\documentclass[11pt]{article}

\usepackage{amsmath, amsfonts, amssymb, amsthm,  graphicx, mathtools, enumerate,amsbsy, dsfont}

\usepackage[blocks, affil-it]{authblk}

\usepackage{mathrsfs}

\usepackage[numbers, square, sort]{natbib}
\usepackage[CJKbookmarks=true,
            bookmarksnumbered=true,
			bookmarksopen=true,
			colorlinks=true,
			citecolor=red,
			linkcolor=blue,
			anchorcolor=red,
			urlcolor=blue]{hyperref}
\usepackage[usenames]{color}

\usepackage[letterpaper, left=1.2truein, right=1.2truein, top = 1.2truein, bottom = 1.2truein]{geometry}
\usepackage[ruled, vlined, lined, commentsnumbered]{algorithm2e}
\usepackage[font=small]{caption}
\usepackage{prettyref,soul}

\newtheorem{lemma}{Lemma}[section]
\newtheorem{proposition}{Proposition}[section]
\newtheorem{thm}{Theorem}[section]

\newtheorem{corollary}{Corollary}[section]

\newtheorem{example}{Example}[section]

\newrefformat{eq}{(\ref{#1})}
\newrefformat{chap}{Chapter~\ref{#1}}
\newrefformat{sec}{Section~\ref{#1}}
\newrefformat{algo}{Algorithm~\ref{#1}}
\newrefformat{fig}{Fig.~\ref{#1}}
\newrefformat{tab}{Table~\ref{#1}}
\newrefformat{rmk}{Remark~\ref{#1}}
\newrefformat{clm}{Claim~\ref{#1}}
\newrefformat{def}{Definition~\ref{#1}}
\newrefformat{cor}{Corollary~\ref{#1}}
\newrefformat{lmm}{Lemma~\ref{#1}}
\newrefformat{lemma}{Lemma~\ref{#1}}
\newrefformat{prop}{Proposition~\ref{#1}}
\newrefformat{app}{Appendix~\ref{#1}}
\newrefformat{ex}{Example~\ref{#1}}
\newrefformat{exer}{Exercise~\ref{#1}}
\newrefformat{soln}{Solution~\ref{#1}}
\newrefformat{cond}{Condition~\ref{#1}}

\def\Cov{\textsf{Cov}} 
\def\Var{\textsf{Var}} 

\def\snr{\textsf{SNR}}
\def\h{\textsf{H}}


\def\text#1{\mbox{\rm #1}}

\def\ave{\textsf{ave}}
\def\sgn{\text{sign}}
\def\h{\textsf{H}}
\def\k{\textsf{K}}
\def\S{\mathfrak{S}}

\newcommand{\argmin}{\mathop{\rm argmin}}

\newcommand{\indc}[1]{{\mathbb{I}\left\{{#1}\right\}}}
\newcommand{\norm}[1]{\left\|{#1} \right\|}

\newcommand{\wh}{\widehat}

\newtheorem*{condb'}{Condition B'}

\newcommand{\br}[1]{\left( #1 \right)}

\newcommand{\cbr}[1]{\left\{ #1 \right\}}

\newcommand{\abs}[1]{\left| #1 \right|}

\newcommand{\p}{\mathbb{P}}
\newcommand{\E}{\mathbb{E}}

\title{Partial Recovery for Top-$k$ Ranking:\\
Optimality of MLE and Sub-Optimality of Spectral Method
}
\author[1]{Pinhan Chen}
\author[1]{Chao Gao}
\author[2]{Anderson Y. Zhang}
\affil[1]{
University of Chicago
}

\affil[2]{
University of Pennsylvania
}

\begin{document}
\maketitle

\begin{abstract}

Given partially observed pairwise comparison data generated by the Bradley-Terry-Luce (BTL) model, we study the problem of top-$k$ ranking. That is, to optimally identify the set of top-$k$ players. We derive the minimax rate with respect to a normalized Hamming loss. This provides the first result in the literature that characterizes the partial recovery error in terms of the proportion of mistakes for top-$k$ ranking. We also derive the optimal signal to noise ratio condition for the exact recovery of the top-$k$ set. The maximum likelihood estimator (MLE) is shown to achieve both optimal partial recovery and optimal exact recovery. On the other hand, we show another popular algorithm, the spectral method, is in general sub-optimal. Our results complement the recent work by \cite{chen2019spectral} that shows both the MLE and the spectral method achieve the optimal sample complexity for exact recovery. It turns out the leading constants of the sample complexity are different for the two algorithms. Another contribution that may be of independent interest is the analysis of the MLE without any penalty or regularization for the BTL model. This closes an important gap between theory and practice in the literature of ranking.

\smallskip

\end{abstract}


\section{Introduction}\label{sec:intro}

Given partially observed pairwise comparison data from $n$ players, a central statistical question is how to optimally aggregate the comparison results and to find the leading top $k$ players. This problem is known as top-$k$ ranking, which has important applications in many areas such as web search \citep{dwork2001rank,cossock2006subset} and competitive sports \citep{motegi2012network,sha2016chalkboarding}. In this paper, our goal is to study the statistical limits of both \textit{partial} and \textit{exact} recovery of the top-$k$ ranking problem.

We will focus on the popular Bradley-Terry-Luce (BTL) pairwise comparison model \citep{bradley1952rank,luce2012individual}. That is, we observe $L$ games played between $i$ and $j$, and the outcome is modeled by
\begin{equation}
y_{ijl}\stackrel{ind}{\sim}\text{Bernoulli}\left(\frac{w_i^*}{w_i^*+w_j^*}\right),\quad l=1,\cdots, L. \label{eq:BTL-w}
\end{equation}
We only observe outcomes from a small subset of pairs. This subset $\mathcal{E}$ is modeled by edges generated by an Erd\H{o}s-R\'{e}nyi \citep{erdHos1960evolution} random graph with connection probability $p$ on the $n$ players. More details of the model will be given in Section \ref{sec:mm}. With the observations $\{y_{ijl}\}_{(i,j)\in\mathcal{E},l\in[L]}$, the goal is to reliably recover the set of top-$k$ players with the largest skill parameters $w_i^*$.

Theoretical properties of the top-$k$ ranking problem have been studied by \cite{chen2015spectral,jang2016top,shah2017simple,chen2017competitive,jang2017optimal,negahban2017rank,chen2019spectral} and references therein. The literature is mainly focused the problem of exact recovery. That is, to investigate the signal to noise ratio condition under which one can recovery the top-$k$ set without any error in probability. For this purpose, the state-of-the-art result is obtained by the recent work \cite{chen2019spectral}. It was shown by \cite{chen2019spectral} that both the MLE and the spectral method can perfectly identify the top-$k$ players under optimal sample complexity up to some constant factor. This discovery was also verified by a numerical experiment that shows almost identical performances of the two methods. The results of \cite{chen2019spectral} lead to the following intriguing research questions. What is the leading constant factor of the optimal sample complexity? Are the MLE and the spectral method still optimal if we take the leading constant into consideration?

In this paper, we give complete answers to the above questions. Our results show that while the MLE achieves a leading constant that is information-theoretically optimal, the spectral method only achieves a sub-optimal constant. In particular, the MLE achieves exact recovery when
\begin{equation}
npL\Delta^2 > 2.001 V(\kappa)\left(\sqrt{\log k} + \sqrt{\log(n-k)}\right)^2, \label{eq:exact-intro}
\end{equation}
and the spectral method requires
$$npL\Delta^2 > 2.001 \overline{V}(\kappa)\left(\sqrt{\log k} + \sqrt{\log(n-k)}\right)^2.$$
In the above two formulas, $\Delta$ is the logarithmic gap of the skill parameters between the top-$k$ group and the rest of the players. The parameter $\kappa$ is the dynamic range of the skill vector that will be defined in Section \ref{sec:mm}. The performances of the two methods are precisely characterized by the two functions $V(\kappa)$ and $\overline{V}(\kappa)$, which are understood to be the effective variances of the two algorithms. The two functions satisfy the strict inequality that $\overline{V}(\kappa)> V(\kappa)$ for all $\kappa> 0$, and the equality $\overline{V}(\kappa)= V(\kappa)$ only holds when $\kappa=0$. We also establish an information-theoretic lower bound that shows the MLE constant $V(\kappa)$ is optimal, and it characterizes the phase transition boundary of exact recovery for the top-$k$ ranking problem.

We would like to emphasize that our results do not contradict the conclusions of \cite{chen2019spectral}. On the contrary, the current paper complements and refines the results of \cite{chen2019spectral}. The optimality claim made by \cite{chen2019spectral} on both the MLE and the spectral method only refers to the order of the sample complexity. Our results show that the performances of the two algorithms can be drastically different when the dynamic range parameter $\kappa$ is strictly positive. We are also able to explain why the numerical experiment conducted in \cite{chen2019spectral} demonstrates nearly identical performances of the MLE and the spectral method. Note that the experiment in \cite{chen2019spectral} was conducted with the skill parameters $w_i^*$ only taking two possible values, $e^{\Delta}$ or $1$, depending on whether $i$ belongs to the top-$k$ group or not. We show in Section \ref{sec:compare} that this configuration of $w^*$ is asymptotically equivalent to $\kappa=0$,  which is the only case that makes $\overline{V}(\kappa)= V(\kappa)$, and thus the nearly identical performances of the two algorithms are actually well expected by our theory. As long as $w^*$ deviates from this simple two-piece structure, our extensive numerical experiments in this paper show that the MLE always dominates the spectral method, and the advantage of the MLE is usually quite significant.

In addition to the exact recovery results, we have also obtained a series of results for partial recovery. We observe that top-$k$ ranking can be viewed as a clustering problem. That is, one wants to cluster the players into two groups of sizes $k$ and $n-k$, respectively. Therefore, it is more natural to consider the problem of partial recovery by analyzing the proportion of players that are clustered into a wrong group. Clearly, this problem is more relevant in practice, since one rarely expects any real application where top-$k$ ranking can be done without any error. From a mathematical point of view, the partial recovery problem is more general and we will show in Section \ref{sec:MLE-result} that an optimal partial recovery error bound will lead to the optimal exact recovery condition (\ref{eq:exact-intro}). To the best of our knowledge, a systematic study of partial recovery for top-$k$ ranking has never been done in the literature. Our paper is perhaps the first work that formulates the top-$k$ ranking problem into a decision-theoretic framework and derives the minimax optimal partial recovery error rate.
Similar to the results of exact recovery, we show that the MLE is also optimal for partial recovery. It has an exponential error bound with respect to a normalized Hamming loss. The error exponent is shown to depend on the variance function $V(\kappa)$. In comparison, the spectral method still achieves a sub-optimal error rate for partial recovery, with the error exponent depending on $\overline{V}(\kappa)$.

Recently, a few papers provide sharp analysis of spectral methods on some high-dimensional estimation problems and show spectral methods can achieve optimal theoretical guarantees just as MLEs. For example, it was shown by \cite{abbe2017entrywise} that spectral clustering achieves optimal community detection for a special class of stochastic block models (SBMs). The paper \cite{loffler2019optimality} proved spectral clustering is also optimal under Gaussian mixture models. We emphasize that the results of both papers imply that not only the order of the sample complexity of spectral clustering is optimal, but even the leading constant is optimal, at least in the setting of SBMs and Gaussian mixture models. The results of the current paper, however, show that the optimality of spectral methods may not hold under more complicated settings such as the BTL model.

Finally, we discuss another contribution of the paper that may be of independent interest. That is, we are able to give a sharp analysis of the MLE under the BTL model. Previous analyses of the MLE in the literature \citep{chen2015spectral,negahban2017rank,chen2019spectral} all impose some additional regularization to address the challenge that the Hessian of the log-likelihood function is not well behaved. Whether the \textit{vanilla} MLE works theoretically without any penalty or regularization remains an open problem. Our analysis solves this open problem by relating a regularized MLE to an $\ell_{\infty}$-constrained MLE. This allows us to show that the solution to the $\ell_{\infty}$-constrained MLE lies in the interior of the constraint. Thus, we can conclude that the $\ell_{\infty}$-constrained MLE is equivalent to the vanilla MLE in its original form. This equivalence then leads to the desired control of the spectrum of the Hessian matrix, which is the most critical step of our analysis.

The rest of the paper is organized as follows. We introduce the setting of the problem in Section \ref{sec:mm}. The results of the MLE and the spectral method will be given in Section \ref{sec:MLE-result} and Section \ref{sec:spec-result}, respectively. We then comprehensively compare the two methods in Section \ref{sec:compare} by numerical experiments. Section \ref{sec:lower-partial} presents a minimax lower bound for partial recovery. In Section \ref{sec:local}, we analyze the error rates of the MLE and the spectral method for each individual parameter. The proofs of our main results are given in Sections \ref{sec:proof-MLE}-\ref{sec:pf_local}, with Section \ref{sec:proof-MLE} for the analysis of the MLE, Section \ref{sec:proof-spec} for the analysis of the spectral method, Section \ref{sec:pf-minmax-l} for the proofs of the lower bounds, and Section \ref{sec:pf_local} for the proof of local error rates. Finally, a few technical lemmas will be given and proved in Section \ref{sec:pf-tech}.

We close this section by introducing some notation that will be used in the paper. For an integer $d$, we use $[d]$ to denote the set $\{1,2,...,d\}$. Given two numbers $a,b\in\mathbb{R}$, we use $a\vee b=\max(a,b)$ and $a\wedge b=\min(a,b)$. We also write $a_+=\max(a,0)$. For two positive sequences $\{a_n\},\{b_n\}$, $a_n\lesssim b_n$ or $a_n=O(b_n)$ means $a_n\leq Cb_n$ for some constant $C>0$ independent of $n$, $a_n=\Omega(b_n)$ means $b_n=O(a_n)$, and $a_n\asymp b_n$ means $a_n\lesssim b_n$ and $b_n\lesssim a_n$. We also write $a_n=o(b_n)$ when $\limsup_n\frac{a_n}{b_n}=0$. For a set $S$, we use $\indc{S}$ to denote its indicator function and $|S|$ to denote its cardinality. For a vector $v\in\mathbb{R}^d$, its norms are defined by $\norm{v}_1=\sum_{i=1}^d|v_i|$, $\norm{v}^2=\sum_{i=1}^dv_i^2$ and $\norm{v}_{\infty}=\max_{1\leq i\leq d}|v_i|$. The notation $\mathds{1}_{d}$ means a $d$-dimensional column vector of all ones. For any $v\in\mathbb{R}^d$, we write $\ave(v)=d^{-1}\mathds{1}_{d}^Tv$. Given $p,q\in(0,1)$, the Kullback-Leibler divergence is defined by $D(p\|q)=p\log\frac{p}{q}+(1-p)\log\frac{1-p}{1-q}$. For a natural number $n$, $\S_n$ is the set of permutations on $[n]$. The notation $\mathbb{P}$ and $\mathbb{E}$ are used for generic probability and expectation whose distribution is determined from the context.

\section{Models and Methods}\label{sec:mm}

\paragraph{The BTL Model.}

We start by introducing the setting of our problem. Consider $n$ players, and each one is associated with a positive latent skill parameter $w_i^*$ for $i\in[n]$. The comparison scheme of the $n$ players is characterized by an Erd\H{o}s-R\'{e}nyi random graph $A\sim\mathcal{G}(n,p)$. That is, $A_{ij}\stackrel{iid}{\sim}\text{Bernoulli}(p)$ for all $1\leq i<j\leq n$. For a pair $(i,j)$ that is connected by the random graph and $A_{ij}=1$, we observe $L$ games played between $i$ and $j$. The outcome of the games is modeled by the Bradley-Terry-Luce (BTL) model (\ref{eq:BTL-w}).
Our goal is to identify the top-$k$ players whose skill parameters $w_i^*$'s have the largest values.

To formulate this problem from a decision-theoretic point of view, we reparametrize the BTL model (\ref{eq:BTL-w}) by a sorted vector $\theta^*$ and a rank vector $r^*$. A sorted vector $\theta^*$ satisfies $\theta_1^*\geq \theta_2^* \geq \cdots \geq \theta_n^*$, and a rank vector $r^*$ is an element of permutation $r^*\in\S_n$. Then, the BTL model (\ref{eq:BTL-w}) can be equivalently written as
\begin{equation}
y_{ijl}\stackrel{ind}{\sim}\text{Bernoulli}(\psi(\theta^*_{r^*_i}-\theta^*_{r^*_j})),\quad l=1,\cdots, L. \label{eq:BTL-theta}
\end{equation}
where $\psi(\cdot)$ is the sigmoid function $\psi(t)=\frac{1}{1+e^{-t}}$. In the original representation, we have $w_i^*=\exp(\theta_{r_i^*}^*)$ for all $i\in[n]$. With (\ref{eq:BTL-theta}), the top-$k$ ranking problem is to identify the subset $\{i\in[n]:r_i^*\leq k\}$ from the random comparison data. This is a typical semiparametric problem because of the presence of the nuisance parameter $\theta^*$.

\paragraph{Loss Function for Top-$k$ Ranking.}

Our goal is to study optimal top-$k$ ranking in terms of both \textit{partial} and \textit{exact} recovery. We thus introduce a loss function to quantify the error of top-$k$ ranking. Given any $\wh{r},r^*\in\S_k$, define the normalized Hamming distance by
\begin{equation}
\h_k(\wh{r},r^*)=\frac{1}{2k}\left(\sum_{i=1}^n\indc{\wh{r}_i>k, r^*_i\leq k}+\sum_{i=1}^n\indc{\wh{r}_i\leq k, r^*_i> k}\right). \label{eq:h-loss}
\end{equation}
The definition (\ref{eq:h-loss}) gives a natural loss function for top-$k$ ranking, since $\h_k(\wh{r},r^*)$ can be equivalently written as the cardinality of the symmetric difference of the sets $\{i\in[n]:\wh{r}_i\leq k\}$ and $\{i\in[n]:r_i^*\leq k\}$ normalized by $2k$. The value of $\h_k(\wh{r},r^*)$ is always within the unit interval $[0,1]$. Moreover, $\h_k(\wh{r},r^*)=0$ if and only if $\{i\in[n]:\wh{r}_i\leq k\}=\{i\in[n]:r_i^*\leq k\}$.

The loss function (\ref{eq:h-loss}) can be related to various quantities previously defined in the literature. One of the most popular distances to compare two rank vectors is the \textit{Kendall tau distance}, defined as
$$\k(\wh{r},r^*)=\frac{1}{n}\sum_{1\leq i<j\leq n}\indc{\sgn(\wh{r}_i-\wh{r}_j)\sgn(r_i^*-r_j^*)<0}.$$
Since $\k(\wh{r},r^*)$ counts all pairwise differences in the ranking relation, it is a stronger distance than (\ref{eq:h-loss}). While $\k(\wh{r},r^*)=0$ requires $\wh{r}=r^*$, $\h_k(\wh{r},r^*)=0$ only requires the two top-$k$ sets are identical regardless of the actual ranks of the members of the sets. In fact, the study of the BTL model under $\k(\wh{r},r^*)$, called full ranking, is also a very interesting problem, and will be considered in a different paper.

 As we have discussed in Section \ref{sec:intro}, the top-$k$ ranking problem can be thought of as a special variable selection problem. Variable selection under the normalized Hamming loss has recently been studied by \cite{butucea2018variable,ndaoud2020optimal}. Consider either a Gaussian sequence model or a regression model with coefficient vector $\beta^*\in\mathbb{R}^p$ that satisfies either $\beta_j^*=0$ or $|\beta_j^*|>a$. The papers \cite{butucea2018variable,ndaoud2020optimal} consider estimating $\beta^*$ under the loss
$$\overline{\h}_s(\wh{\beta},\beta^*)=\frac{1}{2s}\left(\sum_{j=1}^p\indc{|\wh{\beta}_j|>a,\beta_j^*=0}+\sum_{j=1}^p\indc{\wh{\beta}_j=0,|\beta_j^*|>a}\right),$$
where $s$ is the number of $\beta_j^*$'s that are not zero. One can clearly see the similarity between the two loss functions $\h_k(\wh{r},r^*)$ and $\overline{\h}_s(\wh{\beta},\beta^*)$. Similarly, the loss $\overline{\h}_s(\wh{\beta},\beta^*)$ only characterizes the estimation error of the set $\{j\in[p]:|\beta_j^*| >a\}$, and $\overline{\h}_s(\wh{\beta},\beta^*)=0$ if and only if $\{j\in[p]:|\wh{\beta}_j| >a\}=\{j\in[p]:|\beta_j^*| >a\}$.

\paragraph{Parameter Space.}

For the nuisance parameter $\theta^*$ of the model (\ref{eq:BTL-theta}), it is necessary that there exists a positive gap between $\theta_k^*$ and $\theta_{k+1}^*$ for the top-$k$ set $\{i\in[n]:r_i^*\leq k\}$ to be identifiable. We introduce a parameter space for this purpose. For any $0\leq \Delta\leq \kappa$, define
$$\Theta(k,\Delta,\kappa)=\left\{\theta\in\mathbb{R}^n: \theta_1\geq\cdots\geq\theta_n, \theta_k-\theta_{k+1}\geq\Delta, \theta_1-\theta_n\leq\kappa\right\}.$$
For any $\theta^*\in\Theta(k,\Delta,\kappa)$, a positive $\Delta$ guarantees that there is a separation between the group of top-$k$ players and the rest. The number $\kappa$ is called \textit{dynamic range} of the problem.\footnote{For readers who are familiar with \cite{chen2019spectral}, we note that our definitions of $\Delta$ and $\kappa$ are slightly different from those in \cite{chen2019spectral}.} This is a very important quantity, since it is closely related to the effective variance of the problem. Our results will give the exact dependence of the top-$k$ ranking error on both $\Delta$ and $\kappa$.

\paragraph{MLE and Spectral Method.}

We study and compare the performances of two algorithms in the paper. The first algorithm is based on the maximum likelihood estimator (MLE). For each $(i,j)$, we use the notation $\bar{y}_{ij}=\frac{1}{L}\sum_{l=1}^Ly_{ijl}$. Throughout the paper, we adopt the convention of notation that $A_{ij}=A_{ji}$ and $\bar{y}_{ij}=1-\bar{y}_{ji}$. Then, the negative log-likelihood function is given by
\begin{equation}
\ell_n(\theta)=\sum_{1\leq i<j\leq n}A_{ij}\left[\bar{y}_{ij}\log\frac{1}{\psi(\theta_i-\theta_j)}+(1-\bar{y}_{ij})\log\frac{1}{1-\psi(\theta_i-\theta_j)}\right]. \label{eq:likelihood-BTL}
\end{equation}
Define the MLE,
\begin{equation}
\wh{\theta} \in \argmin_{\theta:\mathds{1}_{n}^T\theta=0}\ell_n(\theta). \label{eq:pure-MLE}
\end{equation}
It can be shown that $\wh{\theta}$ is unique as long as the comparison graph is connected.
Then, set $\wh{r}$ to be the rank of players based on $\wh{\theta}$. In other words, find any $\wh{r}\in\S_n$ such that $\wh{\theta}_{\wh{\sigma}_1}\geq\cdots\geq\wh{\theta}_{\wh{\sigma}_n}$ is satisfied, where $\wh{\sigma}$ is the inverse of $\wh{r}$. We emphasize that the MLE (\ref{eq:pure-MLE}) is written in its vanilla version, without any constraint or penalty. To the best of our knowledge, (\ref{eq:pure-MLE}) has not been previously analyzed in the literature.

Another popular algorithm for ranking is the spectral method, also known as Rank Centrality proposed by \cite{negahban2017rank}. Define a matrix $P\in\mathbb{R}^{n\times n}$ by
\begin{equation}
P_{ij}=\begin{cases}
\frac{1}{d}A_{ij}\bar{y}_{ji}, & i\neq j, \\
1 - \frac{1}{d}\sum_{l\in[n]\backslash\{i\}}A_{il}\bar{y}_{li}, & i=j,
\end{cases} \label{eq:spec-P}
\end{equation}
where $d$ needs to be at least the maximum degree of the random graph $A$. We just set $d=2np$ throughout the paper.
One can check that $P$ is a transition matrix of a Markov chain. To see why $P$ is useful, we can compute the conditional expectation of $P$ given the random graph $A$,
$$P_{ij}^*=\begin{cases}
\frac{1}{d}A_{ij}\psi(\theta^*_{r_j^*}-\theta^*_{r_i^*}), & i\neq j, \\
1 - \frac{1}{d}\sum_{l\in[n]\backslash\{i\}}A_{il}\psi(\theta^*_{r_l^*}-\theta^*_{r_i^*}), & i=j.
\end{cases}$$
The stationary distribution induced by the Markov chain $P^*$ is
$$(\pi^*)^T=\left(\frac{\exp(\theta_{r_1^*}^*)}{\sum_{i=1}^n\exp(\theta_{r_i^*}^*)},\cdots,\frac{\exp(\theta_{r_n^*}^*)}{\sum_{i=1}^n\exp(\theta_{r_i^*}^*)}\right).$$
One can easily check that $(\pi^*)^TP^*=(\pi^*)^T$. Since $\pi^*$ preserves the order of $\{\theta_{r_i^*}^*\}$, the set with the $k$ largest $\pi_i^*$'s is the top-$k$ group. With the sample version $P$, we can first compute its stationary distribution $\wh{\pi}$, and then find any $\wh{r}\in\S_n$ such that $\wh{\pi}_{\wh{\sigma}_1}\geq\cdots\geq \wh{\pi}_{\wh{\sigma}_n}$, with $\wh{\sigma}$ being the inverse of $\wh{r}$.

\section{Results for the MLE}\label{sec:MLE-result}

We study the property of MLE in this section. Our first result gives theoretical guarantees for (\ref{eq:pure-MLE}) under both $\ell_2$ and $\ell_{\infty}$ loss functions.

\begin{thm}\label{thm:MLE-estimation}
Assume $p\geq c_0\frac{\log n}{n}$ for some sufficiently large constant $c_0>0$ and $\kappa\leq c_1$ for some constant $c_1>0$. Then, for the estimator $\wh{\theta}$ defined by (\ref{eq:pure-MLE}), we have
\begin{eqnarray}
\label{eq:main-l2} \sum_{i=1}^n(\wh{\theta}_i-\theta_{r_i^*}^*)^2 &\leq& C\frac{1}{pL}, \\
\label{eq:main-linf} \max_{i\in[n]}|\wh{\theta}_i-\theta_{r_i^*}^*|^2 &\leq& C\frac{\log n}{npL},
\end{eqnarray}
for some constant $C>0$ only depending on $c_1$ with probability at least $1-O(n^{-7})$ uniformly over all $r^*\in\S_n$ and all $\theta^*\in\Theta(k,0,\kappa)$ such that $\mathds{1}_{n}^T\theta^*=0$.
\end{thm}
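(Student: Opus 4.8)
The difficulty with the vanilla MLE (\ref{eq:pure-MLE}) is that its Hessian $\nabla^2\ell_n(\theta)=\sum_{i<j}A_{ij}\psi'(\theta_i-\theta_j)(e_i-e_j)(e_i-e_j)^\top$ degenerates when the entries of $\theta$ spread out, so one cannot invoke strong convexity without first controlling $\|\wh\theta\|_\infty$. I would break this circularity by analyzing the $\ell_\infty$-constrained MLE
\[
  \wh\theta_\rho=\argmin_{\|\theta\|_\infty\le\rho,\ \mathds{1}_n^\top\theta=0}\ell_n(\theta)
\]
for a fixed constant $\rho>\kappa$: on its feasible set every difference $\theta_i-\theta_j$ is bounded by $2\rho=O(1)$, so $\psi'(\theta_i-\theta_j)\ge\psi'(2\rho)\asymp1$. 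Writing $\theta^0:=(\theta^*_{r^*_i})_{i\in[n]}$, note $\mathds{1}_n^\top\theta^0=0$ and, since $\theta^*$ is sorted and centered, $\|\theta^0\|_\infty=\|\theta^*\|_\infty\le\kappa<\rho$, so $\theta^0$ is feasible. Once (\ref{eq:main-l2})--(\ref{eq:main-linf}) are established for $\wh\theta_\rho$ in place of $\wh\theta$, the $\ell_\infty$ bound gives $\|\wh\theta_\rho\|_\infty\le\kappa+\sqrt{C\log n/(npL)}\le\kappa+\sqrt{C/c_0}<\rho$ once $c_0$ and $\rho$ are large enough constants, so $\wh\theta_\rho$ is an \emph{interior} minimizer of the convex function $\ell_n$ over the hyperplane $\{\mathds{1}_n^\top\theta=0\}$ (on which, the graph being connected, $\ell_n$ is strictly convex) and hence equals the global minimizer $\wh\theta$. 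It therefore suffices to prove the two bounds for $\wh\theta_\rho$, which is now a well-conditioned problem.

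\textbf{Restricted strong convexity, score control, and the $\ell_2$ bound.} On a good event of probability $1-O(n^{-7})$, uniform in $(\theta^*,r^*)$, two facts hold. First, on the feasible set $\nabla^2\ell_n(\theta)\succeq\psi'(2\rho)L_A$, where $L_A$ is the Laplacian of the Erd\H{o}s--R\'enyi sampling graph; since $p\ge c_0\log n/n$, a standard eigenvalue-concentration bound (a technical lemma in Section~\ref{sec:pf-tech}) puts all nonzero eigenvalues of $L_A$ in $[c_1np,c_2np]$, so $\ell_n$ is $\mu$-strongly convex with $\mu\asymp np$ and smooth with parameter $\asymp np$ on $B_\infty(\rho)\cap\{\mathds{1}_n^\top\theta=0\}$. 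Second, $\nabla\ell_n(\theta^0)=\sum_{i<j}A_{ij}(\psi(\theta^0_i-\theta^0_j)-\bar{y}_{ij})(e_i-e_j)$ has conditional mean zero given $A$, lies in $\mathds{1}_n^\perp$, and each coordinate is a sum of $O(np)$ independent bounded terms of conditional variance $O(1/L)$; a Bernstein bound, a union bound over coordinates for the $\ell_\infty$ control, and a concentration-of-the-square argument for the $\ell_2$ control give $\|\nabla\ell_n(\theta^0)\|_2\lesssim n\sqrt{p/L}$ and $\|\nabla\ell_n(\theta^0)\|_\infty\lesssim\sqrt{np\log n/L}$. The $\ell_2$ bound (\ref{eq:main-l2}) is then immediate: $\ell_n(\wh\theta_\rho)\le\ell_n(\theta^0)$ combined with $\mu$-strong convexity about $\theta^0$ yields $\tfrac{\mu}{2}\|\wh\theta_\rho-\theta^0\|_2^2\le-\langle\nabla\ell_n(\theta^0),\wh\theta_\rho-\theta^0\rangle\le\|\nabla\ell_n(\theta^0)\|_2\|\wh\theta_\rho-\theta^0\|_2$, hence $\|\wh\theta_\rho-\theta^0\|_2\lesssim(n\sqrt{p/L})/(np)=1/\sqrt{pL}$.

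\textbf{The $\ell_\infty$ bound --- the main obstacle.} I expect the crux to be (\ref{eq:main-linf}), because if $\wh\theta_\rho$ lies on the boundary of $B_\infty(\rho)$ its optimality condition is only a KKT inclusion, not $\nabla\ell_n(\wh\theta_\rho)=0$, and the usual entrywise analysis needs the latter. My plan is to relate $\wh\theta_\rho$ to the $\ell_2$-regularized but \emph{unconstrained} MLE $\wh\theta_\lambda=\argmin_{\mathds{1}_n^\top\theta=0}\{\ell_n(\theta)+\tfrac{\lambda}{2}\|\theta\|_2^2\}$, which does satisfy the clean stationarity identity $\nabla\ell_n(\wh\theta_\lambda)+\lambda\wh\theta_\lambda=0$, and to show that for a suitably small $\lambda$ the two estimators coincide on the good event --- this is the device that makes the rest go through. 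On this identity one runs a leave-one-out decoupling, in the spirit of the entrywise analyses used for regularized ranking estimators (see \cite{chen2019spectral}): for each $m\in[n]$ introduce an auxiliary estimator $\wh\theta^{(m)}$ obtained by replacing, in $\ell_n$, the comparison outcomes on the edges incident to $m$ by their conditional means $\psi(\theta^0_i-\theta^0_j)$, so that $\wh\theta^{(m)}$ is (conditionally) independent of $\{y_{mjl}\}_{j,l}$; strong convexity bounds the coupling error $\|\wh\theta_\rho-\wh\theta^{(m)}\|$ in terms of the size of the induced perturbation, while the independence plus Bernstein concentration controls the $m$-th coordinate $|\wh\theta^{(m)}_m-\theta^0_m|$, and then $|\wh\theta_{\rho,m}-\theta^0_m|\le\|\wh\theta_\rho-\wh\theta^{(m)}\|_\infty+|\wh\theta^{(m)}_m-\theta^0_m|$ together with a union bound over $m$ delivers (\ref{eq:main-linf}). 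The hard part is exactly this: making the leave-one-out coupling and all the attendant matrix/vector concentration estimates hold simultaneously over the $n$ auxiliary problems, and cleanly disposing of the non-smooth $\ell_\infty$ constraint through the regularized surrogate; the $\ell_2$ bound above enters as the leading-order input throughout.
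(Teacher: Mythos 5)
Your plan follows the paper's proof in all of its essentials: an $\ell_\infty$-constrained MLE that is shown to lie in the interior of its constraint (hence to equal the vanilla MLE), the regularized estimator $\wh{\theta}_\lambda$ as the stationarity surrogate that breaks the circularity, restricted strong convexity of the Hessian plus a gradient-norm bound for (\ref{eq:main-l2}), and a leave-one-out decoupling with a self-bounding inequality for (\ref{eq:main-linf}). Two points in the write-up need repair, and they concern the logical ordering rather than the toolbox. First, $\wh{\theta}_\rho$ and $\wh{\theta}_\lambda$ cannot literally coincide, since they minimize different objectives; what one actually shows (and what suffices) is that they are close: from $\nabla\ell_n(\wh{\theta}_\lambda)=-\lambda\wh{\theta}_\lambda$ and strong convexity on the segment between the two feasible points, $\|\wh{\theta}_\rho-\wh{\theta}_\lambda\|\lesssim \lambda\|\wh{\theta}_\lambda\|/(np)\lesssim n^{-1/2}$ when $\lambda=n^{-1}$. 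Second, as written your argument is circular: you invoke the optimal rate (\ref{eq:main-linf}) for $\wh{\theta}_\rho$ to conclude interiority, but the leave-one-out that produces (\ref{eq:main-linf}) needs exact stationarity, which needs interiority. The correct order is: (i) establish an a priori \emph{constant-order} bound $\|\wh{\theta}_\lambda-\theta^0\|_\infty=O(1)$ — this is the genuinely hard step, a gradient-descent-iterate induction interleaved with leave-one-out copies in the style of \cite{chen2019spectral}, though only a crude constant is needed so their extra condition on $L$ can be dropped; (ii) use the closeness above to conclude $\|\wh{\theta}_\rho-\theta^0\|_\infty$ is strictly below the constraint level, hence $\wh{\theta}_\rho=\wh{\theta}$ and $\nabla\ell_n(\wh{\theta})=0$; (iii) only then run the entrywise leave-one-out you describe to get the optimal rates. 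With that reordering, your auxiliary estimators decoupled from player $m$'s data, the coordinatewise Bernstein bounds, and the union bound over $m$ deliver (\ref{eq:main-linf}) exactly as in the paper.
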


Let us give some comments on the assumptions and conclusions of Theorem \ref{thm:MLE-estimation}. We have established that the MLE achieves the error rates $O\left(\frac{1}{pL}\right)$ and $O\left(\frac{\log n}{npL}\right)$ for the squared $\ell_2$ loss and the squared $\ell_{\infty}$ loss, respectively. Both error rates are known to be optimal in the literature \citep{negahban2017rank,chen2019spectral}. Since the BTL model (\ref{eq:BTL-theta}) is defined through pairwise differences of $\theta_i^*$'s, the model parameter is only identifiable up to a constant shift. We therefore require both $\mathds{1}_{n}^T\wh{\theta}=0$ and $\mathds{1}_{n}^T\theta^*=0$ so that the two vectors are properly aligned. Note that the results for parameter estimation do not need a positive $\Delta$, and we only assume $\theta^*\in \Theta(k,0,\kappa)$. The condition $p\geq c_0\frac{\log n}{n}$ is imposed for the random graph $A$ to be well behaved in terms of both its degrees and the eigenvalues of the graph Laplacian. In fact, $p\gtrsim \frac{\log n}{n}$ is necessary to ensure the random graph is connected. Otherwise, ranking and parameter estimation would be impossible due to the identifiability issue caused by the lack of comparison between disconnected graph components. In the rest of the paper, some of the results will require a slightly stronger condition $\frac{np}{\log n}\rightarrow\infty$, but we will give very detailed remarks on when and why it will be needed. Last but not least, we require that the dynamic range $\kappa$ to be bounded by a constant. One can certainly allow $\kappa$ to tend to infinity, but the rates (\ref{eq:main-l2}) and (\ref{eq:main-linf}) would depend on $\kappa$ exponentially \citep{negahban2017rank,chen2019spectral}. This is because the eigenvalues of the Hessian of the objective function of (\ref{eq:pure-MLE}) will be exponentially small when $\kappa$ diverges. In fact, when $\kappa\rightarrow\infty$, it is not clear whether MLE still leads to optimal error rates for parameter estimation. In this paper, we will focus on the case $\kappa=O(1)$. We will see in later theorems that even with $\kappa=O(1)$, the exact value of $\kappa$ still plays a fundamental role in top-$k$ ranking.

To the best of our knowledge, Theorem \ref{thm:MLE-estimation} is the first result in the literature that gives optimal rates for parameter estimation by \textit{vanilla} MLE under the BTL model. Previous results in the literature including \cite{chen2015spectral,negahban2017rank,chen2019spectral} all work with regularized MLE
\begin{equation}
\wh{\theta}_{\lambda} = \argmin_{\theta:\mathds{1}_{n}^T\theta=0}\left[\ell_n(\theta) + \frac{\lambda}{2}\|\theta\|^2\right]. \label{eq:pen-MLE}
\end{equation}
In particular, the recent paper \cite{chen2019spectral} shows that $\wh{\theta}_{\lambda}$ also achieves the optimal rates (\ref{eq:main-l2}) and (\ref{eq:main-linf}) for a $\lambda$ that is chosen appropriately, though in practice it is known that the vanilla MLE performs very well. Theorem \ref{thm:MLE-estimation} shows that penalty is not needed for the MLE to be optimal, thus closing a gap between theory and practice.

The proof of Theorem \ref{thm:MLE-estimation} is built upon the elegant leave-one-out technique in \cite{chen2019spectral}. We first show that with a sufficiently small $\lambda$, a (sub-optimal) $\ell_{\infty}$ bound for $\wh{\theta}_{\lambda}$ can be transferred to $\wh{\theta}$. Then, we apply a leave-one-out argument to derive the optimal rates (\ref{eq:main-l2}) and (\ref{eq:main-linf}). We also note that our leave-one-out argument is actually different from the form used in \cite{chen2019spectral}. While the leave-one-out argument in \cite{chen2019spectral} is applied together with a gradient descent analysis, we do not need to follow this gradient descent analysis because of the $\ell_{\infty}$ bound that has already been obtained. As a result, we are able to remove the additional technical assumption $\log L=O(\log n)$ that is imposed in \cite{chen2019spectral}. A detailed analysis of the MLE will be given in Section \ref{sec:proof-MLE}.

Next, we study the theoretical property of $\wh{r}$, the rank induced by the MLE $\wh{\theta}$. Without loss of generality, let us assume $k\leq \frac{n}{2}$ throughout the paper. The case $k>\frac{n}{2}$ can be dealt with by a symmetric bottom-$k$ ranking problem. Before presenting the error bound for the loss function $\h_k(\wh{r},r^*)$, we need to introduce a few notation. We first define the effective variance of the MLE by
\begin{equation}
V(\kappa) = \max_{\substack{\kappa_1+\kappa_2\leq \kappa\\ \kappa_1,\kappa_2\geq 0}}\frac{n}{k\psi'(\kappa_1)+(n-k)\psi'(\kappa_2)}. \label{eq:var-function-V}
\end{equation}
Recall that $\psi(t)=\frac{1}{1+e^{-t}}$ is the sigmoid function so that $\psi'(t)=\psi(t)\psi(-t)$. Since $\kappa=O(1)$, we have $V(\kappa) \asymp 1$. Then, the signal to noise ratio is defined by
$$\snr=\frac{npL\Delta^2}{V(\kappa)}.$$
Note that $\snr$ is a function of $n,k,p,L,\Delta$, but we suppress the dependence for simplicity of notation. The following theorem shows that $\h_k(\wh{r},r^*)$ has an exponential rate with $\snr$ appearing in the exponent.

\begin{thm}\label{thm:MLE-ranking}
Assume $\frac{np}{\log n}\rightarrow\infty$ and $\kappa\leq c_1$ for some constant $c_1>0$. Then, for the rank vector $\wh{r}$ that is induced by the MLE (\ref{eq:pure-MLE}), there exists some $\delta=o(1)$, such that
\begin{equation}
\h_k(\wh{r},r^*)\leq C\exp\left(-\frac{1}{2}\left(\frac{\sqrt{(1-\delta)\snr}}{2}-\frac{1}{\sqrt{(1-\delta)\snr}}\log\frac{n-k}{k}\right)_+^2\right), \label{eq:upper-bound}
\end{equation}
for some constant $C>0$ only depending on $c_1$ with probability $1-o(1)$ uniformly over all $r^*\in\S_n$ and all $\theta^*\in\Theta(k,\Delta,\kappa)$.
\end{thm}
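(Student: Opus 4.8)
The argument has three ingredients: a combinatorial reduction of $\h_k$ to one‑sided coordinate deviations of $\wh\theta$; a sharp coordinatewise expansion of the MLE built from its stationarity equation and a leave‑one‑out decoupling; and an optimization of a free threshold that produces the $\log\frac{n-k}{k}$ term.

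First I would reduce the loss. Write $\wh S=\{i:\wh r_i\le k\}$, $S^*=\{i:r^*_i\le k\}$; since $|\wh S|=|S^*|=k$ we have $\h_k(\wh r,r^*)=\frac1k|\wh S\setminus S^*|=\frac1k|S^*\setminus\wh S|$. For any deterministic $t$, put $U_t=\{i:\wh\theta_i>t\}$. If $|U_t|\ge k$ then $\wh S\subseteq U_t$, so $|\wh S\setminus S^*|\le\sum_{r^*_j>k}\indc{\wh\theta_j>t}$; if $|U_t|<k$ then $U_t\subseteq\wh S$, so $|S^*\setminus\wh S|\le\sum_{r^*_i\le k}\indc{\wh\theta_i\le t}$. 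In either case
\[
\h_k(\wh r,r^*)\ \le\ \frac1k\Big(\textstyle\sum_{i:\,r^*_i\le k}\indc{\wh\theta_i\le t}+\sum_{j:\,r^*_j>k}\indc{\wh\theta_j>t}\Big).
\]
I will take $t\in(\theta^*_{k+1},\theta^*_k)$ and set $g_1=\theta^*_k-t$, $g_2=t-\theta^*_{k+1}$, $g_1+g_2=\Delta':=\theta^*_k-\theta^*_{k+1}\ge\Delta$, so that a ``miss'' for a top player $i$ forces $\wh\theta_i-\theta^*_{r^*_i}\le-g_1$ and a ``false alarm'' for a bottom player $j$ forces $\wh\theta_j-\theta^*_{r^*_j}\ge g_2$; the split $(g_1,g_2)$ is optimized at the end.

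Next I would analyze the individual deviations. By antisymmetry of the score the Lagrange multiplier for $\mathds{1}_n^T\theta=0$ vanishes, so $\nabla\ell_n(\wh\theta)=0$: for each $i$, $\sum_{j\ne i}A_{ij}(\bar y_{ij}-\psi(\wh\theta_i-\wh\theta_j))=0$. Writing $x_i=\wh\theta_i-\theta^*_{r^*_i}$, $\eta_{ij}=\bar y_{ij}-\Expect\bar y_{ij}$, expanding $\psi$ about $\theta^*_{r^*_i}-\theta^*_{r^*_j}$ by the mean value theorem, and using $\|x\|_\infty^2\lesssim\frac{\log n}{npL}=o(1)$ from Theorem~\ref{thm:MLE-estimation} (so the intermediate $\psi'$ equals $\psi'(\theta^*_{r^*_i}-\theta^*_{r^*_j})(1+o(1))$ edgewise), one gets $x_iD_i=\xi_i+\sum_{j\ne i}A_{ij}\psi'(\theta^*_{r^*_i}-\theta^*_{r^*_j})x_j+(\mathrm{error})$, where $\xi_i=\sum_{j\ne i}A_{ij}\eta_{ij}$ and $D_i=\sum_{j\ne i}A_{ij}\psi'(\theta^*_{r^*_i}-\theta^*_{r^*_j})$. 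The coupling term is neutralized by replacing $x_j$ with its leave‑one‑out version $x_j^{(i)}$ (the recentered MLE computed without any edge at $i$), which is independent of $\{A_{ij},\bar y_{ij}\}_j$; using $\sum_jx_j=0$ together with the $\ell_\infty$ bound \eqref{eq:main-linf} and a leave‑one‑out stability estimate shows this term and the expansion error are $o(1/\sqrt{npL})$, hence negligible against the standard deviation of $\xi_i/D_i$. Thus, conditionally on the graph, $x_i=(1+o(1))\xi_i/D_i+o(\sigma_i)$ with $\xi_i$ a sum of independent, mean‑zero, $[-1,1]$‑bounded variables of total conditional variance $D_i/L$, and $\sigma_i^2:=1/(LD_i)$. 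On the graph event $\mathcal E_0$ (maximum degree and Laplacian spectrum as expected, which is where $np/\log n\to\infty$ is used), $D_i=(1+o(1))p\sum_{j\ne i}\psi'(\theta^*_{r^*_i}-\theta^*_{r^*_j})$; for a boundary player, splitting into top/bottom indices and using that $\psi'$ is even and decreasing in $|\cdot|$ gives $\sum_j\psi'(\cdot)\ge k\psi'(\kappa_1)+(n-k)\psi'(\kappa_2)$ with $\kappa_1=\theta^*_1-\theta^*_k$, $\kappa_2=(\theta^*_{k+1}-\theta^*_n)+\Delta'$, and since $\kappa_1+\kappa_2\le\theta^*_1-\theta^*_n\le\kappa$ this pair is feasible in \eqref{eq:var-function-V}, so $\sum_j\psi'(\cdot)\ge n/V(\kappa)$ and $\sigma_i^2\le(1+o(1))V(\kappa)/(npL)=:(1+o(1))\sigma^2$; players away from the boundary have larger signal gaps and are absorbed by a cruder variance bound. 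A Chernoff/Bernstein bound for the bounded sum $\xi_i/D_i$ — sharp to first order because $\sqrt{npL}$ is large — then gives $\Prob(x_i\le-g_1)\le\exp(-(1-\delta)g_1^2/(2\sigma^2))$ for top $i$ and $\Prob(x_j\ge g_2)\le\exp(-(1-\delta)g_2^2/(2\sigma^2))$ for bottom $j$, with some $\delta=o(1)$. Combining with the reduction, $\Expect[k\,\h_k\indc{\mathcal E_0}]\le k\,e^{-(1-\delta)g_1^2/(2\sigma^2)}+(n-k)\,e^{-(1-\delta)g_2^2/(2\sigma^2)}$; choosing $g_1=\frac{\Delta'}{2}-\frac{\sigma^2}{(1-\delta)\Delta'}\log\frac{n-k}{k}$ balances the two terms, and using $\Delta'\ge\Delta$, $\Delta^2/\sigma^2=\snr$, and $(S-2\ell)^2/(8S)=\frac12(\frac{\sqrt S}{2}-\frac{\ell}{\sqrt S})^2$ yields the exponent in \eqref{eq:upper-bound}; the case $g_1\le0$ makes the right side $\gtrsim 1$ and is covered by the positive part. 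Dividing by $k$, invoking Markov's inequality, and using $\Prob(\mathcal E_0^\complement)=o(1)$ gives \eqref{eq:upper-bound} with probability $1-o(1)$, the polynomial Markov loss being absorbed into a slightly enlarged $\delta=o(1)$ when the exponent diverges and the bound being vacuous otherwise; uniformity over $r^*$ and $\theta^*\in\Theta(k,\Delta,\kappa)$ holds because every $o(1)$ is uniform.

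\textbf{Main obstacle.} The crux is getting the \emph{leading constant} right in the tail step: one needs a concentration inequality for $\xi_i/D_i$ that is sharp to first order (ordinary sub‑Gaussian bounds lose constants, so the Bernstein correction must be shown lower order), and, more delicately, the variance $\sigma_i^2$ must be pinned down to $1+o(1)$ — this is exactly where the variational definition of $V(\kappa)$ and the worst‑case configuration of $\theta^*$ enter. This in turn forces the leave‑one‑out decoupling and the graph concentration to be carried out with error terms small enough not to perturb that constant, which is the most technical part of the proof.
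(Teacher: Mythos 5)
Your proposal follows essentially the same route as the paper's proof: the thresholding reduction (Lemma~\ref{lem:anderson}), a coordinatewise linearization of the MLE from its stationarity equation decoupled by a leave-one-out construction, a first-order-sharp Bernstein tail conditional on a good graph event, the variational bound $\sum_{j}\psi'(\theta^*_{r_i^*}-\theta^*_{r_j^*})\geq n/V(\kappa)$ via a feasible pair $(\kappa_1,\kappa_2)$, and the optimization of the threshold split followed by Markov's inequality and the discreteness of $\h_k$. Two intermediate claims are stated too strongly, though the plan survives: (i) the deterministic leave-one-out bias $p\sum_j\psi'(\cdot)x_j^{(i)}/D_i$ is genuinely of order $1/\sqrt{npL}$, i.e.\ $\Theta(\sigma_i)$ rather than $o(\sigma_i)$ as you assert --- what is actually needed, and what the paper proves in (\ref{eq:MLE-local-approx-2}), is that all such corrections are $o(g_1)$, which holds because $g_1^2npL\to\infty$ in the only regime where the bound is nontrivial; (ii) Bernstein's inequality is first-order sharp only when the deviation level is $o(1)$, which fails when $\Delta$ is bounded away from zero, so your "sharp because $\sqrt{npL}$ is large" step needs the paper's truncation of the deviation at $(\log n/(np))^{1/4}$ (see (\ref{eq:delta-bar-i})), with the truncated case absorbed into the polynomial error term, or else a separate dispatch of the constant-$\Delta$ regime through exact recovery.
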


The error exponent of (\ref{eq:upper-bound}) is complicated. We present a special case of the bound when $k\asymp n$ to help understand the result.

\begin{corollary}\label{cor:MLE-ranking}
Assume $\frac{np}{\log n}\rightarrow\infty$, $\kappa=O(1)$ and $k\asymp n$. Then, as long as $\snr\rightarrow\infty$, the rank vector $\wh{r}$ induced by the MLE (\ref{eq:pure-MLE}) satisfies
\begin{equation}
\h_k(\wh{r},r^*)\leq \exp\left(-(1-o(1))\frac{\snr}{8}\right), \label{eq:upper-bound-special}
\end{equation}
with probability $1-o(1)$ uniformly over all $r^*\in\S_n$ and all $\theta^*\in\Theta(k,\Delta,\kappa)$.
\end{corollary}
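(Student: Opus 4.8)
The plan is to specialize the general bound \eqref{eq:upper-bound} of Theorem \ref{thm:MLE-ranking} to the regime $k \asymp n$ and show that the error exponent simplifies to $(1-o(1))\snr/8$. First I would observe that under $k\asymp n$ we have $\log\frac{n-k}{k} = O(1)$, so the second term inside the parenthesis of \eqref{eq:upper-bound}, namely $\frac{1}{\sqrt{(1-\delta)\snr}}\log\frac{n-k}{k}$, is of order $O\!\left(\frac{1}{\sqrt{\snr}}\right) = o(1)$ once $\snr\to\infty$. Meanwhile the first term $\frac{\sqrt{(1-\delta)\snr}}{2}$ diverges. Hence the positive-part operation is vacuous for $n$ large, and the whole expression inside the parenthesis equals $\frac{\sqrt{(1-\delta)\snr}}{2}\left(1 - O\!\left(\frac{1}{\snr}\right)\right)$.

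Next I would square this and track the lower-order terms: the exponent in \eqref{eq:upper-bound} becomes
\[
-\frac{1}{2}\cdot\frac{(1-\delta)\snr}{4}\left(1 - O\!\left(\tfrac{1}{\snr}\right)\right)^2 = -\frac{(1-\delta)\snr}{8}\left(1 - O\!\left(\tfrac{1}{\snr}\right)\right) = -(1-o(1))\frac{\snr}{8},
\]
using $\delta = o(1)$ from Theorem \ref{thm:MLE-ranking} and $\snr\to\infty$ to absorb the $O(1/\snr)$ correction into the $o(1)$ factor. Finally, the leading constant $C$ in \eqref{eq:upper-bound} is harmless: since $\snr\to\infty$, we have $C = \exp(o(\snr))$, so $C\exp(-(1-o(1))\snr/8) = \exp(-(1-o(1))\snr/8)$ after adjusting the $o(1)$ term, which yields \eqref{eq:upper-bound-special}. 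The probability and uniformity statements are inherited verbatim from Theorem \ref{thm:MLE-ranking}, and the hypotheses $\frac{np}{\log n}\to\infty$ and $\kappa=O(1)$ are exactly those of that theorem.

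This corollary is essentially a bookkeeping consequence of Theorem \ref{thm:MLE-ranking}, so there is no serious obstacle; the only point requiring a little care is making sure that all three sources of lower-order slack — the $\delta=o(1)$, the $\frac{1}{\sqrt{\snr}}\log\frac{n-k}{k}$ cross term, and the multiplicative constant $C$ — can simultaneously be folded into a single $(1-o(1))$ factor, which is legitimate precisely because we assume $\snr\to\infty$. I would also remark why the assumption $k\asymp n$ (rather than merely $k\le n/2$) is what makes $\log\frac{n-k}{k}=O(1)$: without it, the cross term need not be negligible relative to $\sqrt{\snr}$, and the exponent would genuinely retain the more complicated form of \eqref{eq:upper-bound}.
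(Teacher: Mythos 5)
Your proposal is correct and is exactly the intended derivation: the paper states Corollary \ref{cor:MLE-ranking} as an immediate specialization of Theorem \ref{thm:MLE-ranking} without a separate proof, and your bookkeeping (the cross term $\frac{1}{\sqrt{(1-\delta)\snr}}\log\frac{n-k}{k}=O(1/\sqrt{\snr})$ under $k\asymp n$, absorption of $\delta$ and the constant $C$ into the $(1-o(1))$ factor using $\snr\to\infty$) is precisely the argument needed.
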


Under the additional assumption $k\asymp n$, the top-$k$ ranking problem can be viewed as a clustering or community detection problem, with the goal to divide the $n$ players into two groups of sizes $k$ and $n-k$, respectively. The exponential convergence rate (\ref{eq:upper-bound-special}) is in a typical form of optimal clustering error \citep{zhang2016minimax,lu2016statistical}. It is intuitively clear that a larger $\snr$ leads to a faster convergence rate. When the sizes of the two clusters are of different orders, one can obtain a more general convergence rate in the form of (\ref{eq:upper-bound}). The extra term $\log\frac{n-k}{k}$ characterizes the unbalancedness of the two clusters. We note that for variable selection under Hamming loss \citep{butucea2018variable,ndaoud2020optimal}, the optimal rate is very similar to the form of (\ref{eq:upper-bound}). This is because variable selection can also be thought of as clustering with two clusters of sizes $s$ and $p-s$, whose orders can potentially be different.

Theorem \ref{thm:MLE-ranking} and Corollary \ref{cor:MLE-ranking} together reveal an interesting phenomenon for top-$k$ ranking. The result shows that the top-$k$ ranking problem can be very different for different orders of $k$. We note that in order to successfully identify the majority of the set $\{i\in[n]:r_i^*\leq k\}$, we need to have $\h_k(\wh{r},r^*)\rightarrow 0$. When $k=n/4$, Corollary \ref{cor:MLE-ranking} shows that $\h_k(\wh{r},r^*)\rightarrow 0$ is achieved when $\snr\rightarrow \infty$. In comparison, when $k=5$, Theorem \ref{thm:MLE-ranking} shows that $\h_k(\wh{r},r^*)\rightarrow 0$ when $\snr > (1+\epsilon)2\log n$ for some arbitrarily small constant $\epsilon>0$. In other words, in terms of partial recovery consistency, top-quarter ranking is an easier problem than top-$5$ ranking. In general, a larger $\snr$ is required for a smaller $k$ according to the formula (\ref{eq:upper-bound}).

Compared with Theorem \ref{thm:MLE-estimation}, we need a slightly stronger condition $\frac{np}{\log n}\rightarrow\infty$ for Theorem \ref{thm:MLE-ranking} and Corollary \ref{cor:MLE-ranking}. If we only assume $p\geq c_0\frac{\log n}{n}$, the $1-\delta$ factor in the exponent of (\ref{eq:upper-bound}) can be replaced by $1-\epsilon$ with some $\epsilon$ of constant order. The constant $\epsilon$ can be made arbitrarily small as long as $c_0$ is sufficiently large.

The proof of Theorem \ref{thm:MLE-ranking} relies on a very interesting lemma that is stated below.
\begin{lemma}\label{lem:anderson}
Suppose $\wh{r}$ is a rank vector induced by $\wh{\theta}$, we then have
$$\h_k(\wh{r},r^*) \leq \frac{1}{k}\min_{t\in\mathbb{R}}\left[\sum_{i:r^*_i\leq k}\indc{\wh{\theta}_i\leq t} + \sum_{i:r^*_i>k}\indc{\wh{\theta}_i\geq t}\right].$$
The inequality holds for any $r^*\in\S_n$.
\end{lemma}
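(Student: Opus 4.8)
The plan is to recast the loss in set form and then reduce to an elementary counting inequality about threshold cuts of $\wh\theta$. Write $S^{*}=\{i\in[n]:r^{*}_{i}\le k\}$ and $\wh S=\{i\in[n]:\wh r_{i}\le k\}$, both of cardinality $k$. Since $2k\,\h_k(\wh r,r^{*})=|S^{*}\setminus\wh S|+|\wh S\setminus S^{*}|$, and $|S^{*}|=|\wh S|=k$ forces $|S^{*}\setminus\wh S|=|\wh S\setminus S^{*}|$, we get $k\,\h_k(\wh r,r^{*})=|S^{*}\setminus\wh S|=|\wh S\setminus S^{*}|$. Hence it suffices to prove that for \emph{every} fixed $t\in\mathbb R$,
\[
|S^{*}\setminus\wh S|\ \le\ \sum_{i:r^{*}_{i}\le k}\indc{\wh\theta_i\le t}+\sum_{i:r^{*}_{i}>k}\indc{\wh\theta_i\ge t},
\]
because taking the minimum over $t$ on the right then yields the stated bound.

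To establish this pointwise inequality, fix $t$ and split $S^{*}\setminus\wh S$ according to the sign of $\wh\theta_i-t$. The indices $i\in S^{*}\setminus\wh S$ with $\wh\theta_i\le t$ all lie in $\{i:r^{*}_{i}\le k,\ \wh\theta_i\le t\}$, so their number is at most the first sum. For the indices $i\in S^{*}\setminus\wh S$ with $\wh\theta_i>t$, I would invoke the defining property of $\wh r$: with $\wh\sigma$ the inverse of $\wh r$ one has $\wh\theta_{\wh\sigma_1}\ge\cdots\ge\wh\theta_{\wh\sigma_n}$, so $\wh S=\{\wh\sigma_1,\dots,\wh\sigma_k\}$ collects the $k$ largest coordinates of $\wh\theta$. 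Consequently, if there exists even one $i_0\in S^{*}\setminus\wh S$ with $\wh\theta_{i_0}>t$, then $\wh r_j\le k<\wh r_{i_0}$ for every $j\in\wh S$ gives $\wh\theta_j\ge\wh\theta_{i_0}>t$; in particular every $j\in\wh S\setminus S^{*}$ satisfies $r^{*}_j>k$ and $\wh\theta_j\ge t$, hence is counted in the second sum, and since $|\wh S\setminus S^{*}|=|S^{*}\setminus\wh S|$ the second sum alone already dominates $|S^{*}\setminus\wh S|$. Combining the two cases closes the argument: if the "strictly-above-$t$" piece of $S^{*}\setminus\wh S$ is empty the first sum suffices, and if it is nonempty the second sum suffices; either way the right-hand side is at least $|S^{*}\setminus\wh S|=k\,\h_k(\wh r,r^{*})$.

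There is no deep step here; the argument is purely combinatorial. The only point I expect to require care is the tie-breaking built into the definition of $\wh r$: when several coordinates of $\wh\theta$ coincide, $\wh S$ need not equal $\{i:\wh\theta_i\ge\wh\theta_{(k)}\}$, which is why the second step is phrased through the sorting permutation $\wh\sigma$ and the comparison $\wh r_j\le k<\wh r_{i_0}$ rather than through an order statistic threshold. Beyond that, the only place a slip could occur is the bookkeeping identity $k\,\h_k(\wh r,r^{*})=|S^{*}\setminus\wh S|=|\wh S\setminus S^{*}|$, which uses $|S^{*}|=|\wh S|=k$.
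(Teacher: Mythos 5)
Your proof is correct. The bookkeeping identity $k\,\h_k(\wh r,r^*)=|S^*\setminus\wh S|=|\wh S\setminus S^*|$ is exactly how the paper also begins (both top-$k$ sets have cardinality $k$, so the two one-sided error counts coincide), and your use of the sorting permutation $\wh\sigma$ to convert $\wh r_j\le k<\wh r_{i_0}$ into $\wh\theta_j\ge\wh\theta_{i_0}$ is sound, including the tie-breaking caveat you flag. Where you diverge from the paper is in how the threshold enters: the paper bounds the two one-sided errors by the threshold counts at the specific order statistics $\wh\theta_{(k+1)}$ and $\wh\theta_{(k)}$, then passes through the chain $\min(\cdot,\cdot)\le\max_t\min\bigl(A(t),B(t)\bigr)\le\min_t\max\bigl(A(t),B(t)\bigr)\le\min_t\bigl(A(t)+B(t)\bigr)$, where the middle step is a max--min interchange justified by the monotonicity of $A$ and $B$ in $t$. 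You instead fix an arbitrary $t$ and run a dichotomy: either every misplaced top player falls at or below $t$ (so the first sum alone covers $|S^*\setminus\wh S|$), or some misplaced top player sits strictly above $t$, which forces all of $\wh S$ above $t$ and makes the second sum cover $|\wh S\setminus S^*|$. This buys you a proof that never mentions order statistics and avoids the minimax interchange entirely, at the cost of an explicit case split; the paper's version is more compact once the monotone max--min identity is accepted. Both arguments are purely combinatorial and both are valid.
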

We will prove Lemma \ref{lem:anderson} in Section \ref{sec:pf-tech}. This inequality shows that the error of ranking $\wh{\theta}$ is bounded by the error of any thresholding rule. Using this result, we immediately obtain that
$$\mathbb{E} \h_k(\wh{r},r^*) \leq \frac{1}{k}\min_{t\in\mathbb{R}}\left[\sum_{i:r^*_i\leq k}\mathbb{P}(\wh{\theta}_i\leq t) + \sum_{i:r^*_i>k}\mathbb{P}(\wh{\theta}_i\geq t)\right].$$
We then obtain the exponential error bound (\ref{eq:upper-bound}) by carefully analyzing the probability $\mathbb{P}(\wh{\theta}_i\leq t)$ (or $\mathbb{P}(\wh{\theta}_i\geq t)$) for each $i\in[n]$. The analysis of $\mathbb{P}(\wh{\theta}_i\leq t)$ is quite involved. We need to first obtain a local linear expansion of the MLE at each coordinate, and then apply the leave-one-out technique introduced by \cite{chen2019spectral} to decouple the dependence between the data and the coefficients of the local linear expansion. The details will be given in Section \ref{sec:proof-MLE}.

The result of Theorem \ref{thm:MLE-ranking} immediately implies a condition for exact recovery of the top-$k$ set. By the definition of $\h_k(\wh{r},r^*)$, it is easy to see that
\begin{equation}
\h_k(\wh{r},r^*)\in\{0,(2k)^{-1},2(2k)^{-1},3(2k)^{-1},\cdots, 1\}.\label{eq:grid}
\end{equation}
Then as long as $\h_k(\wh{r},r^*)<(2k)^{-1}$, we must have $\h_k(\wh{r},r^*)=0$. Under the condition that the right hand side of (\ref{eq:upper-bound}) is smaller than $(2k)^{-1}$, we obtain exact recovery of the top-$k$ set.
This result is stated as follows.
\begin{thm}\label{thm:MLE-exact}
Assume $\frac{np}{\log n}\rightarrow\infty$, $\kappa=O(1)$, and
\begin{equation}
\frac{npL\Delta^2}{V(\kappa)} > (1+\epsilon)2\left(\sqrt{\log k} + \sqrt{\log(n-k)}\right)^2, \label{eq:exact-threshold-upper}
\end{equation}
for some arbitrarily small constant $\epsilon>0$. Then, for the rank vector $\wh{r}$ that is induced by the MLE (\ref{eq:pure-MLE}), we have $\h_k(\wh{r},r^*)=0$ with probability $1-o(1)$ uniformly over all $r^*\in\S_n$ and all $\theta^*\in\Theta(k,\Delta,\kappa)$.
\end{thm}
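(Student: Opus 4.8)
The plan is to obtain the theorem as an immediate consequence of the partial-recovery bound of Theorem~\ref{thm:MLE-ranking} combined with the discreteness of the loss $\h_k$. Since we assume $k\le n/2$, we have $\log\frac{n-k}{k}\ge 0$, and Theorem~\ref{thm:MLE-ranking} applies: on an event of probability $1-o(1)$, uniformly over all $r^*\in\S_n$ and $\theta^*\in\Theta(k,\Delta,\kappa)$,
\[
\h_k(\wh{r},r^*)\le C\exp\!\left(-\tfrac12 q_+^2\right),\qquad q:=\frac{\sqrt{(1-\delta)\snr}}{2}-\frac{1}{\sqrt{(1-\delta)\snr}}\log\frac{n-k}{k},
\]
for some $\delta=o(1)$, where $\snr=\frac{npL\Delta^2}{V(\kappa)}$. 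By the grid structure $(\ref{eq:grid})$, $\h_k(\wh{r},r^*)<(2k)^{-1}$ already forces $\h_k(\wh{r},r^*)=0$, so it suffices to show that under $(\ref{eq:exact-threshold-upper})$ the right-hand side above is strictly smaller than $(2k)^{-1}$ for all large $n$, i.e. $\tfrac12 q^2>\log k+\log(2C)$.

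To verify this inequality I would set $a=\sqrt{\log k}$ and $b=\sqrt{\log(n-k)}$, so that $\log\frac{n-k}{k}=b^2-a^2$ and condition $(\ref{eq:exact-threshold-upper})$ reads $\snr>2(1+\epsilon)(a+b)^2$. Because $\delta=o(1)$ and $\epsilon$ is a fixed constant, for $n$ large we have $(1-\delta)(1+\epsilon)\ge 1+\epsilon/2=:c^2$ with $c>1$, hence $s:=\sqrt{(1-\delta)\snr}\ge\sqrt2\,c\,(a+b)$. Since $b^2-a^2\ge0$, the map $x\mapsto \frac x2-\frac{b^2-a^2}{x}$ is increasing on $(0,\infty)$, so
\[
q\ \ge\ \frac{c(a+b)}{\sqrt2}-\frac{b^2-a^2}{\sqrt2\,c\,(a+b)}\ =\ \frac{1}{\sqrt2\,c}\Big((c^2+1)a+(c^2-1)b\Big)\ >\ 0,
\]
so the positive part is inactive. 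Therefore, using $(x+y)^2\ge x^2+y^2$ for $x,y\ge0$,
\[
\tfrac12 q^2\ \ge\ \frac{1}{4c^2}\Big((c^2+1)a+(c^2-1)b\Big)^2\ \ge\ \frac{(c^2+1)^2}{4c^2}\,a^2+\frac{(c^2-1)^2}{4c^2}\,b^2 .
\]
Now $\frac{(c^2+1)^2}{4c^2}=\frac14(c+c^{-1})^2\ge1$ by AM–GM, and since $c>1$ the constant $\eta:=\frac{(c^2-1)^2}{4c^2}>0$ depends only on $\epsilon$; hence $\tfrac12 q^2\ge\log k+\eta\log(n-k)\ge\log k+\eta\log(n/2)$. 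As $\eta\log(n/2)\to\infty$, this exceeds the constant $\log(2C)$ for all sufficiently large $n$, which gives $\h_k(\wh{r},r^*)<(2k)^{-1}$ and therefore $\h_k(\wh{r},r^*)=0$ on the same $1-o(1)$ event, uniformly over the parameter space.

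The substantive work is already contained in Theorem~\ref{thm:MLE-ranking}; what remains is the elementary optimization above, and I do not expect a genuine obstacle beyond bookkeeping. The two points that do require a little care are: (i) pushing the vanishing factor $\delta=o(1)$ and the fixed prefactor $C$ through the exponent without destroying the strict inequality in $(\ref{eq:exact-threshold-upper})$ — this is exactly why one trades $1+\epsilon$ down to $1+\epsilon/2$ and absorbs $\log(2C)$ into a divergent term; and (ii) noting that, because $k\le n/2$, the term $\log(n-k)\ge\log(n/2)$ always diverges, so the bound works uniformly whether $k$ stays bounded or grows with $n$ (this is also the qualitative reason, already highlighted after Corollary~\ref{cor:MLE-ranking}, that smaller $k$ is the harder regime, which is consistent with the extra $\sqrt{\log(n-k)}$ term always being available to beat the prefactor).
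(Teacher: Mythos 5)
Your proof is correct and takes essentially the same route as the paper: invoke the partial recovery bound of Theorem~\ref{thm:MLE-ranking} and use the discreteness property (\ref{eq:grid}) to upgrade $\h_k(\wh{r},r^*)<(2k)^{-1}$ to $\h_k(\wh{r},r^*)=0$. The only difference is cosmetic: the paper verifies the exponent inequality by a rearrangement followed by a case split on $k\rightarrow\infty$ versus $k=O(1)$, whereas your unified bound $\tfrac{1}{2}q^2\ge \log k+\eta\log(n-k)$ with $\eta>0$ depending only on $\epsilon$ handles both regimes at once and cleanly absorbs the prefactor $C$.
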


We remark that the condition $\frac{np}{\log n}\rightarrow\infty$ can be relaxed to $p\geq c_0\frac{\log n}{n}$ for a sufficiently large constant $c_0$ without affecting the conclusion of Theorem \ref{thm:MLE-exact}. The result of Theorem \ref{thm:MLE-exact} improves the exact recovery threshold obtained in the literature. The paper \cite{chen2019spectral} proves that the MLE exactly recovers the top-$k$ set when $npL\Delta^2 > C\left(\sqrt{\log k}+\sqrt{\log(n-k)}\right)^2$ for some sufficiently large constant $C>0$. We complement the result of \cite{chen2019spectral} by showing that the leading constant should be $2V(\kappa)$, an increasing function of the dynamic range $\kappa$. Moreover, the symmetry of $k$ and $n-k$ in (\ref{eq:exact-threshold-upper}) agrees with the understanding that top-$k$ ranking and bottom-$k$ ranking are mathematically equivalent.

The next theorem shows that the exact recovery threshold (\ref{eq:exact-threshold-upper}) is optimal, and cannot be further improved.
\begin{thm}\label{thm:exact-lower}
Assume $\frac{np}{\log n}\rightarrow\infty$, $\kappa=O(1)$, $(\log n)^8=O(L)$, and
\begin{equation}
\frac{npL\Delta^2}{V(\kappa)} < (1-\epsilon)2\left(\sqrt{\log k} + \sqrt{\log(n-k)}\right)^2, \label{eq:exact-threshold-lower}
\end{equation}
for some arbitrarily small constant $\epsilon>0$. Then, we have
$$\liminf_{n\rightarrow\infty}\inf_{\wh{r}}\sup_{\substack{r^*\in\S_n\\\theta^*\in\Theta(k,\Delta,\kappa)}}\mathbb{P}_{(\theta^*,r^*)}\left(\h_k(\wh{r},r^*)>0\right)\geq 0.95,$$
where we use the notation $\mathbb{P}_{(\theta^*,r^*)}$ for the data generating process (\ref{eq:BTL-theta}).
\end{thm}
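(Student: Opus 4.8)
The plan is to prove the minimax lower bound for exact recovery by reducing it to a testing problem between carefully chosen configurations of $(\theta^*, r^*)$, following the standard two-point / Fano-type argument but adapted to the semiparametric structure of the BTL model. The key observation is that exact recovery of the top-$k$ set fails with high probability whenever there is substantial ambiguity about whether a single ``borderline'' player belongs above or below the cutoff. So I would first fix a worst-case nuisance parameter $\theta^*$: choose $\kappa_1^*, \kappa_2^*$ achieving the maximum in the definition of $V(\kappa)$, i.e. $V(\kappa) = n/(k\psi'(\kappa_1^*) + (n-k)\psi'(\kappa_2^*))$, and build $\theta^*$ so that the top-$k$ coordinates spread over a range of size $\kappa_1^*$, the bottom $n-k$ coordinates spread over a range $\kappa_2^*$, and the gap $\theta^*_k - \theta^*_{k+1}$ is exactly $\Delta$ (so $\theta^* \in \Theta(k,\Delta,\kappa)$). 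The point of this choice is that the Fisher information for distinguishing whether a player sits just above or just below the cutoff, when comparing against all other $n-1$ players, is proportional to $p L (k\psi'(\kappa_1^*) + (n-k)\psi'(\kappa_2^*)) = npL/V(\kappa)$.

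Next I would set up the ensemble of hypotheses. For each player $i$ in the top-$k$ group and each player $j$ in the bottom group, consider swapping their ranks: this produces a rank vector $r^{(i,j)}$ under which the true top-$k$ set differs from the original one. Any estimator $\wh r$ that exactly recovers the top-$k$ set under $r^*$ must, on the overlapping event, fail under at least one of the neighbors. The heart of the matter is to lower bound, for a single such swap, the probability that the optimal test confuses the two hypotheses. This is a likelihood-ratio computation: under $r^*$, player $i$ (sitting at $\theta^*_k$, the bottom of the top group) plays against everyone; under $r^{(i,j)}$, the relevant player sits at $\theta^*_{k+1}$. The log-likelihood ratio between these two is a sum over the edges incident to the swapped players of Bernoulli log-likelihood differences. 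Because $\Delta \le \kappa = O(1)$ and we assume $(\log n)^8 = O(L)$, the per-edge increments are small and a Lindeberg-type CLT (or a direct second-moment / Chernoff analysis) shows the log-likelihood ratio is approximately Gaussian with mean and variance both $\asymp npL\Delta^2/V(\kappa)$. Under the sub-optimality condition \eqref{eq:exact-threshold-lower}, this quantity is below $(1-\epsilon)2(\sqrt{\log k}+\sqrt{\log(n-k)})^2$, so the testing error for each individual pair does not decay fast enough.

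To convert a per-pair testing lower bound into a global statement with probability at least $0.95$, I would use the union/combinatorial structure: there are roughly $k$ top players near the cutoff and $n-k$ bottom players near the cutoff, giving of order $k(n-k)$ candidate swaps, but they are coupled through shared edges. The clean way around this is the argument used in optimal clustering and sparse variable selection lower bounds (as in \cite{butucea2018variable,ndaoud2020optimal}): condition on the random graph $A$ being typical, reduce to a Bayesian problem with a uniform prior over which single element crosses the boundary, and show that the Bayes risk of the induced multiple testing problem is bounded below. Concretely, the expected number of misclassified borderline players under the Bayes-optimal rule is $\Omega(1)$ precisely when $npL\Delta^2/V(\kappa) < (1-\epsilon) 2(\sqrt{\log k}+\sqrt{\log(n-k)})^2$, because each of the $\approx k$ top-borderline players is confused with probability $\gtrsim \exp(-(1+o(1))\,npL\Delta^2/(2V(\kappa)))$ times the ``capacity'' $(n-k)$ available to absorb it, and symmetrically for the bottom; matching the threshold requires $\sqrt{\log k}$ and $\sqrt{\log(n-k)}$ to enter additively inside a square, exactly as in \eqref{eq:exact-threshold-lower}. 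A Paley–Zygmund / second-moment argument on the count of misclassified players then upgrades ``expected number $\Omega(1)$'' to ``at least one misclassification with probability $\ge 0.95$''.

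The main obstacle I expect is the sharp (constant-precise) analysis of the single-swap log-likelihood ratio and, relatedly, making the reduction yield the correct leading constant $2V(\kappa)$ rather than merely the correct order. Two technical issues feed into this. First, when player $i$ is moved from rank $k$ to rank $k+1$ (or across the boundary), its comparisons against the $k-1$ other top players have a different effective variance ($\psi'$ evaluated near $\kappa_1^*$) than its comparisons against the $n-k$ bottom players ($\psi'$ near $\kappa_2^*$); getting $V(\kappa)$ exactly requires that the constructed $\theta^*$ balances these two contributions optimally, and requires a careful Gaussian approximation to the log-likelihood ratio with error $o(1)$ in the exponent — this is where the moment condition $(\log n)^8 = O(L)$ is used, to control the non-Gaussian tail of the Binomial$(L,\cdot)$ summands via Bernstein/Cramér-type bounds. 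Second, the candidate swaps are not independent, so the union-to-Bayes reduction must handle the dependence; the standard fix is to restrict attention to a subfamily of swaps whose incident-edge sets are disjoint (or nearly so) after conditioning on $A$, which is possible with high probability when $np/\log n \to \infty$, losing only a constant factor in the count — harmless since we only need probability $0.95$, not $1-o(1)$.
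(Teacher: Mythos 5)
Your overall architecture --- a least favorable $\theta^*$ realizing $V(\kappa)$ via the optimal $(\kappa_1^*,\kappa_2^*)$, reduction to a Bayesian problem over borderline swaps, and a Gaussian approximation whose justification consumes the assumption $(\log n)^8=O(L)$ --- is the same as the paper's. The genuine gap is in the final combinatorial step. You propose to lower bound the \emph{expected number} of misclassified borderline players by counting the $\approx k(n-k)$ candidate swaps, each succeeding with some probability $\exp(-(1+o(1))\,c\,npL\Delta^2/V(\kappa))$, and then to upgrade via Paley--Zygmund. Whatever the per-pair exponent, this route produces a threshold of the form $\log k+\log(n-k)=\log(k(n-k))$ and cannot generate the cross term $2\sqrt{\log k\,\log(n-k)}$ that distinguishes $\left(\sqrt{\log k}+\sqrt{\log(n-k)}\right)^2$ from $\log(k(n-k))$; the two agree only when $\log k\asymp\log(n-k)$. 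Moreover, the second-moment upgrade fails: the pair events $\{T_a-T_b>t\}$ (where $T_a=\sum_{i}A_{ia}(\bar y_{ia}-\mathbb{E}\bar y_{ia})$ is the centered score of player $a$) are strongly positively correlated across pairs sharing a player, so the variance of the count is dominated by these correlations and $\mathbb{E}N=\Omega(1)$ does not imply $\mathbb{P}(N\geq 1)$ is bounded below. Indeed, with the correct per-pair exponent the counting threshold strictly exceeds the true one in the unbalanced case, which would contradict Theorem~\ref{thm:MLE-exact}.

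The mechanism you are missing is that a swap $(a,b)$ beats the truth precisely when $T_a-T_b$ exceeds a deterministic KL penalty, so failure is governed by $\max_a T_a-\min_b T_b$. One splits the penalty in the ratio $\sqrt{\log k}:\sqrt{\log(n-k)}$ and shows \emph{separately} that $\max_a T_a$ exceeds its share and that $-\min_b T_b$ exceeds its share, combining the two events by $\mathbb{P}(A\cap B)\geq\mathbb{P}(A)+\mathbb{P}(B)-1$; squaring the sum of the two shares is exactly where $\left(\sqrt{\log k}+\sqrt{\log(n-k)}\right)^2$ comes from. Each of the two extrema is a maximum of dependent binomial sums, which the paper handles by the Chernozhukov--Chetverikov--Kato high-dimensional CLT (this is where $(\log n)^8=O(L)$ enters) followed by a lower bound on the maximum of dependent Gaussians (Hartigan's lemma), after checking that the conditional covariance $\Sigma_{ab}=A_{ab}\psi'(\cdot)/L$ is negligible off the diagonal. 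Your proposed fix of restricting to swaps with disjoint incident edge sets is neither necessary (two distinct players share only the single edge between them, so the $T_a$ are already nearly uncorrelated) nor sufficient, since the problematic dependence lives at the level of the pair events, not the edge sets.
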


The proof of Theorem \ref{thm:exact-lower} relies on a precise lower bound characterization of the maximum of dependent binomial random variables. The extra assumption $L\gtrsim (\log n)^8$ allows us to apply a high-dimensional central limit theorem \citep{chernozhukov2013gaussian} for this purpose. Without this additional technical condition, we are not aware of any probabilistic tool to deal with maximum of dependent binomial random variables.

Theorem \ref{thm:MLE-exact} and Theorem \ref{thm:exact-lower} together nail down the phase transition boundary of exact recovery, which is $\frac{npL\Delta^2}{V(\kappa)} = 2\left(\sqrt{\log k} + \sqrt{\log(n-k)}\right)^2$. Thus, the MLE is an optimal procedure that achieves this boundary. The lower bound result of Theorem \ref{thm:exact-lower} also suggests that the partial recovery error rate obtained in Theorem \ref{thm:MLE-ranking} cannot be improved, since otherwise one would obtain a better $\snr$ condition for exact recovery in Theorem \ref{thm:MLE-exact}. A rigorous minimax lower bound for partial recovery will be given in Section \ref{sec:lower-partial}.

\section{Results for the Spectral Method}\label{sec:spec-result}

In this section, we study the theoretical property of the spectral method, also known as rank centrality \citep{negahban2017rank}. Let $\wh{\pi}$ be the stationary distribution of the Markov chain with transition probability (\ref{eq:spec-P}). The estimation error of $\wh{\pi}$ has already been investigated by \cite{negahban2017rank,chen2019spectral}. For both $\ell_2$ and $\ell_{\infty}$ loss functions, it has been shown by \cite{chen2019spectral} that $\wh{\pi}$ achieves the optimal rates (\ref{eq:main-l2}) and (\ref{eq:main-linf}) after an appropriate scaling. We therefore directly study the accuracy of the rank vector $\wh{r}$ induced by $\wh{\pi}$. This is where we can see the difference between the MLE and the spectral method.

We first define the effective variance of the spectral method,
\begin{equation}
\overline{V}(\kappa) = \max_{\substack{\kappa_1+\kappa_2\leq \kappa\\ \kappa_1,\kappa_2\geq 0}}\frac{k\psi'(\kappa_1)(1+e^{\kappa_1})^2+(n-k)\psi'(\kappa_2)(1+e^{-\kappa_2})^2}{(k\psi(\kappa_1)+(n-k)\psi(-\kappa_2))^2/n}. \label{eq:var-function-V-bar}
\end{equation}
Note that $\overline{V}(\kappa)\asymp 1$ when $\kappa=O(1)$. The signal to noise ratio is defined by
$$\overline{\snr}=\frac{npL\Delta^2}{\overline{V}(\kappa)}.$$
The error rate of the spectral method with respect to $\h_k(\wh{r},r^*)$ is stated as follows.

\begin{thm}\label{thm:spectral-ranking}
Assume $\frac{np}{\log n}\rightarrow\infty$ and $\kappa\leq c_1$ for some constant $c_1>0$. Then, for the rank vector $\wh{r}$ that is induced by the stationary distribution of the Markov chain (\ref{eq:spec-P}), there exists some $\delta=o(1)$, such that
\begin{equation}
\h_k(\wh{r},r^*)\leq C\exp\left(-\frac{1}{2}\left(\frac{\sqrt{(1-\delta)\overline{\snr}}}{2}-\frac{1}{\sqrt{(1-\delta)\overline{\snr}}}\log\frac{n-k}{k}\right)_+^2\right), \label{eq:upper-bound-spec}
\end{equation}
for some constant $C>0$ only depending on $c_1$ with probability $1-o(1)$ uniformly over all $r^*\in\S_n$ and all $\theta^*\in\Theta(k,\Delta,\kappa)$.
\end{thm}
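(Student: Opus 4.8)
The plan is to mirror the proof of Theorem~\ref{thm:MLE-ranking}, replacing the MLE $\wh\theta$ by $\log\wh\pi$. Since Lemma~\ref{lem:anderson} is a purely combinatorial fact about a real vector and its induced ranking, it applies verbatim to $\log\wh\pi$ (whose induced ranking is $\wh r$):
\[
\h_k(\wh r,r^*)\le\frac1k\min_{t\in\mathbb{R}}\left[\sum_{i:\,r^*_i\le k}\indc{\log\wh\pi_i\le t}+\sum_{i:\,r^*_i>k}\indc{\log\wh\pi_i\ge t}\right].
\]
Taking expectations reduces the problem to bounding $\mathbb{P}(\log\wh\pi_i\le t)$ for $r^*_i\le k$ and $\mathbb{P}(\log\wh\pi_i\ge t)$ for $r^*_i>k$, then summing over $i$ and optimizing $t$. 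The population stationary distribution satisfies $\log\pi^*_i=\theta^*_{r^*_i}-\log\sum_j e^{\theta^*_{r^*_j}}$, a shift of $\theta^*_{r^*_i}$ by an $i$-independent constant, so the separation between rank $k$ and rank $k+1$ on the $\log\pi^*$-scale is still at least $\Delta$; taking $t$ near the midpoint of the rank-$k$ and rank-$(k{+}1)$ values of $\log\pi^*$ and balancing the $k$-term against the $(n-k)$-term produces the factor $\log\frac{n-k}{k}$ exactly as in the MLE case.

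\textbf{Linear expansion.} From $\wh\pi^TP=\wh\pi^T$, $(\pi^*)^TP^*=(\pi^*)^T$ and $(\wh\pi-\pi^*)^T\mathds{1}_n=0$ one obtains $(\wh\pi-\pi^*)^T(I-P^*)=\wh\pi^T(P-P^*)$, hence to leading order
\[
(\wh\pi-\pi^*)^T\;\approx\;(\pi^*)^T(P-P^*)\,G,
\]
where $G$ is the group inverse of $I-P^*$ on $\mathds{1}_n^{\perp}$ (equivalently $d$ times the group inverse of the weighted graph ``Laplacian'' $M:=d(I-P^*)$). Using $\bar y_{ij}=1-\bar y_{ji}$ and $\psi(-t)=1-\psi(t)$, the $j$-th coordinate of the right factor collapses to $\frac1d\sum_{m\ne j}(\pi^*_j+\pi^*_m)A_{jm}\xi_{jm}$, with $\xi_{jm}:=\bar y_{jm}-\psi(\theta^*_{r^*_j}-\theta^*_{r^*_m})$, $\xi_{mj}=-\xi_{jm}$, and the $\{\xi_{jm}\}_{j<m}$ independent, mean zero, bounded given $A$. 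Consequently the leading term of $\log\wh\pi_i-\log\pi^*_i\approx(\wh\pi_i-\pi^*_i)/\pi^*_i$ is a linear combination of the $\xi_{jm}$ whose coefficients are measurable with respect to $A$ alone. The two error sources---replacing $\wh\pi$ by $\pi^*$ inside $\wh\pi^T(P-P^*)$, and the second-order term $(\wh\pi-\pi^*)^T(P-P^*)G$---must be shown to be of smaller order than $\Delta$, hence to contribute only to the $\delta=o(1)$ slack; this uses the scaled $\ell_\infty$ bound for $\wh\pi$ from \cite{chen2019spectral} (the analogue of \eqref{eq:main-linf}) together with the leave-one-out decoupling of the proof of Theorem~\ref{thm:MLE-ranking}.

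\textbf{Effective variance.} Because $G$ is, to leading order, diagonal (the Green's function of the expander $A$ decays off the diagonal), the conditional variance of $\log\wh\pi_i-\log\pi^*_i$ given $A$ equals, up to lower-order terms,
\[
\frac{1}{M_{ii}^2}\sum_{m\ne i}\left(\frac{\pi^*_i+\pi^*_m}{\pi^*_i}\right)^2 A_{im}\,\frac{\psi'(\theta^*_{r^*_i}-\theta^*_{r^*_m})}{L},\qquad M_{ii}=\sum_{m\ne i}A_{im}\,\psi(\theta^*_{r^*_m}-\theta^*_{r^*_i}).
\]
Using $\pi^*_m/\pi^*_i=e^{-(\theta^*_{r^*_i}-\theta^*_{r^*_m})}$, concentrating over $A$ (valid since $np/\log n\to\infty$), and taking $i$ at the boundary rank $k$ with the extremal configuration in which the remaining top players sit a distance $\kappa_1$ above and the bottom players a distance $\kappa_2$ below (so $\kappa_1+\kappa_2\le\kappa$), the displayed sum becomes $\tfrac{p}{L}\big[k\psi'(\kappa_1)(1+e^{\kappa_1})^2+(n-k)\psi'(\kappa_2)(1+e^{-\kappa_2})^2\big]$ while $M_{ii}\to np\cdot\tfrac1n\big[k\psi(\kappa_1)+(n-k)\psi(-\kappa_2)\big]$, whose ratio, maximized over $\kappa_1,\kappa_2$, is exactly $\overline V(\kappa)/(npL)$. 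The extra factors $(1+e^{\kappa_1})^2$ and $(1+e^{-\kappa_2})^2$ appear because the spectral chain weights a comparison by $\psi$ rather than by the Fisher weight $\psi'$ used implicitly by the MLE, and this is the origin of the strict inequality $\overline V(\kappa)>V(\kappa)$ for $\kappa>0$. Concentration of $M_{ii}$ and of the edge set yields the $(1-\delta)$ factor in the exponent.

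\textbf{Tail bound and main obstacle.} With the leading term written as a sum of independent bounded variables with $A$-measurable coefficients and conditional variance at most $(1+\delta)\overline V(\kappa)/(npL)$, Bernstein's inequality gives, at the chosen threshold, $\mathbb{P}(\log\wh\pi_i\le t)\le\exp\!\big(-(1-\delta)(\text{gap})^2/(2\overline V(\kappa)/(npL))\big)$; inserting this together with the symmetric bound for $r^*_i>k$, summing over $i$, and optimizing $t$ reproduces \eqref{eq:upper-bound-spec}. I expect the main obstacle to be the fine analysis of $G$: unlike the MLE proof, where the relevant operator is a symmetric positive semidefinite weighted graph Laplacian, here $I-P^*$ is non-symmetric, so one needs both a uniform contraction/operator-norm bound---from the spectral gap of the reversible chain $P^*$, itself a consequence of the random-graph expansion when $np/\log n\to\infty$ and $\kappa=O(1)$---\emph{and} sharp entrywise control, to justify that only the diagonal of $G$ contributes at leading order and that the expansion remainder is $o(\Delta)$. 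Pinning down the variance constant to be \emph{exactly} $\overline V(\kappa)$, rather than merely of the right order, is the delicate step, and it is precisely where the sub-optimality $\overline V(\kappa)>V(\kappa)$ relative to the MLE is born.
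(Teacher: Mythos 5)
Your skeleton (Lemma \ref{lem:anderson} applied to $\wh\pi$, an entrywise tail bound, Bernstein conditionally on $A$, a variance ratio maximizing to $\overline{V}(\kappa)$, then balancing the $k$ and $n-k$ terms via the threshold) matches the paper, and your variance computation at the boundary rank is the right one. But the core step --- the linearization of $\wh\pi_i-\pi_i^*$ --- rests on the unproven assertion that the group inverse $G$ of $I-P^*$ is ``to leading order diagonal,'' and you yourself flag the entrywise analysis of $G$ as the main unresolved obstacle. This is a genuine gap, not a technicality: on $\mathds{1}_{n}^{\perp}$ the matrix $I-P^*$ has diagonal entries of order one and roughly $np$ off-diagonal entries of order $1/(np)$ per row, so the off-diagonal mass of each row exactly equals the diagonal (row sums vanish); ``diagonal to leading order'' is false in any entrywise sense, and a spectral-gap bound alone cannot certify that the off-diagonal contributions to your particular bilinear form are negligible. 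The paper sidesteps the inversion entirely: from $\wh\pi^TP=\wh\pi^T$ it reads off the exact coordinate-wise identity $\wh\pi_m=\sum_{j\neq m}A_{jm}\bar y_{mj}\wh\pi_j\big/\sum_{j\neq m}A_{jm}\bar y_{jm}$, replaces $\wh\pi_j$ by $\pi_j^*$ in the numerator to define $\bar\pi_m$, and controls $|\wh\pi_m-\bar\pi_m|$ by splitting through the leave-one-out stationary distribution $\pi^{(m)}$ (Lemma \ref{lem:yuxin-chen}), which decouples the random coefficients from the noise $\bar y_{mj}-\psi(\theta_m^*-\theta_j^*)$. Then $\bar\pi_m-\pi_m^*$ is an explicit ratio whose numerator is a sum of independent terms given $A$, and Bernstein applies directly. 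To salvage your route you would have to prove sharp entrywise bounds on the group inverse of a non-symmetric random operator, which is strictly harder than the argument you are trying to replace.

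A second gap: you assert that the worst case of the variance ratio over $\theta^*\in\Theta(k,\Delta,\kappa)$ is the two-point configuration $(\kappa_1,\kappa_2)$, which is what makes the constant exactly $\overline{V}(\kappa)$ rather than merely of the right order. The theorem is uniform over the parameter space, so you must show that for every admissible $\theta^*$ the quantity $\bigl(\sum_{j}\psi(\theta_j^*-\theta_k^*)\bigr)^2\big/\sum_j\psi'(\theta_k^*-\theta_j^*)(1+e^{\theta_j^*-\theta_k^*})^2$ is at least $n/\overline{V}(\kappa)$; this is the content of the paper's Lemma \ref{lem:minimizer}, a separate and somewhat delicate optimization showing the minimum over the box is attained at the extreme points. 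Without it your bound holds only for the specific extremal $\theta^*$ you describe, not uniformly as the statement requires.
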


The formula (\ref{eq:upper-bound-spec}) characterizes the convergence rate of partial recovery of the top-$k$ set by the spectral method. It can be compared with the MLE error bound (\ref{eq:upper-bound}). The only difference lies in the effective variance of the two methods. We will show in Lemma \ref{lem:2V} that $\overline{V}(\kappa)\geq V(\kappa)$ and the equality only holds when $\kappa=0$. Therefore, the spectral method is not optimal in general. Detailed comparisons of the two algorithms will be given in Section \ref{sec:compare}.

By the property (\ref{eq:grid}), we immediately obtain an exact recovery result from Theorem \ref{thm:spectral-ranking}.

\begin{thm}\label{thm:spectral-exact}
Assume $\frac{np}{\log n}\rightarrow\infty$, $\kappa=O(1)$, and
\begin{equation}
\frac{npL\Delta^2}{\overline{V}(\kappa)} > (1+\epsilon)2\left(\sqrt{\log k} + \sqrt{\log(n-k)}\right)^2, \label{eq:exact-threshold-upper-spec}
\end{equation}
for some arbitrarily small constant $\epsilon>0$. Then, for the rank vector $\wh{r}$ that is induced by the stationary distribution of the Markov chain (\ref{eq:spec-P}), we have $\h_k(\wh{r},r^*)=0$ with probability $1-o(1)$ uniformly over all $r^*\in\S_n$ and all $\theta^*\in\Theta(k,\Delta,\kappa)$.
\end{thm}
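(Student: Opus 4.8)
\textbf{Proof proposal for Theorem~\ref{thm:spectral-exact}.}

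The plan is to deduce exact recovery directly from the partial recovery bound of Theorem~\ref{thm:spectral-ranking} together with the discreteness property~(\ref{eq:grid}), mirroring precisely the way Theorem~\ref{thm:MLE-exact} follows from Theorem~\ref{thm:MLE-ranking}. Since $\h_k(\wh{r},r^*)$ takes values only in $\{0,(2k)^{-1},2(2k)^{-1},\dots,1\}$, it suffices to show that, under~(\ref{eq:exact-threshold-upper-spec}) and for all large $n$, the right-hand side of~(\ref{eq:upper-bound-spec}) is strictly smaller than $(2k)^{-1}$. Granting this, Theorem~\ref{thm:spectral-ranking} yields $\h_k(\wh{r},r^*)<(2k)^{-1}$ on an event of probability $1-o(1)$ that is uniform over $r^*\in\S_n$ and $\theta^*\in\Theta(k,\Delta,\kappa)$, and~(\ref{eq:grid}) then forces $\h_k(\wh{r},r^*)=0$ on that same event; no new probabilistic input is required.

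It thus remains to carry out an elementary deterministic threshold calculation. Write $a=\sqrt{\log k}$ and $b=\sqrt{\log(n-k)}$, so that $0\le a\le b$ because $k\le n/2$, and set $x=\sqrt{(1-\delta)\overline{\snr}}$ with $\delta=o(1)$ the sequence produced by Theorem~\ref{thm:spectral-ranking}. Condition~(\ref{eq:exact-threshold-upper-spec}) gives $\overline{\snr}>2(1+\epsilon)(a+b)^2$; since $\epsilon$ is a fixed constant and $\delta=o(1)$, for $n$ large we get $x^2>2(1+\epsilon/2)(a+b)^2$, so we may write $x=\sqrt{2}\,(a+b)s$ with $s>s_0:=\sqrt{1+\epsilon/2}>1$. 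A direct computation gives $\tfrac{x}{2}-\tfrac{1}{x}\log\tfrac{n-k}{k}=\tfrac{1}{\sqrt{2}}\big((a+b)s-(b-a)/s\big)$, which is nonnegative (so the $(\cdot)_+$ in~(\ref{eq:upper-bound-spec}) is inactive) and strictly increasing in $s$ on $[1,\infty)$. Hence the exponent in~(\ref{eq:upper-bound-spec}) is at least $\tfrac14\big((a+b)s_0-(b-a)/s_0\big)^2$; expanding this with the identity $(s_0+s_0^{-1})^2-(s_0-s_0^{-1})^2=4$ gives the lower bound $\log k+\tfrac{c^2}{4}\log\!\big(k(n-k)\big)$ up to a nonnegative remainder, where $c:=s_0-s_0^{-1}>0$ depends only on $\epsilon$. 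Since $k(n-k)\ge n/2\to\infty$, the exponent exceeds $\log k+\tfrac{c^2}{4}\log(n/2)$, which is larger than $\log(2Ck)$ for $n$ large; equivalently the right-hand side of~(\ref{eq:upper-bound-spec}) is strictly below $(2k)^{-1}$, completing the reduction.

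I do not anticipate a real obstacle: the substance is entirely contained in Theorem~\ref{thm:spectral-ranking}, and what remains is the same bookkeeping used for the MLE in Theorem~\ref{thm:MLE-exact}. The only points requiring care are checking that the truncation $(\cdot)_+$ is inactive (which follows from $a\le b$ and $s\ge1$), tracking how the fixed gap constant $\epsilon$ survives the $o(1)$ perturbation coming from $\delta$, and noting that $\log\!\big(k(n-k)\big)\to\infty$ uniformly over $k\le n/2$ so that it absorbs the constant $C$ regardless of whether $k$ is bounded or of order $n$. Finally, as in the remark after Theorem~\ref{thm:MLE-exact}, the assumption $\tfrac{np}{\log n}\to\infty$ can be weakened to $p\ge c_0\tfrac{\log n}{n}$ for a large constant $c_0$: Theorem~\ref{thm:spectral-ranking} then holds with $1-\delta$ replaced by $1-\epsilon'$ for a small constant $\epsilon'$, and the computation above goes through with $s_0$ replaced by a slightly smaller constant that still exceeds $1$.
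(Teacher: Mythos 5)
Your proposal is correct and follows essentially the same route as the paper, which proves this theorem by the identical reduction used for Theorem~\ref{thm:MLE-exact}: combine the partial recovery bound of Theorem~\ref{thm:spectral-ranking} with the discreteness property~(\ref{eq:grid}) and check that the exponential bound drops below $(2k)^{-1}$. Your algebraic verification (via $x=\sqrt{2}(a+b)s$ and the identity $(s_0+s_0^{-1})^2-(s_0-s_0^{-1})^2=4$) is sound and in fact handles the regimes $k\to\infty$ and $k=O(1)$ in one computation, whereas the paper treats them separately; this is a cosmetic difference only.
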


It has been shown in \cite{chen2019spectral} that the spectral method exactly recovers the top-$k$ set when $npL\Delta^2 > C\left(\sqrt{\log k}+\sqrt{\log(n-k)}\right)^2$ for some sufficiently large constant $C>0$. Without specifying the constant $C$, one cannot tell the difference between the MLE and the spectral method. In view of the lower bound result given by Theorem \ref{thm:exact-lower}, the exact recovery threshold (\ref{eq:exact-threshold-upper-spec}) of the spectral method does not achieve the phase transition boundary for a general $\kappa$. A careful reader may wonder whether this is resulted from a loose analysis in the proof. Our next result shows that the sub-optimality of the spectral method is intrinsic.

\begin{thm}\label{thm:spectral_lower}
Assume $\frac{np}{\log n}\rightarrow\infty$, $\kappa\leq c_1$ for some constant $c_1>0$, $k\rightarrow\infty$ and
\begin{equation}
\frac{npL\Delta^2}{\overline{V}(\kappa)} < (1-\epsilon)2\left(\sqrt{\log k} + \sqrt{\log(n-k)}\right)^2, \label{eq:spec-lower-condition}
\end{equation}
for some arbitrarily small constant $\epsilon>0$. Then, for the rank vector $\wh{r}$ that is induced by the stationary distribution of the Markov chain (\ref{eq:spec-P}), we have
$$\liminf_{n\rightarrow\infty}\sup_{\substack{r^*\in\S_n\\\theta^*\in\Theta(k,\Delta,\kappa)}}\mathbb{P}_{(\theta^*,r^*)}\left(\h_k(\wh{r},r^*)>0\right)\geq 0.95.$$
Moreover, there exists some $\delta=o(1)$, such that
\begin{equation}
\sup_{\substack{r^*\in\S_n\\\theta^*\in\Theta(k,\Delta,\kappa)}}\mathbb{E}_{(\theta^*,r^*)}\h_k(\wh{r},r^*)\geq C\exp\left(-\frac{1}{2}\left(\frac{\sqrt{(1+\delta)\overline{\snr}}}{2}-\frac{1}{\sqrt{(1+\delta)\overline{\snr}}}\log\frac{n-k}{k}\right)_+^2\right), \label{eq:lower-bound-spec}
\end{equation}
for some constant $C>0$ only depending on $c_1$ and $\epsilon$.
\end{thm}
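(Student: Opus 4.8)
The plan is to reverse the thresholding analysis behind Theorem~\ref{thm:spectral-ranking}: I would reuse the sharp coordinatewise expansion of the spectral estimator developed there, but now on a carefully chosen least favourable configuration, so as to lower bound rather than upper bound the ranking error. Concretely, from the proof in Section~\ref{sec:proof-spec} I would extract a decomposition $\log(n\wh\pi_i)=\theta^*_{r^*_i}-\log\ave(e^{\theta^*})+W_i+R_i$, where $W_i$ is an explicit linear functional of the centred outcomes $\{A_{ij}(\bar y_{ij}-\psi(\theta^*_{r^*_j}-\theta^*_{r^*_i}))\}_j$ and $\max_i|R_i|=o((npL)^{-1/2})$ with probability $1-o(1)$. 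The leave-one-out device already used for the upper bound shows that, conditionally on a typical graph $A$, the $W_i$ are almost independent—they share a non-zero summand only across a single edge of $A$, whose contribution is an $O(1/np)$ fraction of the variance—each being approximately centred Gaussian, with variance converging to $\sigma^2:=\overline V(\kappa)/(npL)$ for players placed at the boundary between the two clusters in the instance below.

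I would then build the least favourable instance. Take $r^*=\mathrm{id}$, choose $(\kappa_1,\kappa_2)$ attaining, up to $o(1)$, the maximum in (\ref{eq:var-function-V-bar}) under the slightly tightened constraint $\kappa_1+\kappa_2\le\kappa-2\Delta$ (legitimate since $\Delta=o(1)$ here and $\overline V$ is continuous), and place $k-k_0$ bulk top players at skill $\Delta+\kappa_1$, a block $S_1$ of $k_0$ boundary top players at skill $\Delta$, a block $S_2$ of $m_0$ boundary bottom players at skill $0$, and $(n-k)-m_0$ bulk bottom players at skill $-\kappa_2$, with $k_0=k/(\log k)^{C}$ and $m_0=(n-k)/(\log(n-k))^{C}$ for a fixed $C\ge1$. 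Then $\theta^*\in\Theta(k,\Delta,\kappa)$; the boundary players' noise variance is $(1+o(1))\sigma^2$ because their neighbourhoods are dominated by the two bulk blocks at offsets $\kappa_1$ and $-\kappa_2$; and, since the bulk skill gaps are $\Omega(1)\gg\sigma\sqrt{\log n}$, the bulk players provably land on the correct side of every relevant threshold with probability $1-o(1)$, so the ranking near the boundary is decided entirely by $\{W_l\}_{l\in S_1\cup S_2}$.

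For the exact-recovery claim I would prove $\Prob(\h_k(\wh r,r^*)>0)\ge0.95$ through two second-moment (Paley--Zygmund) arguments, on the counts $M_2:=\#\{j\in S_2:W_j>(1-\eta)\sigma\sqrt{2\log m_0}\}$ and $M_1:=\#\{i\in S_1:W_i<-(1-\eta)\sigma\sqrt{2\log k_0}\}$ for a small $\eta>0$. Their means tend to infinity by a moderate-deviation lower bound: $W_i$ is a sum of $\asymp npL$ summands each bounded by $\asymp(npL)^{-1}$, so the classical bounded-summand regime $\sqrt{\log n}=o((npL)^{1/2})$ applies, which is exactly where $np/\log n\to\infty$ is used; the variances are $o((\text{mean})^2)$ because $W$'s at distinct indices are conditionally independent while the cross-edge correlation $\rho\asymp(np)^{-1}$ obeys $\rho\log n=o(1)$. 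Hence $M_1,M_2>0$ with probability $1-o(1)$, so $\max_{j\in S_2}W_j-\min_{i\in S_1}W_i\ge(1-\eta)\sigma(\sqrt{2\log m_0}+\sqrt{2\log k_0})$, which under (\ref{eq:spec-lower-condition}) exceeds $\Delta+\sigma$ once $\eta$ is small; combined with $\max_i|R_i|=o(\sigma)$ this forces $\wh\pi_{j^*}>\wh\pi_{i^*}$ for the extremising pair, and a true bottom player above a true top player in $\wh\pi$ immediately yields $\h_k(\wh r,r^*)>0$. The hypothesis $k\to\infty$ is used here so that $|S_1|,|S_2|\to\infty$ and the second-moment arguments apply.

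For the expected-loss bound (\ref{eq:lower-bound-spec}) I would lower bound $\E\h_k(\wh r,r^*)\ge k^{-1}\sum_{i\in S_1}\Prob(i\notin\wh T)$, with $\wh T$ the estimated top-$k$ set. Conditioning on the data not involving player $i$, the event $\{i\notin\wh T\}$ is $\{\log(n\wh\pi_i)<\tau_{(-i)}\}$ with $\tau_{(-i)}$ the $k$-th largest of $\{\log(n\wh\pi_l)\}_{l\ne i}$; a uniform concentration of the empirical distribution function of these values—again via near-independence and moderate deviations—locates $\tau_{(-i)}$ at the population quantile $t^*$ up to an $o(\Delta)$ error with probability bounded away from $0$, where $t^*$ is precisely the threshold minimising the bound in the proof of (\ref{eq:upper-bound-spec}). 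Plugging in $\log(n\wh\pi_i)=\theta^*_{r^*_i}-\log\ave(e^{\theta^*})+W_i+R_i$ with $W_i\approx N(0,(1+o(1))\sigma^2)$, and using $(t^*-\theta^*_{r^*_i}+\log\ave(e^{\theta^*}))/\sigma=-\big(\tfrac{\sqrt{\overline\snr}}{2}-\tfrac{1}{\sqrt{\overline\snr}}\log\tfrac{n-k}{k}\big)+o(1)$, yields $\Prob(i\notin\wh T)\gtrsim\bar\Phi\big(\tfrac{\sqrt{(1+\delta)\overline\snr}}{2}-\tfrac{1}{\sqrt{(1+\delta)\overline\snr}}\log\tfrac{n-k}{k}\big)$ for a suitable $\delta=o(1)$ (when the argument is negative $\bar\Phi$ is bounded below and the $(\cdot)_+$ in (\ref{eq:lower-bound-spec}) is matched); multiplying by $k_0/k$, a $(\log k)^{-C}$ factor absorbed into $\delta$, gives (\ref{eq:lower-bound-spec}). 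The principal obstacle throughout is this tail control—establishing moderate-deviation lower bounds for $W_i$, for the partial extrema over $S_1,S_2$, and for the order statistic $\tau_{(-i)}$, uniformly and at the precise scale $\sqrt{\overline\snr}\asymp\sqrt{\log n}$, while simultaneously absorbing $R_i$ at the finer scale $o(\sigma)$ and handling the mild dependence among the $W_i$ induced by the shared graph; pinning $\tau_{(-i)}$ to $t^*$ within $o(\Delta)$ uniformly over the sub-critical range of $\overline\snr$ is the most delicate step.
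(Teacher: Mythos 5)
Your construction and your treatment of the exact-recovery claim track the paper's proof closely: the paper also uses a four-block least favourable $\theta^*$ (bulk at the $\overline{V}$-maximizing offsets $\kappa_1,-\kappa_2$, two thin boundary blocks of size $\rho k$ and $\rho(n-k)$ separated by $\Delta$), reduces $\wh\pi_i$ to a linear statistic of centred outcomes via the stationarity identity plus leave-one-out, restricts that statistic to the bulk neighbours so that the boundary indicators become exactly conditionally independent given $A$ (your weak-correlation Paley--Zygmund plays the same role as the paper's Chebyshev bound on $\sum_i L_{i,1}$), and lower-bounds the individual exceedance probabilities by an exponential-tilting argument (Lemma \ref{lem:spectral_lower_tail_prop}, which is the rigorous version of the ``classical moderate deviations'' you invoke; note the summands are weighted by $1+e^{\theta_j^*-\theta_i^*}$ and conditioned on $A$, so some care beyond the i.i.d.\ bounded case is needed, and the correction term $C\eta\sqrt{\overline{\snr}}$ it produces can only be absorbed into $(1+\delta)$ after the exponent is shown to diverge; the bounded-exponent case is handled separately). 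Your inversion argument $\max_{j\in S_2}W_j-\min_{i\in S_1}W_i>\Delta$ then gives the $0.95$ claim, and only needs $|R_i|=o(\Delta)$, not the $o(\sigma)$ you ask for (which the available estimates do not deliver when $npL$ is only slightly larger than $\log n$); the $o(\Delta)$ error is what gets absorbed into $\delta$.

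The genuine gap is in your route to the expected-loss bound (\ref{eq:lower-bound-spec}). You reduce $\Prob(i\notin\wh T)$ to locating the random order statistic $\tau_{(-i)}$ within $o(\Delta)$ of a population quantile $t^*$, uniformly over the sub-critical range of $\overline{\snr}$, and you correctly flag this as the most delicate step --- but you do not supply it, and it is where the difficulty actually lives: $\tau_{(-i)}$ is a functional of the full stationary distribution (hence not independent of the data involving player $i$ without a further leave-one-out surrogate), and the exceedance counts that determine it fluctuate by constant multiplicative factors, which shift the quantile by $\Theta(\sigma)$ amounts whose effect on the tail probability must be shown to enter only through the $(1+\delta)$ factor. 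The paper avoids this entirely with Lemma \ref{lem:anderson_lower_bound}, the reverse of Lemma \ref{lem:anderson}: $\h_k(\wh r,r^*)\geq k^{-1}\max_t\min\bigl(\#\{\text{top }i:\wh\pi_i<t\},\,\#\{\text{bottom }j:\wh\pi_j>t\}\bigr)$ for a \emph{deterministic} $t$ (the one in (\ref{eqn:spectral_t}) with $\eta$ as in (\ref{eqn:spectral_lower_eta_def})). With that device, the very same two exceedance counts you already control for the $0.95$ claim --- shown to each exceed $Ck\exp(-\phi(\bar\delta)^2/2)$ with probability $1-o(1)$ --- immediately yield both conclusions of the theorem via Markov's inequality, with no order-statistic analysis. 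I would recommend replacing your $\tau_{(-i)}$ argument with this max--min reduction; as written, that part of the proposal is not a proof.
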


Theorem \ref{thm:spectral_lower} shows that the results of Theorem \ref{thm:spectral-ranking} and Theorem \ref{thm:spectral-exact} on the performance of spectral method are sharp, under the additional condition that $k\rightarrow\infty$. The conclusion of Theorem \ref{thm:spectral_lower} can also be extended to the case of $k=O(1)$ via a similar argument that is used in the proof of Theorem \ref{thm:exact-lower}, as long as the technical condition $(\log n)^8=O(np)$ is further imposed.

To close this section, we remark that all the theorems we have obtained for the spectral method can be stated under the weaker assumption $p\geq c_0\frac{\log n}{n}$ for some sufficiently large constant $c_0>0$, as long as the $\delta$ in (\ref{eq:upper-bound-spec}) and (\ref{eq:lower-bound-spec}) are replaced by some sufficiently small constant.

\section{Comparison of the Two Methods}\label{sec:compare}

In this section, we compare the MLE and the spectral method based on the results obtained in Section \ref{sec:MLE-result} and Section \ref{sec:spec-result}. The statistical properties of the two methods in terms of partial and exact recovery are characterized by the two variance functions $V(\kappa)$ and $\overline{V}(\kappa)$, respectively. We first give a direct comparison of the two functions by plotting them together with different values of $k/n$. We observe in Figure \ref{fig:V} that $\overline{V}(\kappa)\geq V(\kappa)$ for all $\kappa\geq 0$.
\begin{figure}[h]
	\centering
	\includegraphics[width=0.9\textwidth]{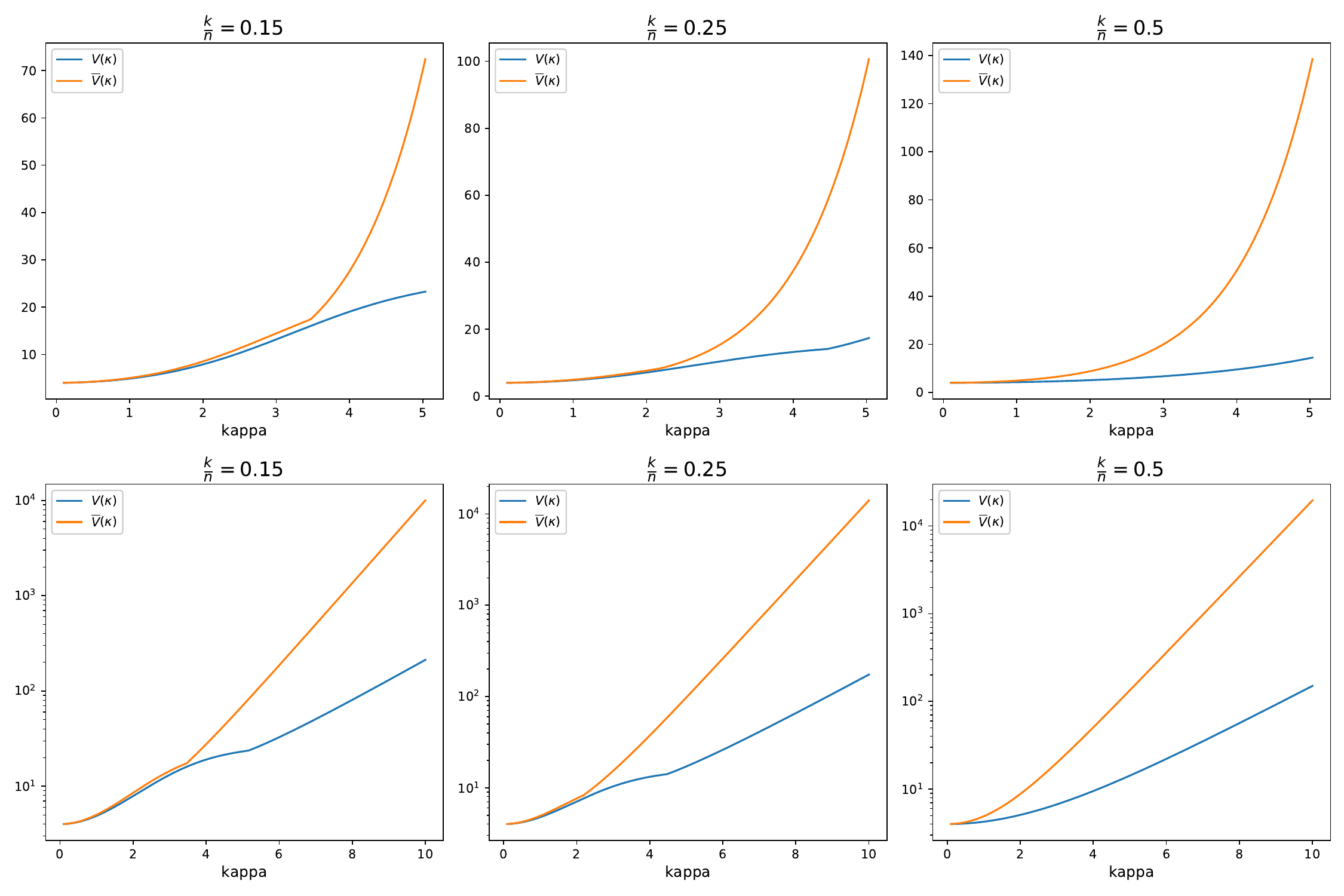}
	\caption{\textsl{The functions $V(\kappa)$ and $\overline{V}(\kappa)$ with $k/n\in\{0.15,0.25,0.5\}$. In the first row, we plot the functions for $\kappa\in[0,5]$. The second row plots the same functions for $\kappa\in[0,10]$ in a logarithmic scale to better illustrate the global structure. It is very interesting that both $V(\kappa)$ and $\overline{V}(\kappa)$ have a point at which the derivative is not continuous. Before this critical point, the optimization of $V(\kappa)$ is achieved by $(\kappa_1^*,\kappa_2^*)=(0,\kappa)$. Right after the critical point, $\kappa_1^*$ is immediately bounded away from $0$ and $\kappa_2^*$ is immediately bounded away from $\kappa$. The same property also holds for $\overline{V}(\kappa)$. Moreover, the critical point occurs earlier as $k/n$ becomes larger (when $k/n\leq 1/2$).}}
	\label{fig:V}
\end{figure}
This inequality is rigorously established by the following lemma.
\begin{lemma}\label{lem:2V}
For $V(\kappa)$ and $\overline{V}(\kappa)$ defined in (\ref{eq:var-function-V}) and (\ref{eq:var-function-V-bar}), respectively, we have
$$\overline{V}(\kappa)\geq V(\kappa),$$
for all $\kappa\geq 0$. Moreover, the equality holds if and only if $\kappa=0$.
\end{lemma}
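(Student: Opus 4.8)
The plan is to compare the two max-expressions defining $V(\kappa)$ and $\overline{V}(\kappa)$ over the same feasible region $\{(\kappa_1,\kappa_2):\kappa_1,\kappa_2\ge 0,\ \kappa_1+\kappa_2\le\kappa\}$. It suffices to show that for \emph{every} feasible $(\kappa_1,\kappa_2)$ the ratio appearing in $\overline{V}$ is at least the reciprocal-type expression appearing in $V$; then taking the max over the region preserves the inequality. Concretely, writing $a=\psi'(\kappa_1)$, $b=\psi'(\kappa_2)$, $\alpha=\psi(\kappa_1)$, $\beta=\psi(-\kappa_2)$, and using $1+e^{\kappa_1}=1/\psi(-\kappa_1)=1/(1-\alpha)$ and $1+e^{-\kappa_2}=1/\psi(\kappa_2)=1/(1-\beta)$, the $\overline{V}$-integrand becomes
$$
\frac{n\bigl(k\,a/(1-\alpha)^2+(n-k)\,b/(1-\beta)^2\bigr)}{\bigl(k\alpha+(n-k)\beta\bigr)^2},
$$
while the $V$-integrand is $n/\bigl(ka+(n-k)b\bigr)$. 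So the claim reduces to the pointwise inequality
$$
\bigl(ka+(n-k)b\bigr)\Bigl(k\frac{a}{(1-\alpha)^2}+(n-k)\frac{b}{(1-\beta)^2}\Bigr)\ \ge\ \bigl(k\alpha+(n-k)\beta\bigr)^2 .
$$

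First I would record the elementary identity $\psi'(t)=\psi(t)\psi(-t)$, which gives $a=\alpha(1-\alpha)$ and $b=\beta(1-\beta)$ (here using $\psi(-\kappa_1)=1-\alpha$, $\psi(\kappa_2)=1-\beta$). Substituting, $a/(1-\alpha)^2=\alpha/(1-\alpha)$ and $b/(1-\beta)^2=\beta/(1-\beta)$, and likewise $ka+(n-k)b=k\alpha(1-\alpha)+(n-k)\beta(1-\beta)$. Thus the target inequality is
$$
\Bigl(k\alpha(1-\alpha)+(n-k)\beta(1-\beta)\Bigr)\Bigl(\frac{k\alpha}{1-\alpha}+\frac{(n-k)\beta}{1-\beta}\Bigr)\ \ge\ \bigl(k\alpha+(n-k)\beta\bigr)^2 .
$$
This now has the shape of a Cauchy--Schwarz inequality: with weights $u_1=k\alpha(1-\alpha)$, $u_2=(n-k)\beta(1-\beta)$ on one side and $v_1=\frac{k\alpha}{1-\alpha}$, $v_2=\frac{(n-k)\beta}{1-\beta}$ on the other, we have $\sqrt{u_1 v_1}=k\alpha$ and $\sqrt{u_2 v_2}=(n-k)\beta$, so Cauchy--Schwarz $(u_1+u_2)(v_1+v_2)\ge(\sqrt{u_1v_1}+\sqrt{u_2v_2})^2$ gives exactly the desired bound. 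This proves $\overline{V}(\kappa)\ge V(\kappa)$ for all $\kappa\ge 0$.

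For the equality characterization, Cauchy--Schwarz is tight iff the vectors $(u_1,u_2)$ and $(v_1,v_2)$ are proportional, i.e. $u_1 v_2=u_2 v_1$, which after simplification reads $k\alpha(1-\alpha)\cdot\frac{(n-k)\beta}{1-\beta}=(n-k)\beta(1-\beta)\cdot\frac{k\alpha}{1-\alpha}$, equivalently $(1-\alpha)^2=(1-\beta)^2$, i.e. $\alpha=\beta$, i.e. $\psi(\kappa_1)=\psi(-\kappa_2)$, i.e. $\kappa_1=-\kappa_2$; since $\kappa_1,\kappa_2\ge 0$ this forces $\kappa_1=\kappa_2=0$. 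Hence at any feasible point other than the origin the $\overline{V}$-integrand is \emph{strictly} larger than the $V$-integrand. It remains to upgrade this pointwise strictness to strictness of the maxima. When $\kappa>0$ the point $(0,0)$ is not the maximizer of $V$ (for instance the boundary point $(0,\kappa)$ already gives a strictly larger value of the $V$-integrand, since $\psi'$ is strictly decreasing on $[0,\infty)$ so $k\psi'(0)+(n-k)\psi'(\kappa)<k\psi'(0)+(n-k)\psi'(0)$), so the $V$-maximizer $(\kappa_1^\star,\kappa_2^\star)$ has $(\kappa_1^\star,\kappa_2^\star)\ne(0,0)$; at that point $\overline{V}(\kappa)\ge(\text{$\overline{V}$-integrand at }(\kappa_1^\star,\kappa_2^\star))>(\text{$V$-integrand at }(\kappa_1^\star,\kappa_2^\star))=V(\kappa)$, giving the strict inequality. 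Conversely at $\kappa=0$ the only feasible point is the origin and both functions equal $n/(k\psi'(0)+(n-k)\psi'(0))=4$, so equality holds. The main obstacle is really just the bookkeeping: making sure the substitution $a=\alpha(1-\alpha)$ is applied consistently (watching the sign conventions $\psi(-t)=1-\psi(t)$) and that the argument promoting pointwise strictness to strictness of the optima correctly uses a point known to be suboptimal for $V$ — everything else is a clean two-term Cauchy--Schwarz.
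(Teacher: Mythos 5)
Your proof is correct and follows essentially the same route as the paper: the paper establishes the same pointwise inequality between the two integrands via Jensen's inequality $\mathbb{E}X^2\ge(\mathbb{E}X)^2$ for a two-point random variable, which is exactly your two-term Cauchy--Schwarz in different clothing, and it promotes strictness to the maxima in the same way (the $V$-maximizer cannot be the origin when $\kappa>0$). Your write-up is if anything slightly more explicit about why the $V$-maximizer avoids $(0,0)$, but there is no substantive difference in method.
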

\begin{proof}
By Jensen's inequality, we have
\begin{equation}
\frac{k\frac{e^{\kappa_1}}{(1+e^{\kappa_1})^2}+(n-k)\frac{e^{-\kappa_2}}{(1+e^{-\kappa_2})^2}}{ke^{\kappa_1}+(n-k)e^{-\kappa_2}} \geq \left(\frac{k\frac{e^{\kappa_1}}{1+e^{\kappa_1}}+(n-k)\frac{e^{-\kappa_2}}{1+e^{-\kappa_2}}}{ke^{\kappa_1}+(n-k)e^{-\kappa_2}}\right)^2. \label{eq:apply-Jensen}
\end{equation}
Another way to see the above inequality is to construct a random variable $X$ such that $\mathbb{P}\left(X=\frac{1}{1+e^{\kappa_1}}\right)=\frac{ke^{\kappa_1}}{ke^{\kappa_1}+(n-k)e^{-\kappa_2}}$ and $\mathbb{P}\left(X=\frac{1}{1+e^{-\kappa_2}}\right)=\frac{(n-k)e^{-\kappa_2}}{ke^{\kappa_1}+(n-k)e^{-\kappa_2}}$. Then, (\ref{eq:apply-Jensen}) is equivalent to $\mathbb{E}X^2\geq (\mathbb{E}X)^2$. The inequality (\ref{eq:apply-Jensen}) can be rearranged into
\begin{equation}
\frac{k\psi'(\kappa_1)(1+e^{\kappa_1})^2+(n-k)\psi'(\kappa_2)(1+e^{-\kappa_2})^2}{(k\psi(\kappa_1)+(n-k)\psi(-\kappa_2))^2/n} \geq \frac{n}{k\psi'(\kappa_1)+(n-k)\psi'(\kappa_2)}. \label{eq:Jensen-applied}
\end{equation}
Taking maximum over $\kappa_1$ and $\kappa_2$ on both sides, we obtain the inequality $\overline{V}(\kappa)\geq V(\kappa)$. When $\kappa=0$, we obviously have $V(\kappa)= \overline{V}(\kappa)$. When $\kappa>0$, we need to show $V(\kappa)\neq \overline{V}(\kappa)$. The optimization of $V(\kappa)$ must be achieved by some $(\kappa_1^*,\kappa_2^*)\neq (0,0)$. For such $(\kappa_1^*,\kappa_2^*)$, the constructed random variable $X$ has a positive variance, and thus both inequalities (\ref{eq:apply-Jensen}) and (\ref{eq:Jensen-applied}) are strict. We then have
\begin{eqnarray*}
\overline{V}(\kappa) &\geq& \frac{k\psi'(\kappa_1^*)(1+e^{\kappa_1^*})^2+(n-k)\psi'(\kappa_2^*)(1+e^{-\kappa_2^*})^2}{(k\psi(\kappa_1^*)+(n-k)\psi(-\kappa_2^*))^2/n} \\
&>& \frac{n}{k\psi'(\kappa_1^*)+(n-k)\psi'(\kappa_2^*)} \\
&=& V(\kappa).
\end{eqnarray*}
The proof is complete.
\end{proof}

The comparison between $V(\kappa)$ and $\overline{V}(\kappa)$ shows that the spectral method is not optimal in general. It has a worse error exponent for partial recovery and requires a larger signal to noise ratio threshold for exact recovery. In fact, the difference $\overline{V}(\kappa)-V(\kappa)$ eventually grows exponentially fast as a function of $\kappa$. See Figure \ref{fig:V}.
 
Note that both $V(\kappa)$ and $\overline{V}(\kappa)$ are the worst-case effective variances with respect to the parameter space $\Theta(k,\Delta,\kappa)$ for the two algorithms. In Section \ref{sec:local}, we will further show that the MLE outperforms the spectral method for each $\theta^*\in \Theta(k,\Delta,\kappa)$. This conclusion is supported by extensive numerical experiments. We set $n=200$, $p=0.25$, $L=20$ and $k=50$ throughout the experiments.

In our first experiment, we consider $\theta^*\in\mathbb{R}^n$ that has four pieces, with the three change-points located at $\{25,50,200\}$. The values of the four pieces are set as $10$, $10-\tau$, $10-\tau-\Delta$ and $0$, respectively, where $\tau=\theta_1^*-\theta_k^*\in\{1,4\}$ and $\Delta$ is varied from $0.01$ to $5$.
We apply both the MLE and the spectral method to the data.
\begin{figure}[h]
	\centering
	\includegraphics[width=0.8\textwidth]{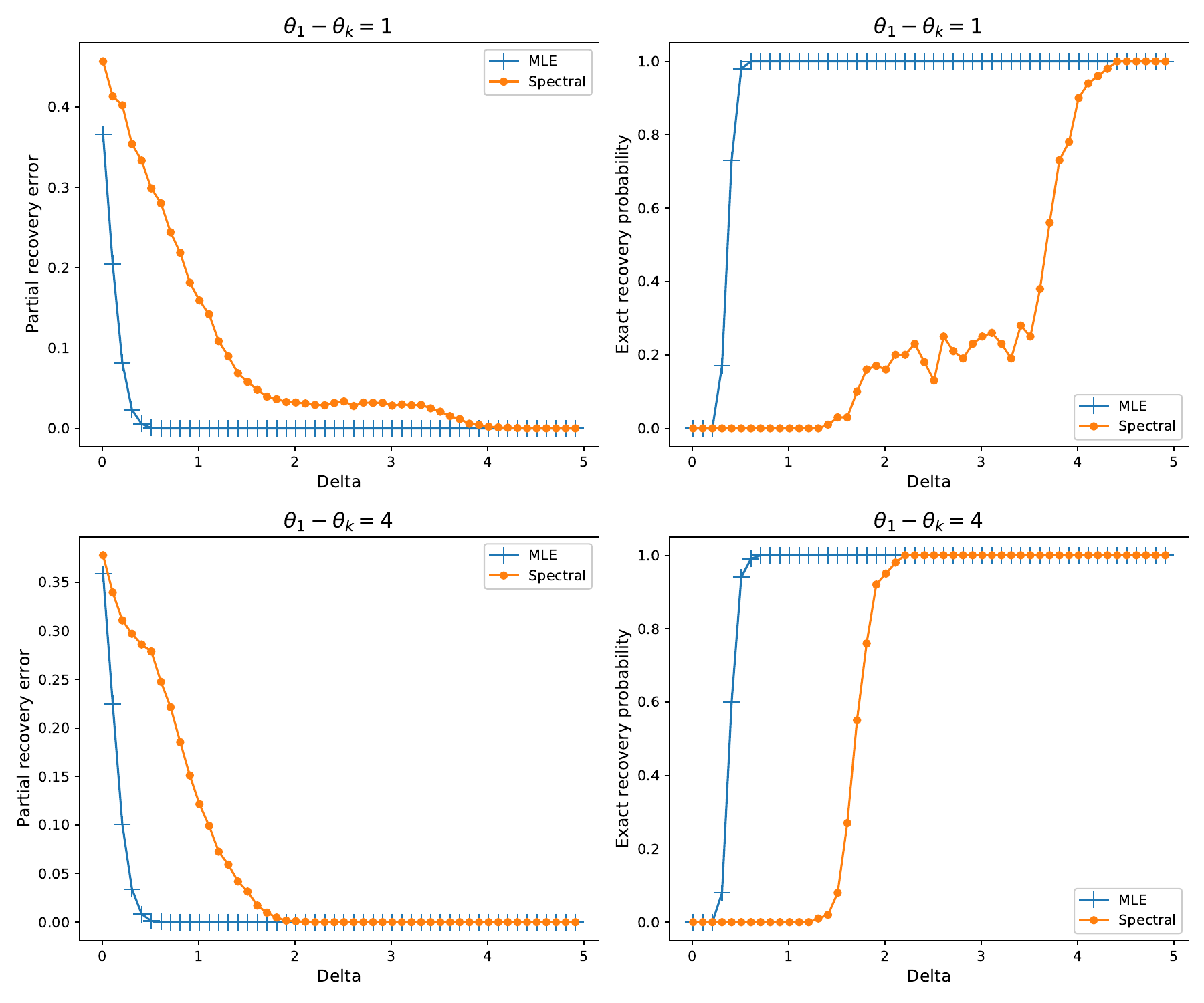}
	\caption{\textsl{The partial recovery error (left) and the exact recovery probability (right) for the MLE and the spectral method. The parameter $\theta^*$ is chosen to be a piecewise constant vector of four pieces of sizes $25,25,75,75$. The plots are obtained by averaging $100$ independent experiments.}} 
	\label{fig:rho-half}
\end{figure}
Figure \ref{fig:rho-half} shows the results for both partial and exact recovery. We observe that the MLE consistently outperforms the spectral method.

In the second experiment, we consider $\theta^*\in\mathbb{R}^n$ that has four pieces, with the three change-points located at $\{50(1-\rho),50,50+150\rho\}$. The values of the four pieces are set as $10,6,6-\Delta$ and $0$, respectively. The parameter $\rho$ is chosen in $\{0.1,0.5,0.9\}$ and $\Delta$ is varied from $0.01$ to $3$.
\begin{figure}[h]
	\centering
	\includegraphics[width=1.0\textwidth]{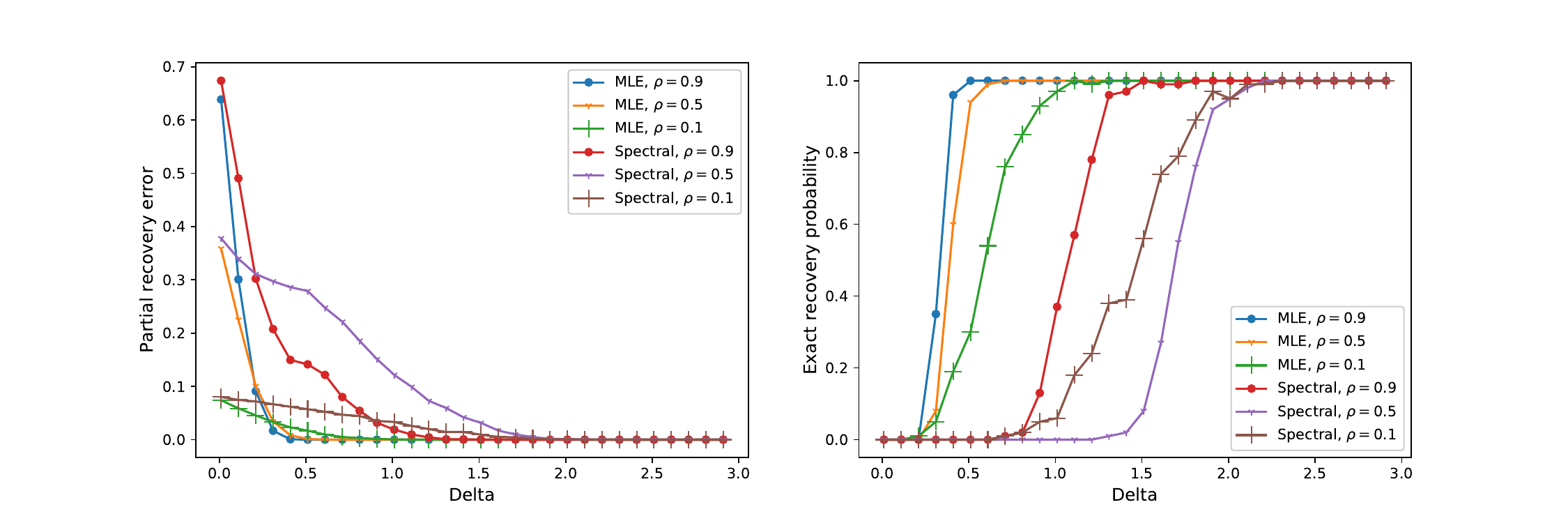}
	\caption{\textsl{The partial recovery error (left) and the exact recovery probability (right) for the MLE and the spectral method. The parameter $\theta^*$ is chosen to be a piecewise constant vector of four pieces of sizes $50(1-\rho), 50\rho, 150(1-\rho), 150\rho$.  The plots are obtained by averaging $100$ independent experiments.}} 
	\label{fig:varyingrho}
\end{figure}
The performance of the two methods for partial and exact recovery are plotted in Figure \ref{fig:varyingrho}. Again, the MLE always outperforms the spectral method.

Next, we consider a $\theta^*\in\mathbb{R}^n$ that has a more complicated structure. We fix $\theta_1^*=10, \theta_{200}^*=0$, generate $\theta_2^*,\cdots,\theta_{50}^*$ from $\text{Uniform}[6,10]$ and generate $\theta_{51}^*,\cdots,\theta_{199}^*$ from $\text{Uniform}[0,6-\Delta]$, and we vary $\Delta$ from $0.01$ to $2$.
\begin{figure}[h]
	\centering
	\includegraphics[width=1.0\textwidth]{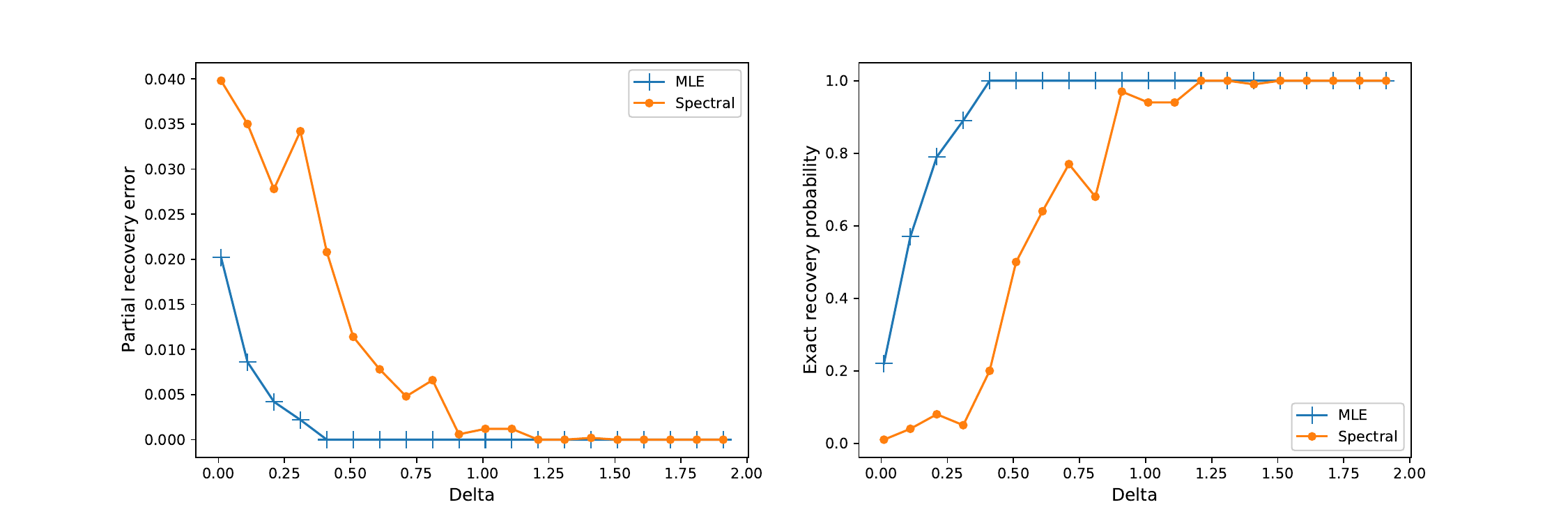}
	\caption{\textsl{The partial recovery error (left) and the exact recovery probability (right) for the MLE and the spectral method. The parameter $\theta^*$ is randomly generated from some distribution. The plots are obtained by averaging $100$ independent experiments.}}
	\label{fig:random-design} 
\end{figure}
We find even for such randomly generated $\theta^*$'s, the MLE always outperforms the spectral method. The results are summarized in Figure \ref{fig:random-design} for both partial and exact recovery.

In summary, we are able to confirm that the MLE is a much better algorithm than the spectral method under various scenarios. Our results complement the analysis in \cite{chen2019spectral}. It is claimed in \cite{chen2019spectral} that both the MLE and the spectral method are optimal in terms of the order of the exact recovery threshold. In addition, the paper conducts a very curious numerical experiment that shows the performances of the MLE and the spectral method are nearly identical. We note that the $\theta^*$ chosen in the numerical experiment of \cite{chen2019spectral} is a piecewise constant vector with only two pieces. We will explain why this choice leads to nearly identical performances of the two algorithms. Let us first conduct a similar experiment to replicate this conclusion. We continue to use the setting $n=200$, $p=0.25$, $L=20$ and $k=50$. Then, choose $\theta^*$ such that $\theta_1^*=\cdots=\theta_{50}^*=\Delta$ and $\theta_{51}^*=\cdots=\theta_{200}^*=0$. Figure \ref{fig:suboptimal} plots the results of partial and exact recovery with $\Delta$ varied from $0.01$ to $0.55$.
\begin{figure}[h]
	\centering
	\includegraphics[width=1.0\textwidth]{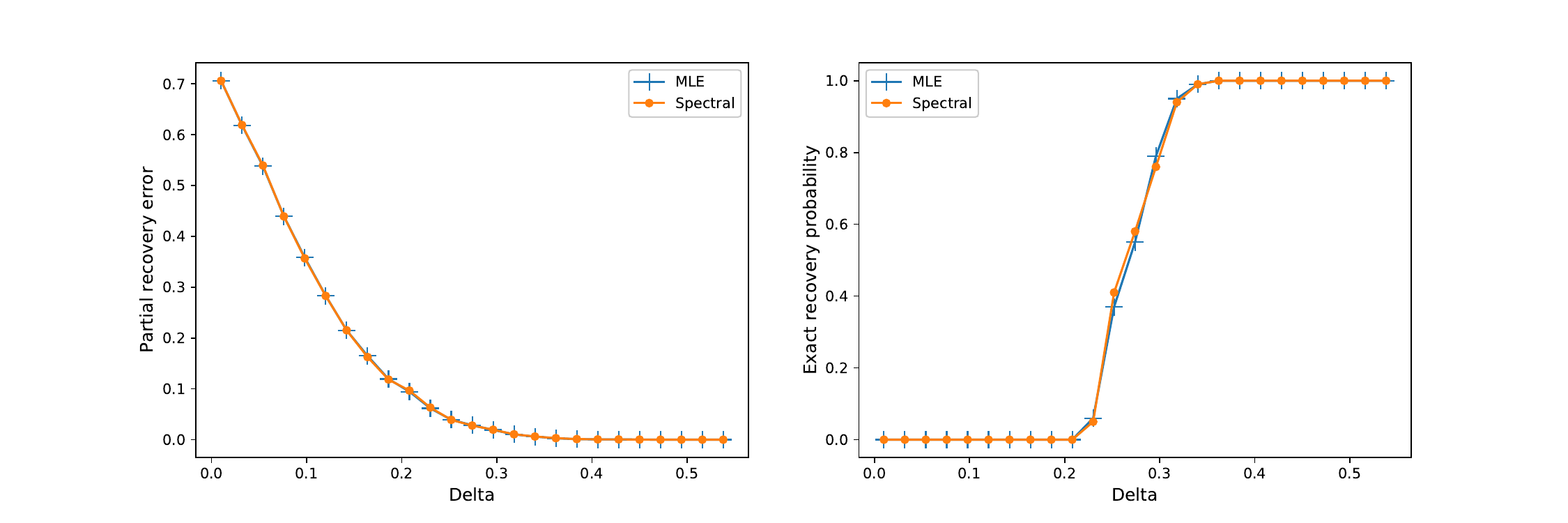}
	\caption{The partial recovery error (left) and the exact recovery probability (right) for the MLE and the spectral method. The parameter $\theta^*$ is chosen to be a piecewise constant vector of two pieces of sizes $50$ and $150$.  The plots are obtained by averaging $100$ independent experiments.} 
	\label{fig:suboptimal}
\end{figure}
For both partial recovery and exact recovery, the results are indeed nearly identical for the two algorithms. This phenomenon can be easily explained by our theory. For $\theta^*\in\Theta(k,\Delta,\kappa)$ with only two pieces, we must have $\kappa=\Delta$. When $\Delta=o(1)$, we have $V(\kappa)=(1+o(1))V(0)$ and $\overline{V}(\kappa)=(1+o(1))\overline{V}(0)$. This leads to the relation $\overline{V}(\kappa)=(1+o(1))V(\kappa)$, and thus the spectral method has the same asymptotic error exponent for partial recovery and achieves the optimal phase transition boundary for exact recovery. When $\Delta$ does not tend to zero but of a constant order, we have $\overline{\snr}\gtrsim npL\gg \log n$, and the error bound (\ref{eq:upper-bound-spec}) already leads to exact recovery because of the large value of $\overline{\snr}$. In either case, the spectral method is optimal. Let us summarize the optimality of the spectral method under this special situation by the following corollary.
\begin{corollary}\label{cor:opt-spec-ranking}
Assume $\frac{np}{\log n}\rightarrow\infty$ and $\kappa=\Delta\leq c_1$ for some constant $c_1>0$. Then, for the rank vector $\wh{r}$ that is induced by the stationary distribution of the Markov chain (\ref{eq:spec-P}), there exists some $\delta=o(1)$, such that
$$
\h_k(\wh{r},r^*)\leq C\exp\left(-\frac{1}{2}\left(\frac{\sqrt{(1-\delta){\snr}}}{2}-\frac{1}{\sqrt{(1-\delta){\snr}}}\log\frac{n-k}{k}\right)_+^2\right),
$$
for some constant $C>0$ only depending on $c_1$ with probability $1-o(1)$ uniformly over all $r^*\in\S_n$ and all $\theta^*\in\Theta(k,\Delta,\Delta)$. Moreover, as long as
$$
\frac{npL\Delta^2}{{V}(\kappa)} > (1+\epsilon)2\left(\sqrt{\log k} + \sqrt{\log(n-k)}\right)^2,
$$
for some arbitrarily small constant $\epsilon>0$. Then, $\h_k(\wh{r},r^*)=0$ with probability $1-o(1)$ uniformly over all $r^*\in\S_n$ and all $\theta^*\in\Theta(k,\Delta,\Delta)$.
\end{corollary}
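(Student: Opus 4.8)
First note that $\theta^*\in\Theta(k,\Delta,\Delta)$ forces a two-piece structure: since $\theta_1^*-\theta_n^*\le\kappa=\Delta$ while $\theta_1^*-\theta_n^*=(\theta_1^*-\theta_k^*)+(\theta_k^*-\theta_{k+1}^*)+(\theta_{k+1}^*-\theta_n^*)\ge\Delta$, all three nonnegative summands except the middle one must vanish, so $\theta_1^*=\cdots=\theta_k^*$ and $\theta_{k+1}^*=\cdots=\theta_n^*$ with gap exactly $\Delta$ --- precisely the configuration of Figure~\ref{fig:suboptimal}. The proof, however, only uses the relation $\kappa=\Delta$, and combines Theorem~\ref{thm:spectral-ranking}, Theorem~\ref{thm:spectral-exact} and Lemma~\ref{lem:2V}; the last of these supplies the mechanism that makes the spectral method optimal here, since as $\Delta\to 0$ the feasible set in (\ref{eq:var-function-V}) and (\ref{eq:var-function-V-bar}) collapses to $\{(0,0)\}$, where $\overline V$ and $V$ coincide. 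Write $\snr=\frac{npL\Delta^2}{V(\Delta)}$ and $\overline\snr=\frac{npL\Delta^2}{\overline V(\Delta)}=\frac{V(\Delta)}{\overline V(\Delta)}\snr$, with $\frac{V(\Delta)}{\overline V(\Delta)}\in(0,1]$ by Lemma~\ref{lem:2V}. Since $\frac{np}{\log n}\to\infty$, fix a threshold $\tau_n$ with $\sqrt{\log n/(np)}\ll\tau_n\ll 1$ and split the argument into the regimes $\Delta\le\tau_n$ and $\Delta>\tau_n$.

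\emph{Step 1 (variance functions for small gap).} Using the continuity of $\psi,\psi'$ with $\psi(0)=\tfrac12$, $\psi'(0)=\tfrac14$, together with the monotonicity of $\psi$ (increasing) and $\psi'$ (decreasing) on $[0,\infty)$, one obtains the elementary sandwich $4\le V(\Delta)\le 1/\psi'(\Delta)$ and $4\le\overline V(\Delta)\le (1+e^{\Delta})^2/(4\psi(-\Delta)^2)$, both upper bounds depending on $\Delta$ alone and tending to $4$. Hence $\overline V(\Delta)=(1+o(1))V(\Delta)$ uniformly in $n,k$ whenever $\Delta=o(1)$.

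\emph{Step 2 (regime $\Delta\le\tau_n$).} Step 1 gives $\overline\snr=(1-o(1))\snr$. Letting $\delta=o(1)$ be the quantity furnished by Theorem~\ref{thm:spectral-ranking} (one may assume $\delta\ge 0$ without loss of generality, which only weakens the bound), we have $(1-\delta)\overline\snr=(1-\delta')\snr$ for some $\delta'=o(1)$, and substituting into (\ref{eq:upper-bound-spec}) gives exactly the partial-recovery bound of the corollary with $\snr$ in place of $\overline\snr$. For exact recovery, the hypothesis $\frac{npL\Delta^2}{V(\Delta)}>(1+\epsilon)2(\sqrt{\log k}+\sqrt{\log(n-k)})^2$ combined with $\overline V(\Delta)=(1+o(1))V(\Delta)$ yields $\frac{npL\Delta^2}{\overline V(\Delta)}>(1+\epsilon/2)2(\sqrt{\log k}+\sqrt{\log(n-k)})^2$ for all large $n$, so Theorem~\ref{thm:spectral-exact} gives $\h_k(\wh{r},r^*)=0$ with probability $1-o(1)$.

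\emph{Step 3 (regime $\Delta>\tau_n$).} Since $\kappa=\Delta=O(1)$ we have $\overline V(\Delta)\asymp 1$, so with $L\ge 1$, $\overline\snr\gtrsim np\tau_n^2\gg\log n\ge\tfrac14(\sqrt{\log k}+\sqrt{\log(n-k)})^2$ by the choice of $\tau_n$; hence for large $n$ the hypothesis of Theorem~\ref{thm:spectral-exact} holds and $\h_k(\wh{r},r^*)=0$ with probability $1-o(1)$, making both displayed inequalities of the corollary hold trivially since their right-hand sides are nonnegative. Patching the two regimes with a single $\delta=o(1)$ (e.g.\ taking $\delta=1/\log n$ on the large-gap part) completes the proof. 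The calculations in Step 1 and the algebra in Steps 2--3 are routine; the only delicate point is ensuring the various $o(1)$ quantities are uniform over $\theta^*$ and can be merged across the two regimes into one $\delta=o(1)$, which is the reason for introducing the auxiliary threshold $\tau_n$. I expect no genuine obstacle here, as the corollary is essentially a transcription of Theorems~\ref{thm:spectral-ranking}--\ref{thm:spectral-exact} under the degeneracy $\overline V(\Delta)\to V(\Delta)$ forced by $\kappa=\Delta\to 0$.
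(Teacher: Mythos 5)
Your proposal is correct and follows essentially the same route as the paper, which justifies this corollary via the surrounding discussion: when $\Delta=o(1)$ the constraint $\kappa=\Delta$ forces $\overline{V}(\kappa)=(1+o(1))V(\kappa)$ so Theorems \ref{thm:spectral-ranking} and \ref{thm:spectral-exact} transfer directly, while when $\Delta$ is bounded away from zero one has $\overline{\snr}\gtrsim npL\gg\log n$ and exact recovery is automatic. Your explicit threshold $\tau_n$ with $\sqrt{\log n/(np)}\ll\tau_n\ll1$ and the sandwich bounds on $V,\overline V$ are just a more careful write-up of the same argument, handling the intermediate regime the paper glosses over.
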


To close this section, we remark that according to the equality condition of Lemma \ref{lem:2V}, the two-piece $\theta^*$, or equivalently $\kappa=\Delta$, is essentially the only situation where the spectral method is optimal and performs as well as the MLE. Moreover, since both functions $V(\kappa)$ are $\overline{V}(\kappa)$ are increasing, the setting with $\kappa=\Delta$ leads to the smallest effective variance and thus provides the two algorithms with the most favorable scenario.

\section{Minimax Lower Bound of Partial Recovery}\label{sec:lower-partial}

The purpose of this section is to show that the partial recovery error rate (\ref{eq:upper-bound}) achieved by the MLE cannot be improved from a minimax perspective. We are able to establish a matching lower bound for Theorem \ref{thm:MLE-ranking} using a slightly more general parameter space. Define
\begin{equation}
\Theta'(k,\Delta,\kappa)=\left\{\theta\in\mathbb{R}^n: \theta_1\geq\cdots\geq\theta_n, \theta_k-\theta_{k+2}\geq\Delta, \theta_1-\theta_n\leq\kappa\right\}. \label{eq:enlarged-space}
\end{equation}
Compared with $\Theta(k,\Delta,\kappa)$, the new definition (\ref{eq:enlarged-space}) imposes a gap between $\theta_k$ and $\theta_{k+2}$. It is clear that $\Theta(k,\Delta,\kappa)\subset\Theta'(k,\Delta,\kappa)$, and the only difference of $\Theta'(k,\Delta,\kappa)$ is the ambiguity of $\theta_{k+1}$. The player ranked at the $(k+1)$th position does not necessarily has a gap from either the top group or the bottom group. Though this additional uncertainty clearly better models scenarios in many real applications of top-$k$ ranking, the main reason we adopt the slightly larger parameter space is to have a clean lower bound analysis. Directly establishing a lower bound for $\Theta(k,\Delta,\kappa)$ is still possible, but it requires some additional technical assumptions that make the problem unnecessarily involved.

Throughout this section, we assume that (\ref{eq:exact-threshold-lower}) holds. This is the regime of partial recovery, since exact recovery is impossible by Theorem \ref{thm:exact-lower}. We first remark that with a slight modification of the proof of Theorem \ref{thm:MLE-ranking}, the MLE can be shown to achieve the same error rate (\ref{eq:upper-bound}) over the parameter space $\Theta'(k,\Delta,\kappa)$ as well. Thus, the space $\Theta'(k,\Delta,\kappa)$ does not increase the statistical complexity of the problem.

Our lower bound analysis is based on the two least favorable vectors $\theta',\theta''\in\Theta'(k,\Delta,\kappa)$. They are constructed as follows. Let $\rho=o(1)$ be a vanishing sequence that tends to zero with a sufficiently slow rate. We define $\kappa_1^*$ and $\kappa_2^*$ such that the optimization (\ref{eq:var-function-V}) is achieved at $(\kappa_1,\kappa_2)=(\kappa_1^*,\kappa_2^*)$. Then, define $\theta_i'=\kappa_1^*$ for  $1\leq i\leq k-\rho k$, $\theta_i'=0$ for $k-\rho k<i\leq k$, $\theta_i'=-\Delta$ for $k<i\leq k+\rho(n-k)$ and $\theta_i'=-\kappa_2^*$ for $k+\rho(n-k)< i\leq n$. For $\theta''$, we let $\theta_i''=\theta_i'$ for all $i\in[n]\backslash\{k+1\}$ and set $\theta_{k+1}''=0$. We will show that there exist $r',r''\in\S_n$, so that the hardness of top-$k$ ranking is characterized by an optimal testing problem,
\begin{equation}
\inf_{0\leq \phi\leq 1}\left[\mathbb{E}_{(\theta'',r'')}\phi+\frac{n-k-1}{k}\mathbb{E}_{(\theta',r')}(1-\phi)\right]. \label{eq:hard-test-lower}
\end{equation}
Moreover, there exists some $i\in[n]$, such that the two rank vectors $r',r''$ satisfy $\theta_{r_j'}'=\theta_{r_j''}''$ for all $j\in[n]\backslash\{i\}$. For the $i$th entry, we have $\theta_{r_i'}'=-\Delta$ and $\theta_{r_i''}''=0$. The reduction of the top-$k$ ranking problem to the testing problem (\ref{eq:hard-test-lower}) is the most important step in our lower bound analysis. A rigorous argument will be given in Section \ref{sec:pf-minmax-l}.

The testing problem (\ref{eq:hard-test-lower}) can be roughly understood as to test whether the $i$th player belongs to the top-$k$ set or not. The two hypotheses receive different weights $1$ and $\frac{n-k-1}{k}$ because of the definition of the loss function $\h_k(\wh{r},r^*)$. The optimal procedure to (\ref{eq:hard-test-lower}) is given by the likelihood ratio test
$$\phi=\mathbb{I}\left\{\frac{d\mathbb{P}_{(\theta',r')}}{d\mathbb{P}_{(\theta'',r'')}} \geq \frac{k}{n-k-1}\right\},$$
according to Neyman-Pearson lemma. Since the vectors $\{\theta_{r_i'}'\}_{i\in[n]}$ and $\{\theta_{r_i''}''\}_{i\in[n]}$ only differ at the $i$th entry, the likelihood ratio statistic only depends on $\{\bar{y}_{ij}\}_{j\in[n]\backslash\{i\}}$ and $\{{A}_{ij}\}_{j\in[n]\backslash\{i\}}$. Therefore, the testing error (\ref{eq:hard-test-lower}) is relatively easy to quantify. A sharp lower bound can be obtained by a large deviation analysis.

\begin{thm}\label{thm:partial-lower}
Assume $\frac{np}{\log n}\rightarrow\infty$, $\kappa\leq c_1$ for some constant $c_1>0$, and (\ref{eq:exact-threshold-lower}) holds for some arbitrarily small constant $\epsilon>0$. Then, there exists some $\delta=o(1)$, such that
$$
\inf_{\wh{r}}\sup_{\substack{r^*\in\S_n\\\theta^*\in\Theta'(k,\Delta,\kappa)}}\mathbb{E}_{(\theta^*,r^*)}\h_k(\wh{r},r^*)\geq C\exp\left(-\frac{1}{2}\left(\frac{\sqrt{(1+\delta){\snr}}}{2}-\frac{1}{\sqrt{(1+\delta){\snr}}}\log\frac{n-k}{k}\right)_+^2\right),
$$
for some constant $C>0$ only depending on $c_1$ and $\epsilon$.
\end{thm}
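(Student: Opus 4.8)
The plan is to reduce the top-$k$ ranking problem to the simple-versus-simple testing problem \eqref{eq:hard-test-lower} already described in the text, and then to lower bound the optimal testing error by a large deviation calculation. First I would set up the reduction rigorously. Fix the two least favorable vectors $\theta',\theta''\in\Theta'(k,\Delta,\kappa)$ as defined above, and fix $r',r''\in\S_n$ so that the ranking profiles $\{\theta'_{r'_j}\}$ and $\{\theta''_{r''_j}\}$ agree in every coordinate except the $i$th, where they equal $-\Delta$ and $0$ respectively. The key observation is that for any estimator $\wh r$, the loss $\h_k(\wh r,r^*)$ at $r'$ (where player $i$ is outside the top-$k$) is at least $(2k)^{-1}$ times the indicator that $\wh r$ places player $i$ in the top-$k$, and at $r''$ (where player $i$ is inside the top-$k$, since $r''_i=k+1$ but $\theta''_{k+1}=0=\theta''_k$... actually here the enlarged space $\Theta'$ is used precisely so that under $\theta''$ player $i$ is genuinely ``in'' — or at least tied with — the top group). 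Averaging over all $n-k-1$ possible choices of the ``special'' player $i$ among the bottom block, together with the single top-block swap, produces the weighted Bayes risk \eqref{eq:hard-test-lower}; this is where the factor $\frac{n-k-1}{k}$ comes from and why the $(2k)^{-1}$ normalization in $\h_k$ turns into a clean $1/k$ weight.

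Next I would lower bound \eqref{eq:hard-test-lower}. Because $\{\theta'_{r'_j}\}$ and $\{\theta''_{r''_j}\}$ differ only in the $i$th entry, the likelihood ratio $\frac{d\mathbb{P}_{(\theta',r')}}{d\mathbb{P}_{(\theta'',r'')}}$ depends only on $\{\bar y_{ij},A_{ij}\}_{j\neq i}$. By Neyman--Pearson the optimal $\phi$ is the thresholded likelihood ratio test at level $\frac{k}{n-k-1}$, and the testing error equals $\mathbb{E}_{(\theta'',r'')}\phi+\frac{n-k-1}{k}\mathbb{E}_{(\theta',r')}(1-\phi)$. Writing the log-likelihood ratio as a sum over $j\neq i$ of independent increments (each a function of $A_{ij}$ and the $L$ Bernoulli outcomes), I would apply a Chernoff/large-deviation lower bound: the relevant rate function is governed by the exponential tilting of this sum, and the worst-case choice of the nuisance profile $(\kappa_1^*,\kappa_2^*)$ is exactly the maximizer in the definition \eqref{eq:var-function-V} of $V(\kappa)$, which is why the variance $V(\kappa)$ — and hence $\snr=npL\Delta^2/V(\kappa)$ — appears. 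The term $\log\frac{n-k}{k}$ in the exponent enters through the asymmetric threshold $\frac{k}{n-k-1}$: optimizing the Chernoff exponent against this threshold shifts the effective rate by $\frac{1}{\snr}\log\frac{n-k}{k}$, reproducing the $\left(\frac{\sqrt{(1+\delta)\snr}}{2}-\frac{1}{\sqrt{(1+\delta)\snr}}\log\frac{n-k}{k}\right)_+^2$ shape.

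The main obstacle, and the step requiring the most care, is the sharp \emph{two-sided} large deviation analysis of the log-likelihood ratio sum with the \emph{correct constant} $1+\delta$ (rather than a crude $1+C$). Each summand is a function of a $\mathrm{Binomial}(L,\psi(\cdot))$ variable thinned by an independent $\mathrm{Bernoulli}(p)$, so I cannot appeal to a single Gaussian approximation; I would need a local CLT or Cramér-type moderate deviation expansion for the tilted sum, controlling the $o(1)$ corrections uniformly. The condition $\frac{np}{\log n}\to\infty$ ensures enough effective samples ($\asymp np L$ comparisons feeding into coordinate $i$) for such an expansion to hold with a $1+o(1)$ constant; the slow rate $\rho=o(1)$ in the construction of $\theta',\theta''$ guarantees that the blocks of size $\rho k$ and $\rho(n-k)$ contribute negligibly to the likelihood ratio while still forcing the nuisance profile to realize $V(\kappa)$. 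A secondary technical point is verifying $\theta',\theta''\in\Theta'(k,\Delta,\kappa)$ — the gap $\theta_k-\theta_{k+2}\geq\Delta$ holds by construction since $\theta'_k=0$ and $\theta'_{k+2}=-\Delta$ (for $\rho(n-k)\geq 2$), and the enlarged space is essential precisely because $\theta'_{k+1}$ and $\theta''_{k+1}$ must be allowed to differ. Once the Chernoff exponent is pinned down with the right constant, the claimed bound follows by taking $\delta\to 0$ slowly and invoking the fact, noted after Theorem \ref{thm:partial-lower} in the text, that the MLE achieves the matching upper bound over $\Theta'(k,\Delta,\kappa)$.
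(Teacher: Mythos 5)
Your proposal follows essentially the same route as the paper's proof: reduction of the minimax risk to the weighted two-point testing problem \eqref{eq:hard-test-lower} via the least favorable pair $(\theta',\theta'')$ and an averaging over permutations, Neyman--Pearson, and then a sharp Chernoff lower bound obtained by exponential tilting of the log-likelihood ratio with a CLT for the tilted sum to control the window probability (the paper's Lemma \ref{lem:two_point_testing_lower}). The only imprecision is in the combinatorial bookkeeping (the paper obtains the weights $1$ and $\frac{n-k-1}{k}$ from the neighborhood sizes $|\mathcal{N}_{2\rightarrow 1}(r)|=k$ and $|\mathcal{N}_{2\rightarrow 3}(r)|=n-k-1$, and in its construction $r''_i\leq k$ rather than $r''_i=k+1$), but this does not affect the correctness of the overall strategy.
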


\section{Local Error Rates}\label{sec:local}

So far, our study of the top-$k$ ranking problem has been conducted under the minimax decision-theoretic framework laid out in Section \ref{sec:mm}. The upper and lower bounds for the MLE and the spectral method are established uniformly over the parameter space $\Theta(k,\Delta,\kappa)$. To complement the minimax results, in this section, we present local error rates for the MLE and the spectral method, which leads to a refined comparison between the two popular methods.

\paragraph{Local Error Rate for the MLE.} 

To analyze the statistical property of the MLE for each individual $\theta$, we first need to generalize the effective variance (\ref{eq:var-function-V}). For any $\theta\in\mathbb{R}^n$ and any $i\in[n]$, define
\begin{equation}
V_i(\theta)=\frac{n}{\sum_{j=1}^n\psi'(\theta_i-\theta_j)}. \label{eq:var-function-V-theta}
\end{equation}
With the help of $V_i(\theta)$, for any subset $S\subset[n]$, we also define
\begin{eqnarray}
\label{eq:MLE-R1} R_1(S,\theta, t, \delta) &=& \sum_{i\in S}\exp\left(-\frac{(1-\delta)(\theta_i-t)_{+}^2npL}{2V_i(\theta)}\right), \\
\label{eq:MLE-R2} R_2(S,\theta, t, \delta) &=& \sum_{i\in S}\exp\left(-\frac{(1-\delta)(t-\theta_i)_+^2npL}{2V_i(\theta)}\right).
\end{eqnarray}

\begin{thm}\label{thm:MLE-ranking-theta}
Assume $\frac{np}{\log n}\rightarrow\infty$, $\kappa\leq c_1$ for some constant $c_1>0$. Then, for the rank vector $\wh{r}$ that is induced by the MLE (\ref{eq:pure-MLE}), any small constant $0<\delta<0.1$, any $r^*\in\S_n$ and any $\theta^*\in\Theta(k,0,\kappa)$, we have
\begin{align}
\E_{(\theta^*, r^*)}\h_k(\wh{r},r^*)\leq C_1\left(\inf_t\frac{R_1([k],\theta^*, t, \delta) + R_2([n]\backslash[k],\theta^*, t, \delta)}{k}+n^{-3}\right), \label{eq:MLE-ranking-theta-u}
\end{align}
where $C_1>0$ is a constant only depending on $c_1$ and $\delta$. Moreover, we also have
\begin{align}
\E_{(\theta^*, r^*)}\h_k(\wh{r},r^*)\geq C_2\left(\inf_t\frac{R_1([k],\theta^*, t, -\delta) + R_2([n]\backslash[k],\theta^*, t, -\delta)}{k}\right), \label{eq:MLE-ranking-theta-l}
\end{align}
for some constant $C_2>0$ only depending on $c_1$ and $\delta$, if we additionally assume that $\inf_t({R_1([k],\theta^*, t, -\delta) + R_2([n]\backslash[k],\theta^*, t, -\delta)})\rightarrow\infty$.
\end{thm}

Theorem \ref{thm:MLE-ranking-theta} gives matching upper and lower bounds for the error of the MLE for each individual $\theta^*\in\Theta(k,0,\kappa)$ and $r^*\in\S_n$, except for the additional $n^{-3}$ term and an arbitrarily small $\delta$. We remark that the $n^{-3}$ term in the upper bound can be replaced by $n^{-C}$ for an arbitrarily large constant $C$. The upper bound (\ref{eq:MLE-ranking-theta-u}) can be viewed as an extension of Theorem \ref{thm:MLE-ranking}, though the $\delta$ in Theorem \ref{thm:MLE-ranking} is allowed to vanish because of the less general setting. The lower bound (\ref{eq:MLE-ranking-theta-l}) requires an extra condition $\inf_t({R_1([k],\theta^*, t, -\delta) + R_2([n]\backslash[k],\theta^*, t, -\delta)})\rightarrow\infty$, which implies the error rate is of higher order than $O(k^{-1})$. It plays the same role as the condition (\ref{eq:spec-lower-condition}) in Theorem \ref{thm:spectral_lower}. This assumption covers most interesting partial recovery cases, since $O(k^{-1})$ is already the error rate of exact recovery.

Let $t^*$ be a minimizer of the right hand side of (\ref{eq:MLE-ranking-theta-u}) or (\ref{eq:MLE-ranking-theta-l}). Then, we can interpret $\sum_{i=1}^k\exp\left(-\frac{(\theta_i^*-t^*)_{+}^2npL}{2V_i(\theta^*)}\right)$ as the order of the number of top $k$ players that are ranked among the bottom group, and $\sum_{i=k+1}^n\exp\left(-\frac{(t^*-\theta_i^*)_+^2npL}{2V_i(\theta^*)}\right)$ as the order of the number of bottom $n-k$ players that are ranked in the top group.

A careful reader may notice that the error rate in Theorem \ref{thm:MLE-ranking-theta} does not have a clear dependence on the signal gap $\theta_k^*-\theta_{k+1}^*$. This is because the current error rate depends on $\theta^*$ more explicitly rather than just the difference between $\theta_k^*$ and $\theta_{k+1}^*$. Even when $\theta_k^*=\theta_{k+1}^*$, it is still possible that the right hand side of (\ref{eq:MLE-ranking-theta-u}) converges to zero as long as the majority of $\{\theta_i^*\}_{1\leq i\leq k}$ are separated from most of $\{\theta_{i}^*\}_{k+1\leq i\leq n}$.

\paragraph{Local Error Rate for the Spectral Method.}

To present a similar local error rate for the spectral method, we also need to generalize the effective variance (\ref{eq:var-function-V-bar}). For any $\theta\in\mathbb{R}^n$ and any $i\in[n]$, define
\begin{equation}
\overline{V}_i(\theta)=\frac{n\sum_{j=1}^n\psi^\prime(\theta_i-\theta_j)(1+e^{\theta_j-\theta_i})^2}{\left(\sum_{j=1}^n\psi(\theta_j-\theta_i)\right)^2}. \label{eq:var-function-V-bar-theta}
\end{equation}
We also introduce two quantities similar to (\ref{eq:MLE-R1}) and (\ref{eq:MLE-R2}),
\begin{eqnarray*}
\overline{R}_1(S,\theta, t, \delta) &=& \sum_{i\in S}\exp\left(-\frac{(1-\delta)(\theta_i-t)_{+}^2npL}{2\overline{V}_i(\theta)}\right), \\
\overline{R}_2(S,\theta, t, \delta) &=& \sum_{i\in S}\exp\left(-\frac{(1-\delta)(t-\theta_i)_+^2npL}{2\overline{V}_i(\theta)}\right).
\end{eqnarray*}

\begin{thm}\label{thm:spectral-ranking-theta}
Assume $\frac{np}{\log n}\rightarrow\infty$, $\kappa\leq c_1$ for some constant $c_1>0$. Then, for the rank vector $\wh{r}$ that is induced by the stationary distribution of the Markov chain (\ref{eq:spec-P}), any small constant $0<\delta<0.1$, any $r^*\in\S_n$ and any $\theta^*\in\Theta(k,0,\kappa)$, we have
\begin{align}\label{eq:spectral-ranking-theta-u}
\E_{(\theta^*, r^*)}\h_k(\wh{r},r^*)\leq C_1\left(\inf_t\frac{\overline{R}_1([k],\theta^*, t, \delta) + \overline{R}_2([n]\backslash[k],\theta^*, t, \delta)}{k}+n^{-3}\right),
\end{align}
where $C_1>0$ is a constant only depending on $c_1$ and $\delta$. Moreover, we also have
\begin{align}\label{eq:spectral-ranking-theta-l}
\E_{(\theta^*, r^*)}\h_k(\wh{r},r^*)\geq C_2\left(\inf_t\frac{\overline{R}_1([k],\theta^*, t, -\delta) + \overline{R}_2([n]\backslash[k],\theta^*, t, -\delta)}{k}\right),
\end{align}
for some constant $C_2>0$ only depending on $c_1$ and $\delta$, if we additionally assume that $\inf_t(\overline{R}_1([k],\theta^*, t, -\delta) + \overline{R}_2([n]\backslash[k],\theta^*, t, -\delta))\rightarrow\infty$.
\end{thm}

Similar to Theorem \ref{thm:MLE-ranking-theta}, Theorem \ref{thm:spectral-ranking-theta} also gives matching upper and lower bounds for the error of the spectral method for each individual $\theta^*\in\Theta(k,0,\kappa)$ and $r^*\in\S_n$.

Let us remark that the results of Theorem \ref{thm:MLE-ranking-theta} and Theorem \ref{thm:spectral-ranking-theta} can be further extended beyond the setting of Erd\H{o}s-R\'{e}nyi graph and exactly $L$ comparisons on each edge. To be specific, we can consider a random graph $A_{ij}\sim\text{Bernoulli}(p_{ij})$ independently for all $1\leq i<j\leq n$. For each edge, we observe $L_{ij}$ independent games. Then, as long as $\max_{ij}p_{ij}\leq C\min_{ij}p_{ij}$ and $\max_{ij}L_{ij}\leq C\min_{ij}L_{ij}$ hold for some constant $C>0$, the results of Theorem \ref{thm:MLE-ranking-theta} and Theorem \ref{thm:spectral-ranking-theta} continue to hold with $\frac{V_i(\theta)}{npL}$ and $\frac{\overline{V}_i(\theta)}{npL}$ replaced by
$
\frac{\sum_{j\in[n]\backslash\{i\}}\frac{p_{ij}}{L_{ij}}\psi(\theta_i-\theta_j)\psi(\theta_j-\theta_i)}{\left(\sum_{j\in[n]\backslash\{i\}}p_{ij}\psi(\theta_i-\theta_j)\psi(\theta_j-\theta_i)\right)^2}
$
and
$
\frac{\sum_{j\in[n]\backslash\{i\}}\frac{p_{ij}}{L_{ij}}\psi(\theta_i-\theta_j)\psi(\theta_j-\theta_i)\left(1+e^{\theta_j-\theta_i}\right)^2}{\left(\sum_{j\in[n]\backslash\{i\}}p_{ij}\psi(\theta_j-\theta_i)\right)^2},
$
respectively.

\paragraph{Comparison of the Two Methods for each $\theta^*$.}

Theorem \ref{thm:MLE-ranking-theta} and Theorem \ref{thm:spectral-ranking-theta} allow us to give a refined comparison between the MLE and the spectral method. By ignoring the $n^{-3}$ term in the upper bounds and the $\delta$ in each exponent, we can write the error rates of the MLE and the spectral method as
\begin{equation}
\inf_t\frac{1}{k}\left[\sum_{i=1}^k\exp\left(-\frac{(\theta_i^*-t)_{+}^2npL}{2V_i(\theta^*)}\right)+\sum_{i=k+1}^n\exp\left(-\frac{(t-\theta_i^*)_+^2npL}{2V_i(\theta^*)}\right)\right], \label{eq:MLE-case-simp}
\end{equation}
and
\begin{equation}
\inf_t\frac{1}{k}\left[\sum_{i=1}^k\exp\left(-\frac{(\theta_i^*-t)_{+}^2npL}{2\overline{V}_i(\theta^*)}\right)+\sum_{i=k+1}^n\exp\left(-\frac{(t-\theta_i^*)_+^2npL}{2\overline{V}_i(\theta^*)}\right)\right]. \label{eq:spectral-case-simp}
\end{equation}
It is clear that the only difference between (\ref{eq:MLE-case-simp}) and (\ref{eq:spectral-case-simp}) lies in the difference of the variance functions (\ref{eq:var-function-V-theta}) and (\ref{eq:var-function-V-bar-theta}), whose comparison is given by the following lemma.

\begin{lemma}\label{lem:Vi-smaller-than-Vibar}
For any $\theta^*\in\mathbb{R}$ and any $i\in[n]$, we have $V_i(\theta^*)\leq\overline{V}_i(\theta^*)$. The equality holds if and only if $\theta_1^*=...=\theta_n^*$.
\end{lemma}
\begin{proof}
Notice the following chain of equalities and inequality,
\begin{align}
\nonumber V_i(\theta^*)&=\frac{n}{\sum_{j\in[n]}\psi^\prime(\theta_j^*-\theta_i^*)}\\
\nonumber & =\frac{n\left(\sum_{j\in[n]}e^{\theta_j^*-\theta_i^*}\right)}{\left(\sum_{j\in[n]}\psi^\prime(\theta_j^*-\theta_i^*)\right)\left(\sum_{j\in[n]}e^{\theta_j^*-\theta_i^*}\right)}\\
&\leq\frac{n\left(\sum_{j\in[n]}\psi^\prime(\theta_j^*-\theta_i^*)(1+e^{\theta_j^*-\theta_i^*})^2\right)}{\left(\sum_{j\in[n]}\psi(\theta_j^*-\theta_i^*)\right)^2}\label{eq:cs}\\
\nonumber&=\overline{V}_i(\theta^*),
\end{align}
where (\ref{eq:cs}) is by Cauchy-Schwarz inequality on the denominator. According to the equality condition of the Cauchy-Schwarz inequality, we know that $V_i(\theta^*)=\overline{V}_i(\theta^*)$ only when $\theta_1^*=...=\theta_n^*$.
\end{proof}

To close this section, we discuss two special cases of $\theta^*$, under which the error rates recover the results of Theorem \ref{thm:MLE-ranking}, Theorem \ref{thm:spectral-ranking}, and Corollary \ref{cor:opt-spec-ranking}.

\begin{example}
According to the proof of Theorem \ref{thm:spectral_lower} and the construction discussed in Section \ref{sec:lower-partial}, the least favorable $\theta^*\in\Theta(k,\Delta,\kappa)$ takes the following form: $\theta^*_i=\kappa_1$ for all $1\leq i\leq k-\rho k$, $\theta^*_i=0$ for $k-\rho k<i\leq k$, $\theta^*_i=-\Delta$ for $k<i\leq k+\rho(n-k)$ and $\theta^*_i=-\kappa_2$ for $k+\rho(n-k)< i\leq n$. Here, $\kappa_1$ and $\kappa_2$ are maximizers of either (\ref{eq:var-function-V}) for the MLE or (\ref{eq:var-function-V-bar}) for the spectral method, and $\rho$ is a sufficiently small constant. For this $\theta^*$, the formulas (\ref{eq:MLE-case-simp}) and (\ref{eq:spectral-case-simp}) recover the minimax rates obtained in Theorem \ref{thm:MLE-ranking} and Theorem \ref{thm:spectral-ranking}.
\end{example}

\begin{example}
Another interesting $\theta^*$ is the two-piece model $\theta^*\in\Theta(k,\Delta,\Delta)$. By the translational invariance of the variance functions, we can consider $\theta_i^*=\Delta$ for all $1\leq i\leq k$ and $\theta_i^*=0$ for all $k<i\leq n$. We discuss the consequence of this choice of $\theta^*$ under two situations. First, consider $\Delta=o(1)$, and one can check that $V_i(\theta^*)=(1+o(1))4$ and $\overline{V}_i(\theta^*)=(1+o(1))4$ for all $i\in[n]$, which implies the equivalence of error rates of the MLE and the spectral method. Second, consider $\Delta$ lower bounded by some constant. In this case, both the formulas (\ref{eq:MLE-case-simp}) and (\ref{eq:spectral-case-simp}) are $o(k^{-1})$, which implies both the MLE and the spectral method achieve exact recovery with high probability. As shown in Corollary \ref{cor:opt-spec-ranking}, the spectral method is actually optimal for $\theta^*\in\Theta(k,\Delta,\Delta)$. We are therefore able to give a theoretical justification of the numerical experiment of \cite{chen2019spectral}.
\end{example}

\section{Analysis of the MLE}\label{sec:proof-MLE}

In this section, we analyze the MLE (\ref{eq:pure-MLE}), and prove Theorem \ref{thm:MLE-estimation}, Theorem \ref{thm:MLE-ranking}, and Theorem \ref{thm:MLE-exact}. Since the BTL model (\ref{eq:BTL-theta}) is invariant to a shift of the model parameter, we can assume $\mathds{1}_{n}^T\theta^*=0$ without loss of generality. For simplicity of notation, we also assume $r_i^*=i$ for each $i\in[n]$, and thus we have $\theta_{r_i^*}^*=\theta_i^*$. Recall the convention of notation that $A_{ij}=A_{ji}$ and $\bar{y}_{ij}=1-\bar{y}_{ji}$ for any $i<j$. We also set $A_{ii}=0$ for all $i\in[n]$. Throughout the analysis, we will repeatedly use the properties that both $\psi(t)$ and $\psi'(t)$ are bounded continuous functions with bounded Lipschitz constants.

The section is organized as follows. We will first give a brief overview of the techniques and the main steps of the analysis in Section \ref{sec:MLE-over}. We then present a few technical lemmas in Section \ref{sec:MLE-tech}. In Section \ref{sec:MLE-pf-prop}, we establish an important result on the $\ell_{\infty}$ bound of the MLE. Theorem \ref{thm:MLE-estimation} will be proved in Section \ref{sec:MLE-pf-thm-est}. Finally, we prove Theorem \ref{thm:MLE-ranking} and Theorem \ref{thm:MLE-exact} in Section \ref{sec:MLE-pf-thm-rank}.

\subsection{Overview of the Techniques}\label{sec:MLE-over}

A major difficulty of analyzing the MLE is to control the spectrum of the Hessian matrix of the negative log-likelihood function. Recall the definition of $\ell_n(\theta)$ in (\ref{eq:likelihood-BTL}). Its Hessian $\nabla^2\ell_n(\theta)=H(\theta)\in\mathbb{R}^{n\times n}$ is given by the formula
$$H_{ij}(\theta)=\begin{cases}
\sum_{l\in[n]\backslash\{i\}}A_{il}\psi'(\theta_i-\theta_l), & i=j, \\
-A_{ij}\psi'(\theta_i-\theta_j), & i\neq j.
\end{cases}$$
It can be viewed as the Laplacian of the weighted random graph $\{\psi'(\theta_i-\theta_j)A_{ij}\}$. For $\theta$ that satisfies $\max_{i<j}|\theta_i-\theta_j|=O(1)$, the spectrum of $H(\theta)$ can be well controlled via some standard random matrix tool \citep{tropp2015introduction}. The property $\max_{i<j}|\theta_i-\theta_j|=O(1)$ certainly holds for $\theta^*\in\Theta(k,\Delta,\kappa)$. However, when analyzing the Taylor expansion of $\ell_n(\theta)$, we actually need to understand $H(\theta)$ for $\theta$ that is a convex combination between $\wh{\theta}$ and $\theta^*$. Since the MLE is defined without any constraint or regularization, there is no such control for $\wh{\theta}$. Our first step is to establish the following proposition that shows $\|\wh{\theta}-\theta^*\|_{\infty}$ is bounded with high probability even though the MLE has no constraint or regularization.

\begin{proposition}\label{prop:entrywise-MLE}
Under the setting of Theorem \ref{thm:MLE-estimation}, we have
\begin{equation}
\|\wh{\theta}-\theta^*\|_{\infty}\leq 5, \label{eq:niubility}
\end{equation}
with probability at least $1-O(n^{-7})$.
\end{proposition}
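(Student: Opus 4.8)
\emph{Plan of proof.} The obstacle is that (\ref{eq:pure-MLE}) carries no constraint, so a priori nothing prevents some $\wh\theta_i-\wh\theta_j$ from being enormous, which would make the Hessian $H(\wh\theta)$ nearly degenerate and destroy every downstream argument. The strategy is to introduce the $\ell_\infty$-constrained MLE
\[
\wh\theta^{C}\ :=\ \argmin_{\theta:\ \mathds{1}_n^T\theta=0,\ \|\theta-\theta^*\|_\infty\le 5}\ \ell_n(\theta)
\]
and to prove that, with probability $1-O(n^{-7})$, it lies \emph{strictly} inside the ball: $\|\wh\theta^{C}-\theta^*\|_\infty<5$. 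Granting this, convexity finishes the job: a minimizer of a convex function over a convex body that falls in the relative interior of that body also minimizes the function over the whole affine hull, here $\{\theta:\mathds{1}_n^T\theta=0\}$; and since $A$ is connected with probability $1-O(n^{-7})$ when $p\ge c_0\log n/n$, the weighted Laplacian $H(\theta)$ is positive definite on $\mathds{1}_n^\perp$, so this minimizer is unique and equals $\wh\theta$. Hence $\wh\theta=\wh\theta^{C}$ and (\ref{eq:niubility}) follows.

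To establish interiority, note first that on the feasible set $\max_{i,j}|\theta_i-\theta_j|\le\kappa+10=O(1)$, so $\psi'(\theta_i-\theta_j)\ge\psi'(\kappa+10)>0$ and $H(\theta)$ dominates a fixed multiple of the unweighted Laplacian $\mathcal{L}_A$; together with $\lambda_{\min}(\mathcal{L}_A)\gtrsim np$ on $\mathds{1}_n^\perp$ and maximum degree $\lesssim np$ (both holding with probability $1-O(n^{-7})$ under $p\ge c_0\log n/n$), this makes $\ell_n$ $\Omega(np)$-strongly convex along $\mathds{1}_n^\perp$-directions throughout the feasible set. Strong convexity together with $\ell_n(\wh\theta^{C})\le\ell_n(\theta^*)$ already yields $\|\wh\theta^{C}-\theta^*\|_2\lesssim\|\nabla\ell_n(\theta^*)\|_2/(np)\lesssim(pL)^{-1/2}$ up to logarithmic factors, which is unfortunately not below $5$ when $L$ is small and $n$ large. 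To recover the extra factor $\sqrt n$ in the denominator I would invoke the leave-one-out device of \cite{chen2019spectral}: for each $m\in[n]$ let $\wh\theta^{C,(m)}$ be defined as $\wh\theta^{C}$ but with every $\bar y_{mj}$, $j\ne m$, replaced by its mean $\psi(\theta^*_m-\theta^*_j)$, so $\wh\theta^{C,(m)}$ is independent of $\{\bar y_{mj}\}_{j\ne m}$; control $\|\wh\theta^{C}-\wh\theta^{C,(m)}\|$ by strong convexity (the gradient discrepancy involves only coordinate $m$ and concentrates at scale $\sqrt{np\log n/L}$), and control $|\wh\theta^{C,(m)}_m-\theta^*_m|$ via the $m$-th stationarity equation, into which the now-independent data $\{\bar y_{mj}\}$ enters linearly. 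A union bound over $m$ then gives $\|\wh\theta^{C}-\theta^*\|_\infty\lesssim\sqrt{\log n/(npL)}$, which is at most a fixed multiple of $c_0^{-1/2}$ since $np\ge c_0\log n$, hence $<5$ once $c_0$ is large enough.

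The regularized MLE $\wh\theta_\lambda$ of (\ref{eq:pen-MLE}) is the natural vehicle for making the entrywise step rigorous, because $\wh\theta^{C}$ may a priori sit on the boundary (that is exactly what we are trying to rule out) and then carries Lagrange multipliers that muddy its coordinatewise analysis, whereas $\wh\theta_\lambda$ obeys the clean stationarity identity $\nabla\ell_n(\wh\theta_\lambda)=-\lambda\wh\theta_\lambda$. For a moderately small $\lambda$, a one-line monotonicity remark — if $i^*$ maximizes $(\wh\theta_\lambda)_i-\theta^*_i$ then $\nabla_{i^*}\ell_n(\wh\theta_\lambda)\ge\nabla_{i^*}\ell_n(\theta^*)$, and comparing with $\nabla_{i^*}\ell_n(\wh\theta_\lambda)=-\lambda(\wh\theta_\lambda)_{i^*}$, symmetrically at the minimizing coordinate — yields the crude bound $\|\wh\theta_\lambda-\theta^*\|_\infty\le\lambda^{-1}\|\nabla\ell_n(\theta^*)\|_\infty+\kappa$, a finite constant for a suitable $\lambda$. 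This places $\wh\theta_\lambda$ inside the $\ell_\infty$-ball, so the globally well-conditioned Hessian $H(\theta)+\lambda I$ lets one run the leave-one-out refinement on $\wh\theta_\lambda$ itself (now absorbing the regularization-bias term $\lambda\wh\theta_\lambda$), and one concludes that the constrained minimizer $\wh\theta^{C}$ is likewise strictly interior.

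The step I expect to be the main obstacle is precisely this $\ell_2$-to-$\ell_\infty$ upgrade: the elementary strong-convexity estimate only gives $\|\wh\theta^{C}-\theta^*\|_2\lesssim(pL)^{-1/2}$, which can far exceed the radius $5$, so everything hinges on executing the leave-one-out decoupling carefully — in particular, on handling the bias term $\lambda\wh\theta_\lambda$ in the coordinatewise linear expansion and on closing the coupled estimates for $\|\wh\theta^{C}-\wh\theta^{C,(m)}\|$ and $\|\wh\theta^{C,(m)}-\theta^*\|_\infty$ simultaneously, via a short continuity/bootstrap argument.
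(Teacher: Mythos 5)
Your overall skeleton coincides with the paper's: introduce the $\ell_\infty$-constrained MLE, show it is strictly interior to the ball with high probability, and conclude $\wh{\theta}^{C}=\wh{\theta}$ by convexity, with the regularized estimator $\wh{\theta}_{\lambda}$ serving as the bridge. The convexity endgame and the monotonicity observation for $\wh{\theta}_{\lambda}$ (comparing $\nabla_{i^*}\ell_n$ at the coordinate maximizing $(\wh{\theta}_{\lambda})_i-\theta_i^*$ with the stationarity identity $\nabla\ell_n(\wh{\theta}_{\lambda})=-\lambda\wh{\theta}_{\lambda}$) are both correct as stated.

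The genuine gap is the final transfer from $\wh{\theta}_{\lambda}$ to $\wh{\theta}^{C}$, and it stems from an unresolved tension in your choice of $\lambda$. Your crude bound $\|\wh{\theta}_{\lambda}-\theta^*\|_\infty\le\lambda^{-1}\|\nabla\ell_n(\theta^*)\|_\infty+\kappa$ yields a constant only if $\lambda\gtrsim\|\nabla\ell_n(\theta^*)\|_\infty\asymp\sqrt{np\log n/L}$. But the only available comparison between $\wh{\theta}^{C}$ and $\wh{\theta}_{\lambda}$ is through strong convexity of $\ell_n$ on the ball, which gives $\|\wh{\theta}^{C}-\wh{\theta}_{\lambda}\|\le\|\nabla\ell_n(\wh{\theta}_{\lambda})\|/(c\,np)=\lambda\|\wh{\theta}_{\lambda}\|/(c\,np)$; with $\lambda\asymp\sqrt{np\log n/L}$ and $\|\wh{\theta}_{\lambda}\|\lesssim\sqrt{n}$ this is of order $\sqrt{\log n/(pL)}$, which can be as large as $\sqrt{n/(c_0L)}$ and is nowhere near small enough to keep $\wh{\theta}^{C}$ inside radius $5$. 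So "one concludes that the constrained minimizer $\wh{\theta}^{C}$ is likewise strictly interior" does not follow from what precedes it. The paper resolves exactly this tension by taking $\lambda=n^{-1}$ — so that $\|\nabla\ell_n(\wh{\theta}_{\lambda})\|=\lambda\|\wh{\theta}_{\lambda}\|\lesssim n^{-1/2}$ and the strong-convexity comparison gives $\|\wh{\theta}^{C}-\wh{\theta}_{\lambda}\|\lesssim n^{-1/2}$ — and by establishing the constant bound $\|\wh{\theta}_{\lambda}-\theta^*\|_\infty\le 4$ for this \emph{tiny} $\lambda$ not via your monotonicity trick (which is useless when $\lambda=n^{-1}$) but via the gradient-descent plus leave-one-out induction of \cite{chen2019spectral}, which maintains boundedness of every iterate regardless of how small $\lambda$ is. A secondary, non-fatal inefficiency: you aim for the sharp rate $\sqrt{\log n/(npL)}$ at this stage, whereas only a constant bound is needed here; the sharp $\ell_\infty$ rate is a separate, later result that uses the proposition as input.
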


The proof of Proposition \ref{prop:entrywise-MLE} borrows strength from the property of a regularized MLE. Recall the definition of $\wh{\theta}_{\lambda}$ in (\ref{eq:pen-MLE}). This is the version of MLE that has been analyzed by \cite{chen2019spectral}. We will choose $\lambda=n^{-1}$ in order that $\wh{\theta}_{\lambda}$ is close to $\wh{\theta}$. Following the techniques in \cite{chen2019spectral}, we can first show $\|\wh{\theta}_{\lambda}-\theta^*\|_{\infty}\leq 4$ with high probability. The presence of the penalty in (\ref{eq:pen-MLE}) is crucial for the result $\|\wh{\theta}_{\lambda}-\theta^*\|_{\infty}\leq 4$ to be established. Next, we have an argument to show that the two estimators $\wh{\theta}_{\lambda}$ and $\wh{\theta}$ are sufficiently close. This leads to the bound (\ref{eq:niubility}). A detailed proof of Proposition \ref{prop:entrywise-MLE}  will be given in Section \ref{sec:MLE-pf-prop}.

The result of Proposition \ref{prop:entrywise-MLE} is arguably the most important step in the analysis of the MLE. It directly leads to the control of the spectrum of $H(\theta)$. Then, the first bound (\ref{eq:main-l2}) of Theorem \ref{thm:MLE-estimation} can be obtained by a Taylor expansion of the objective function $\ell_n(\theta)$. The second bound (\ref{eq:main-linf}) of Theorem \ref{thm:MLE-estimation} and Theorem \ref{thm:MLE-ranking} requires an entrywise analysis of $\wh{\theta}$, and is therefore more complicated. We need to take advantage of the powerful leave-one-out argument in \cite{chen2019spectral}. The intuition of the leave-one-out technique has been thoroughly discussed in \cite{chen2019spectral}, and we do not repeat it here. We would like to emphasize that our version of the leave-one-out argument is in fact different from the form introduced in \cite{chen2019spectral}. We do not need to combine the leave-one-out argument with a gradient descent analysis as in \cite{chen2019spectral}. This helps us to avoid the extra technical condition $\log L =O(\log n)$ in \cite{chen2019spectral} when proving the theorems.

\subsection{Some Technical Lemmas}\label{sec:MLE-tech}

Let us present a few technical lemmas that facilitate our analysis of the MLE. The first two lemmas are concentration properties of the random graph $A\sim\mathcal{G}(n,p)$. We define $\mathcal{L}_A=D-A$ to be the graph Laplacian of $A$, where $D$ is a diagonal matrix whose entries are given by $D_{ii}=\sum_{j\in[n]\backslash\{i\}}A_{ij}$.
\begin{lemma}\label{lem:A-basic}
Assume $p\geq\frac{c_0\log n}{n}$ for some sufficiently large $c_0>0$. We then have
$$\frac{1}{2}np\leq \min_{i\in[n]}\sum_{j\in[n]\backslash\{i\}}A_{ij} \leq \max_{i\in[n]}\sum_{j\in[n]\backslash\{i\}}A_{ij} \leq 2np,$$
and
$$\lambda_{\min,\perp}(\mathcal{L}_A)=\min_{u\neq 0: \mathds{1}_{n}^Tu=0}\frac{u^T\mathcal{L}_Au}{\|u\|^2} \geq \frac{np}{2},$$
$$\lambda_{\max}(\mathcal{L}_A)=\max_{u\neq 0}\frac{u^T\mathcal{L}_Au}{\|u\|^2} \leq 2np$$
with probability at least $1-O(n^{-10})$.
\end{lemma}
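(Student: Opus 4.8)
The plan is to handle the two assertions separately, since the first is a statement about degrees of the Erd\H{o}s--R\'enyi graph and the second is a spectral gap statement for its Laplacian; both are standard concentration facts but I would prove them at the level required for the constant $\tfrac12$ and the $O(n^{-10})$ failure probability. For the degree bound, fix $i\in[n]$ and note that $\sum_{j\neq i}A_{ij}$ is a sum of $n-1$ i.i.d.\ $\text{Bernoulli}(p)$ random variables with mean $(n-1)p$. A multiplicative Chernoff bound gives
\begin{equation}
\Prob\left(\Big|\sum_{j\neq i}A_{ij}-(n-1)p\Big|\geq \tfrac14 (n-1)p\right)\leq 2\exp\left(-c(n-1)p\right)\label{eq:deg-chernoff}
\end{equation}
for an absolute constant $c>0$. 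Since $p\geq c_0\log n/n$, the right side is at most $2\exp(-c c_0 \tfrac{n-1}{n}\log n)\leq n^{-11}$ once $c_0$ is a sufficiently large absolute constant. A union bound over $i\in[n]$ absorbs the extra factor of $n$ and yields, with probability at least $1-O(n^{-10})$, that every degree lies in $[\tfrac34(n-1)p,\tfrac54(n-1)p]$, which is contained in $[\tfrac12 np, 2np]$ for $n$ large. This gives the first display.

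For the spectral gap, I would write $\calL_A = \calL_{\Expect A} + (\calL_A - \calL_{\Expect A})$ where $\Expect A$ has off-diagonal entries $p$. One checks directly that on the subspace $\{u:\mathds{1}_n^Tu=0\}$ the Laplacian of the complete-graph-like matrix $p(\mathds{1}_n\mathds{1}_n^T - I)$ acts as $pn\cdot I$, so $\lambda_{\min,\perp}(\calL_{\Expect A}) = np$ (up to the harmless $\pm p$ from the diagonal correction). It therefore suffices to show $\opnorm{\calL_A - \calL_{\Expect A}}\leq np/2$ with the stated probability. Decompose $\calL_A - \calL_{\Expect A} = (D - \Expect D) - (A - \Expect A)$. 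The diagonal part $D-\Expect D$ has operator norm equal to the maximal degree deviation $\max_i|\sum_{j\neq i}A_{ij}-(n-1)p|$, which is at most $\tfrac14 np$ on the event from the first part. For the centered adjacency part $A-\Expect A$, I would invoke a standard bound on the spectral norm of a symmetric random matrix with independent, bounded, mean-zero entries of variance at most $p$ --- e.g.\ a matrix Bernstein inequality \citep{tropp2015introduction} or the classical Feige--Ofek-type estimate --- which gives $\opnorm{A-\Expect A}\leq C\sqrt{np}$ with probability $1-O(n^{-10})$ whenever $np\gtrsim \log n$. Combining, $\opnorm{\calL_A-\calL_{\Expect A}}\leq \tfrac14 np + C\sqrt{np}\leq \tfrac12 np$ for $c_0$ large enough (so that $\sqrt{np}\geq \sqrt{c_0\log n}$ dominates the constant $C$ appropriately, i.e.\ $C\sqrt{np}\le \tfrac14 np$). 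By Weyl's inequality restricted to the orthogonal complement of $\mathds{1}_n$, $\lambda_{\min,\perp}(\calL_A)\geq np - \tfrac12 np = \tfrac12 np$.

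The only mild subtlety --- and the step I would be most careful about --- is the operator-norm bound on $A-\Expect A$: the naive matrix Bernstein bound loses a $\sqrt{\log n}$ factor, giving $\opnorm{A-\Expect A}\lesssim \sqrt{np\log n}$, which is still $o(np)$ under $p\geq c_0\log n/n$ but requires $c_0$ to be chosen accordingly; alternatively one uses the sharper $\sqrt{np}$ bound valid for Erd\H{o}s--R\'enyi graphs in the regime $np\gtrsim\log n$, at the cost of citing a more specialized result. Either route works for our purposes since we only need the bound to be at most $np/2$, so I would present the matrix Bernstein version for self-containedness, adjusting $c_0$ as needed, and remark that the sharper constant is available if one wants $p$ close to the connectivity threshold. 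All failure probabilities are $O(n^{-10})$ or smaller, so a final union bound over the two events preserves the claimed probability.
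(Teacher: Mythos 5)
Your proof is correct and follows essentially the same route as the paper, which simply cites Bernstein's inequality plus a union bound for the degrees and the standard concentration of random graph Laplacians from \cite{tropp2015introduction} for the spectral gap. Your write-up merely spells out the details (the decomposition $\mathcal{L}_A-\mathcal{L}_{\Expect A}=(D-\Expect D)-(A-\Expect A)$, the exact value $\lambda_{\min,\perp}(\mathcal{L}_{\Expect A})=np$, and the choice of $c_0$ to absorb the $\sqrt{np\log n}$ term), all of which are sound.
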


\begin{lemma}\label{lem:A-bern}
Assume $p\geq\frac{c_0\log n}{n}$ for some sufficiently large $c_0>0$. For any fixed $\{w_{ij}\}$, we have
$$\max_{i\in[n]}\sum_{j\in[n]\backslash\{i\}}w_{ij}^2(A_{ij}-p)^2\leq Cnp\max_{i,j\in[n]}|w_{ij}|^2,$$
and
$$\max_{i\in[n]}\left(\sum_{j\in[n]\backslash\{i\}}w_{ij}(A_{ij}-p)\right)^2\leq C(\log n)^2\max_{i,j\in[n]}|w_{ij}|^2 + Cp\log n\max_{i\in[n]}\sum_{j\in[n]}w_{ij}^2,$$
for some constant $C>0$ with probability at least $1-O(n^{-10})$.
\end{lemma}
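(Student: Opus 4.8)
The plan is to prove both inequalities by applying Bernstein's inequality coordinate-by-coordinate (for each fixed $i\in[n]$) and then taking a union bound over $i$. The essential point is that the summands have variance of order $p$ rather than of constant order, which is exactly what produces the factor $np$ in the first display and the factor $p\log n$ in the second. For a fixed $i$, the variables $\{A_{ij}:j\in[n]\backslash\{i\}\}$ correspond to distinct edges of the graph and are therefore independent Bernoulli$(p)$, so no symmetrization issue arises despite the convention $A_{ij}=A_{ji}$.

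\textbf{First inequality.} Fix $i$ and set $M:=\max_{i,j\in[n]}|w_{ij}|$ and $X_j:=w_{ij}^2(A_{ij}-p)^2$. The $X_j$ are independent, nonnegative, and bounded by $w_{ij}^2\le M^2$. A direct computation gives $\Expect(A_{ij}-p)^2=p(1-p)$ and $\Var\big((A_{ij}-p)^2\big)=p(1-p)(1-2p)^2\le p$, hence $\Expect\sum_j X_j\le npM^2$ and $\sum_j\Var(X_j)\le npM^4$. Bernstein's inequality with deviation $t=npM^2$ then yields
$$\Prob\Big(\sum_{j\ne i}X_j>2npM^2\Big)\le \exp\Big(-\frac{(npM^2)^2/2}{npM^4+M^2\cdot npM^2/3}\Big)=\exp\big(-\tfrac{3}{8}np\big)\le n^{-11},$$
where the last step uses $np\ge c_0\log n$ with $c_0$ sufficiently large. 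A union bound over $i\in[n]$ gives the first claim with, say, $C=2$ and failure probability $O(n^{-10})$.

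\textbf{Second inequality.} Fix $i$ and set $S_i:=\sum_{j\in[n]\backslash\{i\}}w_{ij}(A_{ij}-p)$ and $Q_i:=\sum_j w_{ij}^2$. This is a sum of independent mean-zero variables with $|w_{ij}(A_{ij}-p)|\le M$ and $\sum_j\Var\big(w_{ij}(A_{ij}-p)\big)=p(1-p)Q_i\le pQ_i$, so Bernstein gives $\Prob(|S_i|>t)\le 2\exp\!\big(-\tfrac{t^2/2}{pQ_i+Mt/3}\big)$. Choosing $t$ to be the larger root of $t^2/2=11\log n\,(pQ_i+Mt/3)$, i.e. $t=\tfrac{11M\log n}{3}+\sqrt{\tfrac{121M^2(\log n)^2}{9}+22pQ_i\log n}\asymp M\log n+\sqrt{pQ_i\log n}$, makes the exponent equal to $11\log n$ (the function $t\mapsto \tfrac{t^2/2}{pQ_i+Mt/3}$ is increasing on $t>0$), so $\Prob(|S_i|>t)\le 2n^{-11}$, while $t^2\lesssim M^2(\log n)^2+pQ_i\log n$. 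A union bound over $i\in[n]$, together with $Q_i\le\max_{i}\sum_j w_{ij}^2$, yields the second claim.

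\textbf{Main obstacle.} There is no deep obstacle; the proof is essentially bookkeeping with Bernstein constants. The only genuinely load-bearing use of the hypothesis is in the first display, where $np\ge c_0\log n$ with $c_0$ large is needed to convert $\exp(-\tfrac{3}{8}np)$ into $n^{-11}$ before the union bound. For the second display the $\log n$ factors are built into the deviation level by hand, so the weaker assumption $p\ge c_0\log n/n$ suffices there; the only minor care needed is checking that the chosen $t$ indeed satisfies $t^2\lesssim M^2(\log n)^2+pQ_i\log n$ uniformly in $i$, which follows from $(a+b)^2\le 2a^2+2b^2$.
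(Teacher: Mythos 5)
Your proposal is correct and follows essentially the same route as the paper, which also proves both bounds by noting $\Expect(A_{ij}-p)^2\leq p$ and $\Var((A_{ij}-p)^2)\lesssim p$ and then applying Bernstein's inequality with a union bound over $i$. Your write-up simply supplies the variance computations and the choice of deviation level that the paper leaves implicit.
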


With $\lambda_{\min,\perp}(\mathcal{L}_A)$ shown to be well behaved, the next lemma establishes a similar control for $\lambda_{\min,\perp}(H(\theta))$.

\begin{lemma}\label{lem:hessian-spec}
Assume $p\geq\frac{c_0\log n}{n}$ for some sufficiently large $c_0>0$. For any $\theta\in\mathbb{R}^{n}$ that satisfies $\max_{i\in[n]}\theta_i-\min_{i\in[n]}\theta_i\leq M$, we have
$$\lambda_{\min,\perp}(H(\theta)) \geq \frac{1}{8}e^{-M}np,$$
with probability at least $1-O(n^{-10})$.
\end{lemma}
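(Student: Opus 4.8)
The plan is to reduce the spectral bound on the weighted Laplacian $H(\theta)$ to the spectral bound on the unweighted Laplacian $\mathcal{L}_A$ that was already established in Lemma \ref{lem:A-basic}. Recall that $H(\theta)$ is exactly the Laplacian of the weighted graph with edge weights $\omega_{ij} := \psi'(\theta_i - \theta_j) A_{ij}$, so for any $u \in \mathbb{R}^n$ we can write the quadratic form as a sum over edges,
$$u^T H(\theta) u = \sum_{1 \leq i < j \leq n} A_{ij}\, \psi'(\theta_i - \theta_j)\, (u_i - u_j)^2.$$
The first step is a pointwise lower bound on the weights: since $\psi'(t) = \psi(t)\psi(-t) = \frac{e^{-|t|}}{(1+e^{-|t|})^2} \geq \frac{1}{4} e^{-|t|}$, and the hypothesis $\max_i \theta_i - \min_i \theta_i \leq M$ forces $|\theta_i - \theta_j| \leq M$ for every pair $(i,j)$, we get $\psi'(\theta_i - \theta_j) \geq \frac{1}{4} e^{-M}$ uniformly. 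Substituting this into the edge sum gives
$$u^T H(\theta) u \;\geq\; \frac{1}{4} e^{-M} \sum_{1 \leq i < j \leq n} A_{ij} (u_i - u_j)^2 \;=\; \frac{1}{4} e^{-M}\, u^T \mathcal{L}_A u,$$
which is a deterministic inequality valid for all $u$ simultaneously — the only randomness sits in $A$.

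The second step simply invokes Lemma \ref{lem:A-basic}: on the event (of probability at least $1 - O(n^{-10})$) on which $\lambda_{\min,\perp}(\mathcal{L}_A) \geq np/2$, we restrict the inequality above to vectors $u$ with $\mathds{1}_n^T u = 0$ and divide by $\|u\|^2$, obtaining
$$\lambda_{\min,\perp}(H(\theta)) \;\geq\; \frac{1}{4} e^{-M} \cdot \lambda_{\min,\perp}(\mathcal{L}_A) \;\geq\; \frac{1}{4} e^{-M} \cdot \frac{np}{2} \;=\; \frac{1}{8} e^{-M} np,$$
as claimed. The bound holds for every such $\theta$ on the same event, so there is no union bound to worry about — the event depends only on $A$, not on $\theta$.

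There is essentially no obstacle here: the entire content is the elementary lower bound $\psi'(t) \geq \frac{1}{4} e^{-|t|}$ combined with the monotone comparison of quadratic forms, and all the probabilistic work has been done in Lemma \ref{lem:A-basic}. The only point requiring the slightest care is to make sure the comparison inequality between $u^T H(\theta) u$ and $u^T \mathcal{L}_A u$ is stated as a pointwise (deterministic, for all $u$) inequality \emph{before} passing to the Rayleigh quotient over the subspace $\{u : \mathds{1}_n^T u = 0\}$, so that the bound on $\lambda_{\min,\perp}$ transfers cleanly.
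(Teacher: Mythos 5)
Your proposal is correct and is essentially identical to the paper's own proof: both express $u^T H(\theta) u$ as the edge sum $\sum_{i<j} A_{ij}\psi'(\theta_i-\theta_j)(u_i-u_j)^2$, use the pointwise bound $\psi'(\theta_i-\theta_j)=\psi(\theta_i-\theta_j)\psi(\theta_j-\theta_i)\geq \tfrac{1}{4}e^{-M}$, and then invoke Lemma \ref{lem:A-basic} for $\lambda_{\min,\perp}(\mathcal{L}_A)\geq np/2$. Your explicit remark that the comparison of quadratic forms is deterministic in $u$ (so no union bound over $\theta$ is needed) is a nice clarification but does not change the argument.
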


Finally, we need a few concentration inequalities.

\begin{lemma}\label{lem:concentration}
Assume $\kappa=O(1)$ and $p\geq\frac{c_0\log n}{n}$ for some sufficiently large $c_0>0$.
Then, we have
$$\sum_{i=1}^n\left(\sum_{j\in[n]\backslash\{i\}}A_{ij}(\bar{y}_{ij}-\psi(\theta_i^*-\theta_j^*))\right)^2\leq C\frac{n^2p}{L},$$
$$\max_{i\in[n]}\left(\sum_{j\in[n]\backslash\{i\}}A_{ij}(\bar{y}_{ij}-\psi(\theta_i^*-\theta_j^*))\right)^2\leq C\frac{np\log n}{L},$$
$$\max_{i\in[n]}\sum_{j\in[n]\backslash\{i\}}A_{ij}(\bar{y}_{ij}-\psi(\theta_i^*-\theta_j^*))^2\leq C\frac{np}{L},$$
for some constant $C>0$ with probability at least $1-O(n^{-10})$ uniformly over all $\theta^*\in\Theta(k,0,\kappa)$.
\end{lemma}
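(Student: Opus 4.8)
\medskip
\noindent\textbf{Proof proposal.}
Write $\xi_{ij}=\bar y_{ij}-\psi(\theta^*_i-\theta^*_j)=\frac1L\sum_{l=1}^L\big(y_{ijl}-\psi(\theta^*_i-\theta^*_j)\big)$ and $X_i:=\sum_{j\ne i}A_{ij}\xi_{ij}$, so the three quantities to be bounded are $\sum_{i}X_i^2$, $\max_i X_i^2$, and $\max_i\sum_{j}A_{ij}\xi_{ij}^2$. The argument rests on three elementary facts: (i) the $\xi_{ij}$ over distinct unordered pairs $\{i,j\}$ are mutually independent and independent of $A$; (ii) by Hoeffding's lemma each $\xi_{ij}$ is mean zero and sub-Gaussian with $\|\xi_{ij}\|_{\psi_2}\le C/\sqrt L$, hence $\|\xi_{ij}^2\|_{\psi_1}\le C/L$; and (iii) $\mathbb{E}\xi_{ij}^2=\psi'(\theta^*_i-\theta^*_j)/L\le 1/(4L)$, which holds for every $\theta^*$ and is what makes all constants uniform over $\Theta(k,0,\kappa)$. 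I would work throughout on the event of Lemma~\ref{lem:A-basic} that $\max_{i}\sum_{j\ne i}A_{ij}\le 2np$; this has probability $1-O(n^{-10})$, and since $A$ is independent of the games, $(i)$--$(iii)$ persist after conditioning on $A$.

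For the third bound, fix $i$ and condition on $A$: $\sum_{j}A_{ij}\xi_{ij}^2$ is a sum of at most $2np$ independent sub-exponential variables of scale $C/L$ with total mean at most $np/(2L)$, so Bernstein's inequality gives $\sum_{j}A_{ij}\xi_{ij}^2\le np/(2L)+C\sqrt{np\log n}/L$ with probability $1-O(n^{-11})$; since $\log n\le np$, the fluctuation is absorbed into $Cnp/L$, and a union bound over $i$ finishes it. The second bound is one step simpler: conditionally on $A$, $X_i$ is a sum of at most $2np$ independent mean-zero sub-Gaussian variables, hence sub-Gaussian with variance proxy $O(np/L)$, so $\mathbb{P}(|X_i|\ge s\mid A)\le 2\exp(-cs^2L/(np))$; taking $s^2$ a large enough multiple of $np\log n/L$ and union-bounding over $i$ yields $\max_i X_i^2\le Cnp\log n/L$.

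The first bound requires exploiting cancellation in the sum, since a coordinatewise estimate for $X_i^2$ loses a $\log n$ factor and bounding $\|(X_i)_i\|$ by $\opnorm{\Xi}\,\|\mathds{1}_n\|$ (with $\Xi_{ij}=A_{ij}\xi_{ij}$) is wasteful when $L$ is large. Expand $\sum_{i}X_i^2=S_1+S_2$ with $S_1=\sum_{i}\sum_{j}A_{ij}\xi_{ij}^2$ and $S_2=\sum_{i}\sum_{j\ne j'}A_{ij}A_{ij'}\xi_{ij}\xi_{ij'}$. Then $S_1\le n\max_i\sum_{j}A_{ij}\xi_{ij}^2\le Cn^2p/L$ by the third bound. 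For the cross term, reindex the games by unordered edges, letting $\eta_e=\bar y_{ij}-\psi(\theta^*_i-\theta^*_j)$ for $e=\{i,j\}$ with $i<j$: the $\eta_e$ are independent, mean zero, sub-Gaussian of scale $C/\sqrt L$, and $\xi_{ij}=\sgn(j-i)\,\eta_{\{i,j\}}$. A short bookkeeping then shows $S_2=\eta^\top B\eta$, where $B$ is the symmetric matrix with $B_{ee}=0$ and, for $e\ne f$, $B_{ef}=\pm A_eA_f$ if $e$ and $f$ share exactly one vertex and $B_{ef}=0$ otherwise; in particular $B$ has zero diagonal, so $\mathbb{E}S_2=0$. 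On the event of Lemma~\ref{lem:A-basic} one checks $\fnorm{B}^2\le\sum_v d_v^2\le 4n^3p^2$ and $\opnorm{B}\le\max_e\sum_f|B_{ef}|\le 4np$, where $d_v=\sum_{u\ne v}A_{vu}$.

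The Hanson--Wright inequality then gives
\[
\mathbb{P}\big(|S_2|\ge t\mid A\big)\le 2\exp\!\Big(-c\min\big(t^2L^2/(n^3p^2),\ tL/(np)\big)\Big),
\]
and choosing $t$ a large enough multiple of $n^2p/L$ makes both arguments of the minimum of order $n\gg\log n$, so $|S_2|\le Cn^2p/L$ with probability $1-O(n^{-10})$. Combining, $\sum_i X_i^2\le S_1+|S_2|\le Cn^2p/L$, and intersecting the three events (each of probability $1-O(n^{-10})$) completes the proof. The main obstacle is the cross term $S_2$: recognizing it as a quadratic form with zero diagonal (so that it is automatically centered) and controlling $\fnorm{B}$ and $\opnorm{B}$ on the high-probability graph event. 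An equivalent route would be to decouple and apply a conditional Bernstein bound, but Hanson--Wright packages this most cleanly; and using the sub-Gaussian scale $\asymp 1/\sqrt L$ of $\xi_{ij}$ rather than crude boundedness is what prevents a spurious $\log n$ loss and a spurious upper bound on $L$.
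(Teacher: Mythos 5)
Your proof is correct, and for two of the three bounds it takes a genuinely different route from the paper. For the third bound the paper runs a $1/2$-net argument over the ellipsoid $\{u:\sum_{j}A_{ij}u_j^2\le 1\}$, converting $\sqrt{\sum_j A_{ij}\xi_{ij}^2}$ into a maximum of $e^{O(np)}$ linear statistics and applying Hoeffding plus a union bound; your route---Bernstein for the sub-exponential variables $\xi_{ij}^2$ with $\|\xi_{ij}^2\|_{\psi_1}\lesssim 1/L$ and conditional mean $\psi'(\theta_i^*-\theta_j^*)/L\le 1/(4L)$---is equivalent in strength and a bit more direct. The second bound is handled identically in both (Hoeffding plus a union bound). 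The real divergence is the first bound: the paper asserts it follows ``immediately'' from the third, but the only one-line deduction available there is Cauchy--Schwarz, $X_i^2\le\big(\sum_jA_{ij}\big)\sum_jA_{ij}\xi_{ij}^2$, which loses a factor of $np$ and yields $n^3p^2/L$; what is actually required is either a net argument over the whole vector $(X_i)_{i\in[n]}$ (bounding $\max_{\|w\|\le1}\sum_{i<j}(w_i-w_j)A_{ij}\xi_{ij}$ over an $e^{O(n)}$-net, using that this linear form has sub-Gaussian parameter $O(np/L)$ uniformly in $w$) or your decomposition into the diagonal part $S_1$ and the zero-diagonal quadratic form $S_2=\eta^\top B\eta$. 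Your bookkeeping for $B$ is right: distinct pairs $(j,j')$ correspond to distinct edges sharing the vertex $i$, so $B$ has zero diagonal and $\mathbb{E}[S_2\mid A]=0$, and on the event of Lemma~\ref{lem:A-basic} one has $\fnorm{B}^2\le\sum_v d_v^2\lesssim n^3p^2$ and $\opnorm{B}\le\max_e\sum_f|B_{ef}|\lesssim np$, so $t\asymp n^2p/L$ makes both Hanson--Wright exponents of order $n\gg\log n$. In short, your argument buys a self-contained, fully justified proof of the $\ell_2$ bound where the paper's is elliptical, at the cost of invoking Hanson--Wright rather than only Hoeffding and Bernstein; the uniformity over $\theta^*\in\Theta(k,0,\kappa)$ is correctly secured in both approaches by the universal bounds $\psi'\le1/4$ and $|y_{ijl}|\le1$.
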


The proofs of the four lemmas above will be given in Section \ref{sec:pf-tech}.

\subsection{Proof of Proposition \ref{prop:entrywise-MLE}}\label{sec:MLE-pf-prop}

As we have outlined in Section \ref{sec:MLE-over}, the main argument to bound $\|\wh{\theta}-\theta^*\|_{\infty}$ is to first derive a bound for $\|\wh{\theta}_{\lambda}-\theta^*\|_{\infty}$, where $\wh{\theta}_{\lambda}$ is the penalized MLE defined in (\ref{eq:pen-MLE}). Then, we only need to show $\wh{\theta}_{\lambda}$ and $\wh{\theta}$ are close with $\lambda$ as small as $\lambda=n^{-1}$. We first state a lemma that bounds $\|\wh{\theta}_{\lambda}-\theta^*\|_{\infty}$.

\begin{lemma}\label{lem:entrywise-pen-MLE}
Under the setting of Theorem \ref{thm:MLE-estimation}, for the estimator $\wh{\theta}_{\lambda}$ with $\lambda=n^{-1}$, we have
$$\|\wh{\theta}_{\lambda}-\theta^*\|_{\infty}\leq 4,$$
with probability at least $1-O(n^{-7})$.
\end{lemma}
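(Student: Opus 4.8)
The plan is to prove this $\ell_\infty$ bound for $\wh\theta_\lambda$ by running the regularized-MLE analysis of \cite{chen2019spectral} with the penalty shrunk to $\lambda = n^{-1}$, and then checking that this rescaling costs nothing. Write $f_\lambda(\theta) = \ell_n(\theta) + \frac{\lambda}{2}\|\theta\|^2$. On the hyperplane $\{\mathds{1}_n^T\theta = 0\}$, $f_\lambda$ is $\lambda$-strongly convex and globally $O(np)$-smooth (the Hessian of $\ell_n$ is the Laplacian of a graph with weights $\psi'(\theta_i-\theta_j)\in[0,\tfrac14]$), so projected gradient descent with a fixed step size $\eta \asymp 1/(np)$, started at $\theta^0 = \theta^*$, converges to the unique minimizer $\wh\theta_\lambda$. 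Since $\{\theta : \|\theta-\theta^*\|_\infty\le 4\}$ is closed, it suffices to keep every iterate $\theta^t$ within $\ell_\infty$-distance $4$ of $\theta^*$ and then send $t\to\infty$.

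First I would set up the leave-one-out trajectories: for each $m\in[n]$, let $\theta^{t,(m)}$ be the projected-gradient trajectory (same $\eta$, same initialization $\theta^*$) for $f_\lambda^{(m)}$, the objective in which player $m$'s comparisons $\{\bar y_{mj}\}_j$ are replaced by their population means $\{\psi(\theta_m^*-\theta_j^*)\}_j$. The heart of the argument is an induction on $t$ that simultaneously propagates three invariants: (i) the $\ell_2$ bound $\|\theta^t-\theta^*\|\lesssim 1/\sqrt{pL}$; (ii) the leave-one-out closeness $\max_m\|\theta^t-\theta^{t,(m)}\|\lesssim \sqrt{\log n/(npL)}$; and (iii) the coordinatewise bound $\max_m|\theta^{t,(m)}_m-\theta^*_m|\lesssim \sqrt{\log n/(npL)}$, which combined with (ii) gives $\|\theta^t-\theta^*\|_\infty\lesssim\sqrt{\log n/(npL)}$. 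Invariant (iii) is where the leave-one-out device is essential: $\theta^{t,(m)}$ is a function of $A$ and of $\{\bar y_{ij}\}_{i,j\ne m}$ only, hence independent of $\{\bar y_{mj}\}_j$, and its $m$-th stationary coordinate is governed by $\sum_{j\ne m}A_{mj}(\bar y_{mj}-\psi(\theta_m^*-\theta_j^*))$, controlled by Lemma \ref{lem:concentration}. The analytic engine of the induction is: the gradient bounds $\|\nabla\ell_n(\theta^*)\|\lesssim n\sqrt{p/L}$ and $\|\nabla\ell_n(\theta^*)\|_\infty\lesssim\sqrt{np\log n/L}$ (Lemma \ref{lem:concentration}); the spectral lower bound $\lambda_{\min,\perp}(H(\theta^t))\gtrsim np$, which holds by Lemma \ref{lem:hessian-spec} because invariant (iii) and $\kappa=O(1)$ force $\max_{i<j}|\theta^t_i-\theta^t_j|=O(1)$; and the degree, Laplacian, and Bernstein-type estimates of Lemmas \ref{lem:A-basic} and \ref{lem:A-bern}. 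At $t=0$ all three invariants hold trivially (each side is zero), and the step-by-step passage from $t$ to $t+1$ is exactly as in \cite{chen2019spectral}.

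It remains to verify that $\lambda = n^{-1}$ is admissible. The penalty affects the bounds in only two places. One is a bias term of size $\lesssim \lambda\|\theta^*\|/\lambda_{\min,\perp}(H)\lesssim n^{-1}\cdot\sqrt n\,\kappa/(np)=O(1/(n^{3/2}p))=o(1)$ (using $\|\theta^*\|_\infty\le\kappa$ and $\mathds{1}_n^T\theta^*=0$), negligible against $1/\sqrt{pL}$. The other is the strong-convexity constant $\lambda=n^{-1}>0$, which only slows the linear convergence rate of gradient descent to $1-\Omega(1/(n^2p))$; since $\eta\asymp1/(np)$ and the per-step induction depends on the smoothness $O(np)$, not on $\lambda$, the invariants are uniform in $t$ and therefore pass to the limit $\wh\theta_\lambda$. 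Hence $\|\wh\theta_\lambda-\theta^*\|_\infty\lesssim\sqrt{\log n/(npL)}$, which under $np\ge c_0\log n$ is at most $C/\sqrt{c_0}$, hence at most $4$ once $c_0$ is large; note the crude $O(1)$ bound does not require the $\log L=O(\log n)$ restriction of \cite{chen2019spectral}, since only $\log n$-type union bounds over pairs enter the Bernstein estimates. A union bound over the events of Lemmas \ref{lem:A-basic}--\ref{lem:concentration} and the $O(n)$ leave-one-out quantities yields the claimed probability $1-O(n^{-7})$.

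The main obstacle I expect is the circular dependence between the $\ell_\infty$ control and the Hessian spectral bound: $\lambda_{\min,\perp}(H(\theta))\gtrsim np$ needs $\theta$ in a bounded region, yet the essentially unregularized $\wh\theta_\lambda$ is not a priori known to lie there. The gradient-descent trajectory breaks the circle by carrying invariants (i)--(iii) jointly, and within the induction the delicate point is that the leave-one-out iterates do not drift: the per-step contraction of projected gradient descent on $\{\mathds{1}_n^T\theta=0\}$, a factor $1-\Omega(\eta\, np)$, must dominate the per-step perturbation incurred by swapping row $m$'s data for its mean. Keeping the high-probability bookkeeping consistent across all $n$ leave-one-out sequences is the other place demanding care.
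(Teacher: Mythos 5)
Your overall strategy---gradient descent started at $\theta^*$ with $n$ leave-one-out trajectories and a three-part induction---is the same as the paper's. But two of your choices create genuine gaps. First, you carry the \emph{optimal-rate} invariants ($\|\theta^t-\theta^*\|\lesssim 1/\sqrt{pL}$, $\max_m\|\theta^t-\theta^{t,(m)}\|\lesssim\sqrt{\log n/(npL)}$, etc.) through the induction. This is exactly the analysis of \cite{chen2019spectral}, and it is where their extra condition $\log L=O(\log n)$ enters: to transfer an optimal-rate bound from the iterates to the minimizer $\wh\theta_\lambda$ you must run gradient descent until the optimization error $(1-\lambda\eta)^{t}$ falls below the statistical precision $\asymp 1/\sqrt{npL}$, which forces the horizon $t^*$ (and hence the union bound over the $t$-dependent high-probability events) to grow with $\log L$. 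The lemma places no upper bound on $L$, so your remark that the restriction is not needed is inconsistent with the induction you actually propose. The paper sidesteps this entirely by propagating only crude $O(1)$-scale invariants: $\max_m\|\theta^{(t,m)}-\theta^{(t)}\|\le 1$, $\|\theta^{(t)}-\theta^*\|\le\sqrt{n/\log n}$, and $\max_m|\theta_m^{(t,m)}-\theta_m^*|\le 1$, which combine to give $\|\theta^{(t)}-\theta^*\|_\infty\le 2$ and are all that a bound by $4$ requires.

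Second, ``the invariants are uniform in $t$ and therefore pass to the limit'' is not a valid step. Each passage from $t$ to $t+1$ consumes a fresh application of Lemma \ref{lem:A-bern} with weights $\psi(\theta_j^{(t,m)}-\theta_m^{(t,m)})-\psi(\theta_j^*-\theta_m^*)$ that change with $t$, so the invariants hold for all $t\le t^*$ only on an event of probability $1-O(t^* n^{-10})$; you cannot let $t\to\infty$. You must truncate at a finite $t^*$ and then separately control $\|\theta^{(t^*)}-\wh\theta_\lambda\|$. The paper does this with $t^*=n^3$ (whence the $1-O(n^{-7})$) together with the self-bounding inequality
\begin{equation*}
\|\wh\theta_\lambda-\theta^*\|_\infty\le\Bigl(1-\tfrac{\lambda}{\lambda+np}\Bigr)^{t^*}\sqrt{n}\,\|\wh\theta_\lambda-\theta^*\|_\infty+2,
\end{equation*}
which closes the loop once $(1-\lambda/(\lambda+np))^{t^*}\sqrt n\le 1/2$; note $t^*$ here depends only on $n$, never on $L$. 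This last device (or a substitute a priori bound on $\|\wh\theta_\lambda-\theta^*\|$ that makes a finite, $L$-free horizon suffice) is the one non-obvious ingredient of the proof, and it is missing from your plan.
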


We first prove Proposition \ref{prop:entrywise-MLE} with the help of Lemma \ref{lem:entrywise-pen-MLE}. We then prove Lemma \ref{lem:entrywise-pen-MLE} at the end of this section.

\begin{proof}[Proof of Proposition \ref{prop:entrywise-MLE}]
Define a constraint MLE as
\begin{equation}
\wh{\theta}^{\sf con}=\argmin_{\mathds{1}_{n}^T\theta=0: \|\theta-\theta^*\|_{\infty}\leq 5}\ell_n(\theta).\label{eq:con-MLE}
\end{equation}
By Lemma \ref{lem:entrywise-pen-MLE}, $\wh{\theta}_{\lambda}$ is feasible for the constraint of (\ref{eq:con-MLE}). We then have
\begin{equation}
\ell_n(\wh{\theta}_{\lambda}) \geq \ell_n(\wh{\theta}^{\sf con}). \label{eq:basic:con-pen}
\end{equation}
We apply Taylor expansion, and obtain
$$
\ell_n(\wh{\theta}^{\sf con}) = \ell_n(\wh{\theta}_{\lambda})  + (\wh{\theta}^{\sf con}-\wh{\theta}_{\lambda})^T\nabla\ell_n(\wh{\theta}_{\lambda}) + \frac{1}{2}(\wh{\theta}_{\lambda}-\wh{\theta}^{\sf con})^TH(\xi)(\wh{\theta}_{\lambda}-\wh{\theta}^{\sf con}),
$$
where $\xi$ is a convex combination of $\wh{\theta}^{\sf con}$ and $\wh{\theta}_{\lambda}$. By Lemma \ref{lem:entrywise-pen-MLE}, we know that $\|\wh{\theta}_{\lambda}-\theta^*\|_{\infty}\leq 4$. We also have $\|\wh{\theta}^{\sf con}-\theta^*\|_{\infty}\leq 5$ by the definition of $\wh{\theta}^{\sf con}$. Thus, $\|\xi-\theta^*\|_{\infty}\leq 5$. By Lemma \ref{lem:hessian-spec}, we get the lower bound
\begin{equation}
\ell_n(\wh{\theta}^{\sf con}) \geq \ell_n(\wh{\theta}_{\lambda})  + (\wh{\theta}^{\sf con}-\wh{\theta}_{\lambda})^T\nabla\ell_n(\wh{\theta}_{\lambda}) + c_1np\|\wh{\theta}^{\sf con}-\wh{\theta}_{\lambda}\|^2, \label{eq:taylor:con-pen}
\end{equation}
for some constant $c_1>0$. By (\ref{eq:basic:con-pen}) and (\ref{eq:taylor:con-pen}), we have
$$
\|\wh{\theta}^{\sf con}-\wh{\theta}_{\lambda}\|^2 \leq \frac{|(\wh{\theta}^{\sf con}-\wh{\theta}_{\lambda})^T\nabla\ell_n(\wh{\theta}_{\lambda})|}{c_1np}.
$$
By Cauchy-Schwarz inequality and the fact that $\nabla\ell_n(\wh{\theta}_{\lambda})+\lambda\wh{\theta}_{\lambda}=0$, we have
$$
\|\wh{\theta}^{\sf con}-\wh{\theta}_{\lambda}\|^2 \leq \frac{\|\nabla\ell_n(\wh{\theta}_{\lambda})\|^2}{(c_1np)^2} = \frac{\lambda^2\|\wh{\theta}_{\lambda}\|^2}{(c_1np)^2} \lesssim \frac{n\lambda^2}{(c_1np)^2} \lesssim n^{-1}.
$$
Finally, since
$$\|\wh{\theta}^{\sf con}-\theta^*\|_{\infty} \leq \|\wh{\theta}_{\lambda}-\theta^*\|_{\infty}+\|\wh{\theta}^{\sf con}-\wh{\theta}_{\lambda}\|\leq 4 + \frac{c_2}{\sqrt{n}} \leq \frac{9}{2},$$
the minimizer of (\ref{eq:con-MLE}) is in the interior of the constraint. By the convexity of (\ref{eq:con-MLE}), we have $\wh{\theta}^{\sf con}=\wh{\theta}$, and thus the desired conclusion $\|\wh{\theta}-\theta^*\|_{\infty}\leq 5$ is obtained.
\end{proof}

\begin{proof}[Proof of Lemma \ref{lem:entrywise-pen-MLE}]
Our proof largely follows the arguments in \cite{chen2019spectral} that analyze the regularized MLE. Since we only need to show $\|\wh{\theta}_{\lambda}-\theta^*\|_{\infty}\leq 4$ rather than the optimal rate, the condition on $L$ imposed by \cite{chen2019spectral} is not needed anymore. This requires a few minor changes in the proof of \cite{chen2019spectral}. We still write down every step of the proof for the result to be self-contained.

Define a gradient descent sequence
\begin{equation}
\theta^{(t+1)}=\theta^{(t)}-\eta\left(\nabla\ell_n(\theta^{(t)})+\lambda\theta^{(t)}\right). \label{eq:gd-pen}
\end{equation}
We also need to introduce a leave-one-out gradient descent sequence. Define
\begin{eqnarray*}
\ell_n^{(m)}(\theta) &=& \sum_{1\leq i<j\leq n: i,j\neq m}A_{ij}\left[\bar{y}_{ij}\log\frac{1}{\psi(\theta_i-\theta_j)}+(1-\bar{y}_{ij})\log\frac{1}{1-\psi(\theta_i-\theta_j)}\right] \\
&& + \sum_{i\in[n]\backslash\{m\}}p\left[\psi(\theta_i^*-\theta_m^*)\log\frac{1}{\psi(\theta_i-\theta_m)}+\psi(\theta_m^*-\theta_i^*)\log\frac{1}{\psi(\theta_m-\theta_i)}\right].
\end{eqnarray*}
With the objective $\ell_n^{(m)}(\theta)$, we define
\begin{equation}
\theta^{(t+1,m)}=\theta^{(t,m)}-\eta\left(\nabla\ell_n^{(m)}(\theta^{(t,m)})+\lambda\theta^{(t,m)}\right). \label{eq:gd-pen-m}
\end{equation}
We initialize both (\ref{eq:gd-pen}) and (\ref{eq:gd-pen-m}) by $\theta^{(0)}=\theta^{(0,m)}=\theta^*$ and use the same step size $\eta=\frac{1}{\lambda + np}$. Note that $\mathds{1}_{n}^T\theta^*=0$ implies $\mathds{1}_{n}^T\theta^{(t)}=\mathds{1}_{n}^T\theta^{(t,m)}=0$ for all $t$. See Section 4.3 of \cite{chen2019spectralsupp}. We will establish the following bounds,
\begin{eqnarray}
\label{eq:induction-gd-1} && \max_{m\in[n]}\|\theta^{(t,m)}-\theta^{(t)}\| \leq 1, \\
\label{eq:induction-gd-2} && \|\theta^{(t)}-\theta^*\| \leq \sqrt{\frac{n}{\log n}}, \\
\label{eq:induction-gd-3} && \max_{m\in[n]}|\theta_m^{(t,m)}-\theta_m^*| \leq 1.
\end{eqnarray}
It is obvious that (\ref{eq:induction-gd-1}), (\ref{eq:induction-gd-2}) and (\ref{eq:induction-gd-3}) hold for $t=0$. We use a mathematical induction argument to show (\ref{eq:induction-gd-1}), (\ref{eq:induction-gd-2}) and (\ref{eq:induction-gd-3}) for a general $t$. Let us suppose (\ref{eq:induction-gd-1}), (\ref{eq:induction-gd-2}) and (\ref{eq:induction-gd-3}) are true, and we need to show the same conclusions continue to hold for $t+1$.

First, we have
\begin{eqnarray*}
\theta^{(t+1)} - \theta^{(t+1,m)} &=& (1-\eta\lambda)(\theta^{(t)} - \theta^{(t,m)}) - \eta(\nabla\ell_n(\theta^{(t)})-\nabla\ell_n^{(m)}(\theta^{(t,m)})) \\
&=& \left((1-\eta\lambda)I_n-\eta H(\xi)\right)(\theta^{(t)} - \theta^{(t,m)}) -\eta\left(\nabla\ell_n(\theta^{(t,m)})-\nabla\ell_n^{(m)}(\theta^{(t,m)})\right),
\end{eqnarray*}
where $\xi$ is a convex combination of $\theta^{(t)}$ and $\theta^{(t,m)}$. By (\ref{eq:induction-gd-1}) and (\ref{eq:induction-gd-3}), we have
\begin{equation}
\|\theta^{(t)}-\theta^*\|_{\infty} \leq \max_{m\in[n]}\|\theta^{(t,m)}-\theta^{(t)}\| + \max_{m\in[n]}|\theta_m^{(t,m)}-\theta_m^*| \leq 2, \label{eq:induction-t-max}
\end{equation}
and
\begin{equation}
\|\theta^{(t,m)}-\theta^*\|_{\infty} \leq \|\theta^{(t)}-\theta^*\|_{\infty} + \|\theta^{(t,m)}-\theta^{(t)}\| \leq 3. \label{eq:induction-t-max-m}
\end{equation}
We thus have $\|\xi-\theta^*\|_{\infty}\leq 3$, and we can apply Lemma \ref{lem:hessian-spec} to obtain the bound
\begin{equation}
\|\left((1-\eta\lambda)I_n-\eta H(\xi)\right)(\theta^{(t)} - \theta^{(t,m)})\| \leq (1-\eta\lambda - c_1\eta np)\|\theta^{(t)} - \theta^{(t,m)}\|, \label{eq:ind-1-1}
\end{equation}
for some constant $c_1>0$. We also note that
\begin{eqnarray}
\nonumber && \|\nabla\ell_n(\theta^{(t,m)})-\nabla\ell_n^{(m)}(\theta^{(t,m)})\|^2 \\
\nonumber &=&  \left(\sum_{j\in[n]\backslash\{m\}}A_{jm}(\bar{y}_{jm}-\psi(\theta_j^*-\theta_m^*))-\sum_{j\in[n]\backslash\{m\}}(A_{jm}-p)(\psi(\theta_j^{(t,m)}-\theta_{m}^{(t,m)})-\psi(\theta_j^*-\theta_m^*))\right)^2 \\
\nonumber && + \sum_{j\in[n]\backslash\{m\}}\left(A_{jm}(\bar{y}_{jm}-\psi(\theta_j^*-\theta_m^*))-(A_{jm}-p)(\psi(\theta_j^{(t,m)}-\theta_{m}^{(t,m)})-\psi(\theta_j^*-\theta_m^*))\right)^2 \\
\label{eq:ind-1-2} &\leq& C_1\frac{np\log n}{L} + C_1np\log n\|\theta^{(t,m)}-\theta^*\|_{\infty}^2,
\end{eqnarray}
for some constant $C_1>0$ by Lemma \ref{lem:A-bern} and Lemma \ref{lem:concentration}. We combine the two bounds (\ref{eq:ind-1-1}) and (\ref{eq:ind-1-2}), and obtain
\begin{eqnarray}
\nonumber \|\theta^{(t+1)} - \theta^{(t+1,m)}\| &\leq& (1-\eta\lambda - c_1\eta np)\|\theta^{(t)} - \theta^{(t,m)}\| + \eta\sqrt{C_1np\log n\left(L^{-1}+\|\theta^{(t,m)}-\theta^*\|_{\infty}^2\right)} \\
\label{eq:anderson} &\leq& (1-c_1\eta np) + \eta\sqrt{C_1np\log n\left(L^{-1}+9\right)} \\
\label{eq:pinhan} &\leq& 1
\end{eqnarray}
where the inequality (\ref{eq:anderson}) is by (\ref{eq:induction-gd-1}) and (\ref{eq:induction-t-max-m}). The inequality (\ref{eq:pinhan}) requires that $\sqrt{C_1np\log n\left(L^{-1}+9\right)}\leq c_1np$, which is implied by the condition that $p\geq\frac{c_0\log n}{n}$ for some sufficiently large $c_0>0$. We thus have proved (\ref{eq:induction-gd-1}) for $t+1$.

Next, we have
\begin{eqnarray*}
\theta^{(t+1)}-\theta^* &=& \theta^{(t)}-\theta^*-\eta\left(\nabla\ell_n(\theta^{(t)})+\lambda\theta^{(t)}\right) \\
&=& (1-\eta\lambda)(\theta^{(t)}-\theta^*) -\eta\left(\nabla\ell_n(\theta^{(t)})-\nabla\ell_n(\theta^*)\right) -\eta\lambda\theta^* -\eta\nabla\ell_n(\theta^*) \\
&=& \left((1-\eta\lambda)I_n-\eta H(\xi)\right)(\theta^{(t)}-\theta^*)  -\eta\lambda\theta^* -\eta\nabla\ell_n(\theta^*),
\end{eqnarray*}
where $\xi$ is abused for a vector that is a  convex combination of $\theta^{(t)}$ and $\theta^*$. Since by (\ref{eq:induction-t-max}) we get $\|\xi-\theta^*\|_{\infty}\leq \|\theta^{(t)}-\theta^*\|_{\infty}\leq 2$,
we can use Lemma \ref{lem:hessian-spec} to obtain the bound
\begin{equation}
\left((1-\eta\lambda)I_n-\eta H(\xi)\right)(\theta^{(t)}-\theta^*) \leq (1-\eta\lambda-c_2\eta np)\|\theta^{(t)}-\theta^*\|, \label{eq:ind-2-1}
\end{equation}
for some constant $c_2>0$. We also note that
\begin{equation}
\|\nabla\ell_n(\theta^*)\|^2 = \sum_{i=1}^n\left(\sum_{j\in[n]\backslash\{i\}}A_{ij}(\bar{y}_{ij}-\psi(\theta_i^*-\theta_j^*))\right)^2 \leq C_2\frac{n^2p}{L}, \label{eq:ind-2-2}
\end{equation}
for some constant $C_2>0$ with high probability by Lemma \ref{lem:concentration}. Combine the bounds (\ref{eq:ind-2-1}) and (\ref{eq:ind-2-2}), and we obtain
\begin{eqnarray*}
\|\theta^{(t+1)}-\theta^*\| &\leq& (1-\eta\lambda-c_2\eta np)\|\theta^{(t)}-\theta^*\| + \eta\sqrt{C_2\frac{n^2p}{L}} + \eta\lambda\|\theta^*\| \\
&\leq& (1-c_2\eta np)\sqrt{\frac{n}{\log n}} + \eta\sqrt{C_2\frac{n^2p}{L}} + \eta\lambda\|\theta^*\| \\
&\leq& \sqrt{\frac{n}{\log n}},
\end{eqnarray*}
where the last inequality is due to $\eta\sqrt{C_2\frac{n^2p}{L}} + \eta\lambda\|\theta^*\|\lesssim \frac{1}{\sqrt{Lp}}+\frac{1}{n^{3/2}p}=o\left(\eta np\sqrt{\frac{n}{\log n}}\right)$ by the choice of $\eta$ and $\lambda$. Hence, (\ref{eq:induction-gd-2}) holds for $t+1$.

Finally, we have
\begin{eqnarray*}
\theta_m^{(t+1,m)} -\theta_m^* &=& \theta_m^{(t,m)}-\theta_m^* + \eta p\sum_{j\in[n]\backslash\{m\}}\left(\psi(\theta_m^*-\theta_j^*)-\psi(\theta_m^{(t,m)}-\theta_j^{(t,m)})\right) -\lambda\eta\theta_{m}^{(t,m)} \\
&=& \theta_m^{(t,m)}-\theta_m^* + \eta p\sum_{j\in[n]\backslash\{m\}}\psi'(\xi_j)(\theta_m^*-\theta_j^*-\theta_m^{(t,m)}+\theta_j^{(t,m)}) -\lambda\eta\theta_{m}^{(t,m)} \\
&=& \left(1-\eta\lambda - \eta p\sum_{j\in[n]\backslash\{m\}}\psi'(\xi_j)\right)(\theta_m^{(t,m)}-\theta_m^*) - \lambda\eta\theta_m^* \\
&& + \eta p\sum_{j\in[n]\backslash\{m\}}\psi'(\xi_j)(\theta_j^{(t,m)}-\theta_j^*),
\end{eqnarray*}
where $\xi_j$ is a scalar between $\theta_m^*-\theta_j^*$ and $\theta_m^{(t,m)}-\theta_j^{(t,m)}$. By (\ref{eq:induction-t-max-m}), we have $|\xi_j-\theta_m^*+\theta_j^*|\leq |\theta_m^*-\theta_j^*-\theta_m^{(t,m)}+\theta_j^{(t,m)}|\leq 6$, which implies $\|\xi\|_{\infty}$ is bounded. We then have $\sum_{j\in[n]\backslash\{m\}}\psi'(\xi_j)\geq c_3n$ for some constant $c_3>0$, and thus
\begin{equation}
\left|\left(1-\eta\lambda - \eta p\sum_{j\in[n]\backslash\{m\}}\psi'(\xi_j)\right)(\theta_m^{(t,m)}-\theta_m^*)\right| \leq (1-\eta\lambda-c_3\eta np)|\theta_m^{(t,m)}-\theta_m^*|. \label{eq:ind-3-1}
\end{equation}
We also have
\begin{equation}
\left|\sum_{j\in[n]\backslash\{m\}}\psi'(\xi_j)(\theta_j^{(t,m)}-\theta_j^*)\right| \leq \|\theta^{(t,m)}-\theta^*\|_1 \leq \sqrt{n}\|\theta^{(t,m)}-\theta^*\| \leq \sqrt{n}\left(1+\sqrt{\frac{n}{\log n}}\right), \label{eq:ind-3-2}
\end{equation}
where the last inequality is by (\ref{eq:induction-gd-1}) and (\ref{eq:induction-gd-2}). Combine the bounds (\ref{eq:ind-3-1}) and (\ref{eq:ind-3-2}), and we get
\begin{eqnarray*}
|\theta_m^{(t+1,m)} -\theta_m^*| &\leq& (1-\eta\lambda-c_3\eta np)|\theta_m^{(t,m)}-\theta_m^*| + \eta p\sqrt{n}\left(1+\sqrt{\frac{n}{\log n}}\right) + \lambda\eta|\theta_m^*| \\
&\leq& \left(1-c_3\eta np\right) + \eta p\sqrt{n} + \eta p\frac{n}{\sqrt{\log n}} + \lambda\eta|\theta_m^*| \\
&\leq& 1,
\end{eqnarray*}
where the last inequality is because of $\eta p\sqrt{n} + \eta p\frac{n}{\sqrt{\log n}} + \lambda\eta|\theta_m^*| = o\left(\eta np\right)$ by the choice of $\eta$ and $\lambda$. Hence, (\ref{eq:induction-gd-3}) holds for $t+1$.

To summarize, we have shown that (\ref{eq:induction-gd-1}), (\ref{eq:induction-gd-2}) and (\ref{eq:induction-gd-3}) hold for all $t\leq t^*$ with probability at least $1-O(t^*n^{-10})$. The reason why we have the probability $1-O(t^*n^{-10})$ is because we need to apply Lemma \ref{lem:A-bern} with a different weight at each iteration to show (\ref{eq:ind-1-2}). Note that the bound (\ref{eq:induction-t-max}) holds for all $t\leq t^*$ as well and we thus have $\|\theta^{(t^*)}-\theta^*\|_{\infty}\leq 2$. With a standard optimization result for a strongly convex objective function, we have
$$\|\theta^{(t^*)}-\wh{\theta}_{\lambda}\|\leq\left(1-\frac{\lambda}{\lambda+np}\right)^{t^*}\|\wh{\theta}_{\lambda}-\theta^*\|.$$
See Lemma 6.7 of \cite{chen2019spectral}. By triangle inequality, we have
$$\|\wh{\theta}_{\lambda}-\theta^*\|_{\infty}\leq \|\theta^{(t^*)}-\wh{\theta}_{\lambda}\| + \|\theta^{(t^*)}-\theta^*\|_{\infty}\leq \left(1-\frac{\lambda}{\lambda+np}\right)^{t^*}\sqrt{n}\|\wh{\theta}_{\lambda}-\theta^*\|_{\infty} + 2.$$
Since $\left(1-\frac{\lambda}{\lambda+np}\right)\leq 1-\frac{1}{1+n^2}$, we can take $t^*=n^3$ in order that $\left(1-\frac{\lambda}{\lambda+np}\right)^{t^*}\sqrt{n}\leq \frac{1}{2}$. This implies $\|\wh{\theta}_{\lambda}-\theta^*\|_{\infty}\leq 4$ with probability at least $1-O(n^{-7})$ as desired.
\end{proof}

\subsection{Proof of Theorem \ref{thm:MLE-estimation}}\label{sec:MLE-pf-thm-est}

We give separate proofs for the conclusions (\ref{eq:main-l2}) and (\ref{eq:main-linf}) in this section.

\begin{proof}[Proof of (\ref{eq:main-l2}) of Theorem \ref{thm:MLE-estimation}]
By the definition of $\wh{\theta}$, we have $\ell_n(\theta^*)\geq\ell_n(\wh{\theta})$. We then apply Taylor expansion and obtain
$$\ell_n(\wh{\theta})=\ell_n(\theta^*) + (\wh{\theta}-\theta^*)^T\nabla\ell_n(\theta^*) + \frac{1}{2}(\wh{\theta}-\theta^*)^TH(\xi)(\wh{\theta}-\theta^*),$$
where $\xi$ is a convex combination of $\wh{\theta}$ and $\theta^*$. By Proposition \ref{prop:entrywise-MLE}, we have $\|\wh{\theta}-\theta^*\|_{\infty}\leq 5$, which implies $\|\xi-\theta^*\|_{\infty}\leq 5$. Thus, we can apply Lemma \ref{lem:hessian-spec} and get $\frac{1}{2}(\wh{\theta}-\theta^*)^TH(\xi)(\wh{\theta}-\theta^*)\geq c_1np\|\wh{\theta}-\theta^*\|^2$ for some constant $c_1>0$. Together with $\ell_n(\theta^*)\geq\ell_n(\wh{\theta})$ and a Cauchy-Schwarz inequality, we have $\|\wh{\theta}-\theta^*\|^2\leq \frac{\|\nabla\ell_n(\theta^*)\|^2}{(c_1np)^2}$. Use (\ref{eq:ind-2-2}) and Lemma \ref{lem:concentration}, we obtain the desired conclusion that $\|\wh{\theta}-\theta^*\|^2\lesssim \frac{1}{Lp}$.
\end{proof}

The proof of (\ref{eq:main-linf}) is more involved. It is based on a leave-one-out argument that is very different from the one used in \cite{chen2019spectral}. Let us decompose the objective function $\ell_n(\theta)$ as
\begin{equation}
\ell_n(\theta) = {\ell}_n^{(-m)}(\theta_{-m}) + {\ell}_n^{(m)}(\theta_m|\theta_{-m}), \label{eq:l-1-out-decomp}
\end{equation}
where we use $\theta_m\in\mathbb{R}$ for the $m$th entry of $\theta$ and $\theta_{-m}\in\mathbb{R}^{n-1}$ for the remaining entries. The two functions in (\ref{eq:l-1-out-decomp}) are defined as
\begin{eqnarray*}
{\ell}_n^{(-m)}(\theta_{-m}) &=& \sum_{1\leq i<j\leq n: i,j\neq m}A_{ij}\left[\bar{y}_{ij}\log\frac{1}{\psi(\theta_i-\theta_j)}+(1-\bar{y}_{ij})\log\frac{1}{1-\psi(\theta_i-\theta_j)}\right], \\
{\ell}_n^{(m)}(\theta_m|\theta_{-m}) &=& \sum_{j\in[n]\backslash\{m\}}A_{mj}\left[\bar{y}_{mj}\log\frac{1}{\psi(\theta_m-\theta_j)}+(1-\bar{y}_{mj})\log\frac{1}{1-\psi(\theta_m-\theta_j)}\right].
\end{eqnarray*}
Define
\begin{equation}
{\theta}_{-m}^{(m)} = \argmin_{\theta_{-m}:\|\theta_{-m}-\theta_{-m}^*\|_{\infty}\leq 5}{\ell}_n^{(-m)}(\theta_{-m}). \label{eq:con-MLE-l-m}
\end{equation}
We first present an $\ell_2$ norm bound for ${\theta}_{-m}^{(m)}$. We also use $H^{(-m)}(\theta_{-m})$ for the Hessian matrix $\nabla^2{\ell}_n^{(-m)}(\theta_{-m})$.
\begin{lemma}\label{lem:global-rate-l-1-o}
Under the setting of Theorem \ref{thm:MLE-estimation}, there exists some constant $C>0$ such that
$$\max_{m\in[n]}\|\theta_{-m}^{(m)}-\theta_{-m}^*-a_m\mathds{1}_{n-1}\|^2\leq C\frac{1}{pL},$$
with probability at least $1-O(n^{-9})$, where $a_m=\ave(\theta_{-m}^{(m)}-\theta_m^*)$.
\end{lemma}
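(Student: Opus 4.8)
\textit{Proof strategy.} The plan is to run, on the reduced problem that deletes player $m$, the same basic-inequality argument used to prove (\ref{eq:main-l2}), and then take a union bound over $m\in[n]$. Throughout write $\delta_m=\theta_{-m}^{(m)}-\theta_{-m}^*-a_m\mathds{1}_{n-1}$ with $a_m=\ave(\theta_{-m}^{(m)}-\theta_{-m}^*)$, so that $\delta_m\perp\mathds{1}_{n-1}$. The minimizer in (\ref{eq:con-MLE-l-m}) exists by compactness of the constraint, $\ell_n^{(-m)}$ is convex, and $\theta_{-m}^*$ is feasible, so optimality gives $\ell_n^{(-m)}(\theta_{-m}^{(m)})\le\ell_n^{(-m)}(\theta_{-m}^*)$.

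First I would Taylor-expand $\ell_n^{(-m)}$ around $\theta_{-m}^*$: there is a convex combination $\xi$ of $\theta_{-m}^{(m)}$ and $\theta_{-m}^*$ with
$$\ell_n^{(-m)}(\theta_{-m}^{(m)})=\ell_n^{(-m)}(\theta_{-m}^*)+\langle\theta_{-m}^{(m)}-\theta_{-m}^*,\nabla\ell_n^{(-m)}(\theta_{-m}^*)\rangle+\tfrac12(\theta_{-m}^{(m)}-\theta_{-m}^*)^TH^{(-m)}(\xi)(\theta_{-m}^{(m)}-\theta_{-m}^*).$$
Since $\theta^*\in\Theta(k,0,\kappa)$ with $\mathds{1}_n^T\theta^*=0$ and $\kappa=O(1)$ we have $\|\theta^*\|_\infty\le\kappa=O(1)$, and the constraint in (\ref{eq:con-MLE-l-m}) forces $\|\xi-\theta^*\|_\infty\le5$, so $\xi$ has dynamic range $O(1)$ on $[n]\setminus\{m\}$. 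Applying the proofs of Lemma \ref{lem:A-basic} and Lemma \ref{lem:A-bern} to the sub-graph of $A$ on $[n]\setminus\{m\}$ (again Erd\H{o}s--R\'{e}nyi with the same $p\ge c_0\log n/n$), the argument of Lemma \ref{lem:hessian-spec} yields $\lambda_{\min,\perp}(H^{(-m)}(\xi))\gtrsim np$. Because $H^{(-m)}(\xi)\mathds{1}_{n-1}=0$ and $\delta_m\perp\mathds{1}_{n-1}$, the quadratic term equals $\tfrac12\delta_m^TH^{(-m)}(\xi)\delta_m\gtrsim np\|\delta_m\|^2$.

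For the linear term, $\nabla\ell_n^{(-m)}(\theta_{-m}^*)\perp\mathds{1}_{n-1}$ (gradient of a shift-invariant function), so it equals $\langle\delta_m,\nabla\ell_n^{(-m)}(\theta_{-m}^*)\rangle$, bounded in absolute value by $\|\delta_m\|\,\|\nabla\ell_n^{(-m)}(\theta_{-m}^*)\|$. Writing each coordinate of $\nabla\ell_n^{(-m)}(\theta_{-m}^*)$ as the corresponding coordinate of $\nabla\ell_n(\theta^*)$ minus the single $j=m$ summand and using $(a-b)^2\le2a^2+2b^2$, the first and third displays of Lemma \ref{lem:concentration} give $\|\nabla\ell_n^{(-m)}(\theta_{-m}^*)\|^2\lesssim n^2p/L$, a bound automatically uniform in $m$ since those displays are stated as maxima over coordinates. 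Plugging the two estimates into the optimality inequality yields $np\|\delta_m\|^2\lesssim\|\delta_m\|\sqrt{n^2p/L}$, hence $\|\delta_m\|^2\lesssim 1/(pL)$; a union bound over $m\in[n]$ turns the per-$m$ failure probability $O(n^{-10})$ into $O(n^{-9})$, which is the claim.

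The main obstacle is the spectral lower bound $\lambda_{\min,\perp}(H^{(-m)}(\xi))\gtrsim np$ holding simultaneously for all $m$: one must verify that deleting an arbitrary vertex from $\mathcal{G}(n,p)$ leaves the minimum degree and the Laplacian spectral gap of the remaining graph of order $np$, and that the $n$ events (one per deleted vertex) can be intersected without pushing the failure probability above $O(n^{-9})$. This is routine given Lemmas \ref{lem:A-basic}--\ref{lem:A-bern} (the sub-graph is again Erd\H{o}s--R\'{e}nyi on $n-1$ vertices with the same $p\gtrsim\log n/n$), but it is the only step where the union bound needs care; the gradient concentration inputs require no extra union bound.
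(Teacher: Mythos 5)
Your proposal is correct and follows essentially the same route as the paper's proof: the basic optimality inequality, a Taylor expansion whose quadratic term is lower bounded via the restricted Hessian eigenvalue (Lemma \ref{lem:hessian-spec} applied to the subgraph on $[n]\setminus\{m\}$), Cauchy--Schwarz on the linear term, the gradient bound from Lemma \ref{lem:concentration}, and a union bound over $m$ converting $O(n^{-10})$ into $O(n^{-9})$. Your handling of the centering via orthogonality of $\delta_m$ and $\nabla\ell_n^{(-m)}(\theta_{-m}^*)$ to $\mathds{1}_{n-1}$ is just an explicit restatement of the shift-invariance the paper invokes, so there is no substantive difference.
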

\begin{proof}
The proof is very similar to that of (\ref{eq:main-l2}), since ${\theta}_{-m}^{(m)}$ can be thought of as a constrained MLE on a subset of the data. By the definition of ${\theta}_{-m}^{(m)}$, we have
\begin{eqnarray*}
{\ell}_n^{(-m)}(\theta_{-m}^*) &\geq& {\ell}_n^{(-m)}(\theta_{-m}^{(m)}) \\
&=& {\ell}_n^{(-m)}(\theta_{-m}^*) + (\theta_{-m}^{(m)}-\theta_{-m}^*-a_m\mathds{1}_{n-1})^T\nabla{\ell}_n^{(-m)}(\theta_{-m}^*) \\
&& + \frac{1}{2}(\theta_{-m}^{(m)}-\theta_{-m}^*-a_m\mathds{1}_{n-1})^TH^{(-m)}(\xi)(\theta_{-m}^{(m)}-\theta_{-m}^*-a_m\mathds{1}_{n-1}),
\end{eqnarray*}
where $\xi$ is a convex combination of $\theta_{-m}^{(m)}$ and $\theta_{-m}^*$. In the above Taylor expansion, we have also used the property that ${\ell}_n^{(-m)}(\theta_{-m})={\ell}_n^{(-m)}(\theta_{-m}+c\mathds{1}_{n-1})$, $\nabla{\ell}_n^{(-m)}(\theta_{-m})=\nabla{\ell}_n^{(-m)}(\theta_{-m}+c\mathds{1}_{n-1})$ and $H^{(-m)}(\theta_{-m})=H^{(-m)}(\theta_{-m}+c\mathds{1}_{n-1})$ for any $c\in\mathbb{R}$. Since $\|\xi-\theta_{-m}^*\|_{\infty}\leq \|\theta_{-m}^{(m)}-\theta_{-m}^*\|_{\infty}\leq 5$, we can apply Lemma \ref{lem:hessian-spec} to the subset of the data, and obtain
$$\frac{1}{2}(\theta_{-m}^{(m)}-\theta_{-m}^*-a_m\mathds{1}_{n-1})^TH^{(-m)}(\xi)(\theta_{-m}^{(m)}-\theta_{-m}^*-a_m\mathds{1}_{n-1})\geq c_1np\|\theta_{-m}^{(m)}-\theta_{-m}^*-a_m\mathds{1}_{n-1}\|^2,$$
with probability at least $1-O(n^{-10})$ for some constant $c_1>0$. By Cauchy-Schwarz inequality, we have
$$\|\theta_{-m}^{(m)}-\theta_{-m}^*-a_m\mathds{1}_{n-1}\|^2\leq \frac{\|\nabla{\ell}_n^{(-m)}(\theta_{-m}^*)\|^2}{(c_1np)^2}.$$
Apply (\ref{eq:ind-2-2}) and Lemma \ref{lem:concentration} to the subset of the data, and we obtain that $\|\theta_{-m}^{(m)}-\theta_{-m}^*-a_m\mathds{1}_{n-1}\|^2\leq C\frac{1}{pL}$ with probability at least $1-O(n^{-10})$. Finally, a union bound argument leads to the desired result.
\end{proof}

With the help of Lemma \ref{lem:global-rate-l-1-o}, we are ready to prove (\ref{eq:main-linf}).

\begin{proof}[Proof of (\ref{eq:main-linf}) of Theorem \ref{thm:MLE-estimation}]
By Proposition \ref{prop:entrywise-MLE}, we have $\|\wh{\theta}_{-m}-\theta^*_{-m}\|_{\infty}\leq \|\wh{\theta}-\theta^*\|_{\infty}\leq 5$, and thus $\wh{\theta}_{-m}$ is feasible for the constraint of (\ref{eq:con-MLE-l-m}).
By the definition of ${\theta}_{-m}^{(m)}$, we have
\begin{eqnarray*}
\ell_n^{(-m)}(\wh{\theta}_{-m}) &\geq& {\ell}_n^{(-m)}(\theta_{-m}^{(m)}) \\
&=& \ell_n^{(-m)}(\wh{\theta}_{-m}) + (\theta^{(m)}_{-m}-\wh{\theta}_{-m}-\bar{a}_m\mathds{1}_{n-1})^T\nabla\ell_n^{(-m)}(\wh{\theta}_{-m}) \\
&& + \frac{1}{2}(\theta^{(m)}_{-m}-\wh{\theta}_{-m}-\bar{a}_m\mathds{1}_{n-1})^TH^{(-m)}(\xi)(\theta^{(m)}_{-m}-\wh{\theta}_{-m}-\bar{a}_m\mathds{1}_{n-1}),
\end{eqnarray*}
where $\bar{a}_m=\ave(\theta^{(m)}_{-m}-\wh{\theta}_{-m})$ and $\xi$ is a convex combination of $\theta^{(m)}_{-m}$ and $\wh{\theta}_{-m}$. Since both $\theta^{(m)}_{-m}$ and $\wh{\theta}_{-m}$ satisfy the constraint of (\ref{eq:con-MLE-l-m}), we must have $\|\xi-\theta_{-m}^*\|_{\infty}\leq 5$. Then, we can apply Lemma \ref{lem:hessian-spec} to the subset of the data, and obtain
$$\frac{1}{2}(\theta^{(m)}_{-m}-\wh{\theta}_{-m}-\bar{a}_m\mathds{1}_{n-1})^TH^{(-m)}(\xi)(\theta^{(m)}_{-m}-\wh{\theta}_{-m}-\bar{a}_m\mathds{1}_{n-1})\geq c_1np\|\theta^{(m)}_{-m}-\wh{\theta}_{-m}-\bar{a}_m\mathds{1}_{n-1}\|^2,$$
for some constant $c_1>0$. By Cauchy-Schwarz inequality, we have
$$\|\theta^{(m)}_{-m}-\wh{\theta}_{-m}-\bar{a}_m\mathds{1}_{n-1}\|^2\leq \frac{\|\nabla\ell_n^{(-m)}(\wh{\theta}_{-m})\|^2}{(c_1np)^2}.$$
For each $i\in[n]\backslash\{m\}$, by the decomposition (\ref{eq:l-1-out-decomp}), we have
$$\frac{\partial}{\partial\theta_i}{\ell}_n^{(-m)}(\theta_{-m}) = \frac{\partial}{\partial\theta_i}\ell_n(\theta) - \frac{\partial}{\partial\theta_i}{\ell}_n^{(m)}(\theta_m|\theta_{-m}).$$
Since $\nabla\ell_n(\wh{\theta})=0$, we have
$$\frac{\partial}{\partial\theta_i}{\ell}_n^{(-m)}(\theta_{-m})_{|\theta=\wh{\theta}}=- \frac{\partial}{\partial\theta_i}{\ell}_n^{(m)}(\theta_m|\theta_{-m})_{|\theta=\wh{\theta}}=-A_{mi}(\bar{y}_{mi}-\psi(\wh{\theta}_m-\wh{\theta}_i)).$$
We therefore have the bound
\begin{eqnarray*}
\|\nabla\ell_n^{(-m)}(\wh{\theta}_{-m})\|^2 &=& \sum_{i\in[n]\backslash\{m\}}A_{mi}(\bar{y}_{mi}-\psi(\wh{\theta}_m-\wh{\theta}_i))^2 \\
&\leq& 2\sum_{i\in[n]\backslash\{m\}}A_{mi}(\bar{y}_{mi}-\psi(\theta_m^*-\theta_i^*))^2 \\
&& + 2\sum_{i\in[n]\backslash\{m\}}A_{mi}(\psi(\theta_m^*-\theta_i^*)-\psi(\wh{\theta}_m-\wh{\theta}_i))^2 \\
&\leq& 2\sum_{i\in[n]\backslash\{m\}}A_{mi}(\bar{y}_{mi}-\psi(\theta_m^*-\theta_i^*))^2 + 2\|\wh{\theta}-\theta^*\|_{\infty}^2\sum_{i\in[n]\backslash\{m\}}A_{mi} \\
&\leq& 2\sum_{i\in[n]\backslash\{m\}}A_{mi}(\bar{y}_{mi}-\psi(\theta_m^*-\theta_i^*))^2 + 4np\|\wh{\theta}-\theta^*\|_{\infty}^2,
\end{eqnarray*}
where the last inequality is by Lemma \ref{lem:A-basic}. This implies
\begin{eqnarray*}
\max_{m\in[n]}\|\theta^{(m)}_{-m}-\wh{\theta}_{-m}-\bar{a}_m\mathds{1}_{n-1}\|^2 &\leq& \frac{\max_{m\in[n]}\sum_{i\in[n]\backslash\{m\}}A_{mi}(\bar{y}_{mi}-\psi(\theta_m^*-\theta_i^*))^2}{(c_1np)^2/2} \\
&& + \frac{\|\wh{\theta}-\theta^*\|_{\infty}^2}{c_1^2np/4}.
\end{eqnarray*}
Since we need a bound for $\max_{m\in[n]}\|\theta^{(m)}_{-m}-\wh{\theta}_{-m}-{a}_m\mathds{1}_{n-1}\|^2$, we need to quantify the difference between $a_m$ and $\bar{a}_m$. Recall that $a_m=\ave(\theta_{-m}^{(m)}-\theta_m^*)$. Since $\mathds{1}_n^T\wh{\theta}=\mathds{1}_n^T\theta^*=0$, we have
$$\|{a}_m\mathds{1}_{n-1}-\bar{a}_m\mathds{1}_{n-1}\|^2=(n-1)(\ave(\wh{\theta}_{-m}-\theta^*_{-m}))^2=\frac{(\wh{\theta}_m-\theta_m^*)^2}{n-1}\leq \frac{\|\wh{\theta}-\theta^*\|_{\infty}^2}{n-1}.$$
We then have
\begin{eqnarray}
\nonumber \max_{m\in[n]}\|\theta^{(m)}_{-m}-\wh{\theta}_{-m}-a_m\mathds{1}_{n-1}\|^2 &\leq& C_1\frac{\max_{m\in[n]}\sum_{i\in[n]\backslash\{m\}}A_{mi}(\bar{y}_{mi}-\psi(\theta_m^*-\theta_i^*))^2}{n^2p^2} \\
\label{eq:interesting} && + C_1\frac{\|\wh{\theta}-\theta^*\|_{\infty}^2}{np},
\end{eqnarray}
for some constant $C_1>0$.

Next, let us derive a bound for $\|\wh{\theta}-\theta^*\|_{\infty}^2$ in terms of $\max_{m\in[n]}\|\theta^{(m)}_{-m}-\wh{\theta}_{-m}-a_m\mathds{1}_{n-1}\|^2$. We introduce the notation
\begin{eqnarray*}
f^{(m)}(\theta_m|\theta_{-m}) &=& \frac{\partial}{\partial\theta_m}{\ell}_n^{(m)}(\theta_m|\theta_{-m}) = -\sum_{i\in[n]\backslash\{m\}}A_{mi}(\bar{y}_{mi}-\psi(\theta_m-\theta_i)), \\
g^{(m)}(\theta_m|\theta_{-m}) &=& \frac{\partial^2}{\partial\theta_m^2}{\ell}_n^{(m)}(\theta_m|\theta_{-m}) = \sum_{i\in[n]\backslash\{m\}}A_{mi}\psi(\theta_m-\theta_i)\psi(\theta_i-\theta_m).
\end{eqnarray*}
By the definition of $\wh{\theta}$, we know that $\ell_n(\wh{\theta})=\min_{\theta:\mathds{1}_{n}^T\theta=0}\ell_n(\theta)$. Since $\ell_n(\theta)=\ell_n(\theta+c\mathds{1}_{n})$ for any $c\in\mathbb{R}$, we also have $\ell_n(\wh{\theta})=\min_{\theta}\ell_n(\theta)$. This allows us to compare the value of the objective $\ell_n(\theta)$ at $\wh{\theta}$ with any vector that is not necessarily centered. We then have
$$\ell_n^{(m)}(\theta_m^*|\wh{\theta}_{-m}) + \ell_n^{(-m)}(\wh{\theta}_{-m})\geq\ell_n(\wh{\theta}),$$
which implies
\begin{eqnarray*}
\ell_n^{(m)}(\theta_m^*|\wh{\theta}_{-m}) &\geq& \ell_n^{(m)}(\wh{\theta}_m|\wh{\theta}_{-m}) \\
&=& \ell_n^{(m)}(\theta_m^*|\wh{\theta}_{-m}) + (\wh{\theta}_m-\theta_m^*)f^{(m)}(\theta_m^*|\wh{\theta}_{-m}) + \frac{1}{2}(\wh{\theta}_m-\theta_m^*)^2g^{(m)}(\xi|\wh{\theta}_{-m}),
\end{eqnarray*}
where $\xi$ is a scalar between $\theta_m^*$ and $\wh{\theta}_m$. By Proposition \ref{prop:entrywise-MLE}, $|\xi-\theta_m^*|\leq |\wh{\theta}_m-\theta_m^*|\leq \|\wh{\theta}-\theta^*\|_{\infty}\leq 5$. Therefore, for any $i\in[n]\backslash\{m\}$, $|\xi-\wh{\theta}_i|\leq |\xi-\theta_m^*|+|\theta_m^*-\theta_i^*|+|\wh{\theta}_i-\theta_i^*|\leq 10 + \kappa$. This implies $\frac{1}{2}g^{(m)}(\xi|\wh{\theta}_{-m})\geq c_2np$ for some constant $c_2>0$ with the help of Lemma \ref{lem:A-basic}. We then have the bound
\begin{equation}
(\wh{\theta}_m-\theta_m^*)^2\leq \frac{|f^{(m)}(\theta_m^*|\wh{\theta}_{-m})|^2}{(c_2np)^2}. \label{eq:fixec}
\end{equation}
We bound $|f^{(m)}(\theta_m^*|\wh{\theta}_{-m})|$ by
\begin{eqnarray}
\nonumber |f^{(m)}(\theta_m^*|\wh{\theta}_{-m})| &=& \left|\sum_{i\in[n]\backslash\{m\}}A_{mi}(\bar{y}_{mi}-\psi(\theta_m^*-\wh{\theta}_i))\right| \\
\label{eq:anderson-trick0} &\leq& \left|\sum_{i\in[n]\backslash\{m\}}A_{mi}(\bar{y}_{mi}-\psi(\theta_m^*-\theta_i^*))\right| \\
\label{eq:anderson-trick1} && + \left|\sum_{i\in[n]\backslash\{m\}}A_{mi}(\psi(\theta_m^*-\theta_i^*)-\psi(\theta_m^*-\theta_i^{(m)}+a_m))\right| \\
\label{eq:anderson-trick2} && + \left|\sum_{i\in[n]\backslash\{m\}}A_{mi}(\psi(\theta_m^*-\theta_i^{(m)}+a_m)-\psi(\theta_m^*-\wh{\theta}_i))\right|.
\end{eqnarray}
We use Lemma \ref{lem:A-bern} to bound (\ref{eq:anderson-trick1}). We have
\begin{eqnarray}
\nonumber && \left|\sum_{i\in[n]\backslash\{m\}}A_{mi}(\psi(\theta_m^*-\theta_i^*)-\psi(\theta_m^*-\theta_i^{(m)}+a_m))\right| \\
\label{eq:anderson-trick1-1} &\leq& p\left|\sum_{i\in[n]\backslash\{m\}}(\psi(\theta_m^*-\theta_i^*)-\psi(\theta_m^*-\theta_i^{(m)}+a_m))\right| \\
\label{eq:anderson-trick1-2} && + \left|\sum_{i\in[n]\backslash\{m\}}(A_{mi}-p)(\psi(\theta_m^*-\theta_i^*)-\psi(\theta_m^*-\theta_i^{(m)}+a_m))\right| \\
\nonumber &\leq& p\sqrt{n}\|\theta_{-m}^{(m)}-\theta_{-m}^*-a_m\mathds{1}_{n-1}\| + C_2\log n \|\theta_{-m}^{(m)}-\theta_{-m}^*-a_m\mathds{1}_{n-1}\|_{\infty} \\
\nonumber && + C_2\sqrt{p\log n}\|\theta_{-m}^{(m)}-\theta_{-m}^*-a_m\mathds{1}_{n-1}\| \\
\nonumber &\leq& (p\sqrt{n}+C_2\sqrt{p\log n})\|\theta_{-m}^{(m)}-\theta_{-m}^*-a_m\mathds{1}_{n-1}\| + C_2\log n\|\wh{\theta}-\theta^*\|_{\infty}  \\
\nonumber && + C_2\log n\|\theta_{-m}^{(m)}-\wh{\theta}_{-m}-a_m\mathds{1}_{n-1}\|.
\end{eqnarray}
With the help of \ref{lem:A-basic}, we can also bound (\ref{eq:anderson-trick2}), and we get
\begin{eqnarray}
\nonumber && \left|\sum_{i\in[n]\backslash\{m\}}A_{mi}(\psi(\theta_m^*-\theta_i^{(m)}+a_m)-\psi(\theta_m^*-\wh{\theta}_i))\right| \\
\nonumber &\leq& \sqrt{\sum_{i\in[n]\backslash\{m\}}A_{mi}}\|\theta_{-m}^{(m)}-\wh{\theta}_{-m}-a_m\mathds{1}_{n-1}\| \\
\label{eq:anderson-trick2-bound} &\leq& C_3\sqrt{np}\|\theta_{-m}^{(m)}-\wh{\theta}_{-m}-a_m\mathds{1}_{n-1}\|.
\end{eqnarray}
Plug the bounds into (\ref{eq:fixec}), and we have
\begin{eqnarray*}
\|\wh{\theta}-\theta^*\|_{\infty} &\leq& \frac{\max_{m\in[n]}\left|\sum_{i\in[n]\backslash\{m\}}A_{mi}(\bar{y}_{mi}-\psi(\theta_m^*-\theta_i^*))\right| }{c_2np} \\
&& + \frac{(p\sqrt{n}+C_2\sqrt{p\log n})\max_{m\in[n]}\|\theta_{-m}^{(m)}-\theta_{-m}^*-a_m\mathds{1}_{n-1}\|}{c_2np} \\
&& + \frac{(C_2\log n+C_3\sqrt{np})\|\theta_{-m}^{(m)}-\wh{\theta}_{-m}-a_m\mathds{1}_{n-1}\|}{c_2np} + \frac{C_2\log n\|\wh{\theta}-\theta^*\|_{\infty} }{c_2np}.
\end{eqnarray*}
Since $np\geq c_0\log n$ for some sufficiently large $c_0$, we obtain the bound
\begin{eqnarray}
\nonumber \|\wh{\theta}-\theta^*\|_{\infty} &\leq& C_4\frac{\max_{m\in[n]}\left|\sum_{i\in[n]\backslash\{m\}}A_{mi}(\bar{y}_{mi}-\psi(\theta_m^*-\theta_i^*))\right| }{np} \\
\nonumber && + C_4\frac{p\sqrt{n}\max_{m\in[n]}\|\theta_{-m}^{(m)}-\theta_{-m}^*-a_m\mathds{1}_{n-1}\|}{np} \\
\label{eq:dangerous} && + C_4\frac{(\log n+\sqrt{np})\|\theta_{-m}^{(m)}-\wh{\theta}_{-m}-a_m\mathds{1}_{n-1}\|}{np}.
\end{eqnarray}
Let us plug the above bound into (\ref{eq:interesting}). Then, after some rearrangement, we obtain
\begin{eqnarray*}
\max_{m\in[n]}\|\theta_{-m}^{(m)}-\wh{\theta}_{-m}-a_m\mathds{1}_{n-1}\| &\leq& C_5\frac{\max_{m\in[n]}\sqrt{\sum_{i\in[n]\backslash\{m\}}A_{mi}(\bar{y}_{mi}-\psi(\theta_m^*-\theta_i^*))^2}}{np} \\
&& + C_5\frac{\max_{m\in[n]}\left|\sum_{i\in[n]\backslash\{m\}}A_{mi}(\bar{y}_{mi}-\psi(\theta_m^*-\theta_i^*))\right| }{np\sqrt{np}} \\
&& + C_5\frac{\max_{m\in[n]}\|\theta_{-m}^{(m)}-\theta_{-m}^*-a_m\mathds{1}_{n-1}\|}{n\sqrt{p}}.
\end{eqnarray*}
By Lemma \ref{lem:concentration} and Lemma \ref{lem:global-rate-l-1-o}, we have
\begin{equation}
\max_{m\in[n]}\|\theta_{-m}^{(m)}-\wh{\theta}_{-m}-a_m\mathds{1}_{n-1}\| \leq C_7\sqrt{\frac{1}{npL}}. \label{eq:important-split}
\end{equation}
Now we can plug the bound (\ref{eq:important-split}) back into (\ref{eq:dangerous}), and together with Lemma \ref{lem:concentration} and Lemma \ref{lem:global-rate-l-1-o}, we have
\begin{equation}
\|\wh{\theta}-\theta^*\|_{\infty} \leq C_8\sqrt{\frac{\log n}{npL}}, \label{eq:finally-done}
\end{equation}
which is the desired conclusion. Tracking all the probabilistic events that we have used in the proof, we can conclude that both (\ref{eq:important-split}) and (\ref{eq:finally-done}) hold with probability at least $1-O(n^{-7})$.
\end{proof}

\subsection{Proofs of Theorem \ref{thm:MLE-ranking} and Theorem \ref{thm:MLE-exact}}\label{sec:MLE-pf-thm-rank}

In the proof of (\ref{eq:main-linf}), we have established the byproduct (\ref{eq:important-split}). This bound turns out to be extremely important for us to establish the result of Theorem \ref{thm:MLE-ranking}. We therefore list it, together with its consequence, as a lemma.
\begin{lemma}\label{lem:MLE-split-sharp}
Under the setting of Theorem \ref{thm:MLE-estimation}, there exists some constant $C>0$ such that
$$\max_{m\in[n]}\|\theta_{-m}^{(m)}-\wh{\theta}_{-m}-a_m\mathds{1}_{n-1}\|^2\leq C\frac{1}{npL},$$
$$\max_{m\in[n]}\|\theta_{-m}^{(m)}-\theta^*_{-m}-a_m\mathds{1}_{n-1}\|_{\infty}^2\leq C\frac{\log n}{npL},$$
with probability at least $1-O(n^{-7})$, where $a_m=\ave(\theta_{-m}^{(m)}-\theta_m^*)$ and $\theta_{-m}^{(m)}$ is defined by (\ref{eq:con-MLE-l-m}).
\end{lemma}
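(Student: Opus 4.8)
The plan is to observe that the first inequality of Lemma~\ref{lem:MLE-split-sharp} is nothing but the byproduct \eqref{eq:important-split} already extracted during the proof of \eqref{eq:main-linf}: there we established $\max_{m\in[n]}\|\theta_{-m}^{(m)}-\wh{\theta}_{-m}-a_m\mathds{1}_{n-1}\|\leq C_7\sqrt{1/(npL)}$ on an event of probability at least $1-O(n^{-7})$, and squaring both sides yields the first bound verbatim. So there is nothing new to do here; we record it separately only because it will be invoked repeatedly in Section~\ref{sec:MLE-pf-thm-rank}.

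For the second inequality I would pass through the MLE via the triangle inequality, writing
\[
\|\theta_{-m}^{(m)}-\theta_{-m}^*-a_m\mathds{1}_{n-1}\|_{\infty}\leq \|\theta_{-m}^{(m)}-\wh{\theta}_{-m}-a_m\mathds{1}_{n-1}\|_{\infty}+\|\wh{\theta}_{-m}-\theta_{-m}^*\|_{\infty}.
\]
For the first term on the right I would bound the $\ell_\infty$ norm by the $\ell_2$ norm and apply the first inequality of the lemma, giving $\|\theta_{-m}^{(m)}-\wh{\theta}_{-m}-a_m\mathds{1}_{n-1}\|_{\infty}^2\leq \|\theta_{-m}^{(m)}-\wh{\theta}_{-m}-a_m\mathds{1}_{n-1}\|^2\leq C/(npL)$. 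For the second term I would use $\|\wh{\theta}_{-m}-\theta_{-m}^*\|_{\infty}\leq\|\wh{\theta}-\theta^*\|_{\infty}$ together with the already proven $\ell_\infty$ rate \eqref{eq:main-linf}, which yields $\|\wh{\theta}_{-m}-\theta_{-m}^*\|_{\infty}^2\leq C\log n/(npL)$. Combining the two estimates and using $1/(npL)\leq \log n/(npL)$ gives $\max_{m\in[n]}\|\theta_{-m}^{(m)}-\theta_{-m}^*-a_m\mathds{1}_{n-1}\|_{\infty}^2\leq C\log n/(npL)$, as desired. (An alternative, longer route would treat $\theta_{-m}^{(m)}$ as the vanilla MLE of the $(n-1)$-player BTL sub-problem obtained by deleting vertex $m$, verify the $\ell_\infty$ constraint in \eqref{eq:con-MLE-l-m} is inactive by the argument of Proposition~\ref{prop:entrywise-MLE}, and apply the $\ell_\infty$ bound of Theorem~\ref{thm:MLE-estimation} to that sub-problem; but the split through $\wh{\theta}_{-m}$ is cleaner and reuses strictly less machinery.)

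The only point requiring care is the choice of centering: the constant $a_m$ in \eqref{eq:important-split} and in the statement of the lemma is the same, so the split above must be taken through $\wh{\theta}_{-m}$ itself rather than through any re-centered copy of it, and consequently the auxiliary comparison between $a_m$ and $\bar a_m$ used inside the proof of \eqref{eq:main-linf} is not needed here. For the probability bookkeeping, both \eqref{eq:important-split} and \eqref{eq:main-linf} hold with probability at least $1-O(n^{-7})$, and crucially \eqref{eq:important-split} already contains the maximum over $m\in[n]$ while $\|\wh{\theta}-\theta^*\|_{\infty}$ does not depend on $m$; hence the intersection still has probability at least $1-O(n^{-7})$ and no extra union bound over $m$ is incurred. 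Since the whole statement reduces to these two already-established estimates, I do not expect any genuine obstacle — the real work was done in the proof of \eqref{eq:main-linf}.
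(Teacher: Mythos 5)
Your proposal is correct and follows essentially the same route as the paper: the first bound is exactly the byproduct \eqref{eq:important-split}, and the second follows by splitting through $\wh{\theta}_{-m}$, bounding the $\ell_\infty$ norm of the first piece by its $\ell_2$ norm, and invoking the $\ell_\infty$ rate \eqref{eq:finally-done} for the second piece. The paper phrases the combination as $(a+b)^2\leq 2a^2+2b^2$ rather than via the triangle inequality before squaring, but this is the same argument.
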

\begin{proof}
The first conclusion has been established in (\ref{eq:important-split}). The second conclusion is a consequence of the inequality
$$\max_{m\in[n]}\|\theta_{-m}^{(m)}-\theta^*_{-m}-a_m\mathds{1}_{n-1}\|_{\infty}^2 \leq 2\max_{m\in[n]}\|\theta_{-m}^{(m)}-\wh{\theta}_{-m}-a_m\mathds{1}_{n-1}\|^2 + 2\|\wh{\theta}-\theta^*\|_{\infty}^2,$$
and (\ref{eq:finally-done}).
\end{proof}

Now we are ready to prove Theorem \ref{thm:MLE-ranking}.
\begin{proof}[Proof of Theorem \ref{thm:MLE-ranking}]
When the error exponent is of constant order, the bound is also a constant, and the result already holds since $\h_k(\wh{r},r^*)\leq 1$. Therefore, we only need to consider the case when the error exponent tends to infinity.
We first introduce some notation. Define
\begin{equation}
\eta = \frac{1}{2} - \frac{V(\kappa)}{(1-\bar{\delta})\Delta^2npL}\log\frac{n-k}{k}, \label{eq:eta}
\end{equation}
where $\bar{\delta}=o(1)$ is chosen such that $\eta>0$. The specific choice of $\bar{\delta}$ will be specified in the proof.
Then let
\begin{equation}
\bar{\Delta}_i = \begin{cases}
\min\left(\eta(\theta_k^*-\theta_{k+1}^*)+\theta_i^*-\theta_k^*,\left(\frac{\log n}{np}\right)^{1/4}\right), & 1\leq i\leq k, \\
\min\left((1-\eta)(\theta_k^*-\theta_{k+1}^*)+\theta_{k+1}^*-\theta_i^*,\left(\frac{\log n}{np}\right)^{1/4}\right), & k+1 \leq i\leq n.
\end{cases} \label{eq:delta-bar-i}
\end{equation}
Since the diverging error exponent implies $\snr\rightarrow\infty$, we have $\min_{i\in[n]}\bar{\Delta}_i^2Lnp\rightarrow\infty$ and $\max_{i\in[n]}\bar{\Delta}_i\rightarrow 0$.

The proof involves several steps. In the first step, we need to derive a sharp probabilistic bound for $|f^{(m)}(\theta_m^*|\wh{\theta}_{-m})|$. In the proof of (\ref{eq:main-linf}) of Theorem \ref{thm:MLE-estimation}, we have shown that $|f^{(m)}(\theta_m^*|\wh{\theta}_{-m})|$ can be bounded by the sum of (\ref{eq:anderson-trick0}), (\ref{eq:anderson-trick2}), (\ref{eq:anderson-trick1-1}) and (\ref{eq:anderson-trick1-2}). For (\ref{eq:anderson-trick0}), we can use Hoeffding's inequality and Lemma \ref{lem:A-basic} and obtain the bound
\begin{align}
\left|\sum_{i\in[n]\backslash\{m\}}A_{mi}(\bar{y}_{mi}-\psi(\theta_m^*-\theta_i^*))\right|\leq C_1\sqrt{\frac{x\sum_{i\in[n]\backslash\{m\}}A_{mi}}{L}}\leq C_2\sqrt{\frac{xnp}{L}}, \label{eq:bound-f-0}
\end{align}
with probability at least $1-O(n^{-10})-e^{-x}$. Take $x=\bar{\Delta}_m^{3/2}Lnp$, and we have
\begin{equation}
\left|\sum_{i\in[n]\backslash\{m\}}A_{mi}(\bar{y}_{mi}-\psi(\theta_m^*-\theta_i^*))\right| \leq C_2\sqrt{\bar{\Delta}_m^{3/2}(np)^2}, \label{eq:bound-f-1}
\end{equation}
with probability at least $1-O(n^{-10})-e^{-\bar{\Delta}_m^{3/2}Lnp}$. Since we have already shown (\ref{eq:anderson-trick2}) can be bounded by (\ref{eq:anderson-trick2-bound}) with probability at least $1-O(n^{-10})$, an application of Lemma \ref{lem:MLE-split-sharp} implies that
\begin{equation}
\left|\sum_{i\in[n]\backslash\{m\}}A_{mi}(\psi(\theta_m^*-\theta_i^{(m)}+a_m)-\psi(\theta_m^*-\wh{\theta}_i))\right| \leq C_3\sqrt{\frac{1}{L}}, \label{eq:bound-f-2}
\end{equation}
with probability at least $1-O(n^{-7})$. By Cauchy-Schwarz inequality, we can bound (\ref{eq:anderson-trick1-1}) by $p\sqrt{n}\|\theta^{(m)}_{-m}-\theta^*_{-m}-a_m\mathds{1}_{n-1}\|$. With the help of Lemma \ref{lem:global-rate-l-1-o}, we have
\begin{equation}
p\left|\sum_{i\in[n]\backslash\{m\}}(\psi(\theta_m^*-\theta_i^*)-\psi(\theta_m^*-\theta_i^{(m)}+a_m))\right|\leq C_4\sqrt{\frac{np}{L}}, \label{eq:bound-f-3}
\end{equation}
with probability at least $1-O(n^{-9})$. For (\ref{eq:anderson-trick1-2}), we use Bernstein's inequality, and we have
\begin{align}
& \left|\sum_{i\in[n]\backslash\{m\}}(A_{mi}-p)(\psi(\theta_m^*-\theta_i^*)-\psi(\theta_m^*-\theta_i^{(m)}+a_m))\right| \nonumber \\
&\leq C_5\sqrt{px}\|\theta^{(m)}_{-m}-\theta^*_{-m}-a_m\mathds{1}_{n-1}\| + C_5x\|\theta^{(m)}_{-m}-\theta^*_{-m}-a_m\mathds{1}_{n-1}\|_{\infty}, \label{eq:bound-f-4}
\end{align}
with probability at least $1-e^{-x}$. We choose $x=\min\left(\bar{\Delta}_m^2 Lnp\frac{np}{\log n}, 7\log n\right)$. Then, with the help of Lemma \ref{lem:global-rate-l-1-o} and Lemma \ref{lem:MLE-split-sharp}, we have
\begin{eqnarray}
\nonumber && \left|\sum_{i\in[n]\backslash\{m\}}(A_{mi}-p)(\psi(\theta_m^*-\theta_i^*)-\psi(\theta_m^*-\theta_i^{(m)}+a_m))\right| \\
\label{eq:bound-f-4} &\leq& C_6\frac{1}{\sqrt{L}}\sqrt{\min\left(\bar{\Delta}_m^2 Lnp\frac{np}{\log n}, 7\log n\right)} + C_6\sqrt{\frac{\log n}{npL}}\min\left(\bar{\Delta}_m^2 Lnp\frac{np}{\log n}, 7\log n\right),
\end{eqnarray}
with probability at least $1-O(n^{-7})-\exp\left(-\bar{\Delta}_m^2npL\frac{np}{\log n}\right)$.
Combining the bounds (\ref{eq:bound-f-1})-(\ref{eq:bound-f-4}), we obtain a bound for $|f^{(m)}(\theta_m^*|\wh{\theta}_{-m})|$. This also implies a bound for $|\wh{\theta}_m-\theta_m^*|$ because of the inequality (\ref{eq:fixec}).

In the second step, we define
\begin{align}
\bar{\theta}_m = \theta_m^* - \frac{f^{(m)}(\theta_m^*|\wh{\theta}_{-m})}{g^{(m)}(\theta_m^*|\wh{\theta}_{-m})}.\label{eqn:bar_theta_def}
\end{align}
We need to show $\bar{\theta}_m$ and $\wh{\theta}_m$ are close.
By Proposition \ref{prop:entrywise-MLE}, $\|\wh{\theta}-\theta^*\|_{\infty}\leq 5$, and thus $g^{(m)}(\theta_m^*|\wh{\theta}_{-m})\geq c_1np$ for some constant $c_1>0$, so that we have the bound $|\bar{\theta}_m-\theta_m^*|\leq \frac{|f^{(m)}(\theta_m^*|\wh{\theta}_{-m})|}{c_1np}$. In fact, given the inequality (\ref{eq:fixec}), we can choose $c_1$ to be sufficiently small so that $|\wh{\theta}_m-\theta_m^*|\leq \frac{|f^{(m)}(\theta_m^*|\wh{\theta}_{-m})|}{c_1np}$ is also true. Therefore, we can express $\bar{\theta}_m$ and $\wh{\theta}_m$ as
\begin{eqnarray*}
\bar{\theta}_m &=& \argmin_{|\theta_m-\theta_m^*|\leq \frac{|f^{(m)}(\theta_m^*|\wh{\theta}_{-m})|}{c_1np}}\bar{\ell}_n^{(m)}(\theta_m|\wh{\theta}_{-m}), \\
\wh{\theta}_m &=& \argmin_{|\theta_m-\theta_m^*|\leq \frac{|f^{(m)}(\theta_m^*|\wh{\theta}_{-m})|}{c_1np}}\ell_n^{(m)}(\theta_m|\wh{\theta}_{-m}),
\end{eqnarray*}
where
$$\bar{\ell}_n^{(m)}(\theta_m|\wh{\theta}_{-m}) = \ell_n^{(m)}(\theta_m^*|\wh{\theta}_{-m})+(\theta_m-\theta_m^*)f^{(m)}(\theta_m^*|\wh{\theta}_{-m}) + \frac{1}{2}(\theta_m-\theta_m^*)^2g^{(m)}(\theta_m^*|\wh{\theta}_{-m}).$$
Recall the definition of $\ell_n^{(m)}(\theta_m|{\theta}_{-m})$ in (\ref{eq:l-1-out-decomp}) and the display afterwards. We will show $\bar{\theta}_m$ and $\wh{\theta}_m$ are close by bounding the difference between the two objective functions. By Taylor expansion, we have
$$
\left|\ell_n^{(m)}(\theta_m|\wh{\theta}_{-m})-\bar{\ell}_n^{(m)}(\theta_m|\wh{\theta}_{-m})\right| = \frac{1}{2}(\theta_m-\theta_m^*)^2\left|g^{(m)}(\xi|\wh{\theta}_{-m})-g^{(m)}(\theta_m^*|\wh{\theta}_{-m})\right|,
$$
where $\xi$ is a scalar between $\theta_m$ and $\theta_m^*$. We then have
\begin{eqnarray*}
&& \left|g^{(m)}(\xi|\wh{\theta}_{-m})-g^{(m)}(\theta_m^*|\wh{\theta}_{-m})\right| \\
&=& \left|\sum_{i\in[n]\backslash\{m\}}A_{mi}\psi(\xi-\wh{\theta}_i)\psi(\wh{\theta}_i-\xi)-\sum_{i\in[n]\backslash\{m\}}A_{mi}\psi(\theta_m^*-\wh{\theta}_i)\psi(\wh{\theta}_i-\theta_m^*)\right| \\
&\leq& |\xi-\theta_m^*|\sum_{i\in[n]\backslash\{m\}}A_{mi} \\
&\leq& C_7|\theta_m-\theta_m^*|np,
\end{eqnarray*}
where the last inequality uses Lemma \ref{lem:A-basic}. Therefore, for any $\theta_m$ that satisfies $|\theta_m-\theta_m^*|\leq \frac{|f^{(m)}(\theta_m^*|\wh{\theta}_{-m})|}{c_1np}$, the difference between the two objective functions can be bounded by
$$\left|\ell_n^{(m)}(\theta_m|\wh{\theta}_{-m})-\bar{\ell}_n^{(m)}(\theta_m|\wh{\theta}_{-m})\right| \leq \frac{C_7np}{2}|\theta_m-\theta_m^*|^3\leq \frac{C_7np}{2}\left(\frac{|f^{(m)}(\theta_m^*|\wh{\theta}_{-m})|}{c_1np}\right)^3.$$
By Pythagorean identity, $\bar{\ell}_n^{(m)}(\wh{\theta}_m|\wh{\theta}_{-m})=\bar{\ell}_n^{(m)}(\bar{\theta}_m|\wh{\theta}_{-m})+\frac{1}{2}g^{(m)}(\theta_m^*|\wh{\theta}_{-m})(\wh{\theta}_m-\bar{\theta}_m)^2$. Then,
\begin{eqnarray*}
&& \frac{1}{2}g^{(m)}(\theta_m^*|\wh{\theta}_{-m})(\wh{\theta}_m-\bar{\theta}_m)^2 \\
&=& \bar{\ell}_n^{(m)}(\wh{\theta}_m|\wh{\theta}_{-m}) - \bar{\ell}_n^{(m)}(\bar{\theta}_m|\wh{\theta}_{-m}) \\
&\leq& {\ell}_n^{(m)}(\wh{\theta}_m|\wh{\theta}_{-m}) - \bar{\ell}_n^{(m)}(\bar{\theta}_m|\wh{\theta}_{-m}) +  \frac{C_7np}{2}\left(\frac{|f^{(m)}(\theta_m^*|\wh{\theta}_{-m})|}{c_1np}\right)^3 \\
&\leq& {\ell}_n^{(m)}(\bar{\theta}_m|\wh{\theta}_{-m}) - \bar{\ell}_n^{(m)}(\bar{\theta}_m|\wh{\theta}_{-m}) +  \frac{C_7np}{2}\left(\frac{|f^{(m)}(\theta_m^*|\wh{\theta}_{-m})|}{c_1np}\right)^3 \\
&\leq& 2\frac{C_7np}{2}\left(\frac{|f^{(m)}(\theta_m^*|\wh{\theta}_{-m})|}{c_1np}\right)^3.
\end{eqnarray*}
Since $g^{(m)}(\theta_m^*|\wh{\theta}_{-m})\geq c_1np$, we obtain the bound
\begin{align*}
(\wh{\theta}_m-\bar{\theta}_m)^2 \leq \frac{2C_7}{c_1^4}\left(\frac{|f^{(m)}(\theta_m^*|\wh{\theta}_{-m})|}{np}\right)^3.
\end{align*}
Since $|f^{(m)}(\theta_m^*|\wh{\theta}_{-m})|$ has been shown to be bounded by the sum of (\ref{eq:bound-f-1})-(\ref{eq:bound-f-4}), we have
\begin{equation}
|\wh{\theta}_m-\bar{\theta}_m| \leq \delta\bar{\Delta}_m, \label{eq:MLE-local-approx-1}
\end{equation}
for some $\delta=o(1)$ with probability at least $1-O(n^{-7})-\exp(-\bar{\Delta}_m^{3/2}Lnp)-\exp\left(-\bar{\Delta}_m^2npL\frac{np}{\log n}\right)$ under the condition that $\bar{\Delta}_m=o(1)$ and $\frac{np}{\log n}\rightarrow\infty$.

In the third step, we need to show that $\frac{f^{(m)}(\theta_m^*|\wh{\theta}_{-m})}{g^{(m)}(\theta_m^*|\wh{\theta}_{-m})}$ in the definition of $\bar{\theta}_m$ can be replaced by $\frac{f^{(m)}(\theta_m^*|\theta^*_{-m})}{g^{(m)}(\theta_m^*|\theta^*_{-m})}$ with a negligible error. By triangle inequality, we can bound $|f^{(m)}(\theta_m^*|\wh{\theta}_{-m})-f^{(m)}(\theta_m^*|\theta^*_{-m})|$ by the sum of (\ref{eq:bound-f-2}), (\ref{eq:bound-f-3}) and (\ref{eq:bound-f-4}). Given that $g^{(m)}(\theta_m^*|\theta_{-m}^*)\gtrsim np$, we have
\begin{equation}
\frac{|f^{(m)}(\theta_m^*|\wh{\theta}_{-m})-f^{(m)}(\theta_m^*|\theta^*_{-m})|}{g^{(m)}(\theta_m^*|\theta_{-m}^*)} \leq \delta\bar{\Delta}_m, \label{eq:MLE-local-approx-2}
\end{equation}
for some $\delta=o(1)$ with probability at least $1-O(n^{-7})-\exp\left(-\bar{\Delta}_m^2npL\frac{np}{\log n}\right)$ under the assumption that $npL\bar{\Delta}_m^2\rightarrow\infty$ and $\frac{np}{\log n}\rightarrow\infty$. Note that we can choose the same $\delta$ to accommodate the two bounds (\ref{eq:MLE-local-approx-1}) and (\ref{eq:MLE-local-approx-2}).
We also need to give a sharp approximation to $g^{(m)}(\theta_m^*|\wh{\theta}_{-m})$. We have
\begin{eqnarray*}
&& \left|g^{(m)}(\theta_m^*|\wh{\theta}_{-m})-g^{(m)}(\theta_m^*|\theta_{-m}^*)\right| \\
&\leq& \left|g^{(m)}(\theta_m^*|\wh{\theta}_{-m})-g^{(m)}(\theta_m^*|\theta_{-m}^{(m)}-a_m\mathds{1}_{n-1})\right| + \left|g^{(m)}(\theta_m^*|\theta_{-m}^{(m)}-a_m\mathds{1}_{n-1})-g^{(m)}(\theta_m^*|\theta_{-m}^*)\right| \\
&\leq& \sqrt{\sum_{i\in[n]\backslash\{m\}}A_{mi}}\|\theta_{-m}^{(m)}-a_m\mathds{1}_{n-1}-\wh{\theta}_{-m}\| + p\sqrt{n}\|\theta_{-m}^{(m)}-a_m\mathds{1}_{n-1}-\theta^*\| \\
&& + \sum_{i\in[n]\backslash\{m\}}(A_{mi}-p)\left|\theta_i^{(m)}-a_m-\theta_i^*\right|.
\end{eqnarray*}
By Lemma \ref{lem:A-basic}, Lemma \ref{lem:global-rate-l-1-o} and Lemma \ref{lem:MLE-split-sharp}, the first two terms can be bounded by $C_8\sqrt{\frac{np}{L}}$ with probability at least $1-O(n^{-7})$. To bound the third term, we can use Lemma \ref{lem:A-bern}, and then $\sum_{i\in[n]\backslash\{m\}}(A_{mi}-p)\left|\theta_i^{(m)}-a_m-\theta_i^*\right|$ can be bounded by
$$C_8\sqrt{p\log n}\|\theta_{-m}^{(m)}-a_m\mathds{1}_{n-1}-\theta^*\| + C_8\log n\|\theta_{-m}^{(m)}-a_m\mathds{1}_{n-1}-\theta^*\|_{\infty},$$
with probability at least $1-O(n^{-10})$. By Lemma \ref{lem:global-rate-l-1-o} and Lemma \ref{lem:MLE-split-sharp}, the above display is at most $C_9\sqrt{\frac{\log n}{L}}+C_9\frac{(\log n)^{3/2}}{\sqrt{npL}}$ with probability at least $1-O(n^{-7})$. Combining our bounds, we obtain
\begin{align}\label{eqn:MLE_gm_gm_diff}
\left|g^{(m)}(\theta_m^*|\wh{\theta}_{-m})-g^{(m)}(\theta_m^*|\theta_{-m}^*)\right|\lesssim \sqrt{\frac{np}{L}}+\frac{(\log n)^{3/2}}{\sqrt{npL}}.
\end{align}
Since $g^{(m)}(\theta_m^*|\theta_{-m}^*)\gtrsim np$, we have
\begin{equation}
\frac{\left|g^{(m)}(\theta_m^*|\wh{\theta}_{-m})-g^{(m)}(\theta_m^*|\theta_{-m}^*)\right|}{g^{(m)}(\theta_m^*|\theta_{-m}^*)} \leq \delta, \label{eq:MLE-local-approx-3}
\end{equation}
for some $\delta=o(1)$ with probability at least $1-O(n^{-7})$. Note that we can choose the same $\delta$ to accommodate the three bounds (\ref{eq:MLE-local-approx-1}), (\ref{eq:MLE-local-approx-2}) and (\ref{eq:MLE-local-approx-3}).

In the last step, we will apply Lemma \ref{lem:anderson} with $t=(1-\eta)\theta_k^*+\eta\theta_{k+1}^*$ to finish the proof. Recall the definition of $\eta$ in (\ref{eq:eta}). For any $i\leq k$, we have
\begin{eqnarray}
\nonumber && \mathbb{P}\left(\wh{\theta}_i \leq (1-\eta)\theta_k^*+\eta\theta_{k+1}^*)\right) \\
\nonumber &\leq& \mathbb{P}\left(\wh{\theta}_i-\theta_i^* \leq -\eta(\theta_k^*-\theta_{k+1}^*) - (\theta_i^*-\theta_k^*)\right) \\
\nonumber &\leq& \mathbb{P}\left(\bar{\theta}_i-\theta_i^*\leq -(1-\delta)\bar{\Delta}_i\right) + \mathbb{P}\left(|\bar{\theta}_i-\wh{\theta}_i|>\delta\bar{\Delta}_i\right) \\
\nonumber &\leq& \mathbb{P}\left(-\frac{f^{(i)}(\theta_i^*|{\theta}^*_{-i})}{g^{(i)}(\theta_i^*|{\theta}^*_{-i})} \leq -(1+\delta^2-3\delta)\bar{\Delta}_i\right) + \mathbb{P}\left(|\bar{\theta}_i-\wh{\theta}_i|>\delta\bar{\Delta}_i\right) \\
\nonumber && + \mathbb{P}\left(\frac{\left|g^{(i)}(\theta_i^*|\wh{\theta}_{-i})-g^{(i)}(\theta_i^*|\theta_{-i}^*)\right|}{g^{(i)}(\theta_i^*|\theta_{-i}^*)} > \delta\right) + \mathbb{P}\left(\frac{|f^{(i)}(\theta_i^*|\wh{\theta}_{-i})-f^{(i)}(\theta_i^*|\theta^*_{-i})|}{g^{(i)}(\theta_i^*|\theta_{-i}^*)} > \delta\bar{\Delta}_i\right) \\
\label{eq:left} &\leq& \mathbb{P}\left(-\frac{f^{(i)}(\theta_i^*|{\theta}^*_{-i})}{g^{(i)}(\theta_i^*|{\theta}^*_{-i})} \leq -(1-3\delta)\bar{\Delta}_i\right) + O(n^{-7}) \\
\nonumber && +\exp(-\bar{\Delta}_i^{3/2}Lnp)+\exp\left(-\bar{\Delta}_i^2npL\frac{np}{\log n}\right),
\end{eqnarray}
where the last inequality is due to (\ref{eq:MLE-local-approx-1}), (\ref{eq:MLE-local-approx-2}) and (\ref{eq:MLE-local-approx-3}). Define the event
$$\mathcal{A}_i=\left\{A: \left|\frac{\sum_{j\in[n]\backslash\{i\}}A_{ij}\psi(\theta_i^*-\theta_j^*)\psi(\theta_j^*-\theta_i^*)}{p\sum_{j\in[n]\backslash\{i\}}\psi(\theta_i^*-\theta_j^*)\psi(\theta_j^*-\theta_i^*)}-1\right|\leq\delta\right\}.$$
By Bernstein's inequality, we have $\mathbb{P}(A\in\mathcal{A}_i^c)\leq O(n^{-7})$ for some $\delta=o(1)$. Again, we shall adjust the value of $\delta$ so that (\ref{eq:MLE-local-approx-1}), (\ref{eq:MLE-local-approx-2}) and (\ref{eq:MLE-local-approx-3}) are still true. We then have
\begin{eqnarray}
\nonumber && \mathbb{P}\left(-\frac{f^{(i)}(\theta_i^*|{\theta}^*_{-i})}{g^{(i)}(\theta_i^*|{\theta}^*_{-i})} \leq -(1-3\delta)\bar{\Delta}_i\right) \\
\nonumber &\leq& \sup_{A\in\mathcal{A}_i}\mathbb{P}\left(\frac{\sum_{j\in[n]\backslash\{i\}}A_{ij}(\bar{y}_{ij}-\psi(\theta_i^*-\theta_j^*))}{\sum_{j\in[n]\backslash\{i\}}A_{ij}\psi(\theta_i^*-\theta_j^*)\psi(\theta_j^*-\theta_i^*)} \leq -(1-3\delta)\bar{\Delta}_i\Big|A\right) + \mathbb{P}(A\in\mathcal{A}_i^c) \\
\label{eq:apply-bern} &\leq& \sup_{A\in\mathcal{A}_i}\exp\left(-\frac{\frac{1}{2}(1-3\delta)^2\bar{\Delta}_i^2\left(L\sum_{j\in[n]\backslash\{i\}}A_{ij}\psi'(\theta_i^*-\theta_j^*)\right)^2}{L\sum_{j\in[n]\backslash\{i\}}A_{ij}\psi'(\theta_i^*-\theta_j^*)+\frac{1-3\delta}{3}\bar{\Delta}_iL\sum_{j\in[n]\backslash\{i\}}A_{ij}\psi'(\theta_i^*-\theta_j^*)}\right) \\
\nonumber &&  + O(n^{-7}) \\
\label{eq:apply-bern2} &=& \exp\left(-\frac{1+o(1)}{2}\bar{\Delta}_i^2Lp\sum_{j\in[n]\backslash\{i\}}\psi^\prime(\theta_i^*-\theta_j^*)\right) + O(n^{-7}) \\
\label{eq:free} &\leq& \exp\left(-\frac{1+o(1)}{2}(\eta(\theta_k^*-\theta_{k+1}^*)+(\theta_i^*-\theta_k^*))^2Lp\sum_{j\in[n]\backslash\{i\}}\psi^\prime(\theta_i^*-\theta_j^*)\right) \\
\nonumber && + O(n^{-7}) \\
\label{eq:extra-step} &\leq& \exp\left(-\frac{1+o(1)}{2}(\bar{\Delta}+(\theta_i^*-\theta_k^*))^2Lp\sum_{j\in[n]\backslash\{i\}}\psi^\prime(\theta_i^*-\theta_j^*)\right) + O(n^{-7}).
\end{eqnarray}
The bound (\ref{eq:apply-bern}) is by Bernstein's inequality. We then use the definition of $\mathcal{A}_i$ to obtain the expression (\ref{eq:apply-bern2}). To see why (\ref{eq:free}) is true, note that when $\bar{\Delta}_i^2=\sqrt{\frac{\log n}{np}}$, the first term of (\ref{eq:apply-bern2}) can be absorbed into $O(n^{-7})$. Finally, in (\ref{eq:extra-step}), we have used the notation $\bar{\Delta}=\min\left(\eta(\theta_k^*-\theta_{k+1}^*),\left(\frac{\log n}{np}\right)^{1/4}\right)$. For each $j\in[n]$, define
$$h_j(t)=\left(\bar{\Delta}+t\right)^2\psi'(t+\theta_k^*-\theta_j^*),\quad\text{for all }t\geq 0.$$
The derivative of this function is
$$h_j'(t)=\left(\bar{\Delta}+t\right)\psi'(t+\theta_k^*-\theta_j^*)\left[2+(\bar{\Delta}+t)(1-2\psi(t+\theta_k^*-\theta_j^*))\right].$$
Since $\max_{j,k}|\theta_k^*-\theta_j^*|=O(1)$, we can find a sufficiently small constant $c_2>0$, such that $h_j(t)$ is increasing on $[0,c_2]$. Moreover, there exists another small constant $c_3>0$ such that $\min_{t\in(c_2,\kappa]}h_j(t)\geq c_3$. With this fact, we can bound the exponent of (\ref{eq:extra-step}) as
\begin{eqnarray}
\nonumber && (\bar{\Delta}+(\theta_i^*-\theta_k^*))^2Lp\sum_{j\in[n]\backslash\{i\}}\psi^\prime(\theta_i^*-\theta_j^*)\\
\nonumber &\geq& Lp\sum_{j\in[n]\backslash\{i\}}\min\left(\bar{\Delta}^2\psi'(\theta_k^*-\theta_j^*),c_3\right) \\
\nonumber &\geq& Lp(k-1)\min\left(\bar{\Delta}^2\psi'(\theta_1^*-\theta_{k}^*),c_3\right) + Lp(n-k)\min\left(\bar{\Delta}^2\psi'(\theta_{k}^*-\theta_n^*),c_3\right) \\
\label{eq:drop-min} &=& Lp\bar{\Delta}^2\left((k-1)\psi'(\theta_1^*-\theta_{k}^*)+(n-k)\psi'(\theta_{k}^*-\theta_n^*)\right) \\
\nonumber &\geq& (1+o(1))Lp\min\left(\eta^2\Delta^2,\sqrt{\frac{\log n}{np}}\right)\frac{n}{V(\kappa)}
\end{eqnarray}
where the equality (\ref{eq:drop-min}) uses the fact that $\bar{\Delta}\rightarrow 0$. Therefore, we can further bound (\ref{eq:extra-step}) as
\begin{eqnarray*}
&& \exp\left(-\frac{1+o(1)}{2}Lp\min\left(\eta^2\Delta^2,\sqrt{\frac{\log n}{np}}\right)\frac{n}{V(\kappa)}\right) + O(n^{-7}) \\
&\leq& \exp\left(-\frac{(1+o(1))\eta^2\Delta^2npL}{2V(\kappa)}\right) + O(n^{-7}).
\end{eqnarray*}
The last inequality holds because when $\min\left(\eta^2\Delta^2,\sqrt{\frac{\log n}{np}}\right)=\sqrt{\frac{\log n}{np}}$, the first term becomes $\exp\left(-\frac{(1+o(1))L\sqrt{np\log n}}{2V(\kappa)}\right)$, which can be absorbed by $O(n^{-7})$. Since $\exp(-\bar{\Delta}_i^{3/2}Lnp)+\exp\left(-\bar{\Delta}_i^2npL\frac{np}{\log n}\right)\leq \exp\left(-\frac{(1+o(1))\eta^2\Delta^2npL}{2V(\kappa)}\right) + O(n^{-7})$, we have
\begin{equation}
\mathbb{P}\left(\wh{\theta}_i \leq (1-\eta)\theta_k^*+\eta\theta_{k+1}^*)\right)\leq \exp\left(-\frac{(1-\delta')\eta^2\Delta^2npL}{2V(\kappa)}\right) + O(n^{-7}), \label{eq:prob-bound-1}
\end{equation}
with some $\delta'=o(1)$ for all $i\leq k$.
With a similar argument, we also have
\begin{equation}
\mathbb{P}\left(\wh{\theta}_i \geq (1-\eta)\theta_k^*+\eta\theta_{k+1}^*)\right)\leq \exp\left(-\frac{(1-\delta')(1-\eta)^2\Delta^2npL}{2V(\kappa)}\right) + O(n^{-7}), \label{eq:prob-bound-2}
\end{equation}
for all all $i\geq k+1$. It can be checked that the $\delta'$ above is independent of the $\bar{\delta}$ in the definition of $\eta$. Now we can choose $\eta$ as in (\ref{eq:eta}) with $\bar{\delta}=\delta'$. By Lemma \ref{lem:anderson}, we have
\begin{eqnarray*}
\mathbb{E}\h_k(\wh{r},r^*) &\leq& \exp\left(-\frac{(1-\bar{\delta})\eta^2\Delta^2npL}{2V(\kappa)}\right) + \frac{n-k}{k}\exp\left(-\frac{(1-\bar{\delta})(1-\eta)^2\Delta^2npL}{2V(\kappa)}\right) + O(n^{-7}) \\
&\leq& 2\exp\left(-\frac{1}{2}\left(\frac{\sqrt{(1-\bar{\delta})\snr}}{2}-\frac{1}{\sqrt{(1-\bar{\delta})\snr}}\log\frac{n-k}{k}\right)^2\right) + O(n^{-7}).
\end{eqnarray*}
By Markov's inequality, the above bound implies
$$\h_k(\wh{r},r^*)\leq \exp\left(-\frac{1}{2}\left(\frac{\sqrt{(1-\delta_1)\snr}}{2}-\frac{1}{\sqrt{(1-\delta_1)\snr}}\log\frac{n-k}{k}\right)^2\right) + O(n^{-6}),$$
for some $\delta_1=o(1)$ with high probability. One can take, for example,
$$\delta_1=\bar{\delta}+\frac{1}{\frac{\sqrt{(1-\bar{\delta})\snr}}{2}-\frac{1}{\sqrt{(1-\bar{\delta})\snr}}\log\frac{n-k}{k}}.$$
When $O(n^{-6})$ dominates the bound, we have $\h_k(\wh{r},r^*)=O(n^{-6})$, which implies $\h_k(\wh{r},r^*)=0$ since $\h_k(\wh{r},r^*)\in\{0,(2k)^{-1}, 2(2k)^{-1}, 3(2k)^{-1},\cdots, 1\}$. Therefore, we always have
$$\h_k(\wh{r},r^*)\leq 2\exp\left(-\frac{1}{2}\left(\frac{\sqrt{(1-\delta_1)\snr}}{2}-\frac{1}{\sqrt{(1-\delta_1)\snr}}\log\frac{n-k}{k}\right)^2\right),$$
with high probability for some $\delta_1=o(1)$. The proof is complete.
\end{proof}

\begin{proof}[Proof of Theorem \ref{thm:MLE-exact}]
With some rearrangements, the condition is equivalent to
$$\frac{npL\Delta^2}{2(1+\epsilon)V(\kappa)}\left(\frac{1}{2}-\frac{(1+\epsilon)V(\kappa)}{npL\Delta^2}\log\frac{n-k}{k}\right)^2 > \log k.$$
Since $\epsilon$ is a constant, it implies
$$\frac{npL\Delta^2}{2V(\kappa)}\left(\frac{1}{2}-\frac{V(\kappa)}{(1-\delta)npL\Delta^2}\log\frac{n-k}{k}\right)^2 > (1+\epsilon)\log k,$$
for any ${\delta}=o(1)$. Therefore, $\h_k(\wh{r},r^*)=o(k^{-1})$ when $k\rightarrow\infty$. Given the fact that $\h_k(\wh{r},r^*)\in\{0,(2k)^{-1}, 2(2k)^{-1}, 3(2k)^{-1},\cdots, 1\}$, we must have $\h_k(\wh{r},r^*)=0$. When $k=O(1)$, the condition implies $\frac{npL\Delta^2}{2V(\kappa)}>(1+\epsilon')\log n$ for some constant $\epsilon'>0$. This leads to the fact that $\left(\frac{1}{2}-\frac{V(\kappa)}{(1-\delta)npL\Delta^2}\log\frac{n-k}{k}\right)^2>c_1$ for some constant $c_1>0$. Therefore, $\h_k(\wh{r},r^*)=o(1)=o(k^{-1})$, which implies $\h_k(\wh{r},r^*)=0$.
\end{proof}

\section{Analysis of the Spectral Method}\label{sec:proof-spec}

We prove results for the spectral method in this section. This includes Theorem \ref{thm:spectral-ranking}, Theorem \ref{thm:spectral-exact} and Theorem \ref{thm:spectral_lower}. The proofs of Theorem \ref{thm:spectral-ranking} and Theorem \ref{thm:spectral-exact} are given in Section \ref{sec:pf-spec1}, and then we prove Theorem \ref{thm:spectral_lower} in Section \ref{sec:pf-spec2}.

\subsection{Proofs of Theorem \ref{thm:spectral-ranking} and Theorem \ref{thm:spectral-exact}} \label{sec:pf-spec1}

The proof of Theorem \ref{thm:spectral-ranking} relies on a leave-one-out argument introduced by \cite{chen2019spectral}. Without loss of generality, we consider $r_i^*=i$ so that $\theta_{r_i^*}^*=\theta_i^*$. Following \cite{chen2019spectral}, we define a transition matrix $P^{(m)}$ for each $m\in[n]$. For any $i\neq j$, $P_{ij}^{(m)}=P_{ij}$ if $i\neq m$ and $j\neq m$ and otherwise $P_{ij}^{(m)}=\frac{p}{d}\psi(\theta_i^*-\theta_j^*)$. For any $i\in[n]$, $P_{ii}^{(m)}=\sum_{j\in[n]\backslash\{i\}}P_{ij}^{(m)}$. Let $\pi^{(m)}$ be the stationary distribution of $P^{(m)}$. The following $\ell_{2}$ norm bound has essentially been proved in \cite{chen2019spectral}.

\begin{lemma}\label{lem:yuxin-chen}
Under the setting of Theorem \ref{thm:spectral-ranking}, there exists a constant $C>0$ such that
$$\max_{m\in[n]}\|\pi^{(m)}-\wh{\pi}\|\leq C\frac{1}{n}\sqrt{\frac{\log n}{npL}},$$
$$\max_{m\in[n]}\|\pi^{(m)}-\pi^*\|_{\infty}\leq C\frac{1}{n}\sqrt{\frac{\log n}{npL}},$$
$$\max_{m\in[n]}\|\pi^{(m)}-\pi^*\|\leq C\frac{1}{n}\sqrt{\frac{1}{pL}},$$
with probability at least $1-O(n^{-4})$.
\end{lemma}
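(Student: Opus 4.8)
The plan is to reduce all three displays to a single estimate — the leave-one-out $\ell_2$ bound $\max_m\|\pi^{(m)}-\wh\pi\|\lesssim\frac1n\sqrt{\frac{\log n}{npL}}$ — together with the already-established estimation error of the spectral estimator itself. Recall that \cite{chen2019spectral} proves, after rescaling by $n$, that $\wh\pi$ attains the rates \eqref{eq:main-l2} and \eqref{eq:main-linf}, i.e.\ $\|\wh\pi-\pi^*\|\lesssim\frac1n\sqrt{\frac1{pL}}$ and $\|\wh\pi-\pi^*\|_\infty\lesssim\frac1n\sqrt{\frac{\log n}{npL}}$ with high probability. Given the leave-one-out bound, the third display follows from $\|\pi^{(m)}-\pi^*\|\le\|\pi^{(m)}-\wh\pi\|+\|\wh\pi-\pi^*\|$, in which the $\frac1n\sqrt{\frac1{pL}}$ term dominates since $\log n\lesssim np$; the second follows from $\|\pi^{(m)}-\pi^*\|_\infty\le\|\pi^{(m)}-\wh\pi\|+\|\wh\pi-\pi^*\|_\infty$ using $\|v\|_\infty\le\|v\|$. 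So the entire content of the lemma is the leave-one-out bound, and the plan for that is to follow the perturbation argument of \cite{chen2019spectral}.

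First I would subtract the fixed-point identities $\wh\pi^TP=\wh\pi^T$ and $(\pi^{(m)})^TP^{(m)}=(\pi^{(m)})^T$ and rearrange to get
\[
(\wh\pi-\pi^{(m)})^T(I-P)=(\pi^{(m)})^T(P-P^{(m)}),
\]
where $\wh\pi-\pi^{(m)}\perp\mathds{1}_n$ because both are probability vectors. To invert $I-P$ on $\mathds{1}_n^\perp$ I would establish that $P$ has a spectral gap of constant order there. Writing $P^*=I-d^{-1}\mathcal{L}^*$ with $\mathcal{L}^*$ the Laplacian of the weighted graph $\{A_{ij}\psi'(\theta_i^*-\theta_j^*)\}$, and using $\psi'(\theta_i^*-\theta_j^*)\asymp1$ (valid since $\kappa=O(1)$), Lemma \ref{lem:A-basic} gives $\lambda_{\min,\perp}(\mathcal{L}^*)\gtrsim np$, hence a $\pi^*$-weighted spectral gap $\gtrsim np/d\asymp1$; since $\pi^*_i\asymp1/n$ uniformly, the weighted and plain $\ell_2$ norms are comparable, and the same constant gap passes to $P$ and to $P^{(m)}$ because $\|P-P^*\|_{\rm op}$ and $\|P^{(m)}-P^*\|_{\rm op}$ are $o(1)$ by the matrix-concentration estimates of \cite{chen2019spectral}. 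This yields
\[
\|\wh\pi-\pi^{(m)}\|\lesssim\|(\pi^{(m)})^T(P-P^{(m)})\|.
\]

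The remaining step, which I expect to be the main obstacle, is the sharp bound $\max_m\|(\pi^{(m)})^T(P-P^{(m)})\|\lesssim\frac1n\sqrt{\frac{\log n}{npL}}$. Here I would exploit that $P-P^{(m)}$ is supported on the $m$-th row and column, that $\pi^{(m)}$ depends only on the data indexed away from $m$ and is therefore independent of the noise $\{A_{mj},\bar y_{jm}\}_j$ entering those entries, and that $\pi^{(m)}_i\asymp1/n$. Expanding $(\pi^{(m)})^T(P-P^{(m)})$ coordinatewise, each coordinate is $d^{-1}$ times a $\pi^{(m)}$-weighted combination of the centered quantities $A_{mj}\bar y_{jm}-p\psi(\theta_j^*-\theta_m^*)$; splitting these into a graph part $(A_{mj}-p)\psi$ and a comparison part $A_{mj}(\bar y_{jm}-\psi)$, bounding the relevant sums of squares via Lemma \ref{lem:A-basic} and a Lemma \ref{lem:concentration}-type estimate, and controlling the single column-$m$ coordinate (a sum of $\asymp n$ independent mean-zero terms) by Bernstein's inequality, produces the claim after inserting $d=2np$ and taking a union bound over $m\in[n]$. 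This is essentially the computation of \cite{chen2019spectral}; the one place I would deviate is that, as in our treatment of the MLE, the leave-one-out scheme is not coupled to a gradient-descent recursion, which lets me dispense with the auxiliary restriction $\log L=O(\log n)$ imposed there.
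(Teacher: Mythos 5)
Your proposal is correct and follows essentially the same route as the paper: both reduce the second and third displays to the leave-one-out $\ell_2$ bound plus the known $\ell_2$/$\ell_\infty$ error of $\wh\pi$ via the triangle inequality, and both obtain the key leave-one-out bound from the fixed-point perturbation argument of \cite{chen2019spectral}. The only difference is one of packaging — the paper simply invokes Lemmas 5.6--5.7 and Theorems 2.6 and 5.2 of \cite{chen2019spectral} (its first display passes through the intermediate bound $\|\pi^{(m)}-\wh{\pi}\|\lesssim\sqrt{\tfrac{\log n}{npL}}\|\pi^*\|_{\infty}+\|\wh{\pi}-\pi^*\|_{\infty}$), whereas you sketch re-deriving the same spectral-gap and row/column-perturbation estimates from scratch.
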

\begin{proof}
By Lemma 5.6 and Lemma 5.7 of \cite{chen2019spectral}, one can obtain $\|\pi^{(m)}-\wh{\pi}\|\leq C_1\sqrt{\frac{\log n}{npL}}\|\pi^*\|_{\infty}+\|\wh{\pi}-\pi^*\|_{\infty}$ for some constant $C_1>0$ with probability at least $1-O(n^{-5})$. Theorem 2.6 of \cite{chen2019spectral} gives the bound $\|\wh{\pi}-\pi^*\|_{\infty}\leq C_2\sqrt{\frac{\log n}{npL}}\|\pi^*\|_{\infty}$ with probability at least $1-O(n^{-5})$. A union bound argument together with the fact that $\|\pi^*\|_{\infty}\asymp n^{-1}$ leads to the first conclusion. The second conclusion is a consequence of triangle inequality. By Theorem 5.2 of \cite{chen2019spectral}, we have $\|\wh{\pi}-\pi^*\|\leq C_3\frac{1}{n}\sqrt{\frac{1}{pL}}$ with probability at least $1-O(n^{-1})$. Thus, we obtain the last conclusion by applying triangle inequality again.
\end{proof}

We also need a lemma that relates the asymptotic variance of $\wh{\pi}_i$ to the function $\overline{V}(\kappa)$.
\begin{lemma}\label{lem:minimizer}
For any positive $\kappa_1,\kappa_2=O(1)$, we have
\begin{eqnarray*}
&& \min_{\substack{x_1,...,x_k\in[0,\kappa_1]\\x_{k+1},...,x_{n}\in[0,\kappa_2]}}\frac{(\sum_{i=1}^k\psi(x_i)+\sum_{i=k+1}^n\psi(-x_i))^2}{\sum_{i=1}^k\psi^\prime(x_i)(1+e^{x_i})^2+\sum_{i=k+1}^n\psi^\prime(x_i)(1+e^{-x_i})^2}\\
&=& \frac{(k\psi(\kappa_1)+(n-k)\psi(-\kappa_2))^2}{k\psi^\prime(\kappa_1)(1+e^{\kappa_1})^2+(n-k)\psi^\prime(\kappa_2)(1+e^{-\kappa_2})^2},
\end{eqnarray*}
for $n$ that is sufficiently large.
\end{lemma}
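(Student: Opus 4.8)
\emph{Proof plan.} The plan is to reparametrize so that the objective becomes a single symmetric ratio, show it is quasiconcave in each coordinate (hence minimized at a vertex of the feasible box), and then compare vertices.

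I would first substitute $c_i = e^{x_i}$ for $i\le k$ and $c_i = e^{-x_i}$ for $i>k$; this is a bijection from the feasible box onto $\prod_{i\le k}[1,e^{\kappa_1}]\times\prod_{i>k}[e^{-\kappa_2},1]$. Using $\psi(t)=\tfrac{e^{t}}{1+e^{t}}$ together with the identities $\psi'(t)(1+e^{t})^2=e^{t}$ and $\psi'(t)(1+e^{-t})^2=e^{-t}$, the objective becomes $F(c)=\big(\sum_{i=1}^n g(c_i)\big)^2\big/\sum_{i=1}^n c_i$ with $g(t)=t/(1+t)$, and the corner $c_i=e^{\kappa_1}$ $(i\le k)$, $c_i=e^{-\kappa_2}$ $(i>k)$ has $F$-value exactly the right-hand side of the lemma. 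Writing $N=\sum_i g(c_i)$ and $D=\sum_i c_i$, one computes $\partial_{c_j}F=\tfrac{N}{D^2}\big(2D-N(1+c_j)^2\big)$, so $\partial_{c_j}F$ has the sign of $q(c_j):=2D-N(1+c_j)^2$; moreover $q'(c_j)=1-2(1+c_j)N\le 1-2N\le 1-2n\,\tfrac{e^{-\kappa_2}}{1+e^{-\kappa_2}}<0$ once $n>\tfrac{e^{\kappa_2}+1}{2}$ — this is the only place the hypothesis ``$n$ sufficiently large'' is used. Hence $q$ is decreasing, $F$ is quasiconcave in each $c_j$ (increasing then decreasing), and therefore, iterating coordinate by coordinate, $\min_{\mathrm{box}}F$ is attained at a vertex, i.e., with each $c_j$ at an endpoint of its interval.

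It then remains to show the corner is the best vertex. At any vertex, a coordinate that differs from the corner necessarily has $c_j=1$, so it suffices to show that replacing such a $c_j=1$ by its extreme value does not increase $F$; iterating this move starting from the minimizing vertex reaches the corner without increasing $F$. For a top coordinate set $\gamma=\psi(\kappa_1)-\tfrac12=\tfrac{e^{\kappa_1}-1}{2(e^{\kappa_1}+1)}>0$, so the replacement sends $(N,D)\mapsto(N+\gamma,\,D+2(e^{\kappa_1}+1)\gamma)$, and after clearing denominators the desired inequality $\tfrac{(N+\gamma)^2}{D+2(e^{\kappa_1}+1)\gamma}\le\tfrac{N^2}{D}$ is equivalent to $D(2N+\gamma)\le 2(e^{\kappa_1}+1)N^2$. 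Since the pinned coordinate contributes $1$ to $D$ and $\tfrac12$ to $N$, while $c_i\le(1+e^{\kappa_1})g(c_i)$ for the others, one gets $D\le 1+(1+e^{\kappa_1})(N-\tfrac12)=(1+e^{\kappa_1})(N-\gamma)$, whence $D(2N+\gamma)\le(1+e^{\kappa_1})(N-\gamma)(2N+\gamma)\le 2(1+e^{\kappa_1})N^2$. The bottom-coordinate case is symmetric with $\gamma=\tfrac12-\psi(-\kappa_2)=\tfrac{1-e^{-\kappa_2}}{2(1+e^{-\kappa_2})}$: the replacement sends $(N,D)\mapsto(N-\gamma,\,D-2(1+e^{-\kappa_2})\gamma)$, the target reduces to $D(2N-\gamma)\ge 2(1+e^{-\kappa_2})N^2$, the bound $c_i\ge(1+e^{-\kappa_2})g(c_i)$ yields $D\ge(1+e^{-\kappa_2})(N+\gamma)$, and one closes using $N\ge g(1)=\tfrac12>\gamma$.

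The parts that will require care are the quasiconcavity step — which is what pins the minimizer onto a vertex and is exactly why ``$n$ sufficiently large'' appears — and the bookkeeping of the identities $e^{\kappa_1}-1=2(e^{\kappa_1}+1)\big(\psi(\kappa_1)-\tfrac12\big)$ and $1-e^{-\kappa_2}=2(1+e^{-\kappa_2})\big(\tfrac12-\psi(-\kappa_2)\big)$, which make the crude bounds $D\lessgtr(1+e^{\pm\kappa})(N\mp\gamma)$ precisely sharp enough to conclude; the rest is routine algebra.
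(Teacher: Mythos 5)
Your argument is correct, and all the identities you flag do check out (e.g.\ $2(e^{\kappa_1}+1)\gamma=e^{\kappa_1}-1$ and $2(1+e^{-\kappa_2})\gamma=1-e^{-\kappa_2}$); the only cosmetic slip is that $\partial_{c_j}F$ carries an extra positive factor $(1+c_j)^{-2}$ which you drop, but since you only use its sign this is harmless. Your proof shares the paper's two-step skeleton — coordinatewise unimodality forces the minimizer onto a vertex of the box, and then a one-coordinate swap from $1$ to the extreme value can only decrease the objective — but the execution differs enough to be worth recording. The substitution $c_i=e^{x_i}$ for $i\le k$ and $c_i=e^{-x_i}$ for $i>k$ puts both blocks into the single symmetric form $F=(\sum_i g(c_i))^2/\sum_i c_i$ with $g(t)=t/(1+t)$, so one argument covers top and bottom coordinates at once; the paper keeps the bottom block as reciprocals, proves the reduction only for the top block, and defers the other to ``similarly.'' For the vertex reduction, the paper differentiates $\log g$ and argues its numerator is a decreasing quadratic in $x_1$, while you differentiate $F$ directly and show $q(c_j)=2D-N(1+c_j)^2$ is strictly decreasing because $N\ge n\,e^{-\kappa_2}/(1+e^{-\kappa_2})$; both need $n$ large at exactly this point, and yours makes the threshold $n>(e^{\kappa_2}+1)/2$ explicit. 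Finally, your swap inequality is closed by the sharp bounds $D\le(1+e^{\kappa_1})(N-\gamma)$ and $D\ge(1+e^{-\kappa_2})(N+\gamma)$, which reduce everything to $(N\mp\gamma)(2N\pm\gamma)\le 2N^2$ in two lines, whereas the paper compares the vertex values $g_i$ and $g_{i+1}$ through a long chain of equivalences relying on the auxiliary inequality $\alpha\ge\beta$. Net effect: same strategy, but a more symmetric parametrization and a substantially shorter, fully explicit closing of both key steps.
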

\begin{proof}
The problem is equivalent to the solution of the following: the optimum of the problem
$$\min_{\substack{x_1,...,x_k\in[1,M_1]\\x_{k+1},...,x_{n}\in[1,M_2]}}\frac{(\sum_{i=1}^k\frac{2x_i}{1+x_i}+\sum_{i=k+1}^n\frac{2}{1+x_i})^2}{\sum_{i=1}^kx_i+\sum_{i=k+1}^n\frac{1}{x_i}} = \min_{\substack{x_1,...,x_k\in[1,M_1]\\x_{k+1},...,x_{n}\in[1,M_2]}}f(x_1,\cdots,x_n)$$
is obtained at $x_1=...=x_k=M_1,x_{k+1}=...=x_n=M_2$. We will show that for any given $x_{k+1},...,x_n\in[1,M_2]$, the function is minimized at $x_1=...=x_k=M_1$. Moreover, for any given $x_1,...,x_k$, the function is minimized at $x_{k+1}=...=x_n=M_2$. We only need to prove the former claim and the latter one can be proved similarly.
Define
$$g(x_1,\cdots,x_k)=\frac{\left(\sum_{i=1}^k\frac{2x_i}{1+x_i}+\alpha\right)^2}{\sum_{i=1}^kx_i+\beta},$$
where $\alpha=\sum_{i=k+1}^n\frac{2}{1+x_i}, \beta=\sum_{i=k+1}^n\frac{1}{x_i}$. We first analyze the behavior of $g(x_1,\cdots,x_k)$ at each coordinate. By direct calculation, we have
\begin{eqnarray*}
\frac{\partial \log g(x_1,\cdots,x_k)}{\partial x_1} &=& \frac{4}{(1+x_1)^2(\sum_{i=1}^k\frac{2x_i}{1+x_i}+\alpha)}-\frac{1}{\sum_{i=1}^kx_i+\beta} \\
&=& \frac{4(\sum_{i=1}^kx_i+\beta)-(1+x_1)^2(\sum_{i=1}^k\frac{2x_i}{1+x_i}+\alpha)}{(1+x_1)^2(\sum_{i=1}^k\frac{2x_i}{1+x_i}+\alpha)(\sum_{i=1}^kx_i+\beta)}.
\end{eqnarray*}
The sign of the partial derivative is determined by its numerator
\begin{eqnarray*}
&& 4(\sum_{i=1}^kx_i+\beta)-(1+x_1)^2(\sum_{i=1}^k\frac{2x_i}{1+x_i}+\alpha) \\
&=& -\left(\sum_{i=2}^k\frac{2x_i}{1+x_i}+\alpha+2\right)x_1^2-\left(\sum_{i=2}^k\frac{4x_i}{1+x_i}+2\alpha-2\right)x_1 \\
&& +4(\sum_{i=2}^kx_i+\beta)-\left(\sum_{i=2}^k\frac{2x_i}{1+x_i}+\alpha\right),
\end{eqnarray*}
which is a quadratic decreasing function of $x_1\in[1,M_1]$. Therefore, $g(x_1,\cdots,x_k)$ is either monotone of $x_1\in[1,M_1]$, or it is first increasing then decreasing. This implies that the optimum is achieved either at $x_1=1$ or $x_1=M_1$. Since $g(x_1,\cdots,x_k)$ is symmetric, we therefore know that the optimizer must satisfy $(x_1,\cdots,x_k)\in \{1,M_1\}^k$. Using symmetry again, we can conclude that the value of $\min_{x_1,\cdots,x_k\in[1,M_1]}g(x_1,\cdots,x_k)$ is determined by the number of coordinates that take $M_1$. For $i\in[k]$, we define $g_i$ to be the value of $g(x_1,\cdots,x_k)$ with $x_1=\cdots=x_i=M_1$ and $x_{i+1}=\cdots=x_k=1$. We now need to show $g_i$ is nonincreasing in $i\in[k]$.
Note that
\begin{eqnarray}
\nonumber g_i\geq g_{i+1} &\Longleftrightarrow& \frac{(i\frac{2M_1}{M_1+1}+k-i+\alpha)^2}{iM_1+k-i+\beta}\geq\frac{(\frac{M_1-1}{M_1+1}+i\frac{2M_1}{M_1+1}+k-i+\alpha)^2}{M_1-1+iM_1+k-i+\beta}\\
\nonumber &\Longleftrightarrow& \frac{M_1-1}{iM_1+k-i+\beta}\geq\frac{(\frac{M_1-1}{M_1+1})^2}{(i\frac{2M_1}{M_1+1}+k-i+\alpha)^2}+\frac{2(\frac{M_1-1}{M_1+1})}{i\frac{2M_1}{M_1+1}+k-i+\alpha}\\
\nonumber &\Longleftrightarrow& \frac{(M_1+1)^2}{iM_1+k-i+\beta}-\frac{M_1-1}{(i\frac{2M_1}{M_1+1}+k-i+\alpha)^2}-\frac{2(M_1+1)}{i\frac{2M_1}{M_1+1}+k-i+\alpha}\geq0\\
\nonumber &\Longleftrightarrow& \frac{(i\frac{M_1-1}{M_1+1}+k+\alpha)(M_1+1)^2}{i(M_1-1)+k+\beta}-\frac{M_1-1}{i\frac{M_1-1}{M_1+1}+k+\alpha}-2(M_1+1)\geq0\\
\nonumber &\Longleftrightarrow& \frac{i(M_1-1)+(k+\alpha)(M_1+1)}{i(M_1-1)+k+\beta}-\frac{M_1-1}{i(M_1-1)+(k+\alpha)(M_1+1)}-2\geq0\\
\label{eq:pinhan-chen} &\Longleftarrow& \frac{i(M_1-1)+(k+\beta)(M_1+1)}{i(M_1-1)+k+\beta}-\frac{M_1-1}{i(M_1-1)+(k+\beta)(M_1+1)}-2\geq0\\
\nonumber &\Longleftrightarrow& \frac{-i(M_1-1)+(k+\beta)(M_1-1)}{i(M_1-1)+k+\beta}-\frac{M_1-1}{i(M_1-1)+(k+\beta)(M_1+1)}\geq0\\
\nonumber &\Longleftarrow& \frac{-i+(k+\beta)}{i(M_1-1)+k+\beta}-\frac{1}{i(M_1-1)+(k+\beta)(M_1+1)}\geq0\\
\nonumber &\Longleftrightarrow& (k+\beta)^2(M_1+1)\geq i(M_1-1)+i^2(M_1-1)+(2i+1)(k+\beta)\\
\nonumber &\Longleftarrow& (k+\beta)^2(M_1+1)\geq(k-1)^2(M_1-1)+(k-1)(M_1-1)+(2k-1)(k+\beta)\\
\nonumber &\Longleftrightarrow&  k^2(M_1+1)+2\beta(M_1+1)k+\beta^2(M_1+1)\geq k^2(M_1+1)+(-M_1+2\beta)k-\beta\\
\nonumber &\Longleftrightarrow&  (2\beta+1)M_1k+\beta^2(M_1+1)+\beta\geq0
\end{eqnarray}
where the last display is trivially true. We have used $\alpha\geq\beta$ for the step (\ref{eq:pinhan-chen}). Therefore, $\min_{x_1,\cdots,x_k\in[1,M_1]}g(x_1,\cdots,x_k)=g_k$, and the proof is complete.
\end{proof}

Now we are ready to prove Theorem \ref{thm:spectral-ranking}.

\begin{proof}[Proof of Theorem \ref{thm:spectral-ranking}]
When the error exponent is of constant order, the bound is also a constant, and the result already holds since $\h_k(\wh{r},r^*)\leq 1$. Therefore, we only need to consider the case when the error exponent tends to infinity. We first introduce some notation. 
Define
\begin{equation}
\eta = \frac{1}{2} - \frac{\overline{V}(\kappa)}{(1-\bar{\delta})\Delta^2npL}\log\frac{n-k}{k}, \label{eq:eta-spec}
\end{equation}
where $\bar{\delta}=o(1)$ is chosen so that $\eta>0$ is satisfied. The specific choice of $\bar{\delta}$ will be determined later in the proof.
We will continue to use the notation $\bar{\Delta}_i$ that is defined in (\ref{eq:delta-bar-i}).
Since the diverging exponent implies $\overline{\snr}\rightarrow\infty$, we have $\min_{i\in[n]}\bar{\Delta}_i^2Lnp\rightarrow\infty$ and $\max_{i\in[n]}\bar{\Delta}_i\rightarrow 0$.

Since $\wh{\pi}$ is the stationary distribution of $P$, we have $\hat{\pi}^TP=\hat{\pi}^T$. This implies that for any $m\in[n]$, we have $\sum_{j=1}^nP_{jm}\wh{\pi}_j = \wh{\pi}_m$. We can equivalently write this identity as
$$\wh{\pi}_m=\frac{\sum_{j\in[n]\backslash\{m\}}P_{jm}\wh{\pi}_j}{1-P_{mm}}=\frac{\sum_{j\in[n]\backslash\{m\}}A_{jm}\bar{y}_{mj}\wh{\pi}_j}{\sum_{j\in[n]\backslash\{m\}}A_{jm}\bar{y}_{jm}}.$$
We approximate $\wh{\pi}_m$ by
\begin{align}
\bar{\pi}_m=\frac{\sum_{j\in[n]\backslash\{m\}}A_{jm}\bar{y}_{mj}{\pi}^*_j}{\sum_{j\in[n]\backslash\{m\}}A_{jm}\bar{y}_{jm}}.\label{eqn:pi_bar_def}
\end{align}
The approximation error can be bounded by
\begin{eqnarray}
\label{eq:spec-l-a} \left|\wh{\pi}_m-\bar{\pi}_m\right| &\leq& \left|\frac{\sum_{j\in[n]\backslash\{m\}}A_{jm}\bar{y}_{mj}(\wh{\pi}_j-\pi_j^{(m)})}{\sum_{j\in[n]\backslash\{m\}}A_{jm}\bar{y}_{jm}}\right| \\
\label{eq:spec-l-e} && + \left|\frac{\sum_{j\in[n]\backslash\{m\}}A_{jm}\bar{y}_{mj}(\pi_j^{(m)}-\pi_j^*)}{\sum_{j\in[n]\backslash\{m\}}A_{jm}\bar{y}_{jm}}\right|.
\end{eqnarray}
The two terms (\ref{eq:spec-l-a}) and (\ref{eq:spec-l-e}) share a common denominator, which can be lower bounded by
\begin{equation}
\sum_{j\in[n]\backslash\{m\}}A_{jm}\bar{y}_{jm}\geq \sum_{j\in[n]\backslash\{m\}}A_{jm}\psi(\theta_j^*-\theta_m^*)-\left|\sum_{j\in[n]\backslash\{m\}}A_{jm}(\bar{y}_{jm}-\psi(\theta_j^*-\theta_m^*))\right|. \label{eq:lower-denom-pi}
\end{equation}
By Lemma \ref{lem:A-basic} and Lemma \ref{lem:concentration}, we have $\sum_{j\in[n]\backslash\{m\}}A_{jm}\bar{y}_{jm}\geq c_1np$ for some constant $c_1>0$ with probability at least $1-O(n^{-10})$. With this lower bound, we then bound (\ref{eq:spec-l-a}) as
\begin{eqnarray*}
\left|\frac{\sum_{j\in[n]\backslash\{m\}}A_{jm}\bar{y}_{mj}(\wh{\pi}_j-\pi_j^{(m)})}{\sum_{j\in[n]\backslash\{m\}}A_{jm}\bar{y}_{jm}}\right| &\leq& \frac{\sqrt{\sum_{j\in[n]\backslash\{m\}}A_{1j}\bar{y}_{mj}^2}\|\wh{\pi}-\pi^{(m)}\|}{c_1np} \\
&\leq& \frac{\sqrt{\sum_{j\in[n]\backslash\{m\}}A_{1j}}\|\wh{\pi}-\pi^{(m)}\|}{c_1np} \\
&\leq& C_1\frac{1}{n}\sqrt{\frac{\log n}{(np)^2L}},
\end{eqnarray*}
with probability at least $1-O(n^{-4})$. In the last inequality, we have used Lemma \ref{lem:A-basic} and Lemma \ref{lem:yuxin-chen}. For (\ref{eq:spec-l-e}), we can bound it as
\begin{eqnarray*}
&& \left|\frac{\sum_{j\in[n]\backslash\{m\}}A_{jm}\bar{y}_{mj}(\pi_j^{(m)}-\pi_j^*)}{\sum_{j\in[n]\backslash\{m\}}A_{jm}\bar{y}_{jm}}\right| \\
&\leq& \frac{\left|\sum_{j\in[n]\backslash\{m\}}A_{jm}(\bar{y}_{mj}-\psi(\theta_m^*-\theta_j^*))(\pi_j^{(m)}-\pi_j^*)\right|}{c_1np} + \frac{p\left|\sum_{j\in[n]\backslash\{m\}}\psi(\theta_m^*-\theta_j^*)(\pi_j^{(m)}-\pi_j^*)\right|}{c_1np} \\
&& + \frac{\left|\sum_{j\in[n]\backslash\{m\}}(A_{jm}-p)\psi(\theta_m^*-\theta_j^*)(\pi_j^{(m)}-\pi_j^*)\right|}{c_1np}.
\end{eqnarray*}
We bound the three terms above separately. For the first term, we use Hoeffding's inequality (Lemma \ref{lem:hoeffding}), and get
\begin{align}\label{eq:spec-add-x-1}
\frac{\left|\sum_{j\in[n]\backslash\{m\}}A_{jm}(\bar{y}_{mj}-\psi(\theta_m^*-\theta_j^*))(\pi_j^{(m)}-\pi_j^*)\right|}{c_1np}\leq C_2\frac{\sqrt{\frac{x}{L}\sum_{j\in[n]\backslash\{m\}}A_{jm}(\pi_j^{(m)}-\pi_j^*)^2}}{np},
\end{align}
with probability at least $1-e^{-x}$. By Lemma \ref{lem:A-basic} and Lemma \ref{lem:yuxin-chen}, we have
$$\sqrt{\sum_{j\in[n]\backslash\{m\}}A_{jm}(\pi_j^{(m)}-\pi_j^*)^2}\leq \|\pi^{(m)}-\pi^*\|_{\infty}\sqrt{\sum_{j\in[n]\backslash\{m\}}A_{jm}}\leq C_3\frac{1}{n}\sqrt{\frac{\log n}{L}},$$
with probability at least $1-O(n^{-4})$. Taking $x=\bar{\Delta}_m^2npL\sqrt{\frac{npL}{\log n}}$, we have
$$\frac{\left|\sum_{j\in[n]\backslash\{m\}}A_{jm}(\bar{y}_{mj}-\psi(\theta_m^*-\theta_j^*))(\pi_j^{(m)}-\pi_j^*)\right|}{c_1np}\leq C_4\frac{1}{n}\bar{\Delta}_m\left(\frac{\log n}{Lnp}\right)^{1/4},$$
with probability at least $1-O(n^{-4})-\exp\left(-\bar{\Delta}_m^2npL\sqrt{\frac{npL}{\log n}}\right)$. Next, for the second term, we apply Lemma \ref{lem:yuxin-chen} and get
\begin{align*}
\frac{p\left|\sum_{j\in[n]\backslash\{m\}}\psi(\theta_m^*-\theta_j^*)(\pi_j^{(m)}-\pi_j^*)\right|}{c_1np} \leq \frac{\|\pi^{(m)}-\pi^*\|}{c_1\sqrt{n}} \leq C_5\frac{1}{n}\sqrt{\frac{1}{npL}},
\end{align*}
with probability at least $1-O(n^{-4})$. For the third term, we use Bernstein's inequality (Lemma \ref{lem:bernstein}), and get
\begin{align}\label{eq:spec-add-x-2}
\frac{\left|\sum_{j\in[n]\backslash\{m\}}(A_{jm}-p)\psi(\theta_m^*-\theta_j^*)(\pi_j^{(m)}-\pi_j^*)\right|}{c_1np} \leq C_6\frac{\sqrt{px}\|\pi^{(m)}-\pi^*\|}{np} + C_6\frac{x\|\pi^{(m)}-\pi^*\|_{\infty}}{np},
\end{align}
with probability at least $1-e^{-x}$. We choose $x=\min\left(\bar{\Delta}_m^2 Lnp\frac{np}{\log n}, 4\log n\right)$. Then, with the help of Lemma \ref{lem:yuxin-chen}, we have
\begin{align}
& \frac{\left|\sum_{j\in[n]\backslash\{m\}}(A_{jm}-p)\psi(\theta_m^*-\theta_j^*)(\pi_j^{(m)}-\pi_j^*)\right|}{c_1np} \nonumber \\
&\leq C_7\frac{1}{n}\frac{1}{np\sqrt{L}}\sqrt{\min\left(\bar{\Delta}_m^2 Lnp\frac{np}{\log n}, \log n\right)} + C_7\frac{1}{n}\frac{1}{np}\sqrt{\frac{\log n}{npL}}\min\left(\bar{\Delta}_m^2 Lnp\frac{np}{\log n}, \log n\right),\label{eqn:spectral_equationone}
\end{align}
with probability at least $1-O(n^{-4})-\exp\left(-\bar{\Delta}_m^2npL\frac{np}{\log n}\right)$.

To summarize, we have proved that
\begin{equation}
\frac{|\wh{\pi}_m-\bar{\pi}_m|}{\pi_m^*} \leq \delta(1-e^{-\bar{\Delta}_m}), \label{eq:pi-hat-bar-approx}
\end{equation}
for some $\delta=o(1)$ with probability at least $1-O(n^{-4})-\exp\left(-\bar{\Delta}_m^2npL\frac{np}{\log n}\right)-\exp\left(-\bar{\Delta}_m^2npL\sqrt{\frac{npL}{\log n}}\right)$ under the assumption that $\bar{\Delta}_m=o(1)$, $npL\bar{\Delta}_m^2\rightarrow\infty$ and $\frac{np}{\log n}\rightarrow\infty$.

Next, we note that by the definition of $\bar{\pi}_m$, we have
\begin{equation}
\bar{\pi}_m - \pi_m^* = \frac{\sum_{j\in[n]\backslash\{m\}}A_{jm}(\bar{y}_{mj}-\psi(\theta_m^*-\theta_j^*))({\pi}^*_j+\pi_m^*)}{\sum_{j\in[n]\backslash\{m\}}A_{jm}\bar{y}_{jm}}. \label{eq:pi-bar-m}
\end{equation}
By Lemma \ref{lem:concentration} and the inequality (\ref{eq:lower-denom-pi}), the denominator of (\ref{eq:pi-bar-m}) satisfies
\begin{equation}
\left|\frac{\sum_{j\in[n]\backslash\{m\}}A_{jm}\bar{y}_{jm}}{\sum_{j\in[n]\backslash\{m\}}A_{jm}\psi(\theta_j^*-\theta_m^*)}-1\right| \leq \delta, \label{eq:easy-de-pi}
\end{equation}
for some $\delta=o(1)$ with probability at least $1-O(n^{-10})$. Note that we can choose the same $\delta$ to accommodate both bounds (\ref{eq:pi-hat-bar-approx}) and (\ref{eq:easy-de-pi}).

We will apply Lemma \ref{lem:anderson} with 
\begin{align}
t=\frac{e^{(1-\eta)\theta_k^*+\eta\theta_{k+1}^*}}{\sum_{j=1}^ne^{\theta_j^*}} \label{eqn:spectral_t}
\end{align}
to finish the proof. Recall the definition of $\eta$ in (\ref{eq:eta-spec}).
For $i\leq k$, we have
\begin{eqnarray}
\nonumber && \mathbb{P}\left(\wh{\pi}_i \leq \frac{e^{(1-\eta)\theta_k^*+\eta\theta_{k+1}^*}}{\sum_{j=1}^ne^{\theta_j^*}}\right) \\
\nonumber &=& \mathbb{P}\left(\frac{\wh{\pi}_i-\pi_i^*}{\pi_i^*} \leq e^{(1-\eta)\theta_k^*+\eta\theta_{k+1}^*-\theta_i^*}-1\right) \\
\nonumber &\leq& \mathbb{P}\left(\frac{\wh{\pi}_i-\pi_i^*}{\pi_i^*} \leq e^{-\bar{\Delta}_i}-1\right) \\
\nonumber &\leq& \mathbb{P}\left(\frac{\bar{\pi}_i-\pi_i^*}{\pi_i^*}\leq -(1-\delta)(1-e^{-\bar{\Delta}_i})\right) + \mathbb{P}\left(\frac{|\bar{\pi}_i-\wh{\pi}_i|}{\pi_i^*}>\delta(1-e^{-\bar{\Delta}_i})\right) \\
\nonumber &\leq& \mathbb{P}\left(\frac{\sum_{j\in[n]\backslash\{i\}}A_{ji}(\bar{y}_{ij}-\psi(\theta_i^*-\theta_j^*))(1+e^{\theta_j^*-\theta_i^*})}{\sum_{j\in[n]\backslash\{i\}}A_{ji}\psi(\theta_j^*-\theta_i^*)} \leq -(1-\delta)^2(1-e^{-\bar{\Delta}_i})\right) \\
\nonumber &&  + \mathbb{P}\left(\frac{|\bar{\pi}_i-\wh{\pi}_i|}{\pi_i^*}>\delta(1-e^{-\bar{\Delta}_i})\right) + \mathbb{P}\left(\left|\frac{\sum_{j\in[n]\backslash\{i\}}A_{ji}\bar{y}_{ji}}{\sum_{j\in[n]\backslash\{i\}}A_{ji}\psi(\theta_j^*-\theta_i^*)}-1\right| > \delta\right) \\
\nonumber &\leq& \mathbb{P}\left(\frac{\sum_{j\in[n]\backslash\{i\}}A_{ji}(\bar{y}_{ij}-\psi(\theta_i^*-\theta_j^*))(1+e^{\theta_j^*-\theta_i^*})}{\sum_{j\in[n]\backslash\{i\}}A_{ji}\psi(\theta_j^*-\theta_i^*)} \leq -(1-\delta)^2(1-e^{-\bar{\Delta}_i})\right) \\
&& + O(n^{-4})+\exp\left(-\bar{\Delta}_i^2npL\frac{np}{\log n}\right)+\exp\left(-\bar{\Delta}_i^2npL\sqrt{\frac{npL}{\log n}}\right), \label{eqn:spectral_equationtwo}
\end{eqnarray}
where the last inequality is by (\ref{eq:pi-hat-bar-approx}) and (\ref{eq:easy-de-pi}). Define the event
\begin{align}
\mathcal{A}_i=\left\{A: \left|\frac{\sum_{j\in[n]\backslash\{i\}}A_{ij}\psi'(\theta_i^*-\theta_j^*)\left(1+e^{\theta_j^*-\theta_i^*}\right)^2}{p\sum_{j\in[n]\backslash\{i\}}\psi'(\theta_i^*-\theta_j^*)\left(1+e^{\theta_j^*-\theta_i^*}\right)^2}-1\right|\leq\delta, \left|\frac{\sum_{j\in[n]\backslash\{i\}}A_{ji}\psi(\theta_j^*-\theta_i^*)}{p\sum_{j\in[n]\backslash\{i\}}\psi(\theta_j^*-\theta_i^*)}-1\right|\leq \delta\right\}.\label{eqn:A_i}
\end{align}
Then, by Bernstein's inequality, we have
\begin{eqnarray}
\nonumber && \mathbb{P}\left(\frac{\sum_{j\in[n]\backslash\{i\}}A_{ji}(\bar{y}_{ij}-\psi(\theta_i^*-\theta_j^*))(1+e^{\theta_j^*-\theta_i^*})}{\sum_{j\in[n]\backslash\{i\}}A_{ji}\psi(\theta_j^*-\theta_i^*)} \leq -(1-\delta)^2(1-e^{-\bar{\Delta}_i})\right) \\
\nonumber &\leq& \sup_{A\in\mathcal{A}_i}\mathbb{P}\left(\frac{\sum_{j\in[n]\backslash\{i\}}A_{ji}(\bar{y}_{ij}-\psi(\theta_i^*-\theta_j^*))(1+e^{\theta_j^*-\theta_i^*})}{\sum_{j\in[n]\backslash\{i\}}A_{ji}\psi(\theta_j^*-\theta_i^*)} \leq -(1-\delta)^2(1-e^{-\bar{\Delta}_i})\Big|A\right) \\
\nonumber && + \mathbb{P}(A\in\mathcal{A}_i^c) \\
\label{eqn:spectral_equationthree} &\leq& \exp\left(-\frac{(1-o(1))Lp\bar{\Delta}_i^2\left(\sum_{j\in[n]\backslash\{i\}}\psi(\theta_j^*-\theta_i^*)\right)^2}{2\sum_{j\in[n]\backslash\{i\}}\psi'(\theta_i^*-\theta_j^*)\left(1+e^{\theta_j^*-\theta_i^*}\right)^2}\right) + O(n^{-4}) \\
\label{eq:use-before-MLE} &\leq& \exp\left(-\frac{(1-o(1))Lp(\bar{\Delta}+\theta_i^*-\theta_k^*)^2\left(\sum_{j\in[n]\backslash\{i\}}\psi(\theta_j^*-\theta_i^*)\right)^2}{2\sum_{j\in[n]\backslash\{i\}}\psi'(\theta_i^*-\theta_j^*)\left(1+e^{\theta_j^*-\theta_i^*}\right)^2}\right) + O(n^{-4}).
\end{eqnarray}
The inequality (\ref{eq:use-before-MLE}) is by the same argument that leads to (\ref{eq:free}) and (\ref{eq:extra-step}). We use the notation $\bar{\Delta}=\min\left(\eta(\theta_k^*-\theta_{k+1}^*),\left(\frac{\log n}{np}\right)^{1/4}\right)$ in (\ref{eq:use-before-MLE}).
Define
$$h_i(t)=\frac{(\bar{\Delta}+t)^2\left(\sum_{j\in[n]\backslash\{i\}}\psi(\theta_j^*-\theta_k^*-t)\right)^2}{\sum_{j\in[n]\backslash\{i\}}\psi'(t+\theta_k^*-\theta_j^*)\left(1+e^{\theta_j^*-\theta_k^*-t}\right)^2},\quad\text{ for all }t\geq 0.$$
Though $h_i(t)$ is a complicated function, by the fact that $\bar{\Delta}=o(1)$ and $\max_{j,k}|\theta_j^*-\theta_k^*|\leq \kappa=O(1)$, one can directly analyze the derivative of $h_i(t)$ to conclude that there exists some small constant $c_2>0$ such that $h_i(t)$ is increasing on $[0,c_2]$. Moreover, there also exists a small constant $c_3>0$ such that $\min_{t\in[c_2,\kappa]}h_i(t)\geq c_3n$. This implies
\begin{eqnarray*}
&& \frac{Lp(\bar{\Delta}+\theta_i^*-\theta_k^*)^2\left(\sum_{j\in[n]\backslash\{i\}}\psi(\theta_j^*-\theta_i^*)\right)^2}{2\sum_{j\in[n]\backslash\{i\}}\psi'(\theta_i^*-\theta_j^*)\left(1+e^{\theta_j^*-\theta_i^*}\right)^2} \\
&\geq& \frac{Lp\bar{\Delta}^2\left(\sum_{j\in[n]\backslash\{i\}}\psi(\theta_j^*-\theta_k^*)\right)^2}{2\sum_{j\in[n]\backslash\{i\}}\psi'(\theta_k^*-\theta_j^*)\left(1+e^{\theta_j^*-\theta_k^*}\right)^2}\wedge \frac{c_3npL}{2} \\
&=& \frac{Lp\bar{\Delta}^2\left(\sum_{j\in[n]\backslash\{i\}}\psi(\theta_j^*-\theta_k^*)\right)^2}{2\sum_{j\in[n]\backslash\{i\}}\psi'(\theta_k^*-\theta_j^*)\left(1+e^{\theta_j^*-\theta_k^*}\right)^2},
\end{eqnarray*}
where the last inequality is due to the fact that $\bar{\Delta}=o(1)$. We further bound the above exponent by
\begin{eqnarray}
\nonumber && \frac{Lp\bar{\Delta}^2\left(\sum_{j\in[n]\backslash\{i\}}\psi(\theta_j^*-\theta_k^*)\right)^2}{2\sum_{j\in[n]\backslash\{i\}}\psi'(\theta_k^*-\theta_j^*)\left(1+e^{\theta_j^*-\theta_k^*}\right)^2} \\
\nonumber &=& (1-o(1))\frac{Lp\bar{\Delta}^2\left(\sum_{j=1}^n\psi(\theta_j^*-\theta_k^*)\right)^2}{2\sum_{j=1}^n\psi'(\theta_k^*-\theta_j^*)\left(1+e^{\theta_j^*-\theta_k^*}\right)^2} \\
\nonumber &\geq& (1-o(1))\frac{Lp\bar{\Delta}^2}{2} \min_{\substack{\kappa_1+\kappa_2\leq \kappa\\ \kappa_1,\kappa_2\geq 0}}\min_{\substack{x_1,\cdots,x_k\in[0,\kappa_1]\\x_{k+1},\cdots,x_n\in[0,\kappa_2]}}\frac{\left(\sum_{j=1}^k\psi(x_j) + \sum_{j=k+1}^n\psi(-x_j)\right)^2}{\sum_{j=1}^k\psi'(x_j)(1+e^{x_j})^2 + \sum_{j=k+1}^n\psi'(x_j)(1+e^{-x_j})^2} \\
\label{eq:piece-constant} &=& (1-o(1))\frac{Lp\bar{\Delta}^2}{2} \min_{\substack{\kappa_1+\kappa_2\leq \kappa\\ \kappa_1,\kappa_2\geq 0}}\frac{\left(k\psi(\kappa_1)+(n-k)\psi(-\kappa_2)\right)^2}{k\psi'(\kappa_1)(1+e^{\kappa_1})^2+(n-k)\psi'(\kappa_2)(1+e^{-\kappa_2})^2} \\
\nonumber &=& (1-o(1))\frac{Lpn\bar{\Delta}^2}{2\overline{V}(\kappa)}.
\end{eqnarray}
The equality (\ref{eq:piece-constant}) is due to Lemma \ref{lem:minimizer}.
With the above analysis of the error exponent, we can further bound (\ref{eq:use-before-MLE}) as
\begin{eqnarray*}
&& \exp\left(-\frac{1-o(1)}{2}Lp\min\left(\eta^2\Delta^2,\sqrt{\frac{\log n}{np}}\right)\frac{n}{\overline{V}(\kappa)}\right) + O(n^{-4}) \\
&\leq& \exp\left(-\frac{(1-o(1))\eta^2\Delta^2npL}{2\overline{V}(\kappa)}\right) + O(n^{-4}).
\end{eqnarray*}
The last inequality holds because when $\min\left(\eta^2\Delta^2,\sqrt{\frac{\log n}{np}}\right)=\sqrt{\frac{\log n}{np}}$, the first term becomes $\exp\left(-\frac{(1-o(1))L\sqrt{np\log n}}{2\overline{V}(\kappa)}\right)$, which can be absorbed by $O(n^{-4})$. Since $\exp\left(-\bar{\Delta}_i^2npL\frac{np}{\log n}\right)+\exp\left(-\bar{\Delta}_i^2npL\sqrt{\frac{npL}{\log n}}\right)\leq \exp\left(-\frac{(1-o(1))\eta^2\Delta^2npL}{2\overline{V}(\kappa)}\right) + O(n^{-4})$, we have
\begin{equation}
\mathbb{P}\left(\wh{\pi}_i \leq \frac{e^{(1-\eta)\theta_k^*+\eta\theta_{k+1}^*}}{\sum_{j=1}^ne^{\theta_j^*}}\right)\leq \exp\left(-\frac{(1-\delta_1)\eta^2\Delta^2npL}{2\overline{V}(\kappa)}\right) + O(n^{-4}), \label{eq:prob-bound-1-spec}
\end{equation}
with some $\delta_1=o(1)$ for all $i\leq k$.
With a similar argument, we also have
\begin{equation}
\mathbb{P}\left(\wh{\pi}_i \geq \frac{e^{(1-\eta)\theta_k^*+\eta\theta_{k+1}^*}}{\sum_{j=1}^ne^{\theta_j^*}}\right) \leq \exp\left(-\frac{(1-\delta_1)(1-\eta)^2\Delta^2npL}{2\overline{V}(\kappa)}\right) + O(n^{-4}), \label{eq:prob-bound-2-spec}
\end{equation}
for all all $i\geq k+1$. It can be checked that the $\delta_1$ above can be set independent of the $\bar{\delta}$ in the definition of $\eta$. Now we choose $\eta$ as in (\ref{eq:eta-spec}) with $\bar{\delta}=\delta_1$. By Lemma \ref{lem:anderson}, we have
\begin{eqnarray*}
\mathbb{E}\h_k(\wh{r},r^*) &\leq& \exp\left(-\frac{(1-\bar{\delta})\eta^2\Delta^2npL}{2\overline{V}(\kappa)}\right) + \frac{n-k}{k}\exp\left(-\frac{(1-\bar{\delta})(1-\eta)^2\Delta^2npL}{2\overline{V}(\kappa)}\right) + O(n^{-4}) \\
&\leq& 2\exp\left(-\frac{1}{2}\left(\frac{\sqrt{(1-\bar{\delta})\overline{\snr}}}{2}-\frac{1}{\sqrt{(1-\bar{\delta})\overline{\snr}}}\log\frac{n-k}{k}\right)^2\right) + O(n^{-4}).
\end{eqnarray*}
By Markov's inequality, the above bound implies
$$\h_k(\wh{r},r^*)\leq \exp\left(-\frac{1}{2}\left(\frac{\sqrt{(1-\delta')\overline{\snr}}}{2}-\frac{1}{\sqrt{(1-\delta')\overline{\snr}}}\log\frac{n-k}{k}\right)^2\right) + O(n^{-3}),$$
for some $\delta'=o(1)$ with high probability. One can take, for example,
$$\delta'=\bar{\delta}+\frac{1}{\frac{\sqrt{(1-\bar{\delta})\overline{\snr}}}{2}-\frac{1}{\sqrt{(1-\bar{\delta})\overline{\snr}}}\log\frac{n-k}{k}}.$$
When $O(n^{-3})$ dominates the bound, we have $\h_k(\wh{r},r^*)=O(n^{-3})$, which implies $\h_k(\wh{r},r^*)=0$ since $\h_k(\wh{r},r^*)\in\{0,(2k)^{-1}, 2(2k)^{-1}, 3(2k)^{-1},\cdots, 1\}$. Therefore, we always have
$$\h_k(\wh{r},r^*)\leq 2\exp\left(-\frac{1}{2}\left(\frac{\sqrt{(1-\delta')\overline{\snr}}}{2}-\frac{1}{\sqrt{(1-\delta')\overline{\snr}}}\log\frac{n-k}{k}\right)^2\right),$$
with high probability with some $\delta'=o(1)$. The proof is complete.
\end{proof}

\begin{proof}[Proof of Theorem \ref{thm:spectral-exact}]
The proof is the same as that of Theorem \ref{thm:MLE-exact}.
\end{proof}


\subsection{Proof of Theorem \ref{thm:spectral_lower}}\label{sec:pf-spec2}

To prove Theorem \ref{thm:spectral_lower}, we need two additional lemmas. The first lemma can be viewed as a reverse version of the inequality in Lemma \ref{lem:anderson}.

\begin{lemma}\label{lem:anderson_lower_bound}
Suppose $\wh{r}$ is a rank vector induced by $\wh{\theta}$, we then have
$$\h_k(\wh{r},r^*) \geq \frac{1}{k}\max_{t\in\mathbb{R}}\min\left(\sum_{i:r^*_i\leq k}\indc{\wh{\theta}_i< t} , \sum_{i:r^*_i>k}\indc{\wh{\theta}_i> t}\right).$$
The inequality holds for any $r^*\in\S_n$.
\end{lemma}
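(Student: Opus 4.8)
The plan is to translate the inequality into a purely set-theoretic statement about the two top-$k$ sets $\wh{S}=\{i\in[n]:\wh{r}_i\le k\}$ and $S^*=\{i\in[n]:r_i^*\le k\}$. Since $|\wh{S}|=|S^*|=k$, the normalized Hamming loss simplifies to $\h_k(\wh{r},r^*)=\frac1k|S^*\setminus\wh{S}|=\frac1k|\wh{S}\setminus S^*|$. Hence it suffices to show that for every fixed $t\in\mathbb{R}$,
$$|S^*\setminus\wh{S}|\ \ge\ \min\left(\sum_{i:r_i^*\le k}\indc{\wh{\theta}_i<t},\ \sum_{i:r_i^*>k}\indc{\wh{\theta}_i>t}\right),$$
after which taking the maximum over $t$ and dividing by $k$ yields the claim.

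The single structural input I would use is that $\wh{S}$ consists of the indices of the $k$ largest entries of $\wh{\theta}$ (with arbitrary tie-breaking), which gives $\wh{\theta}_i\ge\wh{\theta}_j$ whenever $i\in\wh{S}$ and $j\notin\wh{S}$. Fix $t$ and distinguish two cases. If $\wh{\theta}_i\ge t$ for all $i\in\wh{S}$, then $\{i:\wh{\theta}_i<t\}\subseteq[n]\setminus\wh{S}$, so every $i\in S^*$ with $\wh{\theta}_i<t$ already lies in $S^*\setminus\wh{S}$; this bounds the first sum in the $\min$ by $|S^*\setminus\wh{S}|$. If instead some $i_0\in\wh{S}$ has $\wh{\theta}_{i_0}<t$, then the structural input forces $\wh{\theta}_j\le\wh{\theta}_{i_0}<t$ for every $j\notin\wh{S}$, i.e.\ $\{i:\wh{\theta}_i>t\}\subseteq\wh{S}$; hence every $i\notin S^*$ with $\wh{\theta}_i>t$ lies in $\wh{S}\setminus S^*$, which bounds the second sum in the $\min$ by $|\wh{S}\setminus S^*|=|S^*\setminus\wh{S}|$. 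In either case the minimum of the two sums is at most $|S^*\setminus\wh{S}|$, which is exactly what is needed.

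I do not anticipate a genuine obstacle: the argument is the mirror image of the proof of Lemma \ref{lem:anderson}, with all inequalities reversed. The only point requiring care is the interplay between the strict inequalities $\wh{\theta}_i<t$ and $\wh{\theta}_i>t$ in the indicators and the non-strict comparison $\wh{\theta}_i\ge\wh{\theta}_j$ between an in-set and an out-of-set index; this is precisely what makes the two cases dovetail, and one should also note that ties within $\wh{\theta}$ (and the resulting non-uniqueness of $\wh{r}$) are harmless, since the structural input holds for any admissible $\wh{r}$.
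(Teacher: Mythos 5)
Your proof is correct. It takes a somewhat different route from the paper's: the paper mirrors its proof of Lemma \ref{lem:anderson}, first replacing the rank-based error counts by threshold counts at the order statistics $\wh{\theta}_{(k)}$ and $\wh{\theta}_{(k+1)}$, then comparing with $\min_t\max(\cdot,\cdot)$, and finally invoking the identity $\min_t\max=\max_t\min$ for a nondecreasing/nonincreasing pair of step functions. You instead fix an arbitrary $t$ and show directly that, writing $S^*=\{i:r_i^*\le k\}$ and $\wh{S}=\{i:\wh{r}_i\le k\}$, one has $\min\bigl(\sum_{i:r_i^*\le k}\indc{\wh{\theta}_i<t},\ \sum_{i:r_i^*>k}\indc{\wh{\theta}_i>t}\bigr)\le|S^*\setminus\wh{S}|=k\,\h_k(\wh{r},r^*)$, via a dichotomy on whether the threshold $t$ cuts into $\wh{S}$, using only that $\wh{S}$ is a superlevel set of $\wh{\theta}$. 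This avoids both the order statistics and the minimax exchange, and in particular sidesteps the mildly delicate step in the paper's argument where two different thresholds appear inside a single max and must be compared against the single-threshold $\min_t\max$ expression. Your remark on ties is also apt: the superlevel-set property holds for any admissible tie-breaking in $\wh{r}$, so the argument is insensitive to the non-uniqueness of the induced rank.
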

\begin{proof}
Following the proof of Lemma \ref{lem:anderson}, we have
\begin{eqnarray}
\nonumber 2k\h_k(\wh{r},r^*) &=& 2\max\left(\sum_{i=1}^k\indc{\wh{r}_i>k},\sum_{i=k+1}^n\indc{\wh{r}_i\leq k}\right) \\
\nonumber &\geq & 2\max\left(\sum_{i=1}^k\indc{\wh{\theta}_i<\wh{\theta}_{(k)}},\sum_{i=k+1}^n\indc{\wh{\theta}_i>\wh{\theta}_{(k+1)}}\right) \\
\label{eq:some-t_lower_spectral} &\geq& 2\min_t\max\left(\sum_{i=1}^k\indc{\wh{\theta}_i< t},\sum_{i=k+1}^n\indc{\wh{\theta}_i> t}\right)\\
\label{eq:any-t_lower_spectral} &=& 2\max_t\min\left(\sum_{i=1}^k\indc{\wh{\theta}_i< t},\sum_{i=k+1}^n\indc{\wh{\theta}_i> t}\right).
\end{eqnarray}
where (\ref{eq:some-t_lower_spectral}) and (\ref{eq:any-t_lower_spectral}) follow the same argument that leads to (\ref{eq:some-t}) and (\ref{eq:any-t}).
\end{proof}


\begin{proof}[Proof of Theorem \ref{thm:spectral_lower}]
We first note that condition (\ref{eq:spec-lower-condition}) necessarily implies $\Delta =o(1)$. Throughout the proof, we assume 
$\kappa =\Omega(1)$ and there exists some $\delta_1 = o(1)$ such that  
\begin{align}
\frac{\sqrt{(1+\delta_1)\overline{\snr}}}{2} - \frac{1}{\sqrt{(1+ \delta_1)\overline{\snr}}} \log \frac{n-k}{k} \rightarrow\infty.\label{eqn:overline_SNR_infty}
\end{align}
The case with 
$\kappa =o(1)$ or $\overline{\snr}$ not satisfying (\ref{eqn:overline_SNR_infty}) will be addressed at the end of the proof.  


Choose $\kappa_1,\kappa_2\geq 0$ such that we have both $\kappa_1+\kappa_2\leq \kappa$ and
\begin{align*}
\frac{k\psi'(\kappa_1)(1+e^{\kappa_1})^2+(n-k)\psi'(\kappa_2)(1+e^{-\kappa_2})^2}{(k\psi(\kappa_1)+(n-k)\psi(-\kappa_2))^2/n} = \overline{V}(\kappa).
\end{align*}
Let $\rho=o(1)$ be a vanishing number that will be specified later. Since $k\rightarrow\infty$ and $\kappa=\Omega(1)$, one can easily check that $\kappa_2=\Omega(1)$. Define $\theta^*_i=\kappa_1$ for all $1\leq i\leq k-\rho k$, $\theta^*_i=0$ for $k-\rho k<i\leq k$, $\theta^*_i=-\Delta$ for $k<i\leq k+\rho(n-k)$ and $\theta^*_i=-\kappa_2$ for $k+\rho(n-k)< i\leq n$. For the simplicity of proof, we choose $\rho$ so that both $\rho k$ and $\rho(n-k)$ are integers. Define $r^*$ to be $r^*_i = i,\forall i\in[n]$. Then we have
$$\sup_{\substack{r\in\S_n\\\theta\in\Theta(k,\Delta,\kappa)}}\mathbb{E}_{(\theta,r)}\h_k(\wh{r},r) \geq \mathbb{E}_{(\theta^*,r^*)}\h_k(\wh{r},r^*).$$
We will utilize several results established in the proof of Theorem \ref{thm:spectral-ranking}. Define 
\begin{align}
\eta = \frac{1}{2} - \frac{\overline{V}(\kappa)}{(1+{\bar \delta})\Delta^2npL}\log\frac{n-k}{k},\label{eqn:spectral_lower_eta_def}
\end{align}
for $\bar \delta =o(1)$. 
The specific choice of $\bar \delta$ will be specified later in the proof.
Also define  $t=\frac{e^{(1-\eta)\theta_k^*+\eta\theta_{k+1}^*}}{\sum_{j=1}^ne^{\theta_j^*}} = \frac{e^{-\eta \Delta}}{\sum_{j=1}^ne^{\theta_j^*}}$. Then, by Lemma \ref{lem:anderson_lower_bound}, we have
\begin{align*}
\h_k(\wh{r},r^*) & \geq  \frac{1}{k}\min\left(\sum_{i=1}^k\indc{\wh{\pi}_i< t},\sum_{i=k+1}^n\indc{\wh{\pi}_i> t}\right) \\
& \geq  \frac{1}{k}\min\left(\sum_{k-\rho k  <i \leq k}\indc{\wh{\pi}_i< t},\sum_{k< i \leq k + \rho(n-k)}\indc{\wh{\pi}_i> t}  \right).
\end{align*}
For any $\delta>0$, define the function $\phi(\delta) = \frac{\sqrt{(1+\delta)\overline{\snr}}}{2}-\frac{1}{\sqrt{(1+\delta)\overline{\snr}}}\log\frac{n-k}{k}$.
It suffices to show there exists some 
constant $C>0$ such that
\begin{align}
&\mathbb{P}_{(\theta^*,r^*)}\left(\sum_{k-\rho k  <i \leq k}\indc{\wh{\pi}_i< t} \geq Ck\exp\left(-\frac{\phi(\bar\delta)^2}{2} \right) \right) \geq 1-o(1) ,\label{eqn:H_lower_1}\\
\text{and }& \mathbb{P}_{(\theta^*,r^*)}\left(\sum_{k< i \leq k + \rho(n-k)}\indc{\wh{\pi}_i> t} \geq Ck\exp\left(-\frac{\phi(\bar\delta)^2}{2} \right) \right) \geq 1-o(1).\label{eqn:H_lower_2}
\end{align}
Suppose both inequalities hold, we have
$$\mathbb{P}_{(\theta^*,r^*)}\left(\h_k(\wh{r},r^*) > 0\right) \geq 1-o(1).$$
By Markov's inequality, we also have
\begin{align*}
 &\mathbb{E}_{(\theta^*,r^*)}\h_k(\wh{r},r^*) \geq  C\exp\left(-\frac{\phi(\bar\delta)^2}{2} \right) \mathbb{P}_{(\theta^*,r^*)}\left(\h_k(\wh{r},r^*)  \geq C\exp\left(-\frac{\phi(\bar\delta)^2}{2} \right) \right) 
 \geq  \frac{C}{2} \exp\left(-\frac{\phi(\bar\delta)^2}{2} \right).
\end{align*}
Therefore, we obtain the desired conclusions.

\indent In the rest of the proof, we are going to establish (\ref{eqn:H_lower_1}). 
 Recall the definition of $\bar \pi$  in (\ref{eqn:pi_bar_def}).
For any $k-\rho k<i\leq k$, define the event $\mathcal{F}$ as
\begin{align*}
\mathcal{F}_i = \left\{\frac{|\wh{\pi}_i-\bar{\pi}_i|}{ \pi^*_i} \leq  \delta_0(1-e^{-\eta\Delta}) \text{ and } \left|\frac{\sum_{j\in[n]\backslash\{i\}}A_{ji}\bar{y}_{ji}}{\sum_{j\in[n]\backslash\{i\}}A_{ji}\psi(\theta_j^*-\theta_i^*)}-1\right| \leq  \delta_0\right\}.
\end{align*}
Using a similar argument that leads to (\ref{eq:pi-hat-bar-approx}) and (\ref{eq:easy-de-pi}), we can show that there exists some $\delta_0=o(1)$ not dependent on $\bar \delta$, such that
\begin{equation}
\mathbb{P}_{(\theta^*,r^*)} (\mathcal{F}_i) \geq 1-\left(  O(n^{-4})+\exp\left(-\eta^2\Delta^2npL\frac{np}{\log n}\right)+\exp\left(-\eta^2\Delta^2npL\sqrt{\frac{npL}{\log n}}\right)   \right). \label{eq:indi-F-i}
\end{equation}
Suppose $\mathcal{F}_i$ holds, we then have
\begin{align}
\indc{\wh \pi_i < t  } & =  \indc{\wh \pi_i < \frac{e^{(1-\eta)\theta_k^*+\eta\theta_{k+1}^*}}{\sum_{j=1}^ne^{\theta_j^*}}  } \nonumber     \\
&  = \indc{\frac{\wh{\pi}_i-\pi_i^*}{\pi_i^*} \leq e^{(1-\eta)\theta_k^*+\eta\theta_{k+1}^*-\theta_i^*}-1    } \nonumber    \\
&  = \indc{\frac{\wh{\pi}_i-\pi_i^*}{\pi_i^*} \leq e^{-\eta\Delta}-1   } \nonumber    \\
& \geq \indc{\frac{\bar{\pi}_i-\pi_i^*}{\pi_i^*}\leq -(1+ \delta_0)(1-e^{-\eta\Delta})  } \nonumber    \\
& \geq  \indc{\frac{\sum_{j\in[n]\backslash\{i\}}A_{ji}(\bar{y}_{ij}-\psi(\theta_i^*-\theta_j^*))(1+e^{\theta_j^*-\theta_i^*})}{\sum_{j\in[n]\backslash\{i\}}A_{ji}\psi(\theta_j^*-\theta_i^*)} \leq -(1+ \delta_0)^2(1-e^{-\eta\Delta}) }  \nonumber  \\
& \geq \indc{\frac{\sum_{j\in[n]\backslash\{i\}}A_{ji}(\bar{y}_{ij}-\psi(\theta_i^*-\theta_j^*))(1+e^{\theta_j^*-\theta_i^*})}{\sum_{j\in[n]\backslash\{i\}}A_{ji}\psi(\theta_j^*-\theta_i^*)} \leq -(1+ \delta_0)^2\eta\Delta }.\label{eqn:spectral_lower_L} 
\end{align}
We use the notation $L_i$ for the indicator function on the right hand side of (\ref{eqn:spectral_lower_L}).
In other words, we have shown that
\begin{eqnarray*}
\sum_{k-\rho k<i\leq k}\indc{\wh \pi_i < t  } &\geq& \sum_{k-\rho k<i\leq k}L_i\mathbb{I}_{\mathcal{F}_i} \\
&\geq& \sum_{k-\rho k<i\leq k}L_i - \sum_{k-\rho k<i\leq k}\mathbb{I}_{\mathcal{F}_i^c}.
\end{eqnarray*}
By (\ref{eq:indi-F-i}), we have
$$\mathbb{E}\left(\sum_{k-\rho k<i\leq k}\mathbb{I}_{\mathcal{F}_i^c}\right)\leq O(n^{-3}) + \rho k\exp\left(-\eta^2\Delta^2npL\frac{np}{\log n}\right)+\rho k\exp\left(-\eta^2\Delta^2npL\sqrt{\frac{npL}{\log n}}\right).$$
Since the above bounds is of smaller order than $k\exp\left(-\frac{\eta^2\Delta^2npL}{2\overline{V}(\kappa)}\left(\frac{np}{\log n}\right)^{1/4}\right)$, we can use Markov's inequality and obtain
\begin{equation}
\mathbb{P}_{(\theta^*,r^*)}\left(\sum_{k-\rho k<i\leq k}\mathbb{I}_{\mathcal{F}_i^c} \leq k\exp\left(-\frac{\eta^2\Delta^2npL}{2\overline{V}(\kappa)}\left(\frac{np}{\log n}\right)^{1/4}\right)\right) \geq 1-o(1). \label{eq:etos}
\end{equation}
To lower bound $\sum_{k-\rho k<i\leq k}L_i$, we define 
\begin{align}
\mathcal{A} = \Bigg\{A: \forall k-\rho k < i \leq k, &\left|\frac{\sum_{j\in[n]\backslash\{i\}}A_{ij}\psi'(\theta_i^*-\theta_j^*)\left(1+e^{\theta_j^*-\theta_i^*}\right)^2}{p\sum_{j\in[n]\backslash\{i\}}\psi'(\theta_i^*-\theta_j^*)\left(1+e^{\theta_j^*-\theta_i^*}\right)^2}-1\right|\leq \delta_0,\label{eqn:A_1} \\
& \left|\frac{\sum_{j\in[n]\backslash\{i\}}A_{ji}\psi(\theta_j^*-\theta_i^*)}{p\sum_{j\in[n]\backslash\{i\}}\psi(\theta_j^*-\theta_i^*)}-1\right|\leq  \delta_0 ,\label{eqn:A_2}\\
& \abs{\sum_{k-\rho k< j <k}A_{ji}\psi'(\theta_i^*-\theta_j^*)(1+e^{\theta_j^*-\theta_i^*})^2 }\leq 2\rho kp + 10\log n \Bigg\}.\label{eqn:A_3}
\end{align}
By Bernstein's inequality and union bound, we have $\mathbb{P}(A\in \mathcal{A})\geq 1- O(n^{-3})$. From now on, we use the notation $\mathbb{P}_A$ for the conditional probability $\mathbb{P}_{(\theta^*,r^*)}(\cdot|A)$ given $A$.
For any $s>0$,
\begin{align}
\mathbb{P}_{(\theta^*,r^*)}&\left(\sum_{k-\rho k  <i \leq k}L_i \geq s \right) \geq  \mathbb{P}(A\in\mathcal{A}) \inf_{A\in \mathcal{A}}\mathbb{P}_{A}\left(\sum_{k-\rho k  <i \leq k}L_i \geq s \right).  \label{eqn:spectral_final_1}
\end{align}
To study  $\mathbb{P}_{A}\left(\sum_{k-\rho k  <i \leq k}L_i \geq s \right)$, we define the set $S = \cbr{i\in[n]: i\leq k-\rho k \text{ or }i > k}$. 
Note that for each $k-\rho k  <i \leq k$, we have $L_i \geq L_{i,1}- L_{i,2}-L_{i,3}$, where
\begin{align*}
L_{i,1} & =\indc{\frac{\sum_{j\in S}A_{ji}(\bar{y}_{ij}-\psi(\theta_i^*-\theta_j^*))(1+e^{\theta_j^*-\theta_i^*})}{\sum_{j\in[n]\backslash\{i\}}A_{ji}\psi(\theta_j^*-\theta_i^*)} \leq -(1+2\delta')(1+ \delta_0)^2\eta\Delta} \\
L_{i,2} & = \indc{\frac{\sum_{k-\rho k< j <i}A_{ji}(\bar{y}_{ij}-\psi(\theta_i^*-\theta_j^*))(1+e^{\theta_j^*-\theta_i^*})}{\sum_{j\in[n]\backslash\{i\}}A_{ji}\psi(\theta_j^*-\theta_i^*)} \geq \delta'(1+\delta_0)^2\eta\Delta}\\
L_{i,3} & = \indc{\frac{\sum_{ i <j \leq k}A_{ji}(\bar{y}_{ij}-\psi(\theta_i^*-\theta_j^*))(1+e^{\theta_j^*-\theta_i^*})}{\sum_{j\in[n]\backslash\{i\}}A_{ji}\psi(\theta_j^*-\theta_i^*)} \geq \delta'(1+\delta_0)^2\eta\Delta},
\end{align*}
for some $\delta'=o(1)$ whose value will be determined later. We are going to control each term separately.

\textbf{(1).}
Analysis of $L_{i,1}$. Note that conditional on $A$, $\{L_{i,1}\}_{k-\rho k <i \leq k}$ are all independent Bernoulli random variables. We have $L_{i,1} \sim \text{Bernoulli}(p_i)$, where $p_i = \E_{(\theta^*,r^*)} (L_{i,1}| A)$. By Chebyshev's inequality, we have 
\begin{align*}
\mathbb{P}_{A}\left(\sum_{k-\rho k <i \leq k}L_{i,1} \geq \frac{1}{2} \sum_{k-\rho k <i \leq k}p_{i}\right) \geq 1-\frac{4}{\sum_{k-\rho k <i \leq k}p_{i}}.
\end{align*}
By Lemma \ref{lem:spectral_lower_tail_prop} stated and proved at the end of the section, we can lower bound each $p_i$ by
\begin{align}
p_i &= \p_{A} \left(\frac{\sum_{j\in S}A_{ji}(\bar{y}_{ij}-\psi(\theta_i^*-\theta_j^*))(1+e^{\theta_j^*-\theta_i^*})}{\sum_{j\in[n]\backslash\{i\}}A_{ji}\psi(\theta_j^*-\theta_i^*)} \leq -(1+2\delta')(1+\delta_0)^2\eta\Delta\right)   \nonumber\\
& \geq C_1\exp\left(-\frac{(1+\delta_2)\eta^2\Delta^2npL}{2\overline{V}(\kappa)} - C_1' \eta \sqrt{\frac{\Delta^2npL}{\overline{V}(\kappa)}}\right),\nonumber
\end{align}
for some constants $C_1,C_1'>0$ and some $\delta_2 = o(1)$ that are not dependent on $\eta$. By (\ref{eqn:overline_SNR_infty}), there exists some $\delta_3=o(1)$ such that
\begin{align}
\sum_{k-\rho k < i\leq k } p_i \geq  C_1k\exp\left(-\frac{(1+\delta_3)\eta^2\Delta^2npL}{2\overline{V}(\kappa)}\right).\label{eqn:p_i_lower_bound}
\end{align}
To obtain (\ref{eqn:p_i_lower_bound}), we need to set $\rho$ that tends to zero sufficiently slow so that it can be absorbed into the exponent.
Note that condition (\ref{eq:spec-lower-condition}) is equivalent to $\frac{(1+\epsilon)\overline{\snr}}{2} \left(\frac{1}{2} - \frac{1}{(1+\epsilon)\overline{\snr}} \log \frac{n-k}{k}\right)^2 < \log k$.
Since $\epsilon$ is a constant, it implies
\begin{align*}
\frac{\overline{\snr}}{2} \left(\frac{1}{2} - \frac{1}{(1+\bar \delta)\overline{\snr}} \log \frac{n-k}{k}\right)^2 < (1-\epsilon')^{-1}\log k,
\end{align*}
for some constant $\epsilon'>0$.
As a result, under the condition that $k \rightarrow\infty$, we have
\begin{align*}
\sum_{k-\rho k < i\leq k }p_i \geq \sum_{k-\rho k < i\leq k } C_1\exp\left( -(1+\delta_3)(1-\epsilon')\log k \right) \geq k^\frac{\epsilon'}{2}\rightarrow\infty.
\end{align*}
Hence, we have proved
\begin{align*}
\inf_{A\in \mathcal{A}}\mathbb{P}_{A}\left(\sum_{k-\rho k  <i \leq k}L_{i,1} \geq  \frac{1}{2}C_1k\exp\left(-\frac{(1+\delta_2)\eta^2\Delta^2npL}{2\overline{V}(\kappa)} - C_1' \eta \sqrt{\frac{\Delta^2npL}{\overline{V}(\kappa)}}\right)  \right) \geq  1- o(1).
\end{align*}

\textbf{(2).} Analysis of $L_{i,2}$. By (\ref{eqn:A_1})-(\ref{eqn:A_3}) and Bernstein's inequality, we can bound $\E (L_{i,2}|A)$ by
\begin{align*}
& \exp\left( - \frac{\left(\delta'(1+\delta_0)^2\eta\Delta L \sum_{j\in[n]\backslash\{i\}}A_{ji}\psi(\theta_j^*-\theta_i^*)   \right)^2}{2 \left( L \sum_{k-\rho k< j <i}A_{ji}\psi'(\theta_i^*-\theta_j^*)(1+e^{\theta_j^*-\theta_i^*})^2  + \frac{1}{3}\delta'(1+\delta_0)^2\eta\Delta L  \sum_{j\in[n]\backslash\{i\}}A_{ji}\psi(\theta_j^*-\theta_i^*)   \right)}\right)\\
&\leq \exp\left( - \frac{\left(\delta'(1+\delta_0)^2\eta\Delta L \sum_{j\in[n]\backslash\{i\}}p\psi(\theta_j^*-\theta_i^*)   \right)^2}{4 \left( 2L \rho kp +10\log n  + \frac{1}{3}\delta'(1+\delta_0)^2\eta\Delta L  \sum_{j\in[n]\backslash\{i\}}p\psi(\theta_j^*-\theta_i^*)   \right)}\right).
\end{align*}
Now we set $\delta' = \max\{\rho^\frac{1}{2},\Delta^\frac{4}{3}, \left(\frac{\log n}{np}\right)^\frac{1}{2}\}$.
Then, there exists some constant $C_2,C_3>0$ such that
$$
\E (L_{i,2}|A) \leq \exp\left(-C_2\rho^{-\frac{1}{2}}npL \eta^2\Delta^2\right) \leq \exp\left(-C_3\rho^{-1/2}\frac{\eta^2\Delta^2npL}{2\overline{V}(\kappa)}\right).
$$
Then,
$$
\E \left(\sum_{k-\rho k < i \leq k}L_{i,2}\Bigg|A\right) \leq \rho k\exp\left(-C_3\rho^{-1/2}\frac{\eta^2\Delta^2npL}{2\overline{V}(\kappa)}\right).
$$
By Markov inequality, we have
\begin{equation}
\inf_{A\in \mathcal{A}}\mathbb{P}_{A}\left(\sum_{k-\rho k  <i \leq k}L_{i,2} \geq \rho k  \exp\left(-\frac{1}{2}C_3\rho^{-1/2}\frac{\eta^2\Delta^2npL}{2\overline{V}(\kappa)}\right) \right) \leq \exp\left(-\frac{1}{2}C_3\rho^{-1/2}\frac{\eta^2\Delta^2npL}{2\overline{V}(\kappa)}\right). \label{eqn:L_i2_upper}
\end{equation}

\textbf{(3).} Analysis of $L_{i,3}$. By a similar argument, we also have
\begin{equation}
\inf_{A\in \mathcal{A}}\mathbb{P}_{A}\left(\sum_{k-\rho k  <i \leq k}L_{i,3} \geq \rho k  \exp\left(-\frac{1}{2}C_3\rho^{-1/2}\frac{\eta^2\Delta^2npL}{2\overline{V}(\kappa)}\right) \right) \leq \exp\left(-\frac{1}{2}C_3\rho^{-1/2}\frac{\eta^2\Delta^2npL}{2\overline{V}(\kappa)}\right). \label{eqn:L_i3_upper}
\end{equation}

~\\
\indent Now we can combine the above analyses of $L_{i,1}$, $L_{i,2}$ and $L_{i,3}$.
Since $\rho=o(1)$, the bounds (\ref{eqn:L_i2_upper}) and (\ref{eqn:L_i3_upper}) are of smaller order than (\ref{eqn:p_i_lower_bound}). We have
\begin{align}
\inf_{A\in \mathcal{A}}\mathbb{P}_{A}\left(\sum_{k-\rho k  <i \leq k}L_i \geq C_4k\exp\left(-\frac{(1+\delta_2)\eta^2\Delta^2npL}{2\overline{V}(\kappa)} - C_1' \eta \sqrt{\frac{\Delta^2npL}{\overline{V}(\kappa)}}\right) \right)\geq 1-o(1),\label{eqn:k_1}
\end{align}
for some constant $C_4 >0$. Then  (\ref{eq:etos}) and (\ref{eqn:spectral_final_1}) lead to
\begin{align}
\mathbb{P}_{(\theta^*,r^*)}\left(\sum_{k-\rho k  <i \leq k}\indc{\wh{\pi}_i <t}\geq C_4k\exp\left(-\frac{(1+\delta_2)\eta^2\Delta^2npL}{2\overline{V}(\kappa)} - C_1' \eta \sqrt{\frac{\Delta^2npL}{\overline{V}(\kappa)}}\right) \right)\geq 1-o(1).\label{eqn:k_1_1}
\end{align}

We are going to show it leads to (\ref{eqn:H_lower_1}) by selecting an appropriate $\bar \delta$ as follows. We write $\eta=\eta_{\bar\delta} = \frac{1}{2} - \frac{\overline{V}(\kappa)}{(1+{\bar\delta})\Delta^2npL}\log\frac{n-k}{k}$ to make the dependence on $\bar{\delta}$ explicit. Recall that $\delta_2$ and $C_1'$ are independent of the $\bar \delta$ in the definition of $\eta_{\bar \delta}$. First we can let $\bar\delta > \delta_1$, then we have
\begin{align*}
\frac{(1+\delta_2)\eta_{\bar \delta}^2\Delta^2npL}{2\overline{V}(\kappa)} + C_1' \eta_{\bar \delta} \sqrt{\frac{\Delta^2npL}{\overline{V}(\kappa)}} &\leq \left( 1+ \delta_2 + 2C'_1 \left(\eta_{\bar \delta}\frac{\Delta^2npL}{\overline{V}(\kappa)}\right)^{-\frac{1}{2}}\right)\frac{\eta_{\bar \delta}^2\Delta^2npL}{2\overline{V}(\kappa)} \\
& \leq \left( 1+ \delta_2 + 2C'_1 \left(\eta_{ \delta_1}\frac{\Delta^2npL}{\overline{V}(\kappa)}\right)^{-\frac{1}{2}}\right)\frac{\eta_{\bar \delta}^2\Delta^2npL}{2\overline{V}(\kappa)}\\
&\leq \left(1+\delta_4\right)\frac{\eta_{\bar \delta}^2\Delta^2npL}{2\overline{V}(\kappa)},
\end{align*}
for some $\delta_4=o(1)$ not dependent on $\bar\delta$. Here the second inequality is due to the fact that $\eta_\delta$ is in increasing function of $\delta$, and the last inequality is due to (\ref{eqn:overline_SNR_infty}). Then we can let $\bar \delta \geq \delta_4$ to have the above expression to be upper bounded by $\left(1+\bar\delta\right)\frac{\eta_{\bar \delta}^2\Delta^2npL}{2\overline{V}(\kappa)}$. Hence, (\ref{eqn:k_1_1}) leads to
\begin{align}
\mathbb{P}_{(\theta^*,r^*)}\left(\sum_{k-\rho k  <i \leq k}\indc{\wh{\pi}_i <t} \geq C_4k\exp\left(-\frac{(1+\bar\delta)\eta_{\bar \delta}^2\Delta^2npL}{2\overline{V}(\kappa)} \right)  \right)\geq 1-o(1),\label{eqn:k_2}
\end{align}
witch establishes (\ref{eqn:H_lower_1}). 


\indent Similar to (\ref{eqn:k_1_1}), we can establish
\begin{align*}
&\mathbb{P}_{(\theta^*,r^*)}\left(\sum_{k< i \leq k+\rho(n-k)}\indc{\wh{\pi}_i >t} \geq C_4(n-k)\exp\left(-\frac{(1+\delta_2)(1-\eta_{\bar \delta})^2\Delta^2npL}{2\overline{V}(\kappa)} - C_1' (1-\eta_{\bar \delta}) \sqrt{\frac{\Delta^2npL}{\overline{V}(\kappa)}}\right) \right)\\
&\geq 1-o(1).
\end{align*}
Due to (\ref{eqn:overline_SNR_infty}), we have $(1-\eta_{\bar \delta})\in[0,1]$, then
\begin{align*}
\frac{(1+\delta_2)(1-\eta_{\bar \delta})^2\Delta^2npL}{2\overline{V}(\kappa)} + C_1' (1-\eta_{\bar \delta}) \sqrt{\frac{\Delta^2npL}{\overline{V}(\kappa)}} & \leq \frac{(1+\delta_2)(1-\eta_{\bar \delta})^2\Delta^2npL}{2\overline{V}(\kappa)} + C_1' \sqrt{\frac{\Delta^2npL}{\overline{V}(\kappa)}} \\
& \leq   \left(1+\delta_5\right)\frac{(1-\eta_{\bar \delta})^2\Delta^2npL}{2\overline{V}(\kappa)},
\end{align*} 
for some $\delta_5=o(1)$ not dependent on $\bar \delta$.
Since $(1-\eta_{\bar \delta})^2\Delta^2npL/(2\overline{V}(\kappa)) = \eta_{\bar \delta}^2\Delta^2npL/(2\overline{V}(\kappa))  + 2\log\frac{n-k}{k}/(1+\bar\delta)$, we have
\begin{align*}
&(n-k)\exp\left(-\frac{(1+\delta_2)(1-\eta_{\bar \delta})^2\Delta^2npL}{2\overline{V}(\kappa)} - C_1' \eta_{\bar \delta} \sqrt{\frac{\Delta^2npL}{\overline{V}(\kappa)}}\right) \\
& \geq k \exp\left( \log \frac{n-k}{k}-\left(1+\delta_5\right)\frac{(1-\eta_{\bar \delta})^2\Delta^2npL}{2\overline{V}(\kappa)}\right) \\
& = k \exp\left( \frac{\bar \delta - \delta_5}{1+\bar \delta}\log \frac{n-k}{k}-\left(1+\delta_5\right)\frac{\eta_{\bar \delta}^2\Delta^2npL}{2\overline{V}(\kappa)}\right).
\end{align*}
By letting $\bar \delta \geq \delta_5$ and using the same argument as in obtaining (\ref{eqn:k_2}), we have
\begin{align}
\mathbb{P}_{(\theta^*,r^*)}\left(\sum_{k< i \leq k+\rho(n-k)}\indc{\wh{\pi}_i >t} \geq C_4k\exp\left(-\frac{(1+\bar\delta)\eta_{\bar \delta}^2\Delta^2npL}{2\overline{V}(\kappa)} \right)  \right)\geq 1-o(1),\label{eqn:k_3}
\end{align}
which establishes (\ref{eqn:H_lower_2}). 
To sum up, we can choose $\bar \delta = \max\{\delta_1,\delta_4,\delta_5\}$ to 
establish (\ref{eqn:H_lower_1}) and (\ref{eqn:H_lower_2}).


\indent The above proof assumes that 
$\kappa=\Omega(1)$ and $\overline{\snr}$ satisfies (\ref{eqn:overline_SNR_infty}). When these two conditions do not hold, we need to slightly modify the argument.  When (\ref{eqn:overline_SNR_infty}) is not satisfied, there must exist some small constant $\bar\epsilon>0$ such that $\frac{\sqrt{(1+\bar \epsilon)\overline{\snr}}}{2} - \frac{1}{\sqrt{(1+\bar \epsilon)\overline{\snr}}} \log \frac{n-k}{k} =O(1)$. We can then take $\rho$ to be a sufficiently small constant, and the proof will go through with some slight modification.
When $\kappa=o(1)$, we can simply construct $\theta^*$ by $\theta_i^*=0$ for $1\leq i\leq k$ and $\theta_i^*=-\Delta$ for $k+1\leq i\leq n$.
\end{proof}

Finally, we state and prove Lemma \ref{lem:spectral_lower_tail_prop} to close this section.

\begin{lemma}\label{lem:spectral_lower_tail_prop}
Assume $\frac{np}{\log n}\rightarrow\infty$, $\kappa =O(1)$, $\rho=o(1)$, $k\to\infty$ and  (\ref{eq:spec-lower-condition}) holds for some arbitrarily small constant $\epsilon>0$. Choose $\kappa_1,\kappa_2\geq 0$ such that we have both $\kappa_1+\kappa_2\leq \kappa$ and
$$\frac{k\psi'(\kappa_1)(1+e^{\kappa_1})^2+(n-k)\psi'(\kappa_2)(1+e^{-\kappa_2})^2}{(k\psi(\kappa_1)+(n-k)\psi(-\kappa_2))^2/n}=\overline{V}(\kappa).$$
Define $\theta_i^*=\kappa_1$ for all $1\leq i\leq k-\rho k$, $\theta_i^*=0$ for $k-\rho k<i\leq k$, $\theta_i^*=-\Delta$ for $k+1\leq i\leq k+\rho(n-k)$ and $\theta_i^*=-\kappa_2$ for $k+\rho(n-k)< i\leq n$ and $S = \cbr{i\in[n]: i\leq k-\rho k \text{ or }i > k}$.
There exists some constants $C_1>0$ such that for any $\tilde{\delta}=o(1)$, there exists $C_2>0$ and $\delta_1=o(1)$ such that for any $\eta<1/2$ and any $A\in\mathcal{A}$ where $\mathcal{A}$ is defined in (\ref{eqn:A_1})-(\ref{eqn:A_3}), we have
\begin{align}
 &\p \left(\frac{\sum_{j\in S}A_{ji}(\bar{y}_{ij}-\psi(\theta_i^*-\theta_j^*))(1+e^{\theta_j^*-\theta_i^*})}{\sum_{j\in[n]\backslash\{i\}}A_{ji}\psi(\theta_j^*-\theta_i^*)} \leq -(1+\tilde{\delta}) \eta\Delta\Bigg| A\right)   \nonumber\\
& \geq C_1\exp\left(-\frac{1+\delta_1}{2}\eta_+^2\overline{\snr}-C_2\eta_{+}\sqrt{\overline{\snr}}\right).\label{eqn:spectral_lower_tail_prop}
\end{align}
for any $k-\rho k<i\leq k$.
\end{lemma}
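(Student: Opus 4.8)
The plan is to establish a lower bound on the tail probability of a sum of conditionally independent bounded random variables by matching the Gaussian tail up to the first-order correction, using a standard change-of-measure (exponential tilting) argument. Conditioning on $A$, the numerator $\sum_{j\in S}A_{ji}(\bar{y}_{ij}-\psi(\theta_i^*-\theta_j^*))(1+e^{\theta_j^*-\theta_i^*})$ is a centered sum of independent terms, each a scaled centered binomial proportion with $L$ trials. First I would write the event in the form $\{Z_i \leq -(1+\tilde\delta)\eta\Delta\cdot D_i\}$ where $D_i = \sum_{j\in[n]\setminus\{i\}}A_{ji}\psi(\theta_j^*-\theta_i^*)$ is the (fixed, given $A$) denominator and $Z_i = \sum_{j\in S}A_{ji}(\bar{y}_{ij}-\psi(\theta_i^*-\theta_j^*))(1+e^{\theta_j^*-\theta_i^*})$. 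Using the defining constraints (\ref{eqn:A_1})–(\ref{eqn:A_3}) of $\mathcal{A}$, the conditional variance of $Z_i$ is $\frac{1}{L}\sum_{j\in S}A_{ji}\psi'(\theta_i^*-\theta_j^*)(1+e^{\theta_j^*-\theta_i^*})^2 = (1+o(1))\frac{p}{L}\sum_{j\in[n]\setminus\{i\}}\psi'(\theta_i^*-\theta_j^*)(1+e^{\theta_j^*-\theta_i^*})^2$, since the terms $k-\rho k<j\le k$ removed from $[n]\setminus\{i\}$ to form $S$ contribute at most $2\rho kp+10\log n = o(np)$ by (\ref{eqn:A_3}). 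Combined with the denominator bound $D_i = (1+o(1))p\sum_{j}\psi(\theta_j^*-\theta_i^*)$, the ``target'' $(1+\tilde\delta)\eta\Delta D_i$ divided by the standard deviation of $Z_i$ is, up to $(1+o(1))$ factors, $\eta\Delta\sqrt{Lp}\cdot\frac{\sum_j\psi(\theta_j^*-\theta_i^*)}{\sqrt{\sum_j\psi'(\theta_i^*-\theta_j^*)(1+e^{\theta_j^*-\theta_i^*})^2}}$, whose square equals $(1+o(1))\eta^2\overline{\snr}$ by Lemma \ref{lem:minimizer} and the definition of $\overline{V}(\kappa)$ (here the specific piecewise structure of $\theta^*$ realizes the minimizer).

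The core step is the lower tail bound itself. I would apply the classical Cramér-type lower deviation estimate: for a sum $Z = \sum_j \xi_j$ of independent mean-zero random variables with $|\xi_j|\le b$ and $\mathrm{Var}(Z)=\sigma^2$, and a level $z$ with $z/\sigma\to\infty$ but $z/\sigma = o(\sigma/b)$ (equivalently $zb/\sigma^2 = o(1)$), one has $\mathbb{P}(Z\le -z)\ge \exp\big(-\frac{z^2}{2\sigma^2}(1+o(1))\big)$; more precisely, via exponential tilting by $-z/\sigma^2$ and a second-order Taylor expansion of the log-MGF, $\mathbb{P}(Z \le -z) \ge c\exp\big(-\frac{z^2}{2\sigma^2} - C\frac{z^3 b}{\sigma^4}\big)$ for absolute constants $c, C$. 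Here $b \asymp L^{-1}$ (each $\bar y_{ij}-\psi$ lies in a bounded interval and is scaled by a bounded factor), $\sigma^2 \asymp np/L$, and $z \asymp \eta\Delta np$, so $z^3 b/\sigma^4 \asymp \eta^3\Delta^3 (np)^3 L^{-1} / (np/L)^2 = \eta^3\Delta^3 np L \asymp \eta(\eta^2\Delta^2 npL)^{3/2}/(npL)^{1/2}\cdot\ldots$ — careful bookkeeping shows this correction term is of order $\eta_+\sqrt{\overline{\snr}}$ times a $o(1)$ factor coming from $\Delta = o(1)$ (condition (\ref{eq:spec-lower-condition}) forces $\Delta=o(1)$), which is exactly the $C_2\eta_+\sqrt{\overline{\snr}}$ term in (\ref{eqn:spectral_lower_tail_prop}). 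Thus I would carry out: (i) identify $z$, $\sigma^2$, $b$ explicitly in terms of $A$ and $\theta^*$; (ii) invoke the tilting lemma to get $\mathbb{P}(Z_i\le -z)\ge c\exp(-\frac{z^2}{2\sigma^2} - C\frac{z^3b}{\sigma^4})$; (iii) substitute the $\mathcal{A}$-bounds and Lemma \ref{lem:minimizer} to replace $\frac{z^2}{2\sigma^2}$ by $\frac{1+o(1)}{2}\eta^2\overline{\snr}$ and absorb the cubic correction into $C_2\eta_+\sqrt{\overline{\snr}}$; and (iv) note that for $\eta\le 1/2$ with $\eta<0$ impossible in the relevant regime — actually $\eta_+=\eta$ here — the bound holds trivially with large $C_1$ when $\eta^2\overline{\snr}=O(1)$.

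The main obstacle is step (ii): proving a sharp \emph{lower} bound on a large-deviation probability (as opposed to an upper bound) for a sum of bounded but non-identically-distributed variables, with the level growing and an explicit control of the second-order correction. This requires a genuine tilting argument rather than an off-the-shelf Bernstein inequality — one must pick the tilt $\lambda^* = z/\sigma^2$, show the tilted measure places $\Theta(1)$ mass on the event $\{Z\le -z\}$ (via a Chebyshev bound under the tilted law, which needs the tilted variance to still be $\asymp\sigma^2$, true because $\lambda^* b = o(1)$), and expand $\log\mathbb{E}e^{-\lambda^* Z}$ to second order with the cubic remainder bounded by $\lambda^{*3}\sum_j\mathbb{E}|\xi_j|^3 \lesssim \lambda^{*3} b\sigma^2$. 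A secondary technical point is verifying that the binomial proportions $\bar y_{ij}$, whose MGFs are entire, genuinely satisfy the required boundedness and variance-nondegeneracy uniformly over $i$, which follows from $\kappa=O(1)$ so that $\psi'(\theta_i^*-\theta_j^*)$ is bounded away from zero. These are routine but must be done carefully; I would isolate the tilting estimate as a standalone sub-lemma (it is essentially a known result, e.g.\ a one-sided Cramér bound with explicit remainder) and cite or prove it in the technical section.
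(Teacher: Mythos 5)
Your proposal is correct and follows essentially the same route as the paper's proof: an exponential tilting at the saddle point $u^*$, a second-order expansion of the conditional cumulant generating function using the constraints (\ref{eqn:A_1})--(\ref{eqn:A_3}) defining $\mathcal{A}$, and a constant-probability window under the tilted measure (the paper lower bounds the window probability via the Stein-type CLT of Lemma \ref{lem:CLT-stein} where you propose Chebyshev, which works equally well). One bookkeeping remark: the $C_2\eta_{+}\sqrt{\overline{\snr}}$ term in the exponent actually arises from the window cost $u^*T$ with $T\asymp\sqrt{npL}$ (i.e.\ from $z/\sigma$), not from the cubic Taylor remainder, which is of strictly smaller order in this regime; your final bound is nevertheless of the correct form.
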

\begin{proof}
We suggest readers to go through the proof of Lemma \ref{lem:two_point_testing_lower} in Section \ref{sec:partial-lower-pf} first.
The proof of Lemma \ref{lem:spectral_lower_tail_prop} basically follows that of Lemma \ref{lem:two_point_testing_lower}. We will omit repeated details in the proof of Lemma \ref{lem:two_point_testing_lower} and only present key steps and calculations specific to this Lemma \ref{lem:spectral_lower_tail_prop}.

 We denote $q_j=\psi(\theta_i-\theta_j)$. Then $1+e^{\theta_j^*-\theta_i^*}=1/q_j$ and $ \psi(\theta_j-\theta_i)=1-q_j$. Then what we need to  lower bound can be written as
\begin{align*}
\p_A\left(\sum_{\ell\in[L]}\sum_{j\in S}A_{ji}\frac{q_j-y_{ij\ell}}{q_j}\geq Lt'\right),
\end{align*}
where $t'=(1+\delta^\prime)\eta\Delta\sum_{j\in[n]\backslash\{i\}}p(1-q_j)$ for some $\delta^\prime=o(1)$ due to (\ref{eqn:A_1})-(\ref{eqn:A_3}), and $\p_A$ is the conditional probability given $A$. Note that  $\delta^\prime$ can be chosen independent of $\eta$. We remark that 
$$\overline{\snr}=(1+\delta^{\prime\prime})\frac{L\Delta^2(\sum_{j\in[n]\backslash\{i\}}p(1-q_j))^2}{\sum_{j\in S}p\frac{1-q_j}{q_j}}$$
due to $\rho=o(1)$ for some $\delta^{\prime\prime}=o(1)$ independent of $\eta$.
We  still first consider the regime when
\begin{align}
\eta\sqrt{\overline{\snr}}\rightarrow\infty,\label{eqn:spectral_lower_tail_prop_1}
\end{align}
This implies $\eta\in(0,1/2)$. 

The conditional cumulant of $\sum_{j\in S}A_{ji}\frac{q_j-y_{ijl}}{q_j}$ for each $l\in[L]$ is
\begin{align*}
&\nu(u)=\sum_{j\in S}A_{ji}\log\left(q_j e^{\frac{u(q_j-1)}{q_j}}+(1-q_j)e^{u}\right)=\sum_{j\in S}A_{ji}\left[-u\frac{1-q_j}{q_j}+\log((1-q_j)e^{u/q_j}+q_j)\right].
\end{align*}
The function $\nu(u)$ acts as the same role as $K(u)$ in the proof of Lemma \ref{lem:two_point_testing_lower}.
Define
$$u^*=\arg\min_{u\geq0}\left(L\nu(u)-uLt^\prime\right).$$
Its first derivative is
\begin{align*}
\nu^\prime(u)&=\sum_{j\in S}A_{ji}\left[\frac{\frac{(1-q_j)}{q_j}e^{u/q_j}}{(1-q_j)e^{u/q_j}+q_j}-\frac{1-q_j}{q_j}\right].
\end{align*}
Following the same argument in the proof of Lemma \ref{lem:two_point_testing_lower}, we need to pin down a range for $u^*$. First due to (\ref{eqn:spectral_lower_tail_prop_1}) and $\nu^\prime(0)=0$, we have $t^\prime>0$ and thus $\nu^\prime(0)-t^\prime<0$.  
Now for $u=o(1)$, we can approximate $\nu^\prime(u)$ by Taylor expansion and obtain
\begin{align}
1-\delta_2\leq\frac{\nu^\prime(u)}{\overline{\nu^\prime}(u)}\leq1+\delta_2,\label{eqn:nu_prime_bar}
\end{align}
for some $0<\delta_2=o(1)$,
where $\overline{\nu^\prime}(u)=\sum_{j\in S}p\frac{1-q_i}{q_i}u$. Note that we can replace $A_{ji}$ by $p$ because of the condition $A\in\mathcal{A}$.  Then we consider $\tilde{u}=\frac{2t^\prime}{\sum_{j\in S}p\frac{1-q_i}{q_i}}$, which is $o(1)$ since $\Delta=o(1)$ and $\rho=o(1)$. Therefore,
\begin{align*}
&\nu^\prime(\tilde{u})-t^\prime\geq(1-\delta_2)\overline{\nu^\prime}(\tilde{u})-t^\prime=(1-\delta_2)t^\prime>0.
\end{align*}
This implies that $u^*\in\left(0, \frac{2t^\prime}{\sum_{j\in S}p\frac{1-q_i}{q_i}}\right)$. Thus $u^*=o(1)$. 

When $u=o(1)$, $\nu(u)$ also follows a second order Taylor expansion such that:
$$1-\delta_3\leq\frac{\nu(u)}{\bar{\nu}(u)}\leq1+\delta_3,$$
where $\bar{\nu}(u)=\frac{1}{2}\sum_{j\in S}p\frac{1-q_j}{q_j}u^{2}$ and $\delta_3=o(1)$ due to (\ref{eqn:A_1})-(\ref{eqn:A_3}). 

Following the change-of-measure argument in the proof of Lemma \ref{lem:two_point_testing_lower}, the probability of interest can be lower bounded by
$$\exp\left(-u^*T+L\nu(u^*)-Lu^*t^\prime\right)\mathbb{Q}_A\left(0\leq\sum_{l=1}^L\sum_{j\in S}Z_{jl}-Lt^\prime\leq T\right),$$
where $\mathbb{Q}_A$ is a measure under which $Z_{jl}$ are all independent given $A$ and follow
$$\mathbb{Q}_A(Z_{jl}=s)=e^{A_{ji}u^*s-A_{ji}\nu_j(u^*)}\p_A\left(A_{ji}\frac{q_j-y_{ijl}}{q_j}=s\right)$$
and $\nu_j(u)=-u\frac{1-q_j}{q_j}+\log((1-q_j)e^{u/q_j}+q_j)$. Then for each $Z_{jl}$ such that $A_{ij}=1$, its second and 4th moment under $\mathbb{Q}_A$ can be analyzed:
\begin{align}
\mathbb{Q}_A((Z_{jl}-\mathbb{Q}_A(Z_{jl}))^2)=\nu_j^{\prime\prime}(u^*)=\frac{1-q_j}{q_j}\frac{e^{u^*/q_j}}{[(1-q_j)e^{u^*/q_j}+q_j]^2}\in(C_1^\prime, C_2^\prime),\label{eq:spectral-second}
\end{align}
\begin{align}
\mathbb{Q}_A((Z_{jl}-\mathbb{Q}_A(Z_{jl}))^4)=\nu_j^{\prime\prime\prime\prime}(u^*)+3\nu_j^{\prime\prime}(u^*)\leq(3+C_4^\prime)\nu_j^{\prime\prime}(u^*)\leq C_3^\prime,\label{eq:spectral-fourth}
\end{align}
where (\ref{eq:spectral-fourth}) comes from
\begin{align*}
&\nu_j^{\prime\prime\prime\prime}(u)=\frac{1-q_j}{q_j^3}e^{u/q_j}\frac{(1-q_j)^3e^{3u/q_j}-3(1-q_j)^2q_je^{2u/q_j}-3(1-q_j)q_j^2e^{2u/q_j}+q_j^3}{[(1-q_j)e^{u/q_j}+q_j]^5}\\
&\leq\max_{j\in S}1/q_j^2\nu^{\prime\prime}(u)\leq C_4^\prime\nu^{\prime\prime}(u).
\end{align*}

Now, to lower bound $L\nu(u^*)-Lu^*t^\prime$:
\begin{align}
\nonumber&L\nu(u^*)-Lu^*t^\prime\geq L(1-\delta_3)\frac{1}{2}\sum_{j\in S}p\frac{1-q_j}{q_j}u^{*2}-Lu^*t^\prime\\
\label{eqn:spectral_minimization}&\geq L\min_{u\in(0,1)}\left[(1-\delta_3)\frac{1}{2}\sum_{j\in S}p\frac{1-q_j}{q_j}u^{*2}-u^*t^\prime\right]\\
\nonumber&\geq-\frac{1}{2}\frac{Lt^{\prime2}}{(1-\delta_3)\sum_{j\in S}p\frac{1-q_j}{q_j}}\\
\nonumber&\geq-\frac{1+\delta_4}{2}\eta^2\overline{\snr},
\end{align}
where (\ref{eqn:spectral_minimization}) is achieved at
$u=\frac{t^\prime}{(1-\delta_3)\sum_{j\in S}p\frac{1-q_j}{q_j}}$ and $\delta_4=o(1)$ since $\rho=o(1)$. This gives us the desired exponent. We remark that $\delta_4$ is independent of $\eta$. 

To choose $T$, observe that
$$\Var_{\mathbb{Q}_A}\left(\sum_{l\in[L]}\sum_{j\in S}Z_{jl}\right)\leq \tilde{C}_1npL,$$
for some constant $\tilde{C}_1>0$ using (\ref{eqn:A_1}) - (\ref{eqn:A_3}) , (\ref{eq:spectral-second}) and $\rho=o(1)$. Thus we choose $T=\sqrt{\tilde{C}_1npL}$, which leads to a term $C_2\eta\sqrt{\overline{\snr}}$ in the exponent for some $C_2>0$ independent of $\eta$. 

Finally, to lower bound the $\mathbb{Q}_A$ measure, we only need to verify the vanishing property of the 4th moment approximation bound in Lemma \ref{lem:CLT-stein}:
\begin{align}
\nonumber&\sqrt{L\sum_{j\in S}A_{ji}\left(\frac{\mathbb{Q}_A((Z_{j1}-\mathbb{Q}_A(Z_{j1})^4)}{(L\sum_{j\in S}A_{ji}\mathbb{Q}_A((Z_{j1}-\mathbb{Q}_A(Z_{j1})^2))^2}\right)^{3/4}}\\
&\leq \tilde{C}_2(npL)^{-1/4}\label{eq:spectral-clt}
\end{align}
where (\ref{eq:spectral-clt}) is by (\ref{eq:spectral-second}),   (\ref{eq:spectral-fourth}) and $\rho=o(1)$. To summarize, we have proved
$$\p_A\left(\sum_{l\in[L]}\sum_{j\in S}A_{ji}\frac{q_j-y_{ijl}}{q_j}\geq Lt'\right)\geq C_1\exp\left(-\frac{1+\delta_5}{2}\eta^2\overline{\snr}-C_2\eta\sqrt{\overline{\snr}}\right)$$
for some constant $C_1,C_2>0$ and $\delta_5=o(1)$, when (\ref{eqn:spectral_lower_tail_prop_1}) holds. This $\delta_5$ can be used as the $\delta_1$ in (\ref{eqn:spectral_lower_tail_prop}). We remark that $C_1,C_2, \delta_5$ are all independent of $\eta$.

Finally, when 
$$\eta\sqrt{\overline{\snr}}\leq C_3$$
for some constant $C_3>0$. This condition, together with (\ref{eqn:A_1})-(\ref{eqn:A_3}) and $\rho=o(1)$, implies that
$$Lt^\prime\leq C_4\sqrt{L\sum_{j\in S}A_{ji}\frac{1-q_i}{q_i}}.$$
Therefore,
\begin{align}
\nonumber\p_A\left(\sum_{l\in[L]}\sum_{j\in S}A_{ji}\frac{q_j-y_{ijl}}{q_j}\geq Lt'\right)&\geq\p_A\left(\sum_{l\in[L]}\sum_{j\in S}A_{ji}\frac{q_j-y_{ijl}}{q_j}\geq C_5\sqrt{L\sum_{j\in S}A_{ji}\frac{1-q_i}{q_i}}\right)\\
&\geq c_1-o(1)\label{spectral-constant}
\end{align}
where (\ref{spectral-constant}) comes from Lemma \ref{lem:CLT-stein}. The  4th moment approximation can be checked to be of order $(npL)^{-1/4}$ similarly as in (\ref{eq:spectral-clt}) using (\ref{eqn:A_1})-(\ref{eqn:A_3}) and $\rho=o(1)$ since the second and fourth moment of $\frac{q_j-y_{ijl}}{q_j}$ are at the constant order under measure $\p_A$, which completes the proof.

\end{proof}

\section{Proofs of Lower Bounds}\label{sec:pf-minmax-l}

This section collects the proofs of lower bound results of the paper. The lower bound for exact recovery is proved in Section \ref{sec:pf-exact-lower}, and the partial recovery lower bound is proved in Section \ref{sec:partial-lower-pf}.

\subsection{Proof of Theorem \ref{thm:exact-lower}} \label{sec:pf-exact-lower}

The key mathematical argument in the proof of Theorem \ref{thm:exact-lower} is to characterize the maximum of dependent binomial random variables. For this purpose, we need a high-dimensional central limit theorem result by \cite{chernozhukov2013gaussian}. The following lemma is adapted from \cite{chernozhukov2013gaussian} for our purpose.

\begin{lemma} \label{lem:CCK}
Consider independent random vectors $X_1,\cdots,X_n\in\mathbb{R}^d$ with mean zero. Assume there exist constants $c_1,c_2,C_1,C_2>0$ such that $\min_{i,j}\mathbb{E}X_{ij}^2\geq c_1$, $\max_{i,j}\mathbb{E}\exp(|X_{ij}|/C_1)\leq 2$ and $(\log(nd))^7\leq C_2n^{-(1+c_2)}$. Then, there exist independent Gaussian vectors $Z_1,\cdots, Z_n$ satisfying $\mathbb{E}Z_i=0$ and $\Cov(Z_i)=\Cov(X_i)$, such that
$$\sup_{t\in\mathbb{R}}\left|\mathbb{P}\left(\max_{j\in[d]}\sum_{i=1}^nX_{ij}\leq t\right)-\mathbb{P}\left(\max_{j\in[d]}\sum_{i=1}^nZ_{ij}\leq t\right)\right|\leq Cn^{-c},$$
for some constants $c,C>0$ only depending on $c_1,c_2,C_1,C_2$.
\end{lemma}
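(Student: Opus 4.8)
The plan is to deduce Lemma \ref{lem:CCK} directly from the Gaussian approximation theorem for maxima of sums of independent high-dimensional random vectors of \cite{chernozhukov2013gaussian} (their Corollary 2.1), after checking that the hypotheses stated here imply the regularity conditions required there. First I would reduce to the normalized setting used in \cite{chernozhukov2013gaussian}: the event $\{\max_{j\in[d]}\sum_{i=1}^nX_{ij}\leq t\}$ coincides with $\{\max_{j\in[d]}n^{-1/2}\sum_{i=1}^nX_{ij}\leq t/\sqrt{n}\}$, and likewise for the $Z_i$'s, so taking the supremum over all $t\in\mathbb{R}$ is unaffected by this rescaling. Hence it suffices to bound the Kolmogorov distance between $\max_{j}n^{-1/2}\sum_iX_{ij}$ and $\max_{j}n^{-1/2}\sum_iZ_{ij}$, which is precisely the quantity controlled in \cite{chernozhukov2013gaussian}.

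Next I would verify the moment and tail conditions. From $\min_{i,j}\mathbb{E}X_{ij}^2\geq c_1$ one gets $n^{-1}\sum_{i=1}^n\mathbb{E}X_{ij}^2\geq c_1$ uniformly in $j$, which is the required non-degeneracy of the averaged variances. From $\max_{i,j}\mathbb{E}\exp(|X_{ij}|/C_1)\leq 2$, expanding the exponential in power series gives $\mathbb{E}|X_{ij}|^m\leq m!\,C_1^m$ for every $m\geq1$; in particular the averaged $(2+\ell)$-th absolute moments $n^{-1}\sum_i\mathbb{E}|X_{ij}|^{2+\ell}$ for $\ell=1,2$ are bounded by an absolute constant depending only on $C_1$, and the sub-exponential envelope condition of \cite{chernozhukov2013gaussian} holds with its parameter $B_n$ taken to be a constant (or, if the version invoked requires an envelope over all coordinates, at most $O(\log d)$ after a power-mean bound, which only affects the growth condition by a logarithmic factor). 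The assumption relating $(\log(nd))^7$ and $n$ then translates into the condition on $B_n^2(\log(dn))^7/n$ appearing there, with the rate exponent corresponding to $c_2$.

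Having verified all hypotheses, the theorem of \cite{chernozhukov2013gaussian} produces independent Gaussian vectors $Z_1,\dots,Z_n$ with $\mathbb{E}Z_i=0$ and $\mathrm{Cov}(Z_i)=\mathrm{Cov}(X_i)$, such that the Kolmogorov distance between the two normalized maxima is at most $Cn^{-c}$, where $c,C>0$ depend only on $c_1,c_2,C_1,C_2$; undoing the rescaling from the first step gives the claimed bound. The only real obstacle is bookkeeping rather than mathematics: one must carefully match the paper's notation to that of \cite{chernozhukov2013gaussian} and confirm that the final constant and rate exponent depend solely on $c_1,c_2,C_1,C_2$ and on nothing else (such as the dimension $d$ or the distributions themselves), which is exactly what makes the lemma usable in the later proofs.
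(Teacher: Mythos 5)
Your proposal is correct and matches the paper's treatment: the paper gives no independent proof of this lemma, stating only that it is "adapted from" \cite{chernozhukov2013gaussian}, and your verification of the variance lower bound, the sub-exponential moment conditions (with $B_n$ of constant order), and the growth condition is exactly the adaptation intended, with the rescaling by $n^{-1/2}$ being immaterial since the supremum ranges over all $t$. One bookkeeping point worth flagging: the growth condition as printed, $(\log(nd))^7\leq C_2n^{-(1+c_2)}$, is a typo (it is vacuous for large $n$) and should read $(\log(nd))^7\leq C_2n^{1-c_2}$, which is the form that maps onto the condition $B_n^2(\log(dn))^7/n\leq C_2n^{-c_2}$ you invoke.
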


With the above Gaussian approximation, we only need to analyze the maximum of dependent Gaussian random variables. The following lemma can be found in \cite{hartigan2014bounding}.

\begin{lemma} \label{lem:hartigan}
Consider $Z=(Z_1,\cdots,Z_n)^T\sim N(0,\Sigma)$. Then, for any $\alpha\in(0,1)$, there exists some constant $C_{\alpha}>0$ such that for all $n\geq\sqrt{2\pi}e^3\log1/\alpha$,
$$\mathbb{P}\left(\max_{i\in[n]}Z_i > \lambda^{1/2}\sqrt{2\log n-\log\log n-C_{\alpha}}-\Lambda^{1/2}\Phi^{-1}(1-\alpha)\right) \geq 1-2\alpha,$$
where $\lambda=\min_{i\in[n]}\Sigma_{ii}-\frac{\max_{i\in[n]}\sum_{j\in[n]\backslash\{i\}}\Sigma_{ij}^2}{\lambda_{\min}(\Sigma)}$ and $\Lambda=\max_{i\in[n]}\Sigma_{ii}$.
\end{lemma}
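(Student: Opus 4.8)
The plan is to derive the bound from two classical ingredients, following \cite{hartigan2014bounding}: a Sudakov-type lower bound on the ``typical value'' of $\max_{i\in[n]} Z_i$ that is driven by the quantity $\lambda$, and a Gaussian concentration inequality that upgrades this to a high-probability statement with fluctuation scale $\Lambda^{1/2}$.

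First I would give $\lambda$ a conditional-variance interpretation. For each $i$ the conditional variance of $Z_i$ given the remaining coordinates equals $\Sigma_{ii}-\Sigma_{i,-i}\Sigma_{-i,-i}^{-1}\Sigma_{-i,i}$, which is at least $\Sigma_{ii}-\|\Sigma_{i,-i}\|^2/\lambda_{\min}(\Sigma_{-i,-i})\ge \Sigma_{ii}-\sum_{j\ne i}\Sigma_{ij}^2/\lambda_{\min}(\Sigma)\ge \lambda$, using eigenvalue interlacing for $\lambda_{\min}(\Sigma_{-i,-i})\ge\lambda_{\min}(\Sigma)$. Consequently, revealing the coordinates one at a time, $Z_i=m_i+\sigma_i\varepsilon_i$ with $m_i$ measurable with respect to $Z_1,\dots,Z_{i-1}$, $\varepsilon_i$ an independent standard normal, $\sigma_i^2=\mathrm{Var}(Z_i\mid Z_1,\dots,Z_{i-1})\ge\lambda$ (conditioning on a sub-collection only enlarges the conditional variance), and $m_i$ a mean-zero Gaussian with $\mathrm{Var}(m_i)=\Sigma_{ii}-\sigma_i^2\le\Lambda$. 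This is the reduction that lets $\lambda$ play the role of the ``independent surrogate'' variance and $\Lambda$ the role of the deviation scale.

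Next I would lower bound the median (or an appropriate quantile) of $\max_i Z_i$ by $\lambda^{1/2}\sqrt{2\log n-\log\log n-C_\alpha}$ via a second-order Bonferroni (second moment) bound: for $t=\lambda^{1/2}\sqrt{2\log n-\log\log n-C}$ one has $\mathbb{P}(\max_i Z_i>t)\ge\sum_i\mathbb{P}(Z_i>t)-\sum_{i<j}\mathbb{P}(Z_i>t,Z_j>t)$; since $\mathrm{Var}(Z_i)=\Sigma_{ii}\ge\lambda$, the $\log\log n$ correction makes $\sum_i\mathbb{P}(Z_i>t)$ diverge like $e^{C/2}\sqrt{\log n}$, while the Gaussian two-point estimate $\mathbb{P}(Z_i>t,Z_j>t)\le\mathbb{P}(Z_i>t)\mathbb{P}(Z_j>t)\exp(c\,\rho_{ij}^{+}t^2)$ together with the hypothesis that $\max_i\sum_{j\ne i}\Sigma_{ij}^2$ is small relative to $\lambda_{\min}(\Sigma)$ keeps the pairwise sum of lower order, so a large enough $C=C_\alpha$ makes this probability at least $1-\alpha$ once $n\ge\sqrt{2\pi}e^3\log(1/\alpha)$. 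Then Borell--TIS, applied to the $1$-Lipschitz functional $Z\mapsto\max_i Z_i$ whose sup-variance is $\Lambda$, gives $\mathbb{P}\big(\max_i Z_i<\mathrm{Med}(\max_i Z_i)-\Lambda^{1/2}\Phi^{-1}(1-\alpha)\big)\le\alpha$; combining the two displays through a union bound yields the stated bound with probability $1-2\alpha$.

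The main obstacle is the pairwise estimate in the Bonferroni step: with $t^2$ of order $2\log n$ the factor $\exp(c\rho_{ij}^{+}t^2)$ is of size $n^{2c\rho_{ij}}$, so one must exploit the precise hypothesis $\max_i\sum_{j\ne i}\Sigma_{ij}^2\le(\text{small})\cdot\lambda_{\min}(\Sigma)$ rather than any crude per-pair bound, and it is here that the subtraction defining $\lambda$ (and the $\lambda_{\min}(\Sigma)$ in its denominator) must be matched to the variance losses in the two-point inequality. A possible alternative that sidesteps the two-point bound is to run the argument conditionally through the sequential decomposition $Z_i=m_i+\sigma_i\varepsilon_i$, estimating $\mathbb{P}(\max_i Z_i\le t)\le\mathbb{E}\prod_i\Phi\big((t-m_i)/\sqrt{\lambda}\big)$ and controlling the drifts $\{m_i\}$ on a high-probability event; either way, reconciling the loss terms with the exact form of $\lambda$ is the delicate point of the proof.
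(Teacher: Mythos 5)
First, a point of orientation: the paper does not prove this lemma at all --- it is imported directly from \cite{hartigan2014bounding} --- so there is no in-paper argument to match; the only question is whether your sketch constitutes a valid proof. Your opening reduction is correct and is exactly why $\lambda$ is defined as it is: $\mathrm{Var}(Z_i\mid Z_{-i})=\Sigma_{ii}-\Sigma_{i,-i}\Sigma_{-i,-i}^{-1}\Sigma_{-i,i}\geq \Sigma_{ii}-\sum_{j\neq i}\Sigma_{ij}^2/\lambda_{\min}(\Sigma)\geq\lambda$ by Cauchy interlacing, and conditioning on fewer coordinates only increases the conditional variance. The Borell--TIS step with deviation scale $\Lambda^{1/2}$ is also sound.

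The gap is in the middle step, and it is not merely a matter of unfinished bookkeeping. The displayed inequality $\mathbb{P}(\max_iZ_i>t)\geq\sum_i\mathbb{P}(Z_i>t)-\sum_{i<j}\mathbb{P}(Z_i>t,Z_j>t)$ can never certify a probability above $1/2$: writing $S_1=\sum_i\mathbb{P}(Z_i>t)$, the pairwise sum is already $\approx S_1^2/2$ for independent coordinates (and at least that under nonnegative correlations, by Slepian), so the right-hand side is at most about $\max_x(x-x^2/2)=1/2$. Hence the claim that ``a large enough $C_\alpha$ makes this probability at least $1-\alpha$'' fails for $\alpha<1/4$ by this route; you must instead apply Chebyshev/Paley--Zygmund to the exceedance count $N_t$, i.e.\ $\mathbb{P}(N_t=0)\leq \mathrm{Var}(N_t)/(\mathbb{E}N_t)^2$. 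Even after that repair, the pairwise estimate you yourself flag as ``the main obstacle'' remains open and is the entire content of the proof: the hypothesis controls only the row-wise $\ell_2$ mass of off-diagonal entries relative to $\lambda_{\min}(\Sigma)$, which still permits individual correlations $\rho_{ij}$ of constant size (e.g.\ $\Sigma_{ii}\equiv 1$ with one pair at $\rho=0.6$ still yields $\lambda>0$), and since $\Sigma_{ii}$ may greatly exceed $\lambda$ the counts $\mathbb{E}N_t$ can be polynomially large in $n$, so the correlation corrections must be tracked with care and it is not clear the argument closes. The information in the hypothesis is naturally a statement about conditional variances, not about pairwise correlations, which is why the sequential innovations route you mention only in your last sentence --- writing $Z_i=m_i+\sigma_i\varepsilon_i$ with $\{\varepsilon_i\}$ genuinely i.i.d.\ standard normal, $\sigma_i^2\geq\lambda$, and $\mathrm{Var}(m_i)\leq\Lambda$, then comparing $\max_iZ_i$ to $\lambda^{1/2}\max_i\varepsilon_i$ minus a drift term controlled at level $\Lambda^{1/2}\Phi^{-1}(1-\alpha)$ --- is the approach taken in \cite{hartigan2014bounding}; it bypasses all two-point estimates and is what produces the otherwise mysterious condition $n\geq\sqrt{2\pi}e^3\log(1/\alpha)$ (which guarantees $n\,\overline{\Phi}\bigl(\sqrt{2\log n-\log\log n-C_\alpha}\bigr)\geq\log(1/\alpha)$). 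That route should be the main line of the proof, not a fallback.
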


Now we are ready to prove Theorem \ref{thm:exact-lower}.

\begin{proof}[Proof of Theorem \ref{thm:exact-lower}]
We first note that the condition (\ref{eq:exact-threshold-lower}) implies that $\Delta=o(1)$. Choose $\kappa_1,\kappa_2\geq 0$ such that we have both $\kappa_1+\kappa_2\leq \kappa$ and
$$\frac{n}{k\psi'(\kappa_1)+(n-k)\psi'(\kappa_2)}=V(\kappa).$$
We first consider the case $k\rightarrow\infty$ and $\kappa=\Omega(1)$. In this case, one can easily check that $\kappa_2=\Omega(1)$.
Our least favorable $\theta^*\in\Theta(k,\Delta,\kappa)$ is constructed as follows. Let $\rho=o(1)$ be a vanishing number that will be specified later. Define $\theta_i^*=\kappa_1$ for all $1\leq i\leq k-\rho k$, $\theta_i^*=0$ for $k-\rho k<i\leq k$, $\theta_i^*=-\Delta$ for $k<i\leq k+\rho(n-k)$ and $\theta_i^*=-\kappa_2$ for $k+\rho(n-k)< i\leq n$. For the simplicity of proof, we choose $\rho$ so that both $\rho k$ and $\rho(n-k)$ are integers. Consider a subset $\mathcal{R}_{k,\rho}\subset\S_n$ that is defined by
\begin{equation}
\mathcal{R}_{k,\rho}=\left\{r\in\S_n: r_i=i\text{ for all }i\leq k-\rho k\text{ or }i > k+\rho(n-k)\right\}.\label{eq:subset-R-in-pf}
\end{equation}
We then have the lower bound
$$
\inf_{\wh{r}}\sup_{\substack{r^*\in\S_n\\\theta^*\in\Theta(k,\Delta,\kappa)}}\mathbb{P}_{(\theta^*,r^*)}\left(\h_k(\wh{r},r^*)>0\right) \geq \inf_{\wh{r}}\sup_{r^*\in\mathcal{R}_{k,\rho}}\mathbb{P}_{(\theta^*,r^*)}\left(\h_k(\wh{r},r^*)>0\right).
$$
For each $z=\{z_i\}_{k-\rho k<i\leq k+\rho(n-k)}\in\{0,1\}^{\rho n}$, we define $\mathbb{Q}_{z}$ as a joint probability of the observations $\{A_{ij}\}$ and $\{y_{ijl}\}$. To sample data from $\mathbb{Q}_{z}$, we first sample $A\sim\mathcal{G}(n,p)$, and then for any $(i,j)$ such that $A_{ij}=1$, sample $y_{ijl}\sim \text{Bernoulli}(\psi(\mu_i(z)-\mu_j(z)))$ independently for $l\in[L]$. The vector $\mu(z)$ is defined by $\mu_i(z)=\theta_i^*$ for all $i\leq k-\rho k$ or $i> \rho(n-k)$ and $\mu_i(z)=\Delta\indc{z_i=1}$ for all $k-\rho k<i\leq k+\rho(n-k)$. Then, we have
\begin{eqnarray*}
\inf_{\wh{r}}\sup_{r^*\in\mathcal{R}_{k,\rho}}\mathbb{P}_{(\theta^*,r^*)}\left(\h_k(\wh{r},r^*)>0\right) &\geq& \inf_{\wh{z}}\sup_{z^*\in\mathcal{Z}_k}\mathbb{Q}_{z^*}\left(\wh{z}\neq z^*\right)\\
&\geq& \inf_{\wh{z}}\frac{1}{|\mathcal{Z}_k|}\sum_{z^*\in\mathcal{Z}_k}\mathbb{Q}_{z^*}(\wh{z}\neq z^*),
\end{eqnarray*}
where
$$\mathcal{Z}_k=\left\{z=\{z_i\}_{k-\rho k<i\leq k+\rho(n-k)}\in\{0,1\}^{\rho n}: \sum_i z_i=\rho k\right\}.$$
The Bayes risk $\frac{1}{|\mathcal{Z}_k|}\sum_{z^*\in\mathcal{Z}_k}\mathbb{Q}_{z^*}(\wh{z}\neq z^*)$ is minimized by
\begin{equation}
\wh{z}=\argmin_{z\in\mathcal{Z}_k}\ell_n(\mu(z)), \label{eq:MLE-oracle}
\end{equation}
where
$$\ell_n(\mu(z))=\sum_{1\leq i<j\leq n}A_{ij}\left[\bar{y}_{ij}\log\frac{1}{\psi(\mu_i(z)-\mu_j(z))}+(1-\bar{y}_{ij})\log\frac{1}{1-\psi(\mu_i(z)-\mu_j(z))}\right].$$
It suffices to lower bound the probability $\mathbb{Q}_{z^*}(\wh{z}\neq z^*)$ for the estimator (\ref{eq:MLE-oracle}) and for each $z^*\in\mathcal{Z}_k$. By symmetry, the value of $\mathbb{Q}_{z^*}(\wh{z}\neq z^*)$ is the same for any $z^*\in\mathcal{Z}_k$. We therefore can set $z_i^*=\indc{i\leq k}$ without loss of generality. Define
$$\mathcal{N}(z^*)=\left\{z\in\mathcal{Z}_k: \sum_i\indc{z_i\neq z_i^*}=2\right\}.$$
Then, we have
$$\mathbb{Q}_{z^*}(\wh{z}\neq z^*) \geq \mathbb{Q}_{z^*}\left(\min_{z\in\mathcal{N}(z^*)}\ell_n(\mu(z))<\ell_n(\mu(z^*))\right).$$
By direct calculation, we have
\begin{eqnarray*}
&& \ell_n(\mu(z)) - \ell_n(\mu(z^*)) \\
&=& \sum_{1\leq i<j\leq n}A_{ij}(\bar{y}_{ij}-\psi(\mu_i(z^*)-\mu_j(z^*)))(\mu_i(z^*)-\mu_j(z^*)-\mu_i(z)+\mu_j(z)) \\
&& + \sum_{1\leq i<j\leq n}A_{ij}D\left(\psi(\mu_i(z^*)-\mu_j(z^*))\|\psi(\mu_i(z)-\mu_j(z))\right).
\end{eqnarray*}
For any $z\in\mathcal{N}(z^*)$, there exists some $k-\rho k<a\leq k$ and some $k<b\leq k+\rho(n-k)$ such that $z_a=0$, $z_b=1$ and $z_i=z_i^*$ for all other $i$'s. Then,
\begin{eqnarray}
\nonumber && \sum_{1\leq i<j\leq n}A_{ij}D\left(\psi(\mu_i(z^*)-\mu_j(z^*))\|\psi(\mu_i(z)-\mu_j(z))\right) \\
\nonumber &\leq& \sum_{i=1}^{k-\rho k}A_{ia}D(\psi(\kappa_1)\|\psi(\kappa_1+\Delta)) + \sum_{i=k+\rho(n-k)+1}^nA_{ia}D(\psi(-\kappa_2)\|\psi(-\kappa_2+\Delta)) \\
\nonumber&& +\sum_{i=1}^{k-\rho k}A_{ib}D(\psi(\kappa_1+\Delta)\|\psi(\kappa_1))  + \sum_{i=k+\rho(n-k)+1}^nA_{ib}D(\psi(-\kappa_2+\Delta)\|\psi(-\kappa_2)) \\
\nonumber && + \sum_{i=k-\rho k+1}^kA_{ia}D(\psi(0)\|\psi(\Delta)) + \sum_{i=k+1}^{k+\rho(n-k)}A_{ia}D(\psi(-\Delta)\|\psi(0))\\
\nonumber && + \sum_{i=k-\rho k+1}^kA_{ib}D(\psi(\Delta)\|\psi(0)) + \sum_{i=k+1}^{k+\rho(n-k)}A_{ib}D(\psi(0)\|\psi(-\Delta)) + A_{ab}D(\psi(\Delta)\|\psi(-\Delta))\\
\label{eq:bern-lower-exact} &\leq& (1+\delta)(1-\rho)p\left[kD(\psi(\kappa_1)\|\psi(\kappa_1+\Delta)) + (n-k)D(\psi(-\kappa_2)\|\psi(-\kappa_2+\Delta))\right] \\
\nonumber && + (1+\delta)(1-\rho)p\left[kD(\psi(\kappa_1+\Delta)\|\psi(\kappa_1)) + (n-k)D(\psi(-\kappa_2+\Delta)\|\psi(-\kappa_2))\right] \\
\nonumber && +(1+\delta)\rho p\left[kD(\psi(0)\|\psi(\Delta)) + (n-k)D(\psi(-\Delta)\|\psi(0))\right] \\
\nonumber && +(1+\delta)\rho p\left[kD(\psi(\Delta)\|\psi(0)) + (n-k)D(\psi(0)\|\psi(-\Delta))\right] + (1+\delta)pD(\psi(\Delta)\|\psi(-\Delta))\\
\label{eq:lower-exact-taylor} &\leq& (1+\delta)^2(1-\rho)p\Delta^2\left[k\psi'(\kappa_1) + (n-k)\psi'(\kappa_2)\right] + (1+\delta)^2\rho p\Delta^2\frac{n}{4}\\
\label{eq:rho-absorb} &\leq& (1+\delta)^3p\Delta^2\frac{n}{V(\kappa)}.
\end{eqnarray}
The inequality (\ref{eq:bern-lower-exact}) holds with probability at least $1-O(n^{-10})$ by Bernstein's inequality. The inequality (\ref{eq:lower-exact-taylor}) is a Taylor expansion argument with the help of $\Delta=o(1)$. We obtain (\ref{eq:rho-absorb}) by the choice that $\rho=o(1)$. Note that we can choose some $\delta=o(1)$ to make all of (\ref{eq:bern-lower-exact}), (\ref{eq:lower-exact-taylor}) and (\ref{eq:rho-absorb}) hold. We also have
\begin{eqnarray*}
&& \sum_{1\leq i<j\leq n}A_{ij}(\bar{y}_{ij}-\psi(\mu_i(z^*)-\mu_j(z^*)))(\mu_i(z^*)-\mu_j(z^*)-\mu_i(z)+\mu_j(z)) \\
&=& -\Delta\sum_{i\in[n]\backslash\{a\}}A_{ia}(\bar{y}_{ia}-\mathbb{E}\bar{y}_{ia}) + \Delta\sum_{i\in[n]\backslash\{b\}}A_{ib}(\bar{y}_{ib}-\mathbb{E}y_{ib}).
\end{eqnarray*}
Therefore,
\begin{eqnarray*}
&& \min_{z\in\mathcal{N}(z^*)}\ell_n(\mu(z)) - \ell_n(\mu(z^*)) \\
&\leq& -\max_{(1-\rho)k<a\leq k}\Delta\sum_{i\in[n]\backslash\{a\}}A_{ia}(\bar{y}_{ia}-\mathbb{E}\bar{y}_{ia}) + \Delta\min_{k<b\leq k+\rho(n-k)}\sum_{i\in[n]\backslash\{b\}}A_{ib}(\bar{y}_{ib}-\mathbb{E}y_{ib}) \\
&& + (1+\delta)^3p\Delta^2\frac{n}{V(\kappa)},
\end{eqnarray*}
with probability at least $1-O(n^{-10})$.
This leads to the bound
\begin{eqnarray}
\nonumber && \mathbb{Q}_{z^*}\left(\min_{z\in\mathcal{N}(z^*)}\ell_n(\mu(z))<\ell_n(\mu(z^*))\right) \\
\nonumber &\geq& \mathbb{Q}_{z^*}\Bigg(\max_{(1-\rho)k<a\leq k}\sum_{i\in[n]\backslash\{a\}}A_{ia}(\bar{y}_{ia}-\mathbb{E}\bar{y}_{ia}) \\
\nonumber && - \min_{k<b\leq k+\rho(n-k)}\sum_{i\in[n]\backslash\{b\}}A_{ib}(\bar{y}_{ib}-\mathbb{E}y_{ib}) > (1+\delta)^3p\Delta\frac{n}{V(\kappa)}\Bigg) - O(n^{-10}) \\
\label{eq:nightwish} &\geq& \mathbb{Q}_{z^*}\Bigg(\max_{(1-\rho)k<a\leq k}\sum_{i\in[n]\backslash\{a\}}A_{ia}(\bar{y}_{ia}-\mathbb{E}\bar{y}_{ia}) - \min_{k<b\leq k+\rho(n-k)}\sum_{i\in[n]\backslash\{b\}}A_{ib}(\bar{y}_{ib}-\mathbb{E}y_{ib}) \\
\nonumber && > \sqrt{2(1-\epsilon/2)}\sqrt{\frac{np}{LV(\kappa)}}\left(\sqrt{\log k}+\sqrt{\log (n-k)}\right)\Bigg) - O(n^{-10}) \\
\label{eq:endless} &\geq& \mathbb{Q}_{z^*}\left(\max_{(1-\rho)k<a\leq k}\sum_{i\in[n]\backslash\{a\}}A_{ia}(\bar{y}_{ia}-\mathbb{E}\bar{y}_{ia}) > \sqrt{2(1-\epsilon/2)}\sqrt{\frac{np}{LV(\kappa)}}\sqrt{\log k}\right) \\
\nonumber && + \mathbb{Q}_{z^*}\left(- \min_{k<b\leq k+\rho(n-k)}\sum_{i\in[n]\backslash\{b\}}A_{ib}(\bar{y}_{ib}-\mathbb{E}y_{ib}) > \sqrt{2(1-\epsilon/2)}\sqrt{\frac{np}{LV(\kappa)}}\sqrt{\log (n-k)}\right) \\
\nonumber && - 1 - O(n^{-10}),
\end{eqnarray}
where we have used the condition of the theorem to derive (\ref{eq:nightwish}). The last inequality (\ref{eq:endless}) is by union bound $\mathbb{P}(A\cap B)\geq \mathbb{P}(A)+\mathbb{P}(B)-1$. To lower bound (\ref{eq:endless}), we introduce the notation
$$T_a=\sum_{i\in[n]\backslash\{a\}}A_{ia}(\bar{y}_{ia}-\mathbb{E}\bar{y}_{ia}),\quad (1-\rho)k<a\leq k.$$
The covariance structure of $\{T_a\}_{(1-\rho)k<a\leq k}$ can be quantified by the matrix $\Sigma\in\mathbb{R}^{(\rho k)\times (\rho k)}$, which is defined by $\Sigma_{ab}=\Cov(T_a,T_b|A)$. We then construct a vector $S=\{S_a\}_{(1-\rho)k<a\leq k}$ that is jointly Gaussian conditioning on $A$. The conditional covariance of $S$ is also $\Sigma$. By Lemma \ref{lem:CCK}, we have
\begin{eqnarray}
\label{eq:Q-prob} && \mathbb{Q}_{z^*}\left(\max_{(1-\rho)k<a\leq k}\sum_{i\in[n]\backslash\{a\}}A_{ia}(\bar{y}_{ia}-\mathbb{E}\bar{y}_{ia}) > \sqrt{2(1-\epsilon/2)}\sqrt{\frac{np}{LV(\kappa)}}\sqrt{\log k}\right) \\
\label{eq:apply-CCK} &\geq& \mathbb{P}\left(\max_{(1-\rho)k<a\leq k}S_a > \sqrt{2(1-\epsilon/2)}\sqrt{\frac{np}{LV(\kappa)}}\sqrt{\log k}\right) - O\left(\frac{1}{(\log n)^c}\right).
\end{eqnarray}
To see how Lemma \ref{lem:CCK} implies (\ref{eq:apply-CCK}), we can take $X_{la}=\frac{1}{\sqrt{np}}\sum_{i\in[n]\backslash\{a\}}A_{ia}(y_{ial}-\mathbb{E}y_{ial})$. Conditioning on $A$, we observe that $\{X_{la}\}$ is independent across $l\in[L]$. The conditional variance of $X_{la}$ given $A$ is bounded away from zero with high probability by Lemma \ref{lem:A-basic}. Moreover, one can find a constant $C>0$, such that $\mathbb{E}\left[\exp(|X_{la}|/C)\big|A\right]\leq 2$ by Hoeffding's inequality. Then, we can apply Lemma \ref{lem:CCK} for a given $A$ and obtain (\ref{eq:apply-CCK}) under the condition $L>(\log n)^8$.
We need Lemma \ref{lem:hartigan} to lower bound the probability in (\ref{eq:apply-CCK}). For each $a$,
\begin{eqnarray*}
\Sigma_{aa} &=& \Var(T_a|A) \\
&=& \frac{1}{L}\sum_{i\in[n]\backslash\{a\}}A_{ia}\psi'(\mu_i(z^*)-\mu_a(z^*)) \\
&=& \frac{\psi'(\kappa_1)}{L}\sum_{i=1}^{k-\rho k}A_{ia} + \frac{1}{4L}\sum_{i=k-\rho k+1}^kA_{ia} + \frac{\psi'(\kappa_2)}{L}\sum_{i=k+1}^{k+\rho(n-k)}A_{ia} + \frac{\psi'(\Delta)}{L}\sum_{i=k+\rho(n-k)+1}^nA_{ia}.
\end{eqnarray*}
By Lemma \ref{lem:A-basic}, we have
\begin{equation}
\max_{(1-\rho)k<a\leq k}\Sigma_{aa} \leq \frac{1}{4L}\sum_{i\in[n]\backslash\{a\}}A_{ia} \leq \frac{np}{2L}, \label{eq:har1}
\end{equation}
with probability at least $1-O(n^{-10})$. Similar to the proof of Lemma \ref{lem:A-basic}, we can use Bernstein's inequality and a union bound argument to obtain that
\begin{eqnarray}
\nonumber \min_{(1-\rho)k<a\leq k}\Sigma_{aa} &\geq& \min_{(1-\rho)k<a\leq k}\left[\frac{\psi'(\kappa_1)}{L}\sum_{i=1}^{k-\rho k}A_{ia} + \frac{\psi'(\kappa_2)}{L}\sum_{i=k+1}^{k+\rho(n-k)}A_{ia}\right] \\
\nonumber &\geq& \frac{(1-\delta)(1-\rho)p}{L}\left(k\psi'(\kappa_1)+(n-k)\psi'(\kappa_2)\right) \\
&=& \frac{(1-\delta)(1-\rho)pn}{LV(\kappa)}, \label{eq:har2}
\end{eqnarray}
for some $\delta=o(1)$ with probability at least $1-O(n^{-10})$.
For each $a\neq b$,
$$\Sigma_{ab}=\Cov(T_a,T_b|A)=A_{ab}\frac{\psi'(\mu_a(z^*)-\mu_b(z^*))}{L}.$$
Then, Bernstein's inequality and a union bound argument, we have
\begin{equation}
\max_a\sum_{b:b\neq a}\Sigma_{ab}^2\leq \frac{1}{16L^2}\max_{(1-\rho)k<a\leq k}\sum_{b:b\neq a}A_{ab} \leq C_1\frac{\rho kp + \log n}{L^2}, \label{eq:har3}
\end{equation}
with probability at least $1-O(n^{-10})$. We can also obtain a similar bound for $\max_a\sum_{b:b\neq a}\Sigma_{ab}$. This allows us to give a lower bound on $\lambda_{\min}(\Sigma)$:
\begin{equation}
\lambda_{\min}(\Sigma) \geq \min_{(1-\rho)k<a\leq k}\Sigma_{aa} - \max_a\sum_{b:b\neq a}\Sigma_{ab} \geq \frac{(1-\delta)(1-\rho)pn}{LV(\kappa)} - C_2\frac{\rho kp + \log n}{L} \geq c_1\frac{pn}{L}. \label{eq:har4}
\end{equation}
To apply Lemma \ref{lem:hartigan}, we shall choose $\rho$ that satisfies both $\log(\rho k)=(1+o(1))\log k$ and $\rho=o(1)$. The existence of such $\rho$ is guaranteed by $k\rightarrow\infty$. With the bounds (\ref{eq:har1})-(\ref{eq:har4}), we can apply Lemma \ref{lem:hartigan}, and obtain
$$\mathbb{P}\left(\max_{(1-\rho)k<a\leq k}S_a > \sqrt{2(1-\epsilon/2)}\sqrt{\frac{np}{LV(\kappa)}}\sqrt{\log k}\right) \geq 0.98 - O(n^{-1}).$$
We then obtain the desired lower bound for (\ref{eq:Q-prob}). A similar argument also leads to
\begin{eqnarray*}
&& \mathbb{Q}_{z^*}\left(- \min_{k<b\leq k+\rho(n-k)}\sum_{i\in[n]\backslash\{b\}}A_{ib}(\bar{y}_{ib}-\mathbb{E}y_{ib}) > \sqrt{2(1-\epsilon/2)}\sqrt{\frac{np}{LV(\kappa)}}\sqrt{\log (n-k)}\right) \\
&\geq& 0.99-O\left(\frac{1}{(\log n)^c}\right).
\end{eqnarray*}
Therefore, $\mathbb{Q}_{z^*}(\wh{z}\neq z^*)\geq 0.95$ and we obtain the desired conclusion.

The above proof assumes that $k\rightarrow\infty$ and $\kappa=\Omega(1)$. When these two conditions do not hold, we need to slightly modify the argument. Let us briefly discuss two cases. In the first case, $k=O(1)$ and $\kappa=\Omega(1)$. In this case, we can construct $\theta^*$ by $\theta_i^*=0$ for $1\leq i\leq k$, $\theta_i^*=-\Delta$ for $k<i\leq k+\rho(n-k)$ and $\theta_i^*=-\kappa$ for $k+\rho(n-k)< i\leq n$. In the second case, $\kappa=o(1)$, and then we can take $\theta^*$ with $\theta_i^*=0$ for $1\leq i\leq k$ and $\theta_i^*=-\Delta$ for $k<i\leq n$. The remaining part of the proof will go through with similar arguments, and we will omit the details.
\end{proof}

\subsection{Proof of Theorem \ref{thm:partial-lower}} \label{sec:partial-lower-pf}

We first establish a lemma that lower bounds the error of a critical testing problem.
\begin{lemma}\label{lem:two_point_testing_lower}
Assume $\frac{np}{\log n}\rightarrow\infty$, $\kappa=O(1)$, $\rho=o(1)$, $k \rightarrow\infty$ and (\ref{eq:exact-threshold-lower}) holds for some arbitrarily small constant $\epsilon>0$.
Choose $\kappa_1,\kappa_2\geq 0$ such that we have both $\kappa_1+\kappa_2\leq \kappa$ and
$$\frac{n}{k\psi'(\kappa_1)+(n-k)\psi'(\kappa_2)}=V(\kappa).$$
Define $\theta_i=\kappa_1$ for $1\leq i\leq k-\rho k$, $\theta_i=0$ for $k-\rho k<i\leq k$, $\theta_i=-\Delta$ for $k+2\leq i\leq k+\rho(n-k)$ and $\theta_i=-\kappa_2$ for $k+\rho(n-k)< i\leq n$. Suppose we have independent $A_i\sim \text{Bernoulli}(p)$ and $z_{il}\sim\text{Bernoulli}(\psi(\theta_i))$ for all $i\in[n]\backslash\{k+1\}$ and $l\in[L]$. Then, there exists some $\delta=o(1)$ such that
\begin{eqnarray*}
&& \mathbb{P}\left(\sum_{l=1}^L\sum_{i\in[n]\backslash\{k+1\}}A_i\left[z_{il}\log\frac{\psi(\theta_i+\Delta)}{\psi(\theta_i)}+(1-z_{il})\log\frac{1-\psi(\theta_i+\Delta)}{1-\psi(\theta_i)}\right]\geq\log\frac{k}{n-k-1}\right) \\
&\geq& C\exp\left(-\frac{1}{2}\left(\frac{\sqrt{(1+\delta){\snr}}}{2}-\frac{1}{\sqrt{(1+\delta){\snr}}}\log\frac{n-k}{k}\right)_+^2\right),
\end{eqnarray*}
for some constant $C >0$.
\end{lemma}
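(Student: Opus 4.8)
The plan is to prove this as a Bahadur--Rao-type sharp large-deviation lower bound for the log-likelihood-ratio statistic
\[
W \ :=\ \sum_{l=1}^{L}\sum_{i\in[n]\backslash\{k+1\}}A_i\Big[z_{il}\log\frac{\psi(\theta_i+\Delta)}{\psi(\theta_i)}+(1-z_{il})\log\frac{1-\psi(\theta_i+\Delta)}{1-\psi(\theta_i)}\Big]
\]
via exponential tilting (the change-of-measure lower bound). The first step is to compute the log-moment generating function $\Lambda(u):=\log\mathbb{E}[e^{uW}]$. Since the blocks $\{A_i,(z_{il})_{l}\}$ are independent across $i\in[n]\backslash\{k+1\}$, with $p_i=\psi(\theta_i)$, $q_i=\psi(\theta_i+\Delta)$ one has $\Lambda(u)=\sum_{i\ne k+1}\log\big((1-p)+p\,(p_i^{1-u}q_i^{u}+(1-p_i)^{1-u}(1-q_i)^{u})^{L}\big)$. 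The key computation uses $\Delta=o(1)$: a Taylor expansion, with $q_i-p_i=\psi'(\theta_i)\Delta+O(\Delta^2)$ and $\psi(\theta_i)(1-\psi(\theta_i))=\psi'(\theta_i)$, gives $p_i^{1-u}q_i^{u}+(1-p_i)^{1-u}(1-q_i)^{u}=1+\tfrac{u(u-1)}{2}\psi'(\theta_i)\Delta^{2}(1+O(\Delta))$ \emph{uniformly} over $u$ in a fixed bounded interval; combining this with $L\Delta^{2}\lesssim\snr/(np)=o(1)$ (to linearize the $L$-th power and the outer logarithm) and with $\sum_{i\ne k+1}\psi'(\theta_i)=(1+o(1))\big(k\psi'(\kappa_1)+(n-k)\psi'(\kappa_2)\big)=(1+o(1))\,n/V(\kappa)$ (from the choice of $\kappa_1,\kappa_2$ and $\rho=o(1)$) yields
\[
\Lambda(u)=(1+o(1))\,\tfrac{u(u-1)}{2}\,\snr\qquad\text{uniformly over }u\in[0,\tfrac12].
\]
Working with the marginal MGF of $W$, rather than conditioning on $(A_i)$, avoids any random concentration error here.

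The reason the $(1+o(1))$ in $\Lambda$ produces exactly the $(1+\delta)$ in the statement is that a \emph{multiplicative} error is equivalent to rescaling $\snr$: the Legendre transform $I(\tau):=\sup_{u\ge0}\{u\tau-g(u)\}$ of the quadratic $g(u)=\tfrac12(1+\delta)\snr\,(u^{2}-u)$ at $\tau:=\log\frac{k}{n-k-1}=-\log\frac{n-k}{k}+o(1)$ is, by a direct computation, exactly $\tfrac12\big(\tfrac{\sqrt{(1+\delta)\snr}}{2}-\tfrac{1}{\sqrt{(1+\delta)\snr}}\log\frac{n-k}{k}\big)_+^{2}$, which is the asserted exponent. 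When this is $0$ (i.e.\ $\tfrac{\sqrt{\snr}}{2}\le\tfrac{1}{\sqrt{\snr}}\log\frac{n-k}{k}$) the target is a constant, obtained directly: then $\tau\ge\mathbb{E}W-O(\sqrt{\snr})$ while $\mathrm{Var}(W)\asymp\snr$, so a central limit theorem gives $\mathbb{P}(W\ge\tau)\ge c>0$. Otherwise I would take the near-optimal tilt $u^{*}\in(0,\tfrac12]$ with $\Lambda'(u^{*})\approx\tau$, namely $u^{*}=\tfrac12-\tfrac{1}{\snr}\log\frac{n-k}{k}$, so that $\Lambda(u^{*})-u^{*}\tau=-(1+o(1))\tfrac12(u^{*})^{2}\snr=-(1+o(1))\tfrac12\big(\tfrac{\sqrt{\snr}}{2}-\tfrac{1}{\sqrt{\snr}}\log\frac{n-k}{k}\big)^{2}$.

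For the change of measure, let $\mathbb{Q}$ be the $u^{*}$-exponential tilt of the law of $W$. For any $T>0$,
\[
\mathbb{P}(W\ge\tau)\ \ge\ e^{\Lambda(u^{*})-u^{*}\tau-u^{*}T}\,\mathbb{Q}\big(\tau\le W\le\tau+T\big),
\]
and under $\mathbb{Q}$ the blocks remain independent with $\mathbb{E}_{\mathbb{Q}}W=\Lambda'(u^{*})$ (close to $\tau$) and $\mathrm{Var}_{\mathbb{Q}}(W)=\Lambda''(u^{*})=(1+o(1))\snr$. Choosing $T\asymp\sqrt{\mathrm{Var}_{\mathbb{Q}}(W)}\asymp\sqrt{\snr}$ makes $u^{*}T\asymp u^{*}\sqrt{\snr}$, which is of smaller order than $(u^{*})^{2}\snr$ in the diverging regime and is thus absorbed into the $(1+\delta)$. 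Finally $\mathbb{Q}(\tau\le W\le\tau+T)\ge c$ by the quantitative CLT of Lemma \ref{lem:CLT-stein}: under $\mathbb{Q}$, $W$ is a sum of $\asymp npL$ independent summands each bounded by $O(\Delta)=o(1)$, with total variance $\asymp\snr$, the window $[\tau,\tau+T]$ has width $\asymp$ one standard deviation and lies within $O(1)$ standard deviations of $\mathbb{E}_{\mathbb{Q}}W$, and the fourth-moment (Lyapunov) ratio is $O((npL)^{-1/4})=o(1)$. Combining the three displays gives the asserted bound for a suitable $\delta=o(1)$ and $C>0$.

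I expect the main obstacle to be the anti-concentration step $\mathbb{Q}(\tau\le W\le\tau+T)\ge c$, i.e.\ establishing a CLT with vanishing error on a window of width one standard deviation around the tilted mean, with the Lyapunov condition verified uniformly over the relevant range of $u^{*}$; the uniform cumulant expansion of the first step is computation-heavy but routine given $\Delta=o(1)$, and the two-regime split, together with the later reduction of the ranking lower bound (Theorem \ref{thm:partial-lower}) to this lemma via the test (\ref{eq:hard-test-lower}), are straightforward.
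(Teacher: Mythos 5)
Your overall strategy is the same as the paper's: an exponential tilt at $u^*\approx\frac12-\frac{1}{\snr}\log\frac{n-k}{k}$, a uniform quadratic approximation of the cumulant generating function giving the exponent as the Legendre transform of $\frac{(1+o(1))}{2}\snr\,u(u-1)$, absorption of $u^*T$ with $T\asymp\sqrt{\Var_{\mathbb Q}(W)}$, a constant-probability window under $\mathbb Q$ via Lemma \ref{lem:CLT-stein}, and a separate direct-CLT argument when the exponent is bounded. The one genuine deviation is that you tilt the \emph{marginal} law of $W$ (averaging over the $A_i$) instead of conditioning on $A$ as the paper does, and this is exactly where your argument breaks.

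The gap is in the anti-concentration step $\mathbb Q(\tau\le W\le\tau+T)\ge c$. You assert that under $\mathbb Q$, $W$ is a sum of $\asymp npL$ independent summands each bounded by $O(\Delta)$, with fourth-moment ratio $O((npL)^{-1/4})$. That is false for the marginal tilt: the $(i,l)$-level terms $A_i\bigl[z_{il}\log\frac{\psi(\theta_i+\Delta)}{\psi(\theta_i)}+(1-z_{il})\log\frac{1-\psi(\theta_i+\Delta)}{1-\psi(\theta_i)}\bigr]$ with the same $i$ share the factor $A_i$ and are therefore dependent (under $\mathbb P$ and hence under any tilt of the full sum). The independent units are only the $n-1$ blocks $W_i=A_i\sum_l(\cdots)$, and each block is \emph{not} bounded by $O(\Delta)$. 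Running Lemma \ref{lem:CLT-stein} at the block level, one gets $\mathbb E(W_i-\mathbb E W_i)^4\asymp pL^2\Delta^4$ and $\Var(W)\asymp npL\Delta^2$, so the bound in the lemma is of order $\bigl(n^{1/2}p^{3/4}\bigr)^{-1/2}$, which does \emph{not} tend to zero when $np$ is only polylogarithmic in $n$ (e.g.\ $p\asymp\frac{\log n\cdot\omega_n}{n}$ gives $n^{1/2}p^{3/4}\to0$). So the quantitative CLT you invoke fails at exactly the granularity where independence actually holds. This is the reason the paper first restricts $A$ to a high-probability event $\mathcal A$ and tilts the \emph{conditional} law: given $A$, the summands $Z_{il}$ over $\{(i,l):A_i=1\}$ are genuinely independent and $O(\Delta)$-bounded, and the fourth-moment ratio is $O((npL)^{-1/4})$ as you claimed. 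Your route can be repaired — either by conditioning on $A$ after all (at which point you also need the Bernstein-type control of $K(u)$, $K'(u)$ that you were trying to avoid), or by proving a Lindeberg-type CLT for the blocks $W_i$ under $\mathbb Q$ (which holds here since, given $A_i=1$, $W_i$ concentrates at scale $\sqrt{L\Delta^2}=o(1)$) — but as written the key step is unjustified.
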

\begin{proof}
We first consider the case
\begin{equation}
\frac{\sqrt{(1+\delta){\snr}}}{2}-\frac{1}{\sqrt{(1+\delta){\snr}}}\log\frac{n-k}{k} \rightarrow \infty, \label{eq:optimal-exp-div}
\end{equation}
for some $\delta=o(1)$ to be specified later.
Throughout the proof, we use $\mathbb{P}_A$ for the conditional distribution $\mathbb{P}(\cdot|A)$. We use the notation
$$Z_l=\sum_{i\in[n]\backslash\{k+1\}}A_i\left[z_{il}\log\frac{\psi(\theta_i+\Delta)}{\psi(\theta_i)}+(1-z_{il})\log\frac{1-\psi(\theta_i+\Delta)}{1-\psi(\theta_i)}\right].$$
Its conditional cumulant generating function is
$$K(u)=\sum_{i\in[n]\backslash\{k+1\}}A_i\log\left(\psi(\theta_i)^{1-u}\psi(\theta_i+\Delta)^u + (1-\psi(\theta_i))^{1-u}(1-\psi(\theta_i+\Delta))^u\right).$$
Define
$$u^*=\argmin_{u\geq0}\left(LK(u)-u\log\frac{k}{n-k-1}\right).$$
By direct calculation, we have
\begin{eqnarray*}
K'(0) &=& -\sum_{i\in[n]\backslash\{k+1\}}A_iD(\psi(\theta_i)\|\psi(\theta_i+\Delta)). \\
K'(1) &=& \sum_{i\in[n]\backslash\{k+1\}}A_iD(\psi(\theta_i+\Delta)\|\psi(\theta_i)).
\end{eqnarray*}
By Bernstein's inequality,
\begin{eqnarray}
\label{eq:K-d-0} K'(0) &\leq& -(1-\delta_1)p\sum_{i\in[n]\backslash\{k+1\}}D(\psi(\theta_i)\|\psi(\theta_i+\Delta)), \\
\label{eq:K-d-1} K'(1) &\geq& (1-\delta_1)p\sum_{i\in[n]\backslash\{k+1\}}D(\psi(\theta_i+\Delta)\|\psi(\theta_i)),
\end{eqnarray}
with some $\delta_1=o(1)$ for probability at least $1-O(n^{-1})$. Given that $\Delta=o(1)$, which is implied by (\ref{eq:exact-threshold-lower}), and $\rho=o(1)$, we have $\sum_{i\in[n]\backslash\{k+1\}}D(\psi(\theta_i)\|\psi(\theta_i+\Delta))=(1+o(1))\frac{n\Delta^2}{2V(\kappa)}$ and $\sum_{i\in[n]\backslash\{k+1\}}D(\psi(\theta_i+\Delta)\|\psi(\theta_i))=(1+o(1))\frac{n\Delta^2}{2V(\kappa)}$. With the condition (\ref{eq:optimal-exp-div}), we know that $LK'(0)-\log\frac{k}{n-k-1}<0$ and $LK'(1)-\log\frac{k}{n-k-1}>0$. Thus, we must have $u^*\in(0,1)$. In fact, the range of $u^*$ can be further narrowed down. We apply a Taylor expansion of $K'(u)$ as a function of $\Delta$ near $0$, and we obtain
$$K'(u)=\sum_{i\in[n]\backslash\{k+1\}}A_i\left[-\frac{1}{2}\psi'(\theta_i)\Delta^2 + \psi'(\theta_i)u\Delta^2 + O(|\Delta|^3)\right].$$
Note that the remainder term $O(|\Delta|^3)$ can be bounded by $|\Delta|^3$ up to some constant uniformly for all $u\in(0,1)$. By Bernstein's inequality, we have
\begin{equation}
K'(u) \geq -(1+\delta_1)\left(\frac{1}{2}-u\right)\frac{np\Delta^2}{V(\kappa)}, \label{eq:K-derivative-approx}
\end{equation}
for all $u\in(0,1/2)$ with probability at least $1-O(n^{-1})$. By (\ref{eq:K-derivative-approx}), there exists $\delta'=o(1)$ such that
$$K'\left(\frac{1}{2}-\frac{1}{(1+\delta')\snr}\log\frac{n-k}{k}\right)>0,$$
and therefore, we must have
\begin{equation}
u^*\in\left(0,\frac{1}{2}-\frac{1}{(1+\delta')\snr}\log\frac{n-k}{k}\right). \label{eq:u-star-range}
\end{equation}
We also introduce a quadratic approximation for $K(u)$, which is
$$\overline{K}(u)=\frac{np\Delta^2}{2V(\kappa)}(u^2-u).$$
It can be shown that
\begin{equation}
1-\delta_2\leq \frac{K(u)}{\overline{K}(u)} \leq 1+\delta_2, \label{eq:chen-pinhan}
\end{equation}
uniformly over all $u\in(0,1)$ for some $\delta_2=o(1)$ with probability at least $1-O(n^{-1})$. The inequality (\ref{eq:chen-pinhan}) can be obtained by a Taylor expansion argument followed by Bernstein's inequality, similar to the approximation obtained in (\ref{eq:K-derivative-approx}).

Define a probability distribution $\mathbb{Q}_A$, under which $Z_1,\cdots, Z_L$ are i.i.d. given $A$ and follow
$$\mathbb{Q}_A(Z_l=s)=\mathbb{P}_A(Z_l=s)e^{u^*s-K(u^*)},$$
for any $s$. It fact, each $Z_l$, under the measure $\mathbb{Q}_A$ can be written as the sum of several independent random variables, i.e. $Z_l=\sum_{i\in[n]\backslash\{k+1\}}Z_{il}$ where 
$$\mathbb{Q}_A(Z_{il}=s)=e^{A_iu^*s-A_iK_i(u^*)}\mathbb{P}_A\left(A_i\left[z_{il}\log\frac{\psi(\theta_i+\Delta)}{\psi(\theta_i)}+(1-z_{il})\log\frac{1-\psi(\theta_i+\Delta)}{1-\psi(\theta_i)}\right]=s\right),$$
and $K_i(u)=\log\left(\psi(\theta_i)^{1-u}\psi(\theta_i+\Delta)^u + (1-\psi(\theta_i))^{1-u}(1-\psi(\theta_i+\Delta))^u\right)$. Then for each $Z_{il}$ such that $A_i=1$, we can compute its second and 4th moment as
\begin{equation}
\mathbb{Q}_A((Z_{il}-\mathbb{Q}_A(Z_{il}))^2)=K_i^{\prime\prime}(u^*)=\psi^\prime(\theta_i)\Delta^2\frac{e^{u^*\Delta}}{(1-\psi(\theta_i)+e^{u^*\Delta}\psi(\theta_i))^2}\in(C_1^\prime\Delta^2, C_2^\prime\Delta^2),\label{eq:Zil-second}\end{equation}
\begin{equation}
\mathbb{Q}_A((Z_{il}-\mathbb{Q}_A(Z_{il}))^4)=K_i^{\prime\prime\prime\prime}(u^*)+3K_i^{\prime\prime}(u^*)^2\leq\Delta^2K_i^{\prime\prime}(u^*)+3K_i^{\prime\prime}(u^*)^2,\label{eq:Zil-fourth}
\end{equation}
where $C_1^\prime, C_2^\prime>0$ in (\ref{eq:Zil-second}) are some constants and we have used
\begin{align*}
&K_i^{\prime\prime\prime\prime}(u^*)=\psi^\prime(\theta_i)\Delta^4e^{u^*\Delta}\frac{\psi(\theta_i)^3e^{3u^*\Delta}-3\psi(\theta_i)\psi^\prime(\theta_i)e^{2u^*\Delta}-3\psi^\prime(\theta_i)(1-\psi(\theta^*))e^{u^*\Delta}+(1-\psi(\theta_i))^3}{(1-\psi(\theta_i)+\psi(\theta_i)e^{u^*\Delta})^5}\\
&\leq\psi^\prime(\theta_i)\Delta^4e^{u^*\Delta}\frac{1}{(1-\psi(\theta_i)+\psi(\theta_i)e^{u^*\Delta})^2}=\Delta^2K_i^{\prime\prime}(u^*)
\end{align*}
in (\ref{eq:Zil-fourth}).

Define $\mathcal{A}$ to be the event of $A$ that (\ref{eq:K-d-0}), (\ref{eq:K-d-1}), (\ref{eq:K-derivative-approx}), (\ref{eq:chen-pinhan}) and
\begin{equation}
\frac{1}{2}np \leq \sum_{i\in[n]\backslash\{k+1\}}A_i \leq 2np, \label{eq:real-madrid}
\end{equation}
all hold. We know that $\mathbb{P}(A\in\mathcal{A})\geq 1-O(n^{-1})$.

With the above preparations, we can lower bound $\mathbb{P}\left(\sum_{l=1}^LZ_l \geq \log\frac{k}{n-k-1}\right)$ by
$$\inf_{A\in\mathcal{A}}\mathbb{P}_A\left(\sum_{l=1}^LZ_l \geq \log\frac{k}{n-k-1}\right)\mathbb{P}(A\in\mathcal{A}) \geq \frac{1}{2}\inf_{A\in\mathcal{A}}\mathbb{P}_A\left(\sum_{l=1}^LZ_l \geq \log\frac{k}{n-k-1}\right).$$
For any $A\in\mathcal{A}$, a change-of-measure argument leads to the lower bound
\begin{eqnarray*}
&& \mathbb{P}_A\left(\sum_{l=1}^LZ_l \geq \log\frac{k}{n-k-1}\right)\\
&=&\exp\left(LK(u^*)-u^*\frac{k}{n-k-1}\right)\\
&&\times\mathbb{Q}_A\left[\mathbb{I}\left\{\sum_{l=1}^LZ_l-\log\frac{k}{n-k-1}\geq0\right\}\exp\left(-u^*(\sum_{l=1}^LZ_l-\log\frac{k}{n-k-1})\right)\right]\\
&\geq& \exp\left(-u^*T+LK(u^*)-u^*\log\frac{k}{n-k-1}\right)\mathbb{Q}_A\left(0\leq \sum_{l=1}^LZ_l-\log\frac{k}{n-k-1}\leq T\right),
\end{eqnarray*}
for any $T>0$ to be specified. We first lower bound the exponent $LK(u^*)-u^*\log\frac{k}{n-k-1}$ by
\begin{eqnarray*}
LK(u^*)-u^*\log\frac{k}{n-k-1} &=& \min_{u\in(0,1)}\left(LK(u)-u\log\frac{k}{n-k-1}\right) \\
&\geq& \min_{u\in(0,1)}\left(L(1+\delta_2)\overline{K}(u)-u\log\frac{k}{n-k-1}\right) \\
&\geq& -\frac{1}{2}\left(\frac{\sqrt{(1+\delta_3){\snr}}}{2}-\frac{1}{\sqrt{(1+\delta_3){\snr}}}\log\frac{n-k}{k}\right)^2,
\end{eqnarray*}
for some $\delta_3=o(1)$. We then need to choose an appropriate $T$ so that the probability $\mathbb{Q}_A\left(0\leq \sum_{l=1}^LZ_l-\log\frac{k}{n-k-1}\leq T\right)$ can be bounded below by some constant. To  achieve this purpose, we note that
$$\Var_{\mathbb{Q}_A}\left(\sum_{l=1}^LZ_l\right)=L\sum_{i\in[n]\backslash\{k+1\}}A_iK_i^{\prime\prime}(u^*)\leq C_1\Delta^2L\sum_{i\in[n]\backslash\{k+1\}}A_i\leq 2C_1\Delta^2Lnp,$$
for some constant $C_1>0$ due to (\ref{eq:Zil-second}), where $\Var_{\mathbb{Q}_A}$ is the variance operator under the measure $\mathbb{Q}_A$.
Thus, we set $T=\sqrt{2C_1\Delta^2Lnp}$. With this choice, and by (\ref{eq:u-star-range}), we have
$$u^*T\leq \sqrt{2C_1\Delta^2Lnp}\left(\frac{1}{2}-\frac{1}{(1+\delta')\snr}\log\frac{n-k}{k}\right).$$
Therefore, $u^*T$ is at most the order of the square-root of the desired exponent, and thus it is negligible.

Finally, we need to show $\mathbb{Q}_A\left(0\leq \sum_{l=1}^LZ_l-\log\frac{k}{n-k-1}\leq T\right)$ is lower bounded by some constant. Note that the definition of $u^*$ implies that $\sum_{l=1}^LZ_l-\log\frac{k}{n-k-1}$ has mean zero under $\mathbb{Q}_A$. By the definition of $T$, we have
\begin{eqnarray*}
&& \mathbb{Q}_A\left(0\leq \sum_{l=1}^LZ_l-\log\frac{k}{n-k-1}\leq T\right) \\
&\geq& \mathbb{Q}_A\left(0\leq \sum_{l=1}^LZ_l-\log\frac{k}{n-k-1}\leq \sqrt{\Var\left(\sum_{l=1}^LZ_l\Bigg|A\right)}\right)\\
&=& \mathbb{Q}_A\left(0\leq \sum_{l=1}^L\sum_{i\in[n]\backslash\{k+1\}}Z_{il}-\log\frac{k}{n-k-1}\leq \sqrt{\Var\left(\sum_{l=1}^L\sum_{i\in[n]\backslash\{k+1\}}Z_{il}\Bigg|A\right)}\right).
\end{eqnarray*}
We apply the central limit theorem in Lemma \ref{lem:CLT-stein} to bound the above probability. The $4$th moment approximation bound in Lemma \ref{lem:CLT-stein} is 
\begin{eqnarray}
\nonumber&&\sqrt{L\sum_{i\in[n]\backslash\{k+1\}}A_i\left(\frac{K_i^{\prime\prime\prime\prime}(u^*)+3K_i^{\prime\prime}(u^*)^2}{(L\sum_{i\in[n]\backslash\{k+1\}}A_iK_i^{\prime\prime}(u^*))^2}\right)^{3/4}}\\
&&\leq\sqrt{L\sum_{i\in[n]\backslash\{k+1\}}A_i\left(\frac{\Delta^2K_i^{\prime\prime}(u^*)+3K_i^{\prime\prime}(u^*)^2}{(L\sum_{i\in[n]\backslash\{k+1\}}A_iK_i^{\prime\prime}(u^*))^2}\right)^{3/4}}\label{eq:use-fourth}\\
&&\leq\sqrt{L\sum_{i\in[n]\backslash\{k+1\}}A_i\left(\frac{C_2^\prime+3C_2^{\prime2}}{(L\sum_{i\in[n]\backslash\{k+1\}}A_iC_1^\prime)^2}\right)^{3/4}}\label{eq:use-second}\\
&&\leq C_2\left(L\sum_{i\in[n]\backslash\{k+1\}}A_i\right)^{-1/4}\label{vanishing-approx}
\end{eqnarray}
which tends to zero by (\ref{eq:real-madrid}). We have used (\ref{eq:Zil-fourth}) in (\ref{eq:use-fourth}),  (\ref{eq:Zil-second}) in (\ref{eq:use-second}). We thus have
$$\mathbb{Q}_A\left(0\leq \sum_{l=1}^LZ_l-\log\frac{k}{n-k-1}\leq T\right)\geq \mathbb{P}\left(0\leq N(0,1)\leq 1\right)-o(1),$$
which is bounded below by a constant. To summarize, we have shown that
$$\mathbb{P}\left(\sum_{l=1}^LZ_l \geq \log\frac{k}{n-k-1}\right)\geq C_3\exp\left(-\frac{1}{2}\left(\frac{\sqrt{(1+\delta_4){\snr}}}{2}-\frac{1}{\sqrt{(1+\delta_4){\snr}}}\log\frac{n-k}{k}\right)^2\right),$$
for some $\delta_4=o(1)$ and some constant $C_3>0$ when (\ref{eq:optimal-exp-div}) holds with $\delta=\delta_4$.

To close the proof, we need a different argument when
$$\frac{\sqrt{(1+\delta_4){\snr}}}{2}-\frac{1}{\sqrt{(1+\delta_4){\snr}}}\log\frac{n-k}{k} \leq C_4,$$
for some constant $C_4>0$. This condition, together with Bernstein's inequality, implies that
\begin{equation}
\sum_{l=1}^L\mathbb{E}(Z_l|A)-\log\frac{k}{n-k-1} \geq -C_5\sqrt{Lnp\Delta^2}, \label{eq:man-u}
\end{equation}
with probability at least $1-O(n^{-1})$.
Define $\overline{\mathcal{A}}$ to be an event of $A$ such that both (\ref{eq:real-madrid}) and (\ref{eq:man-u}) hold. It is clear that $\mathbb{P}(\overline{\mathcal{A}})\geq 1-O(n^{-1})$. We then have
\begin{eqnarray}
\nonumber \mathbb{P}\left(\sum_{l=1}^LZ_l \geq \log\frac{k}{n-k-1}\right) &\geq& \frac{1}{2}\inf_{A\in\overline{\mathcal{A}}}\mathbb{P}_A\left(\sum_{l=1}^LZ_l \geq \log\frac{k}{n-k-1}\right) \\
\label{eq:ex-l-m-dif} &\geq& \frac{1}{2}\inf_{A\in\overline{\mathcal{A}}}\mathbb{P}_A\left(\sum_{l=1}^L(Z_l-\mathbb{E}(Z_l|A)) \geq C_5\sqrt{Lnp\Delta^2}\right) \\
\label{eq:ex-l-clt} &\geq& c_1 - o(1),
\end{eqnarray}
for some constant $c_1>0$.
The inequality (\ref{eq:ex-l-m-dif}) is by (\ref{eq:man-u}). For (\ref{eq:ex-l-clt}), we use the Gaussian approximation in Lemma \ref{lem:CLT-stein}, and the $4$th moment approximation bound is of order $\left(L\sum_{i\in[n]\backslash\{k+1\}}A_i\right)^{-1/4}$ by similar calculation as in (\ref{vanishing-approx}) under measure $\p_A$, which tends to zero by (\ref{eq:real-madrid}). The proof is complete.
\end{proof}

\begin{proof}[Proof of Theorem \ref{thm:partial-lower}]
We first note that the condition (\ref{eq:exact-threshold-lower}) implies that $\Delta=o(1)$. Choose $\kappa_1,\kappa_2\geq 0$ such that we have both $\kappa_1+\kappa_2\leq \kappa$ and
$$\frac{n}{k\psi'(\kappa_1)+(n-k)\psi'(\kappa_2)}=V(\kappa).$$
We first consider the case $k\rightarrow\infty$ and $\kappa=\Omega(1)$. In this case, one can easily check that $\kappa_2=\Omega(1)$.
Our least favorable $\theta',\theta''\in\Theta'(k,\Delta,\kappa)$ is constructed as follows. Let $\rho=o(1)$ be a vanishing number that will be specified later. Define $\theta_i'=\kappa_1$ for all $1\leq i\leq k-\rho k$, $\theta_i'=0$ for $k-\rho k<i\leq k$, $\theta_i'=-\Delta$ for $k<i\leq k+\rho(n-k)$ and $\theta_i'=-\kappa_2$ for $k+\rho(n-k)< i\leq n$. For the simplicity of proof, we choose $\rho$ so that both $\rho k$ and $\rho(n-k)$ are integers. For $\theta''$, we set $\theta_i''=\theta_i'$ for all $i\in[n]\backslash\{k+1\}$ and $\theta_{k+1}''=0$.
Recall the definition of the subset $\mathcal{R}_{k,\rho}\subset\S_n$ in (\ref{eq:subset-R-in-pf}). We then have
\begin{align*}
\inf_{\wh{r}}\sup_{\substack{r^*\in\S_n\\\theta^*\in\Theta'(k,\Delta,\kappa)}}\mathbb{E}_{(\theta^*,r^*)}\h_k(\wh{r},r^*) &\geq \inf_{\wh{r}}\sup_{\substack{r^*\in\mathcal{R}_{k,\rho}\\ \theta^* \in\{\theta',\theta''\}}}\mathbb{E}_{(\theta^*,r^*)}\h_k(\wh{r},r^*)\\
&\geq \inf_{\wh{r}} \frac{1}{2}\sum_{\theta^* \in\{\theta',\theta''\}} \frac{1}{\abs{\mathcal{R}_{k,\rho}}} \sum_{r^*\in\mathcal{R}_{k,\rho} }\mathbb{E}_{(\theta^*,r^*)}\h_k(\wh{r},r^*).
\end{align*}
That is, we first lower bound the minimax risk by the Bayes risk.
Since
$$\h_k(\wh r,r^*) \geq \frac{1}{2k}\sum_{k-\rho k <i \leq k + \rho(n-k)}\left(\indc{\wh{r}_i>k, r^*_i\leq k}+\indc{\wh{r}_i\leq k, r^*_i> k}\right),$$
we have
\begin{align*}
& \inf_{\wh{r}}\sup_{\substack{r^*\in\S_n\\\theta^*\in\Theta'(k,\Delta,\kappa)}}\mathbb{E}_{(\theta^*,r^*)}\h_k(\wh{r},r^*)\\
& \geq  \inf_{\wh{r}} \frac{1}{2}\sum_{\theta^* \in\{\theta',\theta''\}} \frac{1}{\abs{\mathcal{R}_{k,\rho}}} \sum_{r^*\in\mathcal{R}_{k,\rho} }\mathbb{E}_{(\theta^*,r^*)}\frac{1}{2k}\sum_{k-\rho k <i \leq k + \rho(n-k)}\left(\indc{\wh{r}_i>k, r^*_i\leq k}+\indc{\wh{r}_i\leq k, r^*_i> k}\right) \\
& \geq    \frac{1}{4k\abs{\mathcal{R}_{k,\rho}}} \sum_{k-\rho k <i \leq k + \rho(n-k)}   \inf_{\wh{r}}\sum_{\theta^* \in\{\theta',\theta''\}}  \left( \sum_{\substack{r^*\in\mathcal{R}_{k,\rho}\\r^*_i \leq k}} \mathbb{P}_{(\theta^*,r^*)}(\wh r_i> k) + \sum_{\substack{r^*\in\mathcal{R}_{k,\rho}\\r^*_i \geq k+2}} \mathbb{P}_{(\theta^*,r^*)}(\wh r_i\leq k)\right)\\
&\geq  \frac{1}{4k\abs{\mathcal{R}_{k,\rho}}} \sum_{k-\rho k <i \leq k + \rho(n-k)}   \inf_{\wh{r}} \left( \sum_{\substack{r^*\in\mathcal{R}_{k,\rho}\\r^*_i \leq k}} \mathbb{P}_{(\theta'',r^*)}(\wh r_i> k) + \sum_{\substack{r^*\in\mathcal{R}_{k,\rho}\\r^*_i \geq k+2}} \mathbb{P}_{(\theta',r^*)}(\wh r_i\leq k)\right).
\end{align*}
At this point, we need to introduce some extra notation.
For any $r,r'\in \S_n$, we define the Hamming distance without normalization as $\mathcal{H}(r,r') = \sum_{i=1}^n \indc{r_i \neq r'_i}$. For each $k-\rho k <i \leq k + \rho(n-k)$, we can partition the set $\mathcal{R}_{k,\rho}$ into three disjoint subsets. Define
\begin{eqnarray*}
\mathcal{R}_{k,\rho}^{(1)} &=& \left\{r\in\mathcal{R}_{k,\rho} : r_i \leq k \right\}, \\
\mathcal{R}_{k,\rho}^{(2)} &=& \left\{r\in\mathcal{R}_{k,\rho} : r_i = k+1  \right\}, \\
\mathcal{R}_{k,\rho}^{(3)} &=& \left\{r\in\mathcal{R}_{k,\rho} : r_i \geq k+2 \right\}.
\end{eqnarray*}
It is easy to see that $\mathcal{R}_{k,\rho}=\cup_{j=1}^3\mathcal{R}_{k,\rho}^{(j)}$. We note that the three subsets all depend on the index $i$, but we shall suppress this dependence to avoid notational clutter. For any $r\in\mathcal{R}_{k,\rho}^{(2)}$, define
\begin{eqnarray*}
\mathcal{N}_{2\rightarrow 1} (r) &=& \left\{r''\in\mathcal{R}_{k,\rho}^{(1)}: \mathcal{H}(r,r'')=2 \right\},\\
\mathcal{N}_{2\rightarrow 3} (r) &=& \left\{r'\in\mathcal{R}_{k,\rho}^{(3)}: \mathcal{H}(r,r')=2 \right\}.
\end{eqnarray*}
Since for any different permutations, the smallest Hamming distance between them is $2$, $\mathcal{N}_{2\rightarrow 1} (r)$ and $\mathcal{N}_{2\rightarrow 3} (r)$ can be understood as neighborhoods $r$ within $\mathcal{R}_{k,\rho}^{(1)}$ and $\mathcal{R}_{k,\rho}^{(3)}$, respectively.  It is easy to check that $\{\mathcal{N}_{2\rightarrow 1} (r)\}_{r\in\mathcal{R}_{k,\rho}^{(2)}}$ are disjoint subsets, and they form a partition of $\mathcal{R}_{k,\rho}^{(1)}$. Similarly, $\{\mathcal{N}_{2\rightarrow 3} (r)\}_{r\in\mathcal{R}_{k,\rho}^{(2)}}$ are disjoint subsets, and form a partition of $\mathcal{R}_{k,\rho}^{(3)}$.
With these notation, we have
\begin{align*}
& \inf_{\wh{r}}\sup_{\substack{r^*\in\S_n\\\theta^*\in\Theta'(k,\Delta,\kappa)}}\mathbb{E}_{(\theta^*,r^*)}\h_k(\wh{r},r^*)\\
&\geq  \frac{1}{4k\abs{\mathcal{R}_{k,\rho}}} \sum_{k-\rho k <i \leq k + \rho(n-k)}   \inf_{\wh{r}}   \sum_{r\in\mathcal{R}_{k,\rho}^{(2)}}   \left( \sum_{r''\in \mathcal{N}_{2\rightarrow 1} (r)} \mathbb{P}_{(\theta'',r'')}(\wh r_i> k) + \sum_{r'\in \mathcal{N}_{2\rightarrow 3} (r)} \mathbb{P}_{(\theta',r')}(\wh r_i\leq k)\right)\\
& =   \frac{1}{4k\abs{\mathcal{R}_{k,\rho}}} \sum_{k-\rho k <i \leq k + \rho(n-k)}   \inf_{\wh{r}}   \sum_{r\in\mathcal{R}_{k,\rho}^{(2)}}    \sum_{\substack{r''\in \mathcal{N}_{2\rightarrow 1} (r) \\ r'\in \mathcal{N}_{2\rightarrow 3} (r)}} \left(  \frac{1}{n-k-1}\mathbb{P}_{(\theta'',r'')}(\wh r_i> k)  +  \frac{1}{k}\mathbb{P}_{(\theta',r')}(\wh r_i\leq k) \right) \\
& \geq \frac{1}{4k(n-k-1)\abs{\mathcal{R}_{k,\rho}}} \sum_{k-\rho k <i \leq k + \rho(n-k)}  \sum_{r\in\mathcal{R}_{k,\rho}^{(2)}}    \sum_{\substack{r''\in \mathcal{N}_{2\rightarrow 1} (r) \\ r'\in \mathcal{N}_{2\rightarrow 3} (r)}} \inf_{0\leq \phi\leq 1}\left[\mathbb{E}_{(\theta'',r'')}\phi+\frac{n-k-1}{k}\mathbb{E}_{(\theta',r')}(1-\phi)\right],
\end{align*}
where we have used the fact $| \mathcal{N}_{2\rightarrow 1} (r)|=k$ and $| \mathcal{N}_{2\rightarrow 3} (r)|=n-k-1$ to obtain the equality in the above display. To this end, it suffices to give a lower bound for the testing problem
\begin{equation}
\inf_{0\leq \phi\leq 1}\left[\mathbb{E}_{(\theta'',r'')}\phi+\frac{n-k-1}{k}\mathbb{E}_{(\theta',r')}(1-\phi)\right], \label{eq:coldplay}
\end{equation}
for any $r''\in \mathcal{N}_{2\rightarrow 1} (r)$ and any $r'\in \mathcal{N}_{2\rightarrow 3} (r)$ with any $r\in\mathcal{R}_{k,\rho}^{(2)}$ and any $k-\rho k <i \leq k + \rho(n-k)$.

For the two probability distributions in (\ref{eq:coldplay}), the probability $\mathbb{P}_{(\theta'',r'')}$ is the BTL model with parameter $\{\theta_{r''_i}''\}_{i\in[n]}$ and the probability $\mathbb{P}_{(\theta',r')}$ is the BTL model with parameter $\{\theta_{r'_i}'\}_{i\in[n]}$. It turns out the two vectors $\{\theta_{r''_i}''\}_{i\in[n]}$ and $\{\theta_{r'_i}'\}_{i\in[n]}$ only differ by one entry.
To see this, let $i$ and $j'$ be the two coordinates that $r$ and $r'$ differ and let $i$ and $j''$ be the two coordinates that $r$ and $r''$ differ. Then, $r'$ and $r''$ differ at the $i$th, the $j'$th and the $j''$th coordinates. 
This immediately implies $\theta'_{r'_l} = \theta''_{r''_l}$ for all $l\in[n]\backslash\{i,j',j''\}$.
By the definitions of $\mathcal{N}_{2\rightarrow 1}$ and $\mathcal{N}_{2\rightarrow 3}$, we have $r'_i = r_{j'},r'_{j'}=k+1,r'_{j''} = r_{j''}$ and $r''_{i}= r_{j''},r''_{j'} = r_{j'},r''_{j''}=k+1$. Moreover, we also have $r_{j'} \geq k+2$ and $r_{j''} \leq k$. We remind the readers that all the three coordinates are in the interval $[k-\rho k+1,k+\rho(n-k)]$. According to the definitions of $\theta'$ and $\theta''$, we then have $\theta'_{r'_{j''}} = \theta''_{r''_{j''}} = 0$ and $\theta'_{r'_{j'}} = \theta''_{r''_{j'}} = -\Delta$. For the only different coordinate, we have $\theta'_{r'_i} = -\Delta$ and $\theta''_{r''_i} = 0$.

Since $\{\theta_{r''_i}''\}_{i\in[n]}$ and $\{\theta_{r'_i}'\}_{i\in[n]}$ only differ by a single coordinate, the testing problem (\ref{eq:coldplay}) is equivalent to
\begin{equation}
\inf_{0\leq \phi\leq 1}\left[\mathbb{E}_{(\theta'',\bar{r})}\phi+\frac{n-k-1}{k}\mathbb{E}_{(\theta',\bar{r})}(1-\phi)\right], \label{eq:overwatch}
\end{equation}
where $\bar{r}_i=i$ for all $i\in[n]$. The equivalence between (\ref{eq:coldplay}) and (\ref{eq:overwatch}) can be obtained by the existence of a simultaneous permutation that maps the two vectors $\{\theta_{r''_i}''\}_{i\in[n]}$ and $\{\theta_{r'_i}'\}_{i\in[n]}$ to $\theta''$ and $\theta'$. By Neyman-Pearson lemma, we can lower bound (\ref{eq:overwatch}) by
\begin{equation}
\mathbb{P}_{(\theta'',\bar{r})}\left(\frac{d\mathbb{P}_{(\theta',\bar{r})}}{d\mathbb{P}_{(\theta'',\bar{r})}}\geq \frac{k}{n-k-1}\right). \label{eq:NP-in-pf}
\end{equation}
This probability can be lower bounded by
$$C\exp\left(-\frac{1}{2}\left(\frac{\sqrt{(1+\delta){\snr}}}{2}-\frac{1}{\sqrt{(1+\delta){\snr}}}\log\frac{n-k}{k}\right)_+^2\right),$$
with some constant $C>0$ and some $\delta=o(1)$ according to Lemma \ref{lem:two_point_testing_lower}. Since $|\mathcal{R}_{k,\rho}^{(2)}|/|\mathcal{R}_{(k,\rho)}|=(1-\rho)n$, we have
\begin{eqnarray*}
&&\inf_{\wh{r}}\sup_{\substack{r^*\in\S_n\\\theta^*\in\Theta'(k,\Delta,\kappa)}}\mathbb{E}_{(\theta^*,r^*)}\h_k(\wh{r},r^*) \\
&\geq& C_1\rho\exp\left(-\frac{1}{2}\left(\frac{\sqrt{(1+\delta){\snr}}}{2}-\frac{1}{\sqrt{(1+\delta){\snr}}}\log\frac{n-k}{k}\right)_+^2\right),
\end{eqnarray*}
for some constant $C_1>0$. When the exponent diverges, we can choose $\rho$ that tends to zero sufficiently slow so that it can be absorbed into the exponent. Otherwise, we can simply set $\rho$ to be a sufficiently small constant, and the above proof will still go through. One can use a similar argument as Lemma \ref{lem:two_point_testing_lower} to show (\ref{eq:NP-in-pf}) is bounded below by some constant. In this case, we have $\inf_{\wh{r}}\sup_{\substack{r^*\in\S_n\\\theta^*\in\Theta'(k,\Delta,\kappa)}}\mathbb{E}_{(\theta^*,r^*)}\h_k(\wh{r},r^*)$ bounded below by some constant as desired.

Finally, we briefly discuss how to modify the proof when either $k\rightarrow\infty$ or $\kappa=\Omega(1)$ does not hold. When $k\rightarrow\infty$ and $\kappa=o(1)$, we can take $\theta_i'=0$ for $1\leq i\leq k$ and $\theta_i'=-\Delta$ for $k<i\leq n$. The vector $\theta''$ is still defined according to $\theta_i''=\theta_i'$ for all $i\in[n]\backslash\{k+1\}$ and $\theta_{k+1}''=0$. The proof will go through with some slight modifcation. When $k=O(1)$, the condition (\ref{eq:exact-threshold-lower}) is equivalent to $\snr<(1-\epsilon)2\log n$ for some constant $\epsilon>0$, and we only need to prove a constant minimax lower bound. 
This is obviously true becasue
\begin{eqnarray*}
\inf_{\wh{r}}\sup_{\substack{r^*\in\S_n\\\theta^*\in\Theta'(k,\Delta,\kappa)}}\mathbb{E}_{(\theta^*,r^*)}\h_k(\wh{r},r^*) &\geq& \inf_{\wh{r}}\sup_{\substack{r^*\in\S_n\\\theta^*\in\Theta(k,\Delta,\kappa)}}\mathbb{E}_{(\theta^*,r^*)}\h_k(\wh{r},r^*) \\
&\geq& \inf_{\wh{r}}\sup_{\substack{r^*\in\S_n\\\theta^*\in\Theta(k,\Delta,\kappa)}}\frac{1}{2k}\mathbb{P}_{(\theta^*,r^*)}\left(\h_k(\wh{r},r^*) > 0\right),
\end{eqnarray*}
which is lower bounded by a constant by Theorem \ref{thm:exact-lower} and the condition that $k=O(1)$.
\end{proof}

\section{Proofs of Local Error Rates}\label{sec:pf_local}
In this section, we prove Theorem \ref{thm:MLE-ranking-theta} and Theorem \ref{thm:spectral-ranking-theta}.

\subsection{Proof of Theorem \ref{thm:MLE-ranking-theta}}
We first give Lemma \ref{lem:tail-case} to characterize entrywise tail behaviors of the MLE (\ref{eq:pure-MLE}) which is crucial to the upper bound in Theorem \ref{thm:MLE-ranking-theta}.
\begin{lemma}\label{lem:tail-case}
Assume $\frac{np}{\log n}\rightarrow\infty$ and $\kappa=O(1)$. Then, for the rank vector $\wh{r}$ that is induced by the MLE (\ref{eq:pure-MLE}), for any small constant $0.1>\delta>0$, there exists some constant $C>0$, such that for any $t\in\mathbb{R}$, any $\theta^*\in\Theta(k,0,\kappa)$, $r^*\in\S_n$, we have
\begin{equation}
\p_{(\theta^*, r^*)}\left(\wh{\theta}_i\leq t\right)\leq C\exp\left(-\frac{(1-\delta)(\theta_{r_i^*}^*-t)_{+}^2npL}{2V_{r_i^*}(\theta^*)}\right)+Cn^{-7}, r_i^*\leq k;\label{eq:tail-case-1}
\end{equation}
\begin{equation}
\p_{(\theta^*, r^*)}\left(\wh{\theta}_i\geq t\right)\leq C\exp\left(-\frac{(1-\delta)(t-\theta_{r_i^*}^*)_{+}^2npL}{2V_{r_i^*}(\theta^*)}\right)+Cn^{-7}, r_i^*\geq k+1\label{eq:tail-case-2}
\end{equation}
\end{lemma}
\begin{proof}
The proof follows the proof of Theorem \ref{thm:MLE-ranking} with slight modifications. Without loss of generality, we can assume $r_i^*=i$ for all $\in[n]$. Let
\begin{equation}
\bar{\Delta}_i = \begin{cases}
\min\left((\theta_{i}^*-t)_{+},\left(\frac{\log n}{np}\right)^{1/4}\right), & 1\leq i\leq k, \\
\min\left((t-\theta_{i}^*)_{+},\left(\frac{\log n}{np}\right)^{1/4}\right), & k+1 \leq i\leq n.
\end{cases} \label{eq:delta-bar-i-case}
\end{equation}
We only need to prove (\ref{eq:tail-case-1}) since (\ref{eq:tail-case-2}) can be proved similarly. 

Consider any $m\in[k]$. When $(\theta^*_m-t)_{+}^2npL\leq c^\prime$ for some large enough constant to be specified later, we can directly bound the probability using the trivial bound $1$. Thus, we only need to  consider the regime when $(\theta^*_m-t)_{+}^2npL>c^\prime$. 

Following the proof of Theorem \ref{thm:MLE-ranking}, we have (\ref{eq:bound-f-1})-(\ref{eq:MLE-local-approx-1}) and (\ref{eqn:MLE_gm_gm_diff}) hold. Note that we now have $\bar{\Delta}_m^2Lnp>c'$ instead of $\bar{\Delta}_m^2Lnp \rightarrow\infty$ which is needed  in the proof of Theorem  \ref{thm:MLE-ranking}. As a consequence, we now have (\ref{eq:MLE-local-approx-2}) and (\ref{eq:MLE-local-approx-3}) hold with $\delta =4C_4e^\kappa/\sqrt{c'}$ instead of some $o(1)$ as in the proof of Theorem \ref{thm:MLE-ranking}. To sum up, with this $\delta$, we have
\begin{align}
& |\wh{\theta}_m-\bar{\theta}_m| \leq \delta\bar{\Delta}_m,  \label{eq:MLE-local-approx-1-case}\\
& \frac{|f^{(m)}(\theta_m^*|\wh{\theta}_{-m})-f^{(m)}(\theta_m^*|\theta^*_{-m})|}{g^{(m)}(\theta_m^*|\theta_{-m}^*)} \leq \delta\bar{\Delta}_m,  \\
& \frac{\left|g^{(m)}(\theta_m^*|\wh{\theta}_{-m})-g^{(m)}(\theta_m^*|\theta_{-m}^*)\right|}{g^{(m)}(\theta_m^*|\theta_{-m}^*)} \leq \delta,  \label{eq:left-case}
\end{align}
hold with probability at least  $1-O(n^{-7})-\exp(-\bar{\Delta}_m^{3/2}Lnp)-\exp\left(-\bar{\Delta}_m^2npL\frac{np}{\log n}\right)$. We can make $\delta$ to be an arbitrarily small constant by setting $c'$ large as $\kappa=O(1)$. 

Then for any $i\leq k$, by the same argument as in the proof of Theorem \ref{thm:MLE-ranking}, we have
\begin{eqnarray}
\nonumber && \mathbb{P}\left(\wh{\theta}_i \leq t\right) \\
\nonumber &\leq& \mathbb{P}\left(\wh{\theta}_i-\theta_i^* \leq -(\theta_i^*-t)\right) \\
\nonumber &\leq& \mathbb{P}\left(\bar{\theta}_i-\theta_i^*\leq -(1-\delta)\bar{\Delta}_i\right) + \mathbb{P}\left(|\bar{\theta}_i-\wh{\theta}_i|>\delta\bar{\Delta}_i\right) \\
&\leq& \mathbb{P}\left(-\frac{f^{(i)}(\theta_i^*|{\theta}^*_{-i})}{g^{(i)}(\theta_i^*|{\theta}^*_{-i})} \leq -(1-3\delta)\bar{\Delta}_i\right) + O(n^{-7}) \\
\nonumber && +\exp(-\bar{\Delta}_i^{3/2}Lnp)+\exp\left(-\bar{\Delta}_i^2npL\frac{np}{\log n}\right),
\end{eqnarray}
which has the same upper bound as in (\ref{eq:left}). 
We then have the same (\ref{eq:apply-bern}) and the event $\mathcal{A}_i$ as in the proof of Theorem \ref{thm:MLE-ranking}. As a result,  
\begin{eqnarray}
\nonumber && \mathbb{P}\left(-\frac{f^{(i)}(\theta_i^*|{\theta}^*_{-i})}{g^{(i)}(\theta_i^*|{\theta}^*_{-i})} \leq -(1-3\delta)\bar{\Delta}_i\right) \\
\nonumber  &\leq& \sup_{A\in\mathcal{A}_i}\exp\left(-\frac{\frac{1}{2}(1-3\delta)^2\bar{\Delta}_i^2\left(L\sum_{j\in[n]\backslash\{i\}}A_{ij}\psi'(\theta_i^*-\theta_j^*)\right)^2}{L\sum_{j\in[n]\backslash\{i\}}A_{ij}\psi'(\theta_i^*-\theta_j^*)+\frac{1-3\delta}{3}\bar{\Delta}_iL\sum_{j\in[n]\backslash\{i\}}A_{ij}\psi'(\theta_i^*-\theta_j^*)}\right) \\
\nonumber &&  + O(n^{-7}) \\
\label{eq:apply-bern2-case} &=& \exp\left(-\frac{1-\delta^\prime}{2}\bar{\Delta}_i^2Lp\sum_{j\in[n]\backslash\{i\}}\psi^\prime(\theta_i^*-\theta_j^*)\right) + O(n^{-7}) \\
\label{eq:free-case} &\leq& \exp\left(-\frac{1-\delta^\prime}{2}(\theta_i^*-t)^2Lp\sum_{j\in[n]\backslash\{i\}}\psi^\prime(\theta_i^*-\theta_j^*)\right) + O(n^{-7})\\
\label{eq:eq-case}&=&\exp\left(-\frac{1-\delta^{\prime\prime}}{2V_i(\theta^*)}(\theta_i^*-t)^2npL\right) + O(n^{-7})
\end{eqnarray}
where $\delta^\prime, \delta^{\prime\prime}$ are able to be any small constant (by adjusting $c^\prime$).  We  use the definition of $\mathcal{A}_i$ to obtain the expression (\ref{eq:apply-bern2-case}). To see why (\ref{eq:free-case}) is true, note that when $\bar{\Delta}_i^2=\sqrt{\frac{\log n}{np}}$, the first term of (\ref{eq:apply-bern2-case}) can be absorbed into $O(n^{-7})$. (\ref{eq:eq-case}) comes from $\frac{\sum_{j\in[n]\backslash\{i\}}\psi^\prime(\theta_i^*-\theta_j^*)}{\sum_{j\in[n]}\psi^\prime(\theta_i^*-\theta_j^*)}=1+o(1)$. 

Since $\exp(-\bar{\Delta}_i^{3/2}Lnp)+\exp\left(-\bar{\Delta}_i^2npL\frac{np}{\log n}\right)\leq \exp\left(-\frac{1+o(1)}{2V_i(\theta^*)}(\theta_i^*-t)^2npL\right) + O(n^{-7})$, we have for any small constant $\delta>0$, there exists some constant $C>0$, such that
\begin{equation}
\mathbb{P}\left(\wh{\theta}_i \leq t\right)\leq C\exp\left(-\frac{1-\delta}{2V_i(\theta^*)}(\theta_i^*-t)^2npL\right) + Cn^{-7}, \label{eq:prob-bound-1-case}
\end{equation}
for all $i\leq k$ which completes the proof. 
\end{proof}

\begin{proof}[Proof of (\ref{eq:MLE-ranking-theta-u}) of Theorem \ref{thm:MLE-ranking-theta}]
The upper bound (\ref{eq:MLE-ranking-theta-u}) is a straightforward consequence of Lemma \ref{lem:anderson} and Lemma \ref{lem:tail-case}. We have
\begin{align*}
&\E_{(\theta^*, r^*)}\h_k(\wh{r},r^*)\\
&\leq C\frac{1}{k}\left[\sum_{i=1}^k\exp\left(-\frac{(1-\delta)(\theta_i^*-t)_+^2npL}{2V_i(\theta^*)}\right)+\sum_{i=k+1}^n\exp\left(-\frac{(1-\delta)(t-\theta_i^*)_+^2npL}{2V_i(\theta^*)}\right)\right]+Cn^{-6}.
\end{align*}
\end{proof}

The rest of the section focuses on the lower bound (\ref{eq:MLE-ranking-theta-l}). The proof follows the proof of Theorem \ref{thm:exact-lower} with some modification. We include it below for completeness.
\begin{proof}[Proof of (\ref{eq:MLE-ranking-theta-l}) of Theorem \ref{thm:MLE-ranking-theta}]
We are going to prove
\begin{equation}
\E_{(\theta^*, r^*)}\h_k(\wh{r},r^*)\gtrsim\frac{R_1([k],\theta^*, t^*, -\delta)+R_2([n]\backslash[k],\theta^*, t^*, -\delta)}{k}\label{eq:lower-bound-mle-case}
\end{equation}
where $t^*$ is the unique solution such that $R_1([k],\theta^*, t^*, -\delta)=R_2([n]\backslash[k],\theta^*, t^*, -\delta)$. We first show the existence and uniqueness of $t^*$. Note that $R_1([k],\theta^*, t, -\delta)$ increases with $t$ while $R_2([n]\backslash[k],\theta^*, t, -\delta)$ decreases with $t$. Moreover, since $\lim_{t\to-\infty}R_1([k],\theta^*, t, -\delta)=\lim_{t\to+\infty}R_2([n]\backslash[k],\theta^*, t, -\delta)=0$, such $t^*$ must exist due to continuity. The uniqueness comes from $R_1([k], \theta^*, t, -\delta)$, as a function of $t$, is strictly increasing on $(-\infty, \theta_1^*]$ and $R_2([n]\backslash[k], \theta^*, t, -\delta)$, as a function of $t$, is strictly decreasing on $[\theta^*_n, +\infty)$ and $\theta_1^*\geq\theta_n^*$.

Define
\begin{align}
S_1(t)=\left\{i\in[n]:i\leq k, (\theta_i^* - t)_+\leq(\log n/np)^{1/4}\right\},\label{eqn:S_1_t}\\
S_2(t)=\left\{i\in[n]:i\geq k + 1, (t-\theta_i^*)_+\leq(\log n/np)^{1/4}\right\}.\nonumber
\end{align}
Since we assume $\inf_t({R_1([k],\theta^*, t, -\delta) + R_2([n]\backslash[k],\theta^*, t, -\delta)})\rightarrow\infty$, we must have  
\begin{align}\label{eq:mle-lower-condition-case}
R_1([k],\theta^*, t^*, -\delta)\to\infty
\end{align}
 and hence,
\begin{equation}
\frac{R_1(S_1(t^*),\theta^*, t^*, -\delta)}{R_1([k],\theta^*, t^*, -\delta)}\geq\frac{1}{2}, \frac{R_1(S_2(t^*),\theta^*, t^*, -\delta)}{R_1([n]\backslash[k],\theta^*, t^*, -\delta)}\geq\frac{1}{2}.\label{eq:S-k-same-order}
\end{equation}
This is because $R_1([k],\theta^*, t^*, -\delta)-R_1(S_1(t^*),\theta^*, t^*, -\delta)\leq n^{-6}$ and $R_2([n]\backslash[k],\theta^*, t^*, -\delta)-R_2(S_2(t^*),\theta^*, t^*, -\delta)\leq n^{-6}$ by the definition of $S_1(t^*)$, $S_2(t^*)$ and $np/\log n\to\infty$.

Now by Lemma \ref{lem:anderson_lower_bound}, we have
\begin{align}
\nonumber
\h_k(\wh{r},r^*) & \geq  \frac{1}{k}\min\left(\sum_{i=1}^k\indc{\wh{\theta}_i< t^*},\sum_{i=k+1}^n\indc{\wh{\theta}_i> t^*}\right) \\
& \geq  \frac{1}{k}\min\left(\sum_{i\in S_1(t^*)}\indc{\wh{\theta}_i< t^*},\sum_{i\in S_2(t^*)}\indc{\wh{\theta}_i> t^*}  \right)\label{eq:H-lower-reduce-mle}.
\end{align}
It suffices to show there exists some constant $C>0$ such that
\begin{align}
&\mathbb{P}_{(\theta^*,r^*)}\left(\frac{1}{k}\sum_{i\in S_1(t^*)}\indc{\wh{\theta}_i< t^*} \geq \frac{4C}{k}R_1(S_1(t^*),\theta^*, t^*, -\delta) \right) \geq 3/4\label{eqn:H_lower_1_case_mle}\\
\text{and }& \mathbb{P}_{(\theta^*,r^*)}\left(\frac{1}{k}\sum_{i\in S_2(t^*)}\indc{\wh{\theta}_i> t} \geq \frac{4C}{k} R_2(S_2(t^*),\theta^*, t^*, -\delta)\right) \geq 3/4.\label{eqn:H_lower_2_case_mle}
\end{align}
This is because
\begin{align}
 \nonumber&\mathbb{E}_{(\theta^*,r^*)}\h_k(\wh{r},r^*)\\
 \nonumber&\geq  C\frac{R_1([k], \theta^*, t^*, -\delta)+R_2([n]\backslash[k], \theta^*, t^*, -\delta)}{k}\\
 &\quad\quad\times\mathbb{P}_{(\theta^*,r^*)}\left(\h_k(\wh{r},r^*)  \geq C\frac{R_1([k], \theta^*, t^*, -\delta)+R_2([n]\backslash[k], \theta^*, t^*, -\delta)}{k}\right)\label{eq:markov-mle}\\
&\geq C\frac{R_1([k], \theta^*, t^*, -\delta)+R_2([n]\backslash[k], \theta^*, t^*, -\delta)}{k}\mathbb{P}_{(\theta^*,r^*)}\left(\substack{\frac{\sum_{i\in S_1(t^*)}\indc{\wh{\theta}_i< t^*}}{k} \geq \frac{2C}{k}R_1([k],\theta^*, t^*, -\delta)\text{ and }\label{eq:H-lower-R-equal}\\\frac{\sum_{i\in S_2(t^*)}\indc{\wh{\theta}_i> t}}{k} \geq \frac{2C}{k} R_2([n]\backslash[k],\theta^*, t^*, -\delta)}\right)\\
&\geq C\frac{R_1([k], \theta^*, t^*, -\delta)+R_2([n]\backslash[k], \theta^*, t^*, -\delta)}{k}\mathbb{P}_{(\theta^*,r^*)}\left(\substack{\frac{\sum_{i\in S_1(t^*)}\indc{\wh{\theta}_i< t^*}}{k} \geq \frac{4C}{k}R_1(S_1(t^*),\theta^*, t^*, -\delta)\text{ and }\\\frac{\sum_{i\in S_2(t^*)}\indc{\wh{\theta}_i> t}}{k} \geq \frac{4C}{k} R_2(S_2(t^*),\theta^*, t^*, -\delta)}\right)\label{eq:use-S-mle}\\
&\geq\frac{C}{2}\frac{R_1([k], \theta^*, t^*, -\delta)+R_2([n]\backslash[k], \theta^*, t^*, -\delta)}{k}\label{eq:final-lower-mle}.
\end{align}
Therefore, we obtain the desired conclusion. (\ref{eq:markov-mle}) is a consequence of Markov inequality; (\ref{eq:H-lower-R-equal}) comes from (\ref{eq:H-lower-reduce-mle}) and the choice of $t^*$; (\ref{eq:use-S-mle}) is due to (\ref{eq:S-k-same-order}); (\ref{eqn:H_lower_1_case_mle}) and (\ref{eqn:H_lower_2_case_mle}) lead to (\ref{eq:final-lower-mle}).

In the rest of the proof, we are going to establish (\ref{eqn:H_lower_1_case_mle}) and then (\ref{eqn:H_lower_2_case_mle}) can be proved similarly. Define 
\begin{align}
&S_1^\prime(\rho, t^*)= \left\{i\in S_1(t^*):  \text{$\rho\abs{S_1(t^*)}$ indices in $S_1(t^*)$  with the smallest }\frac{(\theta_i^*-t^*)_+^2}{V_i(\theta^*)}\right\}\label{eq:mle-def-S1-prime}
\end{align}
for some small enough constant $\rho>0$ to be specified later. That is, $S_1^\prime(\rho, t^*)$ is a subset of $S_1(t^*)$ of size $\rho\abs{S_1(t^*)}$ with the smallest $\frac{(\theta_i^*-t^*)_+^2}{V_i(\theta^*)}$ values.
We remark that condition (\ref{eq:mle-lower-condition-case}) and (\ref{eq:S-k-same-order}) necessarily imply $\abs{S_1^\prime(\rho, t^*)}\to\infty$ when $\rho$ is a constant. We shall also assume $\rho \abs{S_1^\prime(\rho, t^*)}$ is an integer. Furthermore, note that the definition of $S_1^\prime(\rho, t^*)$ implies:
\begin{equation}
R_1(S_1(t^*), \theta^*, t^*, -\delta)\geq R_1(S_1^\prime(\rho, t^*), \theta^*, t^*, -\delta)\geq\rho R_1(S_1(t^*), \theta^*, t^*, -\delta).\label{eq:lower-R-rho-mle}
\end{equation}
Therefore, to establish (\ref{eqn:H_lower_1_case_mle}), we only need to show
\begin{equation}
\mathbb{P}_{(\theta^*,r^*)}\left(\sum_{i\in S_1^\prime(\rho, t^*)}\indc{\wh{\theta}_i< t^*} \geq C^\prime R_1(S_1^\prime(\rho, t^*),\theta^*, t^*, -\delta) \right) \geq 3/4.\label{eq:H-lower-1-case-reduce-mle}
\end{equation}
for some constant $C^\prime>0$. The remaining proof is then devoted to proving (\ref{eq:H-lower-1-case-reduce-mle}).

Recall the definition of $\bar \theta$ in (\ref{eqn:bar_theta_def}). Define $\tilde \Delta_i = (\theta^*_i - t^*)_+ \vee \alpha \sqrt{\frac{1}{npL}}$ where $\alpha$  is some large enough constant to be determined later.
Define the event $\mathcal{F}_i$ as
\begin{align*}
\mathcal{F}_i =\cbr{  |\wh{\theta}_i-\bar{\theta}_i| \leq \frac{\delta_0}{3}\tilde{\Delta}_i,  \frac{|f^{(i)}(\theta_i^*|\wh{\theta}_{-i})-f^{(i)}(\theta_i^*|\theta^*_{-i})|}{g^{(i)}(\theta_i^*|\theta_{-i}^*)} \leq \frac{\delta_0}{3}\tilde{\Delta}_i,  \frac{\left|g^{(i)}(\theta_i^*|\wh{\theta}_{-i})-g^{(i)}(\theta_i^*|\theta_{-i}^*)\right|}{g^{(i)}(\theta_i^*|\theta_{-i}^*)} \leq \frac{\delta_0}{3}}.
\end{align*}
When $(\theta^*_i -t^*)^2_+ npL >\alpha$, using a similar argument that leads to (\ref{eq:MLE-local-approx-1-case})-(\ref{eq:left-case}), we can show that there exists some constant $\delta_0>0$, such that
\begin{equation}
\mathbb{P}_{(\theta^*,r^*)} (\mathcal{F}_i) \geq 1-\left(  O(n^{-7})+\exp\left(-\tilde \Delta_i^2npL\frac{np}{\log n}\right)+\exp\left(-\tilde \Delta_i^{3/2}npL\right)   \right). \label{eq:indi-F-i-mle}
\end{equation}
When $(\theta^*_i -t^*)^2_+ npL \leq  \alpha$, we can show
\begin{equation}
\mathbb{P}_{(\theta^*,r^*)} (\mathcal{F}_i) \geq 1-\left(O(n^{-7})+e^{- (npL)^{1/4}}+e^{-\sqrt{\log n}}\right). \label{eq:indi-F-i-2-mle}
\end{equation}
instead. To establish it, we can choose $x=(npL)^{1/4}$ in (\ref{eq:bound-f-0}) and $x=\sqrt{\log n}$ in (\ref{eq:bound-f-4}) and then follow the same proof of (\ref{eq:MLE-local-approx-1}), (\ref{eq:MLE-local-approx-2}), and (\ref{eq:MLE-local-approx-3}) as in the proof of Theorem \ref{thm:MLE-ranking}. In both cases, this $\delta_0$ can be made arbitrarily small by setting $\alpha$ large. 

Assuming $\mathcal{F}_i$ is true, we can use arguments similar to the establishment of  (\ref{eq:left}) to have
\begin{align*}
\indc{\wh \theta_i < t^*  } & \geq  \indc{\frac{\sum_{j\in[n]\backslash\{i\}}A_{ji}(\bar{y}_{ij}-\psi(\theta_i^*-\theta_j^*))}{\sum_{j\in[n]\backslash\{i\}}A_{ji}\psi^\prime(\theta_j^*-\theta_i^*)} \leq -(1+ \delta_0)(\theta_i^*-t^*)_+ }.\label{eqn:mle_lower_L} 
\end{align*}
Define the RHS of the above display as $L_i$.
Then we have shown that
\begin{eqnarray}
\sum_{i\in S_1^\prime(\rho, t^*)} \indc{\wh \theta_i < t^*  }\geq \sum_{i\in S_1^\prime(\rho, t^*)}L_i\mathbb{I}_{\mathcal{F}_i} \geq \sum_{i\in S_1^\prime(\rho, t^*)}L_i - \sum_{i\in S_1^\prime(\rho, t^*)}\mathbb{I}_{\mathcal{F}_i^c}.\label{eq:mle-lower-pi}
\end{eqnarray}
By (\ref{eq:indi-F-i-mle}) and (\ref{eq:indi-F-i-2-mle}), we have
\begin{align*}
&\mathbb{E}\left(\sum_{i\in S_1^\prime(\rho, t^*)}\mathbb{I}_{\mathcal{F}_i^c}\right)\\
&\leq O(n^{-6}) + \sum_{i:i\in S_1^\prime(\rho, t^*), (\theta_i^*-t^*)_+^2npL>\alpha} \br{\exp\left(-\tilde \Delta^2npL\frac{np}{\log n}\right)+\exp\left(-\tilde \Delta^{3/2}npL\right)}\\
&\quad + \sum_{i:i\in S_1^\prime(\rho, t^*), (\theta_i^*-t^*)_+^2npL\leq\alpha} \br{\exp\left(-(npL)^{1/4}\right)+\exp\left(-\sqrt{\log n}\right)}.
\end{align*}
Using $\theta_i^*-t^*\leq(\log n/np)^{1/4}$ for $i\in S_1(t^*)$ and $np/\log n\to\infty$, we see that the above bound is of smaller order than
$$n^{-5.9}+\sum_{i\in S_1^\prime(\rho, t^*)}\exp\left[-\frac{\tilde{\Delta}_i^2npL}{2\overline{V}_i(\theta^*)}\left(\left(\frac{np}{\log n}\right)^{1/9}\wedge(\log n)^{1/5}\right)\right],$$
and we can use Markov's inequality and obtain
\begin{equation}
\mathbb{P}_{(\theta^*,r^*)}\left(\sum_{i\in S_i^\prime(t^*)}\mathbb{I}_{\mathcal{F}_i^c} \leq n^{-5.9}+\sum_{i\in S_1^\prime(\rho, t^*)}\exp\left[-\frac{\tilde{\Delta}_i^2npL}{2V_i(\theta^*)}\left(\left(\frac{np}{\log n}\right)^{1/9}\wedge(\log n)^{1/5}\right)\right]\right) \geq 1-o(1). \label{eq:etos-mle}
\end{equation}

Now to lower bound $\sum_{i\in S_1^\prime(\rho, t^*)}L_i$, we define 
\begin{align}
\mathcal{A} = \Bigg\{A: \forall i\in S_1(t^*), &\left|\frac{\sum_{j\in[n]\backslash\{i\}}A_{ij}\psi'(\theta_i^*-\theta_j^*)}{p\sum_{j\in[n]\backslash\{i\}}\psi'(\theta_i^*-\theta_j^*)}-1\right|\leq \delta_0,\label{eqn:A_1_mle} \\
& \abs{\sum_{j\in S_1^\prime(\rho, t^*)}A_{ji}\psi'(\theta_i^*-\theta_j^*)}\leq 2\rho kp + 10\log n \Bigg\}.\label{eqn:A_2_mle}
\end{align}

By Bernstein's inequality and union bound, we have $\mathbb{P}(A\in \mathcal{A})\geq 1- O(n^{-10})$. From now on, we use the notation $\mathbb{P}_A$ for the conditional probability $\mathbb{P}_{(\theta^*,r^*)}(\cdot|A)$ given $A$.
For any $s>0$,
\begin{align}
\mathbb{P}_{(\theta^*,r^*)}&\left(\sum_{i\in S_1^\prime(\rho, t^*)}L_i \geq s \right) \geq  \mathbb{P}(A\in\mathcal{A}) \inf_{A\in \mathcal{A}}\mathbb{P}_{A}\left(\sum_{i\in S_1^\prime(\rho, t^*)}L_i \geq s \right).  \label{eqn:mle_final_1}
\end{align}
Now we study  $\mathbb{P}_{A}\left(\sum_{i\in S_1^\prime(\rho, t^*)}L_i \geq s \right)$. Define $S = [n]\backslash S_1^\prime(\rho, t^*)$. Note that for each $i\in S_1^\prime(\rho, t^*)$, we have $L_i \geq L_{i,1}- L_{i,2}-L_{i,3}$, where
\begin{align*}
L_{i,1} & =\indc{\frac{\sum_{j\in S}A_{ji}(\bar{y}_{ij}-\psi(\theta_i^*-\theta_j^*))}{\sum_{j\in[n]\backslash\{i\}}A_{ji}\psi^\prime(\theta_j^*-\theta_i^*)} \leq -(1+2\delta')(1+ \delta_0)\tilde{\Delta}_i},\\
L_{i,2} & = \indc{\frac{\sum_{j\in S_1^\prime(\rho, t^*):j <i}A_{ji}(\bar{y}_{ij}-\psi(\theta_i^*-\theta_j^*))}{\sum_{j\in[n]\backslash\{i\}}A_{ji}\psi^\prime(\theta_j^*-\theta_i^*)} \geq \delta'(1+\delta_0)\tilde{\Delta}_i},\\
L_{i,3} & = \indc{\frac{\sum_{j\in S_1^\prime(\rho, t^*):i <j}A_{ji}(\bar{y}_{ij}-\psi(\theta_i^*-\theta_j^*))}{\sum_{j\in[n]\backslash\{i\}}A_{ji}\psi^\prime(\theta_j^*-\theta_i^*)} \geq \delta'(1+\delta_0)\tilde{\Delta}_i}
\end{align*}
for some small constant $\delta'>0$ whose value will be determined later. We are going to control each term separately. 

\textbf{(1).}
Analysis of $L_{i,1}$. Note that conditional on $A$, $\{L_{i,1}\}_{i\in S_1^\prime(\rho, t^*)}$ are all independent Bernoulli random variables. We have $L_{i,1} \sim \text{Bernoulli}(p_i)$, where $p_i = \E_{(\theta^*,r^*)} (L_{i,1}| A)$. By Chebyshev's inequality, we have 
\begin{align*}
\mathbb{P}_{A}\left(\sum_{i\in S_1^\prime(\rho, t^*)}L_{i,1} \geq \frac{1}{2} \sum_{i\in S_1^\prime(\rho, t^*)}p_{i}\right) \geq 1-\frac{4}{\sum_{i\in S_1^\prime(\rho, t^*)}p_{i}}.
\end{align*}
By Lemma \ref{lem:mle_lower_tail_prop_case}, we can lower bound each $p_i$ by
\begin{align}
p_i &= \p_{A} \left(\frac{\sum_{j\in S}A_{ji}(\bar{y}_{ij}-\psi(\theta_i^*-\theta_j^*))(1+e^{\theta_j^*-\theta_i^*})}{\sum_{j\in[n]\backslash\{i\}}A_{ji}\psi(\theta_j^*-\theta_i^*)} \leq -(1+2\delta')(1+\delta_0)^2\tilde{\Delta}_i\right)   \nonumber\\
& \geq C_1\exp\left(-\frac{1+\delta_2}{2}\frac{\tilde{\Delta}_i^2npL}{V_i(\theta^*)}-C_1^\prime\sqrt{\frac{\tilde{\Delta}_i^2npL}{V_i(\theta^*)}}\right),\nonumber
\end{align}
for some constants $C_1,C_1'>0$ and some small constant $\delta_2>0$. Note that $\delta_2$ can be an arbitrarily small constant by making $\delta^\prime$ and $\rho$ small as well as making $\alpha$ large. Thus we can choose $\delta^\prime, \rho$ small  enough  and $\alpha$ large enough to let $\delta_2<\delta/2$. Then we have
\begin{align}
\nonumber&\sum_{i\in S_1^\prime(\rho, t^*)} p_i \geq C_1\sum_{i\in S_1^\prime(\rho, t^*)}\exp\left(-\frac{1+\delta_2}{2}\frac{\tilde{\Delta}_i^2npL}{V_i(\theta^*)}-C_1^\prime\sqrt{\frac{\tilde{\Delta}_i^2npL}{V_i(\theta^*)}}\right)\\ &\geq C_1R_1(S_1^\prime(\rho, t^*), \theta^*, t^*, -\delta)\label{eq:alpha-large-mle}\\
&\geq C_1\rho R_1(S_1(t^*), \theta^*, t^*, -\delta).\label{eqn:p_i_lower_bound_mle}
\end{align}
where (\ref{eq:alpha-large-mle}) can be achieved by setting $\alpha$ large and (\ref{eqn:p_i_lower_bound_mle}) comes from (\ref{eq:lower-R-rho-mle}). As a result, under the condition (\ref{eq:mle-lower-condition-case}), we have $\sum_{i\in S_1^\prime(\rho, t^*)}p_i\rightarrow\infty$.

Hence, we have proved
\begin{align*}
\inf_{A\in \mathcal{A}}\mathbb{P}_{A}\left(\sum_{i\in S_1^\prime(\rho, t^*)}L_{i,1} \geq  \frac{1}{2}C_1\sum_{i\in S_1^\prime(\rho, t^*)}\exp\left(-\frac{1+\delta_2}{2}\frac{\tilde{\Delta}_i^2npL}{V_i(\theta^*)}-C_1^\prime\sqrt{\frac{\tilde{\Delta}_i^2npL}{V_i(\theta^*)}}\right)\right) \geq  1- o(1).
\end{align*}

\textbf{(2).} Analysis of $L_{i,2}$. By (\ref{eqn:A_1_mle})-(\ref{eqn:A_2_mle}) and Bernstein's inequality, we can bound $\E (L_{i,2}|A)$ by
\begin{align*}
& \exp\left( - \frac{\left(\delta'(1+\delta_0)^2\tilde{\Delta}_iL \sum_{j\in[n]\backslash\{i\}}A_{ji}\psi^\prime(\theta_j^*-\theta_i^*)   \right)^2}{2 \left( L \sum_{j\in S_1^\prime(\rho, t^*):j<i}A_{ji}\psi'(\theta_i^*-\theta_j^*)  + \frac{1}{3}\delta'(1+\delta_0)^2\tilde{\Delta}_i L  \sum_{j\in[n]\backslash\{i\}}A_{ji}\psi^\prime(\theta_j^*-\theta_i^*)   \right)}\right)\\
&\leq \exp\left( - \frac{\left(\delta'(1+\delta_0)^2\tilde{\Delta}_iL \sum_{j\in[n]\backslash\{i\}}p\psi^\prime(\theta_j^*-\theta_i^*)   \right)^2}{4 \left( 2L \rho kp +10\log n  + \frac{1}{3}\delta'(1+\delta_0)^2\tilde{\Delta}_i L  \sum_{j\in[n]\backslash\{i\}}p\psi^\prime(\theta_j^*-\theta_i^*)   \right)}\right).
\end{align*}
Now we set $\delta' = \rho^{1/8}$, and make $\rho$ small enough to ensure (\ref{eqn:p_i_lower_bound_mle}). Then, there exists some constants $C_2,C_3>0$ such that
$$
\E (L_{i,2}|A) \leq \exp\left(-C_2\rho^{-\frac{1}{2}}npL \tilde{\Delta}_i^2\right) \leq \exp\left(-C_3\rho^{-1/2}\frac{\tilde{\Delta}_i^2npL}{2V_i(\theta^*)}\right).
$$
due to $\tilde{\Delta}_i=o(1)$ and $np/\log n\to\infty$. Then,
$$
\E \left(\sum_{i\in S_1^\prime(\rho, t^*)}L_{i,2}\Bigg|A\right) \leq \sum_{i\in S_1^\prime(\rho, t^*)}\exp\left(-C_3\rho^{-1/2}\frac{\tilde{\Delta}_i^2npL}{2V_i(\theta^*)}\right).
$$
By Markov inequality, we have
\begin{equation}
\inf_{A\in \mathcal{A}}\mathbb{P}_{A}\left(\sum_{i\in S_1^\prime(\rho, t^*)}L_{i,2} \geq \sum_{i\in S_1^\prime(\rho, t^*)}\exp\left(-\frac{1}{2}C_3\rho^{-1/2}\frac{\tilde{\Delta}_i^2npL}{2V_i(\theta^*)}\right) \right) \leq \frac{\sum_{i\in S_1^\prime(\rho, t^*)}\exp\left(-C_3\rho^{-1/2}\frac{\tilde{\Delta}_i^2npL}{2V_i(\theta^*)}\right)}{\sum_{i\in S_1^\prime(\rho, t^*)}\exp\left(-\frac{1}{2}C_3\rho^{-1/2}\frac{\tilde{\Delta}_i^2npL}{2V_i(\theta^*)}\right)}. \label{eqn:L_i2_upper_mle}
\end{equation}

\textbf{(3).} Analysis of $L_{i,3}$. By a similar argument, we also have
\begin{equation}
\inf_{A\in \mathcal{A}}\mathbb{P}_{A}\left(\sum_{i\in S_1^\prime(\rho, t^*)}L_{i,3} \geq \sum_{i\in S_1^\prime(\rho, t^*)}\exp\left(-\frac{1}{2}C_3\rho^{-1/2}\frac{\tilde{\Delta}_i^2npL}{2V_i(\theta^*)}\right) \right) \leq \frac{\sum_{i\in S_1^\prime(\rho, t^*)}\exp\left(-C_3\rho^{-1/2}\frac{\tilde{\Delta}_i^2npL}{2V_i(\theta^*)}\right)}{\sum_{i\in S_1^\prime(\rho, t^*)}\exp\left(-\frac{1}{2}C_3\rho^{-1/2}\frac{\tilde{\Delta}_i^2npL}{2V_i(\theta^*)}\right)}. \label{eqn:L_i3_upper_mle}
\end{equation}

~\\
\indent Now we can combine the above analyses of $L_{i,1}$, $L_{i,2}$ and $L_{i,3}$.
Since we are allowed to choose $\rho$ to be an arbitrarily small constant, we shall make 
$$\sum_{i\in S_1^\prime(\rho, t^*)}\exp\left(-\frac{1}{2}C_3\rho^{-1/2}\frac{\tilde{\Delta}_i^2npL}{2V_i(\theta^*)}\right)\leq\frac{1}{8}C_1\sum_{i\in S_1^\prime(\rho, t^*)}\exp\left(-\frac{1+\delta_2}{2}\frac{\tilde{\Delta}_i^2npL}{V_i(\theta^*)}-C_1^\prime\sqrt{\frac{\tilde{\Delta}_i^2npL}{V_i(\theta^*)}}\right)$$
and
$$\frac{\sum_{i\in S_1^\prime(\rho, t^*)}\exp\left(-C_3\rho^{-1/2}\frac{\tilde{\Delta}_i^2npL}{2V_i(\theta^*)}\right)}{\sum_{i\in S_1^\prime(\rho, t^*)}\exp\left(-\frac{1}{2}C_3\rho^{-1/2}\frac{\tilde{\Delta}_i^2npL}{2V_i(\theta^*)}\right)}\leq\frac{1}{16}.$$
Thus, we have
\begin{align}
\inf_{A\in \mathcal{A}}\mathbb{P}_{A}\left(\sum_{i\in S_1^\prime(\rho, t^*)}L_i \geq C_4\sum_{i\in S_1^\prime(\rho, t^*)}\exp\left(-\frac{1+\delta_2}{2}\frac{\tilde{\Delta}_i^2npL}{V_i(\theta^*)}-C_1^\prime\sqrt{\frac{\tilde{\Delta}_i^2npL}{V_i(\theta^*)}}\right) \right)\geq \frac{7}{8}-o(1),\label{eqn:k_1_mle}
\end{align}
for some constant $C_4 >0$. Then  (\ref{eq:mle-lower-pi}), (\ref{eq:etos-mle}), (\ref{eqn:mle_final_1}) together with (\ref{eq:mle-lower-condition-case}) lead to
\begin{align}
\mathbb{P}_{(\theta^*,r^*)}\left(\sum_{i\in S_1^\prime(\rho, t^*)}\indc{\wh{\theta}_i <t^*}\geq \frac{C_4}{2}\sum_{i\in S_1^\prime(\rho, t^*)}\exp\left(-\frac{1+\delta_2}{2}\frac{\tilde{\Delta}_i^2npL}{V_i(\theta^*)}-C_1^\prime\sqrt{\frac{\tilde{\Delta}_i^2npL}{V_i(\theta^*)}}\right)\right)\geq \frac{7}{8}-o(1).\label{eqn:k_1_1}
\end{align}
Finally, (\ref{eq:H-lower-1-case-reduce-mle}) follows from (\ref{eqn:p_i_lower_bound_mle}) which completes the proof.
\end{proof}

We state  Lemma \ref{lem:mle_lower_tail_prop_case} to close this section. Its proof is essentially the same as the proof of Lemma \ref{lem:spectral_lower_tail_prop} and hence is omitted here.
\begin{lemma}\label{lem:mle_lower_tail_prop_case}
Assume $\frac{np}{\log n}\rightarrow\infty$, $\kappa =O(1)$. Recall the definition of $S_1^\prime(\rho, t^*)$ in (\ref{eq:mle-def-S1-prime}), $S=[n]\backslash S_1^\prime(\rho,t^*)$ and $\tilde{\Delta}_i=(\theta_i^*-t^*)_+\vee\alpha\sqrt{\frac{1}{npL}}$. There exists some constants $C_1, C_2>0$ such that for any small constant $0.1>\tilde{\delta}>0$, there exists constant $\delta_1>0$ such that for any constant $\alpha>0$, $i\in S_1^\prime(\rho,t^*)$, any $A\in\mathcal{A}$ where $\mathcal{A}$ is defined in (\ref{eqn:A_1_mle})-(\ref{eqn:A_2_mle}), any $\theta^*\in\Theta(k,0,\kappa)$ and any $r^*\in\S_n$, we have
\begin{align}
 &\p_{(\theta^*, r^*)}\left(\frac{\sum_{j \in S}A_{ji}(\bar{y}_{ij}-\psi(\theta_{r_i^*}^*-\theta_{r_j^*}^*))}{\sum_{j\in[n]\backslash\{i\}}A_{ji}\psi^\prime(\theta_{r_j^*}^*-\theta_{r_i^*}^*)} \leq -(1+\tilde{\delta}) \tilde{\Delta}_i\Bigg| A\right)   \nonumber\\
& \geq C_1\exp\left(-\frac{1+\delta_1}{2}\frac{\tilde{\Delta}_i^2npL}{V_{r_i^*}(\theta^*)}-C_2\sqrt{\frac{\tilde{\Delta}_i^2npL}{V_{r_i^*}(\theta^*)}}\right).\label{eqn:mle_lower_tail_prop_case}
\end{align}
Moreover, $\delta_1$ is able to be arbitrarily small if $\tilde{\delta}$ and $\rho$ are small enough. 
\end{lemma}

\subsection{Proof of Theorem \ref{thm:spectral-ranking-theta}}
We first give Lemma \ref{lem:spec-tail-case} to characterize entrywise tail behaviors of the spectral method (\ref{eq:spec-P}) which is crucial to the upper bound in Theorem \ref{thm:spectral-ranking-theta}.
\begin{lemma}\label{lem:spec-tail-case}
Assume $\frac{np}{\log n}\rightarrow\infty$ and $\kappa=O(1)$. Then, for the rank vector $\wh{r}$ that is induced by the stationary distribution of the Markov chain (\ref{eq:spec-P}), for any small constant $0.1>\delta>0$, there exists some constant $C>0$, such that for any $t\in\mathbb{R}$, any $\theta^*\in\Theta(k,0,\kappa)$, $r^*\in\S_n$, we have
\begin{equation}
\p_{(\theta^*, r^*)}\left(\wh{\pi}_i\leq \frac{e^t}{\sum_{j\in[n]}e^{\theta_j^*}}\right)\leq C\exp\left(-\frac{(1-\delta)(\theta_{r_i^*}^*-t)_{+}^2npL}{2\overline{V}_{r_i^*}(\theta^*)}\right)+Cn^{-4}, r_i^*\leq k;\label{eq:spec-tail-case-1}
\end{equation}
\begin{equation}
\p_{(\theta^*, r^*)}\left(\wh{\pi}_i\geq \frac{e^t}{\sum_{j\in[n]}e^{\theta_j^*}}\right)\leq C\exp\left(-\frac{(1-\delta)(t-\theta_{r_i^*}^*)_{+}^2npL}{2\overline{V}_{r_i^*}(\theta^*)}\right)+Cn^{-4}, r_i^*\geq k+1\label{eq:spec-tail-case-2}
\end{equation}
\end{lemma}
\begin{proof}
The proof follows the proof of Theorem \ref{thm:spectral-ranking} with slight modifications. Without loss of generality, we can assume $r_i^*=i$ for all $\in[n]$. Define $\bar \Delta_i$ as in (\ref{eq:delta-bar-i-case}). We only need to prove (\ref{eq:spec-tail-case-1}) since (\ref{eq:spec-tail-case-2}) can be proved similarly.

Consider any $m\in[k]$. When $(\theta^*_m-t)_{+}^2npL\leq c^\prime$ for some large enough constant to be specified later, we can directly bound the probability using the trivial bound $1$. Thus, we only need to  consider the regime when $(\theta^*_m-t)_{+}^2npL>c^\prime$. 

Following the proof of Theorem \ref{thm:spectral-ranking}, we have (\ref{eqn:pi_bar_def})-(\ref{eqn:spectral_equationtwo}) and (\ref{eq:easy-de-pi}) hold. Note that we now have $\bar{\Delta}_m^2Lnp>c'$ instead of $\bar{\Delta}_m^2Lnp \rightarrow\infty$ which is needed  in the proof of Theorem  \ref{thm:spectral-ranking}. As a consequence, we now have (\ref{eq:pi-hat-bar-approx}) hold with $\delta =4C_4e^\kappa/\sqrt{c'}$ instead of some $o(1)$ as in the proof of Theorem \ref{thm:spectral-ranking}. To sum up, with this $\delta$, we have
\begin{align}
\frac{|\wh{\pi}_m-\bar{\pi}_m|}{\pi_m^*} \leq \delta(1-e^{-\bar{\Delta}_m}), \label{eq:pi-hat-bar-approx-case} \\
\left|\frac{\sum_{j\in[n]\backslash\{m\}}A_{jm}\bar{y}_{jm}}{\sum_{j\in[n]\backslash\{m\}}A_{jm}\psi(\theta_j^*-\theta_m^*)}-1\right| \leq \delta, \label{eq:easy-de-pi-case}
\end{align}
hold with probability at least $1-O(n^{-4})-\exp\left(-\bar{\Delta}_m^2npL\frac{np}{\log n}\right)-\exp\left(-\bar{\Delta}_m^2npL\sqrt{\frac{npL}{\log n}}\right)$. We can make $\delta$ to be an arbitrarily small constant by setting $c'$ large as $\kappa=O(1)$. 

Then for any $i\leq k$, by the same argument as in the proof of Theorem \ref{thm:spectral-ranking}, we have
\begin{eqnarray*}
&& \mathbb{P}\left(\wh{\pi}_i \leq \frac{e^{t}}{\sum_{j=1}^ne^{\theta_j^*}}\right) \\
&=& \mathbb{P}\left(\frac{\wh{\pi}_i-\pi_i^*}{\pi_i^*} \leq e^{-(\theta_i^*-t)}-1\right) \\
&\leq& \mathbb{P}\left(\frac{\wh{\pi}_i-\pi_i^*}{\pi_i^*} \leq e^{-\bar{\Delta}_i}-1\right) \\
&\leq& \mathbb{P}\left(\frac{\sum_{j\in[n]\backslash\{i\}}A_{ji}(\bar{y}_{ij}-\psi(\theta_i^*-\theta_j^*))(1+e^{\theta_j^*-\theta_i^*})}{\sum_{j\in[n]\backslash\{i\}}A_{ji}\psi(\theta_j^*-\theta_i^*)} \leq -(1-\delta)^2(1-e^{-\bar{\Delta}_i})\right) \\
&& + O(n^{-4})+\exp\left(-\bar{\Delta}_i^2npL\frac{np}{\log n}\right)+\exp\left(-\bar{\Delta}_i^2npL\sqrt{\frac{npL}{\log n}}\right),
\end{eqnarray*}
which has the same upper bound as in (\ref{eqn:spectral_equationtwo}). We then have the same (\ref{eqn:spectral_equationthree}) as in the proof of Theorem \ref{thm:spectral-ranking} which leads to
\begin{eqnarray*}
&& \mathbb{P}\left(\frac{\sum_{j\in[n]\backslash\{i\}}A_{ji}(\bar{y}_{ij}-\psi(\theta_i^*-\theta_j^*))(1+e^{\theta_j^*-\theta_i^*})}{\sum_{j\in[n]\backslash\{i\}}A_{ji}\psi(\theta_j^*-\theta_i^*)} \leq -(1-\delta)^2(1-e^{-\bar{\Delta}_i})\right) \\
&\leq& \exp\left(-\frac{(1-o(1))Lp\bar{\Delta}_i^2\left(\sum_{j\in[n]\backslash\{i\}}\psi(\theta_j^*-\theta_i^*)\right)^2}{2\sum_{j\in[n]\backslash\{i\}}\psi'(\theta_i^*-\theta_j^*)\left(1+e^{\theta_j^*-\theta_i^*}\right)^2}\right) + O(n^{-4}) \\
&=&\exp\left(-\frac{(1-\delta_2)npL\bar{\Delta}_i^2}{2\overline{V}_i(\theta^*)}\right)+O(n^{-4})\\
&\leq&\exp\left(-\frac{(1-\delta_2)npL(\theta_i^*-t)^2_+}{2\overline{V}_i(\theta^*)}\right)+O(n^{-4})
\end{eqnarray*}
with $\delta_1, \delta_2>0$ being some constant that can be arbitrarily small. The last inequality holds because when $\min\left((\theta_i^*-t)_+^2,\sqrt{\frac{\log n}{np}}\right)=\sqrt{\frac{\log n}{np}}$, the first term becomes $\exp\left(-\frac{(1-\delta_2)L\sqrt{np\log n}}{2\overline{V}_i(\theta^*)}\right)$, which can be absorbed by $O(n^{-4})$. Since $\exp\left(-\bar{\Delta}_i^2npL\frac{np}{\log n}\right)+\exp\left(-\bar{\Delta}_i^2npL\sqrt{\frac{npL}{\log n}}\right)\leq \exp\left(-\frac{(1-\delta_2)(\theta_i^*-t)_+^2npL}{2\overline{V}_i(\theta^*)}\right) + O(n^{-4})$, we have
\begin{equation}
\mathbb{P}\left(\wh{\pi}_i \leq \frac{e^{t}}{\sum_{j=1}^ne^{\theta_j^*}}\right)\leq 2\exp\left(-\frac{(1-\delta_2)(\theta_i^*-t)_+^2npL}{2\overline{V}_i(\theta^*)}\right) + O(n^{-4}), \label{eq:prob-bound-1-spec}
\end{equation}
for all $i\leq k$. The proof is complete.
\end{proof}
%

\begin{proof}[Proof of  (\ref{eq:spectral-ranking-theta-u})  of  Theorem \ref{thm:spectral-ranking-theta}]
The upper bound (\ref{eq:spectral-ranking-theta-u}) is a straightforward consequence of Lemma \ref{lem:spec-tail-case} in the same way as the proof of (\ref{eq:MLE-ranking-theta-u}) of Theorem \ref{thm:MLE-ranking-theta}, and hence is omitted here.
\end{proof}

The rest of the section focuses on the lower bound (\ref{eq:MLE-ranking-theta-l}). The proof follows the proof of Theorem \ref{thm:exact-lower} with some modification and is also very similar to the proof of (\ref{eq:MLE-ranking-theta-l}) of  Theorem \ref{thm:MLE-ranking-theta}. We include it below for completeness.
\begin{proof}[Proof of  (\ref{eq:spectral-ranking-theta-l}) of  Theorem \ref{thm:spectral-ranking-theta}]
To prove the lower bound (\ref{eq:spectral-ranking-theta-l}), we are going to show
\begin{equation}
\E_{(\theta^*, r^*)}\h_k(\wh{r},r^*)\gtrsim\frac{\overline{R}_1([k],\theta^*, t^*, -\delta)+\overline{R}_2([n]\backslash[k],\theta^*, t^*, -\delta)}{k}\label{eq:lower-bound-spec-case}
\end{equation}
where $t^*$ is the unique solution such that $\overline{R}_1([k],\theta^*, t^*, -\delta)=\overline{R}_2([n]\backslash[k],\theta^*, t^*, -\delta)$. The existence and uniqueness of $t^*$ follow the same argument as in the proof of (\ref{eq:MLE-ranking-theta-l}) of  Theorem \ref{thm:MLE-ranking-theta}. Recall the definition of $S_1(t)$ in (\ref{eqn:S_1_t}). Since we assume $\inf_t(\overline{R}_1([k],\theta^*, t, -\delta) + \overline{R}_2([n]\backslash[k],\theta^*, t, -\delta))\rightarrow\infty$, we have
\begin{align}\label{eq:spec-lower-condition-case}
\overline{R}_1([k],\theta^*, t^*, -\delta) \rightarrow\infty. 
\end{align}

The proof of (\ref{eq:lower-bound-spec-case}) follows the proof of Theorem \ref{thm:spectral_lower}. We will omit repeated details and only present the differences. Define 
\begin{equation}
\overline{S}_1^\prime(\rho, t^*)=\left\{i\in S_1(t^*):  \text{$\rho\abs{S_1(t^*)}$ indices in $S_1(t^*)$ with the smallest }\frac{(\theta_i^*-t^*)_+^2}{\overline{V}_i(\theta^*)}\right\}\label{eq:spec-def-S1-prime}
\end{equation}
for some small enough constant $\rho>0$ to be specified later. Following the same argument as in the proof of (\ref{eq:MLE-ranking-theta-l}) of  Theorem \ref{thm:MLE-ranking-theta}, we only need to show
\begin{equation}
\mathbb{P}_{(\theta^*,r^*)}\left(\sum_{i\in \overline{S}_1^\prime(\rho, t^*)}\indc{\wh{\pi}_i< t} \geq C^\prime\overline{R}_1(\overline{S}_1^\prime(\rho, t^*),\theta^*, t^*, -\delta) \right) \geq 3/4.\label{eq:H-lower-1-case-reduce}
\end{equation}
for some constant $C^\prime>0$. The remaining   proof is then devoted to proving (\ref{eq:H-lower-1-case-reduce}).

Recall the definition of $\bar \pi$  in (\ref{eqn:pi_bar_def}). Define $\tilde \Delta_i = (\theta^*_i - t^*)_+ \vee \alpha \sqrt{\frac{1}{npL}}$ where $\alpha$  is some large enough constant to be determined later.
Define the event $\overline{\mathcal{F}}_i$ as
\begin{align*}
\overline{\mathcal{F}}_i = \left\{\frac{|\wh{\pi}_i-\bar{\pi}_i|}{ \pi^*_i} \leq  \delta_0(1-e^{-\tilde \Delta_i}) \text{ and } \left|\frac{\sum_{j\in[n]\backslash\{i\}}A_{ji}\bar{y}_{ji}}{\sum_{j\in[n]\backslash\{i\}}A_{ji}\psi(\theta_j^*-\theta_i^*)}-1\right| \leq  \delta_0\right\}.
\end{align*}
When $(\theta^*_i -t^*)^2_+ npL >\alpha$, using a similar argument that leads to (\ref{eq:pi-hat-bar-approx-case})-(\ref{eq:easy-de-pi-case}), we can show that there exists some constant $\delta_0>0$, such that
\begin{equation}
\mathbb{P}_{(\theta^*,r^*)} (\overline{\mathcal{F}}_i) \geq 1-\left(  O(n^{-4})+\exp\left(-\tilde \Delta_i^2npL\frac{np}{\log n}\right)+\exp\left(-\tilde \Delta_i^{2}npL\sqrt{\frac{npL}{\log n}}\right)   \right). \label{eq:indi-F-i-spec}
\end{equation}
When $(\theta^*_i -t^*)^2_+ npL \leq  \alpha$, we can show
\begin{equation}
\mathbb{P}_{(\theta^*,r^*)} (\overline{\mathcal{F}}_i) \geq 1-\left(O(n^{-4})+e^{- (np/\log n)^{1/2}}+e^{-\sqrt{\log n}}\right). \label{eq:indi-F-i-2-spec}
\end{equation}
instead. To establish it, we can choose $x=(np/\log n)^{1/2}$ in (\ref{eq:spec-add-x-1}) and $x=\sqrt{\log n}$ in (\ref{eq:spec-add-x-2}) and then follow the same proof of (\ref{eq:pi-hat-bar-approx}) and (\ref{eq:easy-de-pi}) as in the proof of Theorem \ref{thm:MLE-ranking}. In both cases, this $\delta_0$ can be made arbitrarily small by setting $\alpha$ large. 

Assuming $\overline{\mathcal{F}}_i$ is true, we can use arguments similar to the establishment of (\ref{eqn:spectral_lower_L}) to have
\begin{align}
&\indc{\wh \pi_i < t  } \geq\indc{\frac{\sum_{j\in[n]\backslash\{i\}}A_{ji}(\bar{y}_{ij}-\psi(\theta_i^*-\theta_j^*))(1+e^{\theta_j^*-\theta_i^*})}{\sum_{j\in[n]\backslash\{i\}}A_{ji}\psi(\theta_j^*-\theta_i^*)} \leq -(1+ \delta_0)^2\alpha\sqrt{\frac{1}{npL}} }.\label{eqn:spectral_lower_L_2}
\end{align}
Define the RHS of the above display as $\overline{L}_i$. 
\begin{eqnarray}
\sum_{i\in \overline{S}_1^\prime(\rho, t^*)} \indc{\wh \pi_i < t  }&\geq& \sum_{i\in \overline{S}_1^\prime(\rho, t^*)}\overline{L}_i\mathbb{I}_{\overline{\mathcal{F}}_i}\geq \sum_{i\in \overline{S}_1^\prime(\rho, t^*)}\overline{L}_i - \sum_{i\in \overline{S}_1^\prime(\rho, t^*)}\mathbb{I}_{\overline{\mathcal{F}}_i^c}.\label{eq:spec-lower-pi}
\end{eqnarray}
By (\ref{eq:indi-F-i-spec}) and (\ref{eq:indi-F-i-2-spec}), we have
\begin{align*}
&\mathbb{E}\left(\sum_{i\in \overline{S}_1^\prime(\rho, t^*)}\mathbb{I}_{\overline{\mathcal{F}}_i^c}\right)\\
&\leq O(n^{-3})+ \sum_{i:i\in \overline{S}_1^\prime(\rho, t^*), (\theta_i^*-t^*)_+^2npL>\alpha}\exp\left(-\tilde \Delta_i^2npL\frac{np}{\log n}\right)+\exp\left(-\tilde \Delta_i^2npL\sqrt{\frac{npL}{\log n}}\right)\\
&\quad+\sum_{i:i\in \overline{S}_1^\prime(\rho, t^*), (\theta_i^*-t^*)_+^2npL\leq\alpha}\exp\left(-(np/\log n)^{1/2}\right)+\exp\left(-\sqrt{\log n}\right).\end{align*}
Since the above bound is of smaller order than 
$$n^{-2.9}+\sum_{i\in \overline{S}_1^\prime(\rho, t^*)}\exp\left[-\frac{\tilde{\Delta}_i^2npL}{2\overline{V}_i(\theta^*)}\left(\left(\frac{np}{\log n}\right)^{1/4}\wedge(\log n)^{1/4}\right)\right],$$
we can use Markov's inequality and obtain
\begin{equation}
\mathbb{P}_{(\theta^*,r^*)}\left(\sum_{i\in S_i^\prime(t^*)}\mathbb{I}_{\overline{\mathcal{F}}_i^c} \leq n^{-2.9}+\sum_{i\in \overline{S}_1^\prime(\rho, t^*)}\exp\left[-\frac{\tilde{\Delta}_i^2npL}{2\overline{V}_i(\theta^*)}\left(\left(\frac{np}{\log n}\right)^{1/4}\wedge(\log n)^{1/4}\right)\right]\right) \geq 1-o(1). \label{eq:etos}
\end{equation}

Now to lower bound $\sum_{i\in \overline{S}_1^\prime(\rho, t^*)}\overline{L}_i$, we define 
\begin{align}
\overline{\mathcal{A}} = \Bigg\{A: \forall i\in S_1(t^*), &\left|\frac{\sum_{j\in[n]\backslash\{i\}}A_{ij}\psi'(\theta_i^*-\theta_j^*)\left(1+e^{\theta_j^*-\theta_i^*}\right)^2}{p\sum_{j\in[n]\backslash\{i\}}\psi'(\theta_i^*-\theta_j^*)\left(1+e^{\theta_j^*-\theta_i^*}\right)^2}-1\right|\leq \delta_0,\label{eqn:A_1} \\
& \left|\frac{\sum_{j\in[n]\backslash\{i\}}A_{ji}\psi(\theta_j^*-\theta_i^*)}{p\sum_{j\in[n]\backslash\{i\}}\psi(\theta_j^*-\theta_i^*)}-1\right|\leq  \delta_0 ,\label{eqn:A_2}\\
& \abs{\sum_{j\in \overline{S}_1^\prime(\rho, t^*)}A_{ji}\psi'(\theta_i^*-\theta_j^*)(1+e^{\theta_j^*-\theta_i^*})^2 }\leq 2\rho kp + 10\log n \Bigg\}.\label{eqn:A_3}
\end{align}

By Bernstein's inequality and union bound, we have $\mathbb{P}(A\in \overline{\mathcal{A}})\geq 1- O(n^{-3})$. From now on, we use the notation $\mathbb{P}_A$ for the conditional probability $\mathbb{P}_{(\theta^*,r^*)}(\cdot|A)$ given $A$.
For any $s>0$,
\begin{align}
\mathbb{P}_{(\theta^*,r^*)}&\left(\sum_{i\in \overline{S}_1^\prime(\rho, t^*)}\overline{L}_i \geq s \right) \geq  \mathbb{P}(A\in\overline{\mathcal{A}}) \inf_{A\in \overline{\mathcal{A}}}\mathbb{P}_{A}\left(\sum_{i\in \overline{S}_1^\prime(\rho, t^*)}\overline{L}_i \geq s \right).  \label{eqn:spectral_final_1}
\end{align}
Now we study  $\mathbb{P}_{A}\left(\sum_{i\in \overline{S}_1^\prime(\rho, t^*)}L_i \geq s \right)$. Define $S = [n]\backslash \overline{S}_1^\prime(\rho, t^*)$. Note that for each $i\in \overline{S}_1^\prime(\rho, t^*)$, we have $L_i \geq L_{i,1}- L_{i,2}-L_{i,3}$, where
\begin{align*}
\overline{L}_{i,1} & =\indc{\frac{\sum_{j\in S}A_{ji}(\bar{y}_{ij}-\psi(\theta_i^*-\theta_j^*))(1+e^{\theta_j^*-\theta_i^*})}{\sum_{j\in[n]\backslash\{i\}}A_{ji}\psi(\theta_j^*-\theta_i^*)} \leq -(1+2\delta')(1+ \delta_0)^2\tilde{\Delta}_i},\\
\overline{L}_{i,2} & = \indc{\frac{\sum_{j\in \overline{S}_1^\prime(\rho, t^*):j <i}A_{ji}(\bar{y}_{ij}-\psi(\theta_i^*-\theta_j^*))(1+e^{\theta_j^*-\theta_i^*})}{\sum_{j\in[n]\backslash\{i\}}A_{ji}\psi(\theta_j^*-\theta_i^*)} \geq \delta'(1+\delta_0)^2\tilde{\Delta}_i},\\
\overline{L}_{i,3} & = \indc{\frac{\sum_{j\in \overline{S}_1^\prime(\rho, t^*):i <j}A_{ji}(\bar{y}_{ij}-\psi(\theta_i^*-\theta_j^*))(1+e^{\theta_j^*-\theta_i^*})}{\sum_{j\in[n]\backslash\{i\}}A_{ji}\psi(\theta_j^*-\theta_i^*)} \geq \delta'(1+\delta_0)^2\tilde{\Delta}_i}
\end{align*}
for some small constant $\delta'>0$ whose value will be determined later. We are going to control each term separately. 

\textbf{(1).}
Analysis of $\overline{L}_{i,1}$. Note that conditional on $A$, $\{\overline{L}_{i,1}\}_{i\in \overline{S}_1^\prime(\rho, t^*)}$ are all independent Bernoulli random variables. We have $\overline{L}_{i,1} \sim \text{Bernoulli}(p_i)$, where $p_i = \E_{(\theta^*,r^*)} (\overline{L}_{i,1}| A)$. By Chebyshev's inequality, we have 
\begin{align*}
\mathbb{P}_{A}\left(\sum_{i\in \overline{S}_1^\prime(\rho, t^*)}\overline{L}_{i,1} \geq \frac{1}{2} \sum_{i\in \overline{S}_1^\prime(\rho, t^*)}p_{i}\right) \geq 1-\frac{4}{\sum_{i\in \overline{S}_1^\prime(\rho, t^*)}p_{i}}.
\end{align*}
By Lemma \ref{lem:spectral_lower_tail_prop_case}, we can lower bound each $p_i$ by
\begin{align}
p_i &= \p_{A} \left(\frac{\sum_{j\in S}A_{ji}(\bar{y}_{ij}-\psi(\theta_i^*-\theta_j^*))(1+e^{\theta_j^*-\theta_i^*})}{\sum_{j\in[n]\backslash\{i\}}A_{ji}\psi(\theta_j^*-\theta_i^*)} \leq -(1+2\delta')(1+\delta_0)^2\tilde{\Delta}_i\right)   \nonumber\\
& \geq C_1\exp\left(-\frac{1+\delta_2}{2}\frac{\tilde{\Delta}_i^2npL}{\overline{V}_i(\theta^*)}-C_1^\prime\sqrt{\frac{\tilde{\Delta}_i^2npL}{\overline{V}_i(\theta^*)}}\right),\nonumber
\end{align}
for some constants $C_1,C_1'>0$ and some small constant $\delta_2>0$. Note that $\delta_2$ can be an arbitrarily small constant by making $\delta^\prime$ and $\rho$ small as well as making $\alpha$ large. Thus we can choose $\delta^\prime, \rho$ small  enough  and $\alpha$ large enough to let $\delta_2<\delta/2$. Then we have
\begin{align}
\nonumber&\sum_{i\in \overline{S}_1^\prime(\rho, t^*)} p_i \geq C_1\sum_{i\in \overline{S}_1^\prime(\rho, t^*)}\exp\left(-\frac{1+\delta_2}{2}\frac{\tilde{\Delta}_i^2npL}{\overline{V}_i(\theta^*)}-C_1^\prime\sqrt{\frac{\tilde{\Delta}_i^2npL}{\overline{V}_i(\theta^*)}}\right)\\ &\geq C_1\overline{R}_1(\overline{S}_1^\prime(\rho, t^*), \theta^*, t^*, -\delta)\label{eq:alpha-large}\\
&\geq C_1\rho\overline{R}_1(S_1(t^*), \theta^*, t^*, -\delta).\label{eqn:p_i_lower_bound}
\end{align}
by the same argument as in the proof of  (\ref{eq:MLE-ranking-theta-l}) of Theorem \ref{thm:MLE-ranking-theta}. As a result, under the condition (\ref{eq:spec-lower-condition-case}), we have $\sum_{i\in \overline{S}_1^\prime(\rho, t^*)}p_i\rightarrow\infty$.

Hence, we have proved
\begin{align*}
\inf_{A\in \overline{\mathcal{A}}}\mathbb{P}_{A}\left(\sum_{i\in \overline{S}_1^\prime(\rho, t^*)}\overline{L}_{i,1} \geq  \frac{1}{2}C_1\sum_{i\in \overline{S}_1^\prime(\rho, t^*)}\exp\left(-\frac{1+\delta_2}{2}\frac{\tilde{\Delta}_i^2npL}{\overline{V}_i(\theta^*)}-C_1^\prime\sqrt{\frac{\tilde{\Delta}_i^2npL}{\overline{V}_i(\theta^*)}}\right)\right) \geq  1- o(1).
\end{align*}

\textbf{(2).} Analysis of $\overline{L}_{i,2}$. By (\ref{eqn:A_1})-(\ref{eqn:A_3}) and Bernstein's inequality, we can bound $\E (\overline{L}_{i,2}|A)$ by
\begin{align*}
& \exp\left( - \frac{\left(\delta'(1+\delta_0)^2\tilde{\Delta}_iL \sum_{j\in[n]\backslash\{i\}}A_{ji}\psi(\theta_j^*-\theta_i^*)   \right)^2}{2 \left( L \sum_{j\in \overline{S}_1^\prime(\rho, t^*):j<i}A_{ji}\psi'(\theta_i^*-\theta_j^*)(1+e^{\theta_j^*-\theta_i^*})^2  + \frac{1}{3}\delta'(1+\delta_0)^2\tilde{\Delta}_i L  \sum_{j\in[n]\backslash\{i\}}A_{ji}\psi(\theta_j^*-\theta_i^*)   \right)}\right)\\
&\leq \exp\left( - \frac{\left(\delta'(1+\delta_0)^2\tilde{\Delta}_iL \sum_{j\in[n]\backslash\{i\}}p\psi(\theta_j^*-\theta_i^*)   \right)^2}{4 \left( 2L \rho kp +10\log n  + \frac{1}{3}\delta'(1+\delta_0)^2\tilde{\Delta}_i L  \sum_{j\in[n]\backslash\{i\}}p\psi(\theta_j^*-\theta_i^*)   \right)}\right).
\end{align*}
Now we set $\delta' = \rho^{1/8}$, and make $\rho$ small enough to ensure (\ref{eqn:p_i_lower_bound}). Then, there exists some constants $C_2,C_3>0$ such that
$$
\E (\overline{L}_{i,2}|A) \leq \exp\left(-C_2\rho^{-\frac{1}{2}}npL \tilde{\Delta}_i^2\right) \leq \exp\left(-C_3\rho^{-1/2}\frac{\tilde{\Delta}_i^2npL}{2\overline{V}_i(\theta^*)}\right).
$$
Then,
$$
\E \left(\sum_{i\in \overline{S}_1^\prime(\rho, t^*)}\overline{L}_{i,2}\Bigg|A\right) \leq \sum_{i\in \overline{S}_1^\prime(\rho, t^*)}\exp\left(-C_3\rho^{-1/2}\frac{\tilde{\Delta}_i^2npL}{2\overline{V}_i(\theta^*)}\right).
$$
By Markov inequality, we have
\begin{equation}
\inf_{A\in \overline{\mathcal{A}}}\mathbb{P}_{A}\left(\sum_{i\in \overline{S}_1^\prime(\rho, t^*)}\overline{L}_{i,2} \geq \sum_{i\in \overline{S}_1^\prime(\rho, t^*)}\exp\left(-\frac{1}{2}C_3\rho^{-1/2}\frac{\tilde{\Delta}_i^2npL}{2\overline{V}_i(\theta^*)}\right) \right) \leq \frac{\sum_{i\in \overline{S}_1^\prime(\rho,t^*)}\exp\left(-C_3\rho^{-1/2}\frac{\tilde{\Delta}_i^2npL}{2\overline{V}_i(\theta^*)}\right)}{\sum_{i\in \overline{S}_1^\prime(\rho, t^*)}\exp\left(-\frac{1}{2}C_3\rho^{-1/2}\frac{\tilde{\Delta}_i^2npL}{2\overline{V}_i(\theta^*)}\right)}. \label{eqn:L_i2_upper}
\end{equation}

\textbf{(3).} Analysis of $\overline{L}_{i,3}$. By a similar argument, we also have
\begin{equation}
\inf_{A\in \overline{\mathcal{A}}}\mathbb{P}_{A}\left(\sum_{i\in \overline{S}_1^\prime(\rho, t^*)}\overline{L}_{i,3} \geq \sum_{i\in \overline{S}_1^\prime(\rho, t^*)}\exp\left(-\frac{1}{2}C_3\rho^{-1/2}\frac{\tilde{\Delta}_i^2npL}{2\overline{V}_i(\theta^*)}\right) \right) \leq \frac{\sum_{i\in \overline{S}_1^\prime(\rho, t^*)}\exp\left(-C_3\rho^{-1/2}\frac{\tilde{\Delta}_i^2npL}{2\overline{V}_i(\theta^*)}\right)}{\sum_{i\in \overline{S}_1^\prime(\rho, t^*)}\exp\left(-\frac{1}{2}C_3\rho^{-1/2}\frac{\tilde{\Delta}_i^2npL}{2\overline{V}_i(\theta^*)}\right)}. \label{eqn:L_i3_upper}
\end{equation}

~\\
\indent Now we can combine the above analyses of $\overline{L}_{i,1}$, $\overline{L}_{i,2}$ and $\overline{L}_{i,3}$.
Since we are allowed to choose $\rho$ to be an arbitrarily small constant, we shall make 
$$\sum_{i\in \overline{S}_1^\prime(\rho, t^*)}\exp\left(-\frac{1}{2}C_3\rho^{-1/2}\frac{\tilde{\Delta}_i^2npL}{2\overline{V}_i(\theta^*)}\right)\leq\frac{1}{8}C_1\sum_{i\in \overline{S}_1^\prime(\rho, t^*)}\exp\left(-\frac{1+\delta_2}{2}\frac{\tilde{\Delta}_i^2npL}{\overline{V}_i(\theta^*)}-C_1^\prime\sqrt{\frac{\tilde{\Delta}_i^2npL}{\overline{V}_i(\theta^*)}}\right)$$
and
$$\frac{\sum_{i\in \overline{S}_1^\prime(\rho, t^*)}\exp\left(-C_3\rho^{-1/2}\frac{\tilde{\Delta}_i^2npL}{2\overline{V}_i(\theta^*)}\right)}{\sum_{i\in \overline{S}_1^\prime(\rho, t^*)}\exp\left(-\frac{1}{2}C_3\rho^{-1/2}\frac{\tilde{\Delta}_i^2npL}{2\overline{V}_i(\theta^*)}\right)}\leq\frac{1}{16}.$$
Thus, we have
\begin{align}
\inf_{A\in \overline{\mathcal{A}}}\mathbb{P}_{A}\left(\sum_{i\in \overline{S}_1^\prime(\rho, t^*)}\overline{L}_i \geq C_4\sum_{i\in \overline{S}_1^\prime(\rho, t^*)}\exp\left(-\frac{1+\delta_2}{2}\frac{\tilde{\Delta}_i^2npL}{\overline{V}_i(\theta^*)}-C_1^\prime\sqrt{\frac{\tilde{\Delta}_i^2npL}{\overline{V}_i(\theta^*)}}\right) \right)\geq \frac{7}{8}-o(1),\label{eqn:k_1}
\end{align}
for some constant $C_4 >0$. Then  (\ref{eq:spec-lower-pi}), (\ref{eq:etos}), (\ref{eqn:spectral_final_1}) together with (\ref{eq:spec-lower-condition-case}) lead to
\begin{align}
\mathbb{P}_{(\theta^*,r^*)}\left(\sum_{i\in \overline{S}_1^\prime(\rho, t^*)}\indc{\wh{\pi}_i <t}\geq \frac{C_4}{2}\sum_{i\in \overline{S}_1^\prime(\rho, t^*)}\exp\left(-\frac{1+\delta_2}{2}\frac{\tilde{\Delta}_i^2npL}{\overline{V}_i(\theta^*)}-C_1^\prime\sqrt{\frac{\tilde{\Delta}_i^2npL}{\overline{V}_i(\theta^*)}}\right)\right)\geq \frac{7}{8}-o(1).\label{eqn:k_1_1}
\end{align}
Finally, (\ref{eq:H-lower-1-case-reduce}) follows from (\ref{eqn:p_i_lower_bound}) which completes the proof.
\end{proof}

We state  Lemma \ref{lem:spectral_lower_tail_prop_case} to close this section. Its proof is essentially the same as the proof of Lemma \ref{lem:spectral_lower_tail_prop} and hence is omitted here.
\begin{lemma}\label{lem:spectral_lower_tail_prop_case}
Assume $\frac{np}{\log n}\rightarrow\infty$, $\kappa =O(1)$. Recall the definition of $\overline{S}_1^\prime(\rho, t^*)$ in (\ref{eq:spec-def-S1-prime}), $S=[n]\backslash \overline{S}_1^\prime(\rho,t^*)$ and $\tilde{\Delta}_i=(\theta_i^*-t^*)_+\vee\alpha\sqrt{\frac{1}{npL}}$. There exists some constants $C_1, C_2>0$ such that for any small constant $0.1>\tilde{\delta}>0$, there exists constant $\delta_1>0$ such that for any constant $\alpha>0$, $i\in S_1^\prime(t^*)$, any $A\in\overline{\mathcal{A}}$ where $\overline{\mathcal{A}}$ is defined in (\ref{eqn:A_1})-(\ref{eqn:A_3}), any $\theta^*\in\Theta(k,0,\kappa)$ and any $r^*\in\S_n$, we have
\begin{align}
 &\p_{(\theta^*, r^*)}\left(\frac{\sum_{j \in S}A_{ji}(\bar{y}_{ij}-\psi(\theta_{r_i^*}^*-\theta_{r_j^*}^*))(1+e^{\theta_{r_j^*}^*-\theta_{r_i^*}^*})}{\sum_{j\in[n]\backslash\{i\}}A_{ji}\psi(\theta_{r_j^*}^*-\theta_{r_i^*}^*)} \leq -(1+\tilde{\delta}) \tilde{\Delta}_i\Bigg| A\right)   \nonumber\\
& \geq C_1\exp\left(-\frac{1+\delta_1}{2}\frac{\tilde{\Delta}_i^2npL}{\overline{V}_{r_i^*}(\theta^*)}-C_2\sqrt{\frac{\tilde{\Delta}_i^2npL}{\overline{V}_{r_i^*}(\theta^*)}}\right).\label{eqn:spectral_lower_tail_prop_case}
\end{align}
Moreover, $\delta_1$ is able to be arbitrarily small if $\tilde{\delta}$ and $\rho$ are small enough. 
\end{lemma}

\section{Proofs of Technical Lemmas}\label{sec:pf-tech}

In this section, we prove Lemma \ref{lem:anderson}, Lemma \ref{lem:A-basic}, Lemma \ref{lem:A-bern}, Lemma \ref{lem:hessian-spec} and Lemma \ref{lem:concentration}.
We first list some additional technical results that will be needed in the proofs.

\begin{lemma}[Hoeffding's inequality]\label{lem:hoeffding}
For independent random variables $X_1,\cdots,X_n$ that satisfy $a_i\leq X_i\leq b_i$, we have
$$\mathbb{P}\left(\sum_{i=1}^n(X_i-\mathbb{E}X_i)\geq t\right)\leq \exp\left(-\frac{2t^2}{\sum_{i=1}^n(b_i-a_i)^2}\right),$$
for any $t>0$.
\end{lemma}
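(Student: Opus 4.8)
The plan is to use the standard exponential moment (Chernoff) method. First I would fix $s>0$ and apply Markov's inequality to the nonnegative random variable $\exp\!\left(s\sum_{i=1}^n(X_i-\mathbb{E}X_i)\right)$, obtaining
$$\mathbb{P}\left(\sum_{i=1}^n(X_i-\mathbb{E}X_i)\geq t\right)\leq e^{-st}\,\mathbb{E}\exp\!\left(s\sum_{i=1}^n(X_i-\mathbb{E}X_i)\right)=e^{-st}\prod_{i=1}^n\mathbb{E}\exp\!\left(s(X_i-\mathbb{E}X_i)\right),$$
where the last equality uses independence of the $X_i$. It then remains to control each factor $\mathbb{E}\exp(s(X_i-\mathbb{E}X_i))$.

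The key step is Hoeffding's lemma: if $Y$ is a random variable with $\mathbb{E}Y=0$ and $a\leq Y\leq b$ almost surely, then $\mathbb{E}e^{sY}\leq \exp\!\left(s^2(b-a)^2/8\right)$ for all $s\in\mathbb{R}$. I would prove this by writing $\varphi(s)=\log\mathbb{E}e^{sY}$, noting $\varphi(0)=0$ and $\varphi'(0)=\mathbb{E}Y=0$, and showing $\varphi''(s)\leq (b-a)^2/4$ uniformly in $s$. The bound on $\varphi''(s)$ comes from recognizing $\varphi''(s)$ as the variance of $Y$ under the tilted measure with density proportional to $e^{sY}$; since this tilted law is also supported in $[a,b]$, its variance is at most $(b-a)^2/4$ by the fact that a random variable supported in an interval of length $\ell$ has variance at most $\ell^2/4$. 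A second-order Taylor expansion of $\varphi$ with integral remainder then yields $\varphi(s)\leq s^2(b-a)^2/8$. (Alternatively one can use the convexity bound $e^{sy}\leq \frac{b-y}{b-a}e^{sa}+\frac{y-a}{b-a}e^{sb}$ and optimize the resulting one-variable function, but the tilted-variance argument is cleaner.)

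Applying Hoeffding's lemma to $Y=X_i-\mathbb{E}X_i$ (which is centered and lies in an interval of length $b_i-a_i$) gives $\mathbb{E}e^{s(X_i-\mathbb{E}X_i)}\leq \exp\!\left(s^2(b_i-a_i)^2/8\right)$, so that
$$\mathbb{P}\left(\sum_{i=1}^n(X_i-\mathbb{E}X_i)\geq t\right)\leq \exp\!\left(-st+\frac{s^2}{8}\sum_{i=1}^n(b_i-a_i)^2\right).$$
Finally I would optimize the right-hand side over $s>0$: the exponent is minimized at $s=4t\big/\sum_{i=1}^n(b_i-a_i)^2$, which gives the exponent $-2t^2\big/\sum_{i=1}^n(b_i-a_i)^2$ and hence the claimed inequality. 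The only genuinely nontrivial part is Hoeffding's lemma; everything else is routine. I expect the main obstacle to be presenting the variance bound on the tilted law cleanly, but this is entirely standard and short.
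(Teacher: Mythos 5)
Your proof is correct and is the standard Chernoff-plus-Hoeffding's-lemma argument; the optimization at $s=4t/\sum_{i=1}^n(b_i-a_i)^2$ indeed yields the stated exponent. The paper states this lemma as a classical fact without proof, so there is nothing to compare against; your write-up, including the tilted-measure variance bound for Hoeffding's lemma, is complete and accurate.
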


\begin{lemma}[Bernstein's inequality]\label{lem:bernstein}
For independent random variables $X_1,\cdots,X_n$ that satisfy $|X_i|\leq M$ and $\mathbb{E}X_i=0$, we have
$$\mathbb{P}\left(\sum_{i=1}^nX_i\geq t\right)\leq \exp\left(-\frac{\frac{1}{2}t^2}{\sum_{i=1}^n\mathbb{E}X_i^2+\frac{1}{3}Mt}\right),$$
for any $t>0$.
\end{lemma}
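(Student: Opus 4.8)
The plan is to prove this by the classical Chernoff (exponential Markov) bound together with a power-series estimate of the moment generating function. First I would fix $\lambda>0$ and apply Markov's inequality to $e^{\lambda\sum_i X_i}$, using independence to factor:
$$\mathbb{P}\left(\sum_{i=1}^n X_i\geq t\right)\leq e^{-\lambda t}\prod_{i=1}^n\mathbb{E}\,e^{\lambda X_i}.$$

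Next I would control each factor. Expanding $e^{\lambda X_i}=1+\lambda X_i+\sum_{k\geq 2}\frac{\lambda^k X_i^k}{k!}$ and taking expectations, the first-order term drops out because $\mathbb{E}X_i=0$, while the bound $|X_i|\leq M$ gives $\mathbb{E}X_i^k\leq M^{k-2}\mathbb{E}X_i^2$ for every $k\geq 2$. Hence
$$\mathbb{E}\,e^{\lambda X_i}\leq 1+\frac{\mathbb{E}X_i^2}{M^2}\left(e^{\lambda M}-1-\lambda M\right)\leq\exp\left(\frac{\mathbb{E}X_i^2}{M^2}\left(e^{\lambda M}-1-\lambda M\right)\right),$$
where the last step uses $1+u\leq e^u$. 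Writing $v=\sum_{i=1}^n\mathbb{E}X_i^2$ and multiplying over $i$, I obtain $\mathbb{P}(\sum_i X_i\geq t)\leq\exp\!\left(-\lambda t+\frac{v}{M^2}(e^{\lambda M}-1-\lambda M)\right)$.

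The only mildly delicate point is to optimize over $\lambda$ and recover the stated form; everything else is mechanical. I would invoke the elementary inequality $e^u-1-u\leq\dfrac{u^2}{2(1-u/3)}$ valid for $0\leq u<3$, which follows by comparing the two power series coefficient by coefficient (for each $k\geq 2$ one needs $k!\geq 2\cdot 3^{k-2}$, which is immediate by a one-line induction from $3!=2\cdot 3$). With $u=\lambda M$ this turns the exponent into $-\lambda t+\dfrac{v\lambda^2}{2(1-\lambda M/3)}$. Choosing the explicit value $\lambda=\dfrac{t}{v+Mt/3}$ makes $1-\lambda M/3=\dfrac{v}{v+Mt/3}$ and $\lambda(v+Mt/3)=t$, so the exponent collapses to $-\lambda t+\tfrac12\lambda t=-\tfrac12\lambda t=-\dfrac{t^2/2}{v+Mt/3}$, which is exactly the claimed bound. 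Since this is a textbook fact, one could alternatively just cite a standard reference; the argument above is recorded for completeness.
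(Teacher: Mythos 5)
Your proof is correct: it is the standard Chernoff-bound derivation of Bernstein's inequality (MGF expansion, the comparison $e^u-1-u\leq u^2/(2(1-u/3))$, and the optimal choice $\lambda=t/(v+Mt/3)$), and all the steps check out, including the induction $k!\geq 2\cdot 3^{k-2}$ and the verification that $\lambda M<3$. The paper itself states this lemma as a textbook fact without proof, so your argument simply supplies the standard derivation that the paper omits.
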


\begin{lemma}[Central limit theorem, Theorem 2.20 of \cite{ross2007second}]\label{lem:CLT-stein}
If $Z\sim N(0,1)$ and $W=\sum_{i=1}^nX_i$ where $X_i$ are independent mean $0$ and $\Var(W)=1$, then
$$\sup_t\left|\mathbb{P}(W\leq t)-\mathbb{P}(Z\leq t)\right| \leq 2\sqrt{3\sum_{i=1}^n\left(\mathbb{E}X_i^4\right)^{3/4}}.$$
\end{lemma}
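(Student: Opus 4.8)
The statement is quoted verbatim as Theorem~2.20 of \cite{ross2007second}, so strictly speaking the ``proof'' consists of invoking that reference, and that is what the paper does. If one wanted a self-contained argument, the natural route is Stein's method of normal approximation, and I sketch the plan below.

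First I would set up the Stein equation: for a bounded test function $h$, let $f=f_h$ solve $f'(w)-wf(w)=h(w)-\mathbb{E}h(Z)$, and record the standard bounds on $f_h$ and its derivatives (for $h=\mathbb{I}\{\cdot\le t\}$ one has $\|f_h\|_\infty$ and $\|f_h'\|_\infty$ controlled by absolute constants, with $f_h$ moreover Lipschitz away from $t$). Then for $W=\sum_{i=1}^nX_i$ one writes $\mathbb{E}h(W)-\mathbb{E}h(Z)=\mathbb{E}[f'(W)-Wf(W)]$ and decomposes $Wf(W)=\sum_i X_if(W)$. Using independence and $\mathbb{E}X_i=0$, each term $\mathbb{E}[X_if(W-X_i)]$ vanishes, so the discrepancy reduces to $\sum_i\mathbb{E}[X_i\bigl(f(W)-f(W-X_i)\bigr)]$; a first-order Taylor expansion against $f'$, together with the normalization $\sum_i\mathbb{E}X_i^2=\operatorname{Var}(W)=1$, collapses the main term and leaves remainders controlled by $\sum_i\mathbb{E}|X_i|^3$ (up to absolute constants). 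To turn this smooth-functional estimate into the Kolmogorov-distance bound $\sup_t|\mathbb{P}(W\le t)-\mathbb{P}(Z\le t)|$ that the lemma asserts, I would invoke the classical comparison inequality (the standard normal density being bounded gives $d_{\mathrm{K}}(W,Z)\lesssim\sqrt{d_{\mathrm{W}}(W,Z)}$), or equivalently run a direct smoothing/concentration argument on the indicator. Finally I would replace third by fourth moments using the power-mean inequality $\mathbb{E}|X_i|^3\le(\mathbb{E}X_i^4)^{3/4}$, arriving at the stated form $2\sqrt{3\sum_i(\mathbb{E}X_i^4)^{3/4}}$.

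The main obstacle in any from-scratch proof is exactly this last transfer step: Stein's method most naturally produces bounds for Lipschitz or smooth test functions, and upgrading to the non-smooth indicator $\mathbb{I}\{\cdot\le t\}$ while keeping the clean constant $2\sqrt{3}$ is what forces either the density-comparison trick (which introduces the square root) or a more delicate induction. For the present paper, though, the lemma is only used as a black box for the high-dimensional CLT and large-deviation arguments in Sections~\ref{sec:pf-minmax-l} and~\ref{sec:pf-spec2}, so I would simply cite \cite{ross2007second} and move on.
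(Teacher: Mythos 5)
Your proposal matches the paper exactly: the lemma is stated as a quoted result (Theorem 2.20 of the cited reference) and the paper provides no proof, simply listing it among the standard technical tools alongside Hoeffding's and Bernstein's inequalities. Your Stein's-method sketch is a reasonable outline of how the cited result is actually established, but for the purposes of this paper citing the reference is all that is required.
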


\begin{proof}[Proof of Lemma \ref{lem:anderson}]
Without loss of generality, we consider $r_i^*=i$ so that $\theta_1^*\geq\cdots\geq \theta_n^*$. Then, we can write the loss as $2k\h_k(\wh{r},r^*)=\sum_{i=1}^k\indc{\wh{r}_i>k}+\sum_{i=k+1}^n\indc{\wh{r}_i\leq k}$. Since $\wh{r}\in\S_n$, we must have $\sum_{i=1}^k\indc{\wh{r}_i>k}=\sum_{i=k+1}^n\indc{\wh{r}_i\leq k}$. This implies
\begin{eqnarray}
\nonumber 2k\h_k(\wh{r},r^*) &=& 2\min\left(\sum_{i=1}^k\indc{\wh{r}_i>k},\sum_{i=k+1}^n\indc{\wh{r}_i\leq k}\right) \\
\nonumber &\leq& 2\min\left(\sum_{i=1}^k\indc{\wh{\theta}_i\leq\wh{\theta}_{(k+1)}},\sum_{i=k+1}^n\indc{\wh{\theta}_i\geq\wh{\theta}_{(k)}}\right) \\
\label{eq:some-t} &\leq& 2\max_t\min\left(\sum_{i=1}^k\indc{\wh{\theta}_i\leq t},\sum_{i=k+1}^n\indc{\wh{\theta}_i\geq t}\right) \\
\label{eq:any-t} &=& 2\min_t\max\left(\sum_{i=1}^k\indc{\wh{\theta}_i\leq t},\sum_{i=k+1}^n\indc{\wh{\theta}_i\geq t}\right) \\
\nonumber &\leq& 2\min_t\left(\sum_{i=1}^k\indc{\wh{\theta}_i\leq t}+\sum_{i=k+1}^n\indc{\wh{\theta}_i\geq t}\right).
\end{eqnarray}
The inequality (\ref{eq:some-t}) uses the fact that $\wh{\theta}_{(k)}\geq\wh{\theta}_{(k+1)}$ where $\{\theta_{(i)}\}_{i=1}^n$ are the order statistics with $\wh{\theta}_{(1)}$ being the largest and $\wh{\theta}_{(n)}$ being the smallest. The equality (\ref{eq:any-t}) holds since $\sum_{i=1}^k\indc{\wh{\theta}_i\leq t}$ is a nondecreasing function of $t$ and $\sum_{i=k+1}^n\indc{\wh{\theta}_i\geq t}$ is a nonincreasing function of $t$.
\end{proof}

\begin{proof}[Proof of Lemma \ref{lem:A-basic}]
The first conclusion is a direct consequence of Bernstein's inequality and a union bound argument. The second and third conclusion is a standard property of random graph Laplacian \citep{tropp2015introduction}.
\end{proof}

\begin{proof}[Proof of Lemma \ref{lem:A-bern}]
To see the first conclusion, we note that $\mathbb{E}(A_{ij}-p)^2\leq p$ and $\Var((A_{ij}-p)^2)\lesssim p$, and thus we can apply Bernstein's inequality followed by a union bound argument to obtain the desired result. The second conclusion is a direct consequence of Bernstein's inequality and a union bound argument.
\end{proof}

\begin{proof}[Proof of Lemma \ref{lem:hessian-spec}]
For any $u\in\mathbb{R}^n$ such that $\mathds{1}_{n}^Tu=0$,
$$u^TH(\theta)u=\sum_{1\leq i<j\leq n}A_{ij}\psi(\theta_i-\theta_j)\psi(\theta_j-\theta_i)(u_i-u_j)^2.$$
Since $\psi(\theta_i-\theta_j)\psi(\theta_j-\theta_i)\geq \frac{1}{4}e^{-M}$, we have $\lambda_{\min,\perp}(H(\theta))\geq \frac{1}{4}e^{-M}\lambda_{\min,\perp}(\mathcal{L}_A)$. By Lemma \ref{lem:A-basic}, we obtain the desired result.
\end{proof}

\begin{proof}[Proof of Lemma \ref{lem:concentration}]

Let $\mathcal{U}=\left\{u\in\mathbb{R}^n:\sum_{i\in[n]}u_i^2\leq1\right\}$ be the unit ball in $\mathbb{R}^n$. Then there exists a subset of $\mathcal{V}\subset\mathcal{U}$ such that for any $u\in\mathcal{U}$, there is a $v\in\mathcal{V}$ satisfying $\norm{u-v}\leq1/2$. Moreover, we also have $\log\abs{\mathcal{V}}\leq C^\prime n$ for some constant $C^\prime$. See Lemma 5.2 of \cite{vershynin2010introduction}. Then for any $u\in\mathcal{U}$, with the corresponding $v\in\mathcal{V}$, we have
\begin{align*}
&\sum_{i=1}^nu_i\left(\sum_{j\in[n]\backslash\{i\}}A_{ij}(\bar{y}_{ij}-\psi(\theta_i^*-\theta_j^*))\right)\\
&=\sum_{i=1}^nv_i\left(\sum_{j\in[n]\backslash\{i\}}A_{ij}(\bar{y}_{ij}-\psi(\theta_i^*-\theta_j^*))\right)+\sum_{i=1}^n(u_i-v_i)\left(\sum_{j\in[n]\backslash\{i\}}A_{ij}(\bar{y}_{ij}-\psi(\theta_i^*-\theta_j^*))\right)\\
&\leq\sum_{i=1}^nv_i\left(\sum_{j\in[n]\backslash\{i\}}A_{ij}(\bar{y}_{ij}-\psi(\theta_i^*-\theta_j^*))\right)+\frac{1}{2}\sqrt{\sum_{i=1}^n\left(\sum_{j\in[n]\backslash\{i\}}A_{ij}(\bar{y}_{ij}-\psi(\theta_i^*-\theta_j^*))\right)^2}.
\end{align*}
Maximize $u$ and $v$ on both sides of the inequality, after rearrangement, we have
\begin{align*}
&\sqrt{\sum_{i=1}^n\left(\sum_{j\in[n]\backslash\{i\}}A_{ij}(\bar{y}_{ij}-\psi(\theta_i^*-\theta_j^*))\right)^2}\\
&\leq2\max_{v\in\mathcal{V}}\sum_{i=1}^nv_i\left(\sum_{j\in[n]\backslash\{i\}}A_{ij}(\bar{y}_{ij}-\psi(\theta_i^*-\theta_j^*))\right)\\
&=2\max_{v\in\mathcal{V}}\sum_{i<j}A_{ij}(v_i-v_j)(\bar{y}_{ij}-\psi(\theta_i^*-\theta_j^*)).
\end{align*}
Conditional on $A$, applying Hoeffding's inequality and union bound on the last line, we have
\begin{align*}
\sum_{i=1}^n\left(\sum_{j\in[n]\backslash\{i\}}A_{ij}(\bar{y}_{ij}-\psi(\theta_i^*-\theta_j^*))\right)^2&\leq C^{\prime\prime}\frac{(\log n+n)\max_{v\in\mathcal{V}}\sum_{i<j}A_{ij}(v_i-v_j)^2}{L}\\
&\leq C^{\prime\prime}\frac{(\log n+n)\lambda_{\max}(\mathcal{L}_A)}{L}
\end{align*}
with probability at least $1-O(n^{-10})$. By Lemma \ref{lem:A-basic}, we obtain the desired bound for the first conclusion.

The second conclusion is a direct application of Hoeffding's inequality and a union bound argument.

The proof of the third conclusion is similar to that of the first one. Define $\mathcal{U}_i=\left\{u\in\mathbb{R}^{n-1}: \sum_{j\in[n]\backslash\{i\}}A_{ij}u_j^2\leq 1\right\}$. Conditioning on $A$, one can think of $\mathcal{U}_i$ as a unit ball with dimension $\sum_{j\in[n]\backslash\{i\}}A_{ij}-1$. Then, there exists a subset $\mathcal{V}_i\subset \mathcal{U}_i$ such that for any $u\in\mathcal{U}_i$, there is a $v\in\mathcal{V}_i$ that satisfies $\|u-v\|\leq \frac{1}{2}$. Moreover, we also have $\log|\mathcal{V}_i|\leq 2\sum_{j\in[n]\backslash\{i\}}A_{ij}$ by Lemma 5.2 of \cite{vershynin2010introduction}. For any $u\in\mathcal{U}_i$, with the corresponding $v\in\mathcal{V}_i$, following a similar argument of the proof of the first conclusion, we have
$$\sqrt{\sum_{j\in[n]\backslash\{i\}}A_{ij}(\bar{y}_{ij}-\psi(\theta_i^*-\theta_j^*))^2} \leq 2\max_{v\in\mathcal{V}_i}\sum_{j\in[n]\backslash\{i\}}A_{ij}v_{ij}(\bar{y}_{ij}-\psi(\theta_i^*-\theta_j^*)),$$
which implies
$$\sqrt{\max_{i\in[n]}\sum_{j\in[n]\backslash\{i\}}A_{ij}(\bar{y}_{ij}-\psi(\theta_i^*-\theta_j^*))^2} \leq 2\max_{i\in[n]}\max_{v\in\mathcal{V}_i}\sum_{j\in[n]\backslash\{i\}}A_{ij}v_{ij}(\bar{y}_{ij}-\psi(\theta_i^*-\theta_j^*)).$$
Applying Hoeffding's inequality and union bound, we have
$$\max_{i\in[n]}\sum_{j\in[n]\backslash\{i\}}A_{ij}(\bar{y}_{ij}-\psi(\theta_i^*-\theta_j^*))^2\leq C_1\frac{\log n+\max_{i\in[n]}\sum_{j\in[n]\backslash\{i\}}A_{ij}}{L},$$
with probability at least $1-O(n^{-10})$. Finally, applying Lemma \ref{lem:A-basic}, we obtain the desired bound for the third conclusion, which concludes the proof.
\end{proof}

\bibliographystyle{dcu}
\bibliography{reference}

\end{document}